\newtheorem{defi}{Definition}
\newtheorem{rem}{Remark} 
\newtheorem{prop}{Proposition}
\newtheorem{theorem}{Theorem}
\newtheorem{cor}{Corollary}
\newtheorem{lemma}{Lemma}
\DeclareMathOperator{\n}{\mathfrak{n}}
\DeclareMathOperator{\so}{\mathfrak{so}}
\DeclareMathOperator{\bi}{\bf{i}}
\DeclareMathOperator{\bj}{\bf{j}}
\DeclareMathOperator{\bk}{\bf{k}}
\DeclareMathOperator{\wP}{\widetilde{P}}
\DeclareMathOperator{\wJ}{\widetilde{J}}
\DeclareMathOperator{\wR}{\widetilde{R}}
\DeclareMathOperator{\wE}{\widetilde{E}}
\DeclareMathOperator{\wF}{\widetilde{F}}
\DeclareMathOperator{\wG}{\widetilde{G}}
\DeclareMathOperator{\wbi}{\tilde{\bf{i}}}
\DeclareMathOperator{\wbj}{\tilde{\bf{j}}}
\DeclareMathOperator{\wbk}{\tilde{\bf{k}}}
\DeclareMathOperator{\End}{End}
\DeclareMathOperator{\spn}{span}
\DeclareMathOperator{\la}{\langle}
\DeclareMathOperator{\ra}{\rangle}
\DeclareMathOperator{\ad}{ad}
\DeclareMathOperator{\Id}{Id}
\DeclareMathOperator{\Cl}{\rm{Cl}}
\DeclareMathOperator{\GL}{\rm{GL}}
\begin{document}
\title{Complete classification of $H$-type algebras: I}
\author[%Christian Autenried, 
Kenro Furutani, Irina Markina]
{%Christian Autenried,  
Kenro Furutani, Irina Markina}

\thanks{%$^\diamondsuit$
The first author was partially
supported by the NCTS at National Taiwan University, Taipei,
and both of authors were partially supported by ISP project 239033/F20 and SPIRE 710022, University of Bergen, Norway}

\subjclass[2010]{Primary 17B60, 17B30, 17B70, 22E15}

\keywords{Clifford module, nilpotent 2-step Lie algebra, pseudo $H$-type algebras, Lie algebra isomorphism, scalar product}

\address{K.~Furutani:  Department of Mathematics, Faculty of Science 
and Technology, Science University of Tokyo, 2641 Yamazaki, Noda, Chiba (278-8510), Japan}
\email{furutani\_kenro@ma.noda.tus.ac.jp}

\address{I.~Markina: Department of Mathematics, University of Bergen, P.O.~Box 7803,
Bergen N-5020, Norway}
\email{irina.markina@uib.no}

\begin{abstract}
Let $\mathscr N$ be a 2-step nilpotent Lie algebra endowed with
non-degenerate scalar product 
$\langle.\,,.\rangle$ 
and let $\mathscr N=V\oplus_{\perp}Z$, 
where $Z$ is the centre of the Lie algebra 
and $V$ its orthogonal complement with respect to the scalar product. 
We study the classification of the Lie algebras for which the space $V$ arises as a representation space of a Clifford algebra $\Cl(\mathbb R^{r,s})$ and the representation map $J\colon \Cl(\mathbb R^{r,s})\to\End(V)$ is related to the Lie algebra structure by $\langle J_zv,w\rangle=\langle z,[v,w]\rangle$ for all $z\in \mathbb R^{r,s}$ and $v,w\in V$. The classification is based on the range of parameters $r$ and $s$ and is completed for the Clifford modules $V$, having minimal possible dimension, that are not necessary irreducible. We find the necessary condition for the existence of a Lie algebra isomorphism according to the range of integer parameters $0\leq r,s<\infty$. We present the constructive proof for the isomorphism map for isomorphic Lie algebras and defined the class of non-isomorphic Lie algebras. 
\end{abstract}
\maketitle
%\tableofcontents

%%%%%%%%%%%%%%%%%%%%%%%%%%%%%%%%

\section{Introduction}

%%%%%%%%%%%%%%%%%%%%%%%%%%%%%%%%

We are studying one special type of 2-step nilpotent Lie algebras. In
the work~\cite{Met} M\'etivier introduced 2-step real nilpotent Lie
algebras $\n=V\oplus Z$ with the center $Z$ such that the adjoint map
$\ad_x\colon\n\to Z$ is surjective for any $x\in V$, where $V$ is the
prescribed complement to the center. Equivalently, the bracket defines a vector valued anti-symmetric form $[.\,,.]\colon V\times V\to Z$,
such that anti-symmetric real valued bilinear form $B(x,y)=\omega([x,y])$ is non-degenerate on $V$ for all $\omega\in Z^*$, $\omega\neq 0$. Particularly, it immediately implies that the space $V$ is even dimensional and $[V,V]=Z$ since $Z=\ad_x(\n)\subseteq[\n,\n]$ for $x\in V$. These Lie algebras were introduced in order to study the analytic hypoellipticity and were called Lie algebras {\it satisfying hypothesis H}. The Lie algebras were also studied in~\cite[Definition 1.3]{E} under the name {\it non-singular}, in~\cite{LT,MS} as {\it Lie algebras of M\'etivier group} or in~\cite{KT} as {\it fat} algebras since they are source of fat distributions. 

Let us observe that if a 2-step nilpotent Lie algebra $\n$ carries a positive definite product  $\langle .\,,.\rangle$ on it, and the map $J_z\colon V\to V$ is defined by 
\begin{equation}\label{eq:0}
\la J_zx,y\ra=\la z,[x,y]\ra=\la z,\ad_x(y)\ra,
\end{equation} 
then $J_z$ is a non-singular linear map for any non-zero $z\in Z$ if and only if the Lie algebra $\n$ is non-singular. The presence of a positive definite product is not restrictive at all, because Eberlein showed in~\cite{Eber03} that any 2-step nilpotent Lie algebra is isomorphic to a standard metric form $(\mathcal N, \la.\,,.\ra_{\mathcal N})$, where $\mathcal N=\mathbb R^n\oplus W$, with $W\subset \so(n)$ and the positive definite product defined by $\la.\,,.\ra_{\mathcal N}=\la.\,,.\ra_{\mathbb R^n}+\la.\,,.\ra_{\so(n)}$. 
Thus any 2-step nilpotent Lie algebra can be considered as a metric Lie algebra with the scalar product defined as above. 

We are interested in those non-singular 2-step nilpotent Lie algebras, for which the map $J\colon Z\to\End(V)$ is a representation of a Clifford algebra $\Cl(\mathbb R^{r,s})$. The map $J$ and the Lie brackets are related by~\eqref{eq:0} by making use a sign indefinite non-degenerate scalar product. The corresponding Lie algebras, which we denote by $\mathcal N_{r,s}(V)$, received the name {\it pseudo $H$-type Lie algebras}. For the Clifford algebras $\Cl(\mathbb R^{r,0})$, generated by the Euclidean space $\mathbb R^r$, the $H$-type algebras $\mathcal N_{r,0}(V)$ were introduced by Kaplan~\cite{Ka1} and attracted a lot of attention~\cite{BTV,CD,CDKR,Ka2,Ka3,R1,R2}. The Lie algebras $\mathcal N_{r,0}(V)$ is a typical example of a standard metric form. For the Clifford algebras $\Cl(\mathbb R^{r,s})$, generated by a sign indefinite non-degenerate scalar product space $\mathbb R^{r,s}$, the pseudo $H$-type Lie algebras $\mathcal N_{r,s}(V)$ were introduced in~\cite{Ci}, and studied in~\cite{CP,FM,FMV,GKM,Gu}. 

We study the isomorphism properties between the Lie
algebras~$\mathcal N_{r,s}(V)$. We show that the Lie
algebras $\mathcal N_{r,s}(V)$ 
can not be isomorphic to $\mathcal N_{u,t}(V)$ unless $r=t$ and $s=u$ or $r=u$
and $s=t$. The present paper is the first part of the complete classification, where we concentrate on the classification of the Lie algebras based on the Clifford modules of minimal possible dimensions (which are not necessarily irreducible), admitting a scalar product making the representation map $J_z$ skew symmetric. Then, by making use the Atiyah-Bott periodicity for underlying Clifford algebras we extend the study to an arbitrary dimension pseudo $H$-type algebras. We also show that the Lie algebras based on the non-equivalent irreducible Clifford modules are isomorphic.
We stress, that the isomorphic relation between the Clifford algebras and the associated  pseudo $H$-type Lie algebras is not functorial. In some cases
the isomorphic Clifford algebras lead to isomorphic Lie algebras, in
other cases not. 

Apart from being motivated by itself interesting mathematical question
of the classification of Lie algebras, we want to mention here
possible applications in other areas of mathematics. It was
shown~\cite{CD,FM} that the pseudo $H$-type Lie algebras
admit the integer structure constants that in its turn, according to
the Mal\'cev theorem~\cite{Malcev}, guarantees the existence of
lattices on the corresponding Lie groups. The factorization of pseudo
$H$-type Lie groups by  lattices gives a vast of new examples of
nilmanifolds, which type strongly depend on the classification of
pseudo $H$-type algebras~\cite{Bat,Cheeger,CP,Gu}. 
Nilmanifolds are related to the Grushin type differential operators
descending from elliptic and sub-elliptic type operators on the
corresponding pseudo $H$-type Lie groups. This kind of nilmanifolds
allows precise construction of the spectral zeta function for the
Grusin operator,~\cite{BFI,BFI1} 
and gives new examples of iso-spectral but
 non-diffeomorphic nilmanifolds~(\cite{BFI2}).

Recently it was noticed that Tanaka prolongations of some pseudo
$H$-type Lie algebras coincide with Tanaka prolongations of simple Lie
algebras, factorized by parabolic subalgebras. It shows a close
relation between the classification of pseudo $H$-type algebras and
the theory of simple Lie algebras. 
The fact that Clifford algebras are pretty much useful in the
orthogonal design, 
signal processing, space-time block coding, 
or computer vision, is well known~\cite{Citko,GW,Ja,TJC}. 
The structure of pseudo $H$-type algebras allows a new construction of
orthogonal 
designs and possible wireless communications, as was shown in~\cite{FMV}. 

The article is organized in the following way. After the introduction
we give necessary definitions and notations in Section~\ref{sec:aux},
including the notion of admissible module and relation between
Clifford algebras and pseudo $H$-type Lie algebras. We also describe
the scheme of classification, 
that includes 4 steps. 
In the rest of  sections we realise 3 steps of the classification.

%%%%%%%%%%%%%%%%%%%%%%%%%%%%%%%%

\section{Clifford algebras, modules, and pseudo $H$-type Lie algebras}\label{sec:aux}

%%%%%%%%%%%%%%%%%%%%%%%%%%%%%%%%

%%%%%%%%%%%%%%%%%%%%%%%%%%%%%%%%

\subsection{Clifford algebras and representations}

%%%%%%%%%%%%%%%%%%%%%%%%%%%%%%%%

We use the notation $\mathbb{R}^{r,s}$ for the space
$\mathbb{R}^{r+s}$ equipped 
with the non-degenerate symmetric
bilinear form
\[
\la x,y\ra_{r,s}=\sum\limits_{i=1}^rx_iy_i-\sum\limits_{j=1}^{s}x_{r+j}y_{r+j},\quad x,y\in\mathbb{R}^{r,s}.
\]
An orthonormal basis we denote by $\{z_1,\ldots,z_{r+s}\}$. Thus 
$$
\la z_i,z_j\ra_{r,s}=\epsilon_{i}(r,s)\delta_{i,j},\quad
\epsilon_{i}(r,s)=
\begin{cases}
1,\quad & \text{if}\quad  i=1,\ldots,r,
\\
-1,\quad & \text{if}\quad  i=r+1,\ldots,r+s,
\end{cases}
$$ 
where $\delta_{i,j}$ is the Kronecker symbol.
By $\Cl_{r,s}$ we denote the Clifford algebra generated by
$\mathbb{R}^{r,s}$, that is, the quotient algebra
of the tensor algebra 
\[
\mathcal{T}(\mathbb{R}^{r+s})=\mathbb{R}\oplus\left(\mathbb{R}^{r+s}\right)\oplus 
\left(\stackrel{2}\otimes\mathbb{R}^{r+s}\right)\oplus \left(\stackrel{3}\otimes\mathbb{R}^{r+s}\right)\oplus\cdots, 
\]
divided by the two-sided
ideal $I_{r,s}$ generated by the elements of the form
$x\otimes x+\la x,x\ra_{r,s}$, $x\in\mathbb{R}^{r+s}$.
The explicit determination of the Clifford algebras 
is given in~\cite{ABS} and they are isomorphic 
to matrix algebras presented~\cite{LawMich}. 
We mention in~\eqref{perCl} useful isomorphisms of Clifford algebras, related to
8-periodicity, 
established in~\cite{ABS} 
and $(4-4)$-periodicity, see~\cite{LawMich}. 
To denote isomorphic objects we use the symbol ``$\cong$".
\begin{equation}\label{perCl}
\begin{array}{lclcll}
&\Cl_{r,s}\otimes\Cl_{0,8}&\cong &\Cl_{r,s+8}&\cong &\Cl_{r,s}\otimes\mathbb R(16),
\\
&\Cl_{r,s}\otimes\Cl_{8,0}&\cong &\Cl_{r+8,s}&\cong &\Cl_{r,s}\otimes\mathbb R(16),
\\
&\Cl_{r,s}\otimes\Cl_{4,4}&\cong &\Cl_{r+4,s+4}&\cong &\Cl_{r,s}\otimes\mathbb R(16).
\end{array}
\end{equation}
An algebra homomorphism $\widehat J\colon \Cl_{r,s}\to\End(U)$ is
called representation map and the vector space $U$ is said to be the
representation space. The representation space $U$ becomes Clifford
$\Cl_{r,s}$-module, where the multiplication is defined 
by $\phi u=\widehat J_{\phi}u$, $u\in U$, $\phi\in \Cl_{r,s}$. 
It is enough to define a linear map $J\colon \mathbb
R^{r,s}\to\End(U)$, satisfying $J^2_z=-\la z,z\ra_{r,s}\Id_{U}$ for an
arbitrary $z\in\mathbb R^{r,s}$. 
Then $J$ can be uniquely extended to the representation $\widehat J$ 
by the universal property, see, for instance~\cite{Hu,Lam,LawMich}. 

%%%%%%%%%%%%%%%%%%%%%%%

\subsection{Admissible modules}\label{sec:adm_mod}

%%%%%%%%%%%%%%%%%%%%%%%

Let $U$ be a Clifford $\Cl_{r,s}$-module.
We call the module $U$ {\it admissible}, 
if there is a non-degenerate symmetric bilinear 
form $\la.\,,.\ra_{U}$ on $U$ such that the representation map $J$ satisfies the following condition:
\begin{equation}\label{AdMo2}
\la J_{z}x,y\ra_U+\la x,J_{z}y\ra_{U}=0\quad\text{for all}\quad z\in\mathbb{R}^{r,s},\ \ x,y\in U.
\end{equation}
We say that the map $J_z$ is skew symmetric with respect 
to the bilinear symmetric form $\la.\,,.\ra_{U}$ and 
write $U=(U,\la.\,,.\ra_{U})$ for an admissible module. 
If $(U,\la.\,,.\ra_{U})$ is an admissible module with a non-degenerate
scalar product, then it decomposes into the 
orthogonal sum of minimal dimensional admissible modules~\cite{FM}, since 
the orthogonal complement to an admissible submodule is an admissible module.

If $U$ is a $\Cl_{r,0}$-module, then there 
always exists a positive definite scalar product 
$\la.\,,.\ra_{U}$ such that $U$ becomes an admissible module. 
Particularly, any irreducible module is an admissible 
with respect to some positive definite scalar product. 
It allowed to Kaplan to introduce $H$-type Lie algebras in~\cite{Ka1}.  

If $s>0$, and $(U,\la.\,,.\ra_{U})$ is an admissible
$\Cl_{r,s}$-module, 
then the scalar product space $(U,\la.\,,.\ra_{U})$ has to be a
neutral space~\cite{Ci}, 
that is an even dimensional space, where the bilinear symmetric form
has equal number of positive and negative eigenvalues.
In this case an irreducible module need not be admissible. 

Recall that the Clifford algebras $\Cl_{r,s}$ with $r-s\neq 3(mod\,\, 4)$
admit only one irreducible module up to equivalence. 
Some of irreducible modules $V$ can be supplied with a scalar
product with the property (\ref{AdMo2})
and becomes an admissible module. In other cases 
the direct sum $V\oplus V$ must be taken in order to 
define the scalar product, see~\cite{Ci}. 
In both cases we call the obtained admissible module 
{\it minimal admissible module}. 
Thus, for the Clifford algebras $\Cl_{r,s}$ with $r-s\neq 3(mod\,\,4)$ 
the minimal admissible module is either $(V,\langle.\,,.\rangle_{V})$
or $(V\oplus V,\langle.\,,.\rangle_{V\oplus V})$, where $V$ is an
irreducible module. 
We will denote a minimal admissible module of the Clifford algebra $\Cl_{r,s}$ by $V^{r,s}$.

We clarify now the structure of minimal admissible modules for
$\Cl_{r,s}$ with $r-s=3(mod\,\, 4)$. 
In this case, there are two non-equivalent irreducible modules. 
Let $\{z_1,\ldots,z_{r+s}\}$ be an orthonormal basis of 
$\mathbb{R}^{r,s}$ 
and $\{J_{z_1},\ldots, J_{z_{r+s}}\}$ the corresponding representation
maps. 
The product $\Omega^{r,s}=\prod_{j=1}^{r+s}J_{z_j}$ is called the
volume form. In the case of $r-s=3(mod\,\,4)$, 
it belongs to the center of the Clifford algebra $\Cl_{r,s}$ 
and $(\Omega^{r,s})^2=\Id$. 
Two non-equivalent irreducible
modules are distinguished 
by the action of $\Omega^{r,s}$. We denote by $V_+$ the irreducible
module, where the volume form acts as the identity operator and 
by $V_-$ the non-equivalent irreducible $\Cl_{r,s}$-module, 
where the volume form
$\Omega^{r,s}$ 
acts as the minus identity operator. 
If non of irreducible modules is admissible, 
then the minimal admissible module is one of the following forms 
$V_+\oplus V_+$, $V_-\oplus V_-$ or $V_+\oplus V_-$. 
A choice of a
possible form 
depends on the value of index $s$ and it is explained in
Proposition~\ref{prop:mod}. 
The summary of possible structures of minimal admissible modules for
all the cases 
is given in Table~\ref{ir}. 
\begin{table}[h]
\center\caption{Structure of possible minimal admissible modules $V^{r,s}$}
\begin{tabular}{|c||c|}
\hline
$r+s\neq 3(mod\,\,4)$&$r+s= 3(mod\,\,4)$
\\
\hline\hline
$
V\quad\text{or}\quad V\oplus V
$
&$
\begin{array}{c|c||c}
s\ \ \text{is even} & s\ \ \text{is even} & s\ \ \text{is odd} 
\\
\hline
\quad V_+\ \text{or}\ V_-\ \ &\quad V_+\oplus V_+\ \text{or}\ V_-\oplus V_-\ \ &\quad V_+\oplus V_-
\end{array}
$
\\
\hline
\end{tabular}\label{ir}
\end{table}

\begin{prop}\label{prop:mod}
Let $\Cl_{r,s}$ be a Clifford algebra with $r-s=3(mod\,\, 4)$. The following cases are possible.
\begin{itemize}
\item[1.]{If $s$ is odd, then an irreducible module can not be admissible. 
The minimal admissible module is unique, up to an isomorphism, and has the form $V^{r,s}=V_+\oplus V_-$;}
\item[2.]{If $s$ is even and if the irreducible module $V_+$ is
    admissible, then $V_-$ is also admissible 
and vice versa;}
\item[3.]{If $s$ is even and if one of irreducible modules is not admissible, then the other one is neither admissible. The minimal admissible module takes one of the forms: $V^{r,s}=V_+\oplus V_+$ or $V^{r,s}=V_-\oplus V_-$.}
\end{itemize}
\end{prop}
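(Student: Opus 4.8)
The plan is to make everything hinge on the behaviour of the volume form $\Omega=\Omega^{r,s}$ under an arbitrary admissible product. The first step is the elementary computation that, if $\langle\cdot,\cdot\rangle_U$ is a non-degenerate symmetric form on a $\Cl_{r,s}$-module $U$ with respect to which every $J_z$ is skew-symmetric, then, writing $n=r+s$ and commuting the generators past one another,
\begin{equation*}
\Omega^{*}=(J_{z_1}\cdots J_{z_n})^{*}=(-1)^{n}\,J_{z_n}\cdots J_{z_1}=(-1)^{\,n+\frac{n(n-1)}{2}}\,\Omega=(-1)^{\frac{n(n+1)}{2}}\,\Omega,
\end{equation*}
where $*$ denotes the adjoint with respect to $\langle\cdot,\cdot\rangle_U$. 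Since $r-s\equiv 3\pmod 4$ the integer $n=r+s$ is odd, and $n\equiv 3+2s\pmod 4$, so $s$ odd forces $n\equiv 1\pmod 4$ while $s$ even forces $n\equiv 3\pmod 4$; accordingly the triangular number $\tfrac{n(n+1)}{2}$ is odd in the first case and even in the second. Hence $\Omega$ is skew-symmetric for every admissible product when $s$ is odd, and symmetric for every admissible product when $s$ is even. This single dichotomy is what drives the three items.

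\emph{Item 1.} When $s$ is odd, $\Omega$ is skew-symmetric, so on any module on which $\Omega$ acts as a scalar $\pm\Id$ — i.e.\ on $V_+$, $V_-$, $V_+\oplus V_+$, and $V_-\oplus V_-$ — the relation $\langle\Omega x,y\rangle_U=-\langle x,\Omega y\rangle_U$ degenerates into $\pm\langle x,y\rangle_U=\mp\langle x,y\rangle_U$, so no non-degenerate admissible product exists on such a module. Therefore the minimal admissible module cannot be irreducible, and among modules with two irreducible constituents only $V_+\oplus V_-$ remains, so (once its existence is granted) the minimal admissible module must be $V^{r,s}=V_+\oplus V_-$. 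Existence I would obtain either by quoting the general fact, used already in Section~\ref{sec:adm_mod}, that minimal admissible modules exist, or by an explicit construction: realise the module on $V\otimes\mathbb{R}^2$ for an irreducible $V$, with the generators acting as $J_{z_i}\otimes\sigma$, where $\sigma$ interchanges the two coordinates of $\mathbb{R}^2$ (the volume form then acting as $(\pm\Id)\otimes\sigma$, whose eigenvalues are both $+1$ and $-1$, which identifies the module as $V_+\oplus V_-$), and equip it with a product of the form $(P\,\cdot\,,\,\cdot)_V\otimes\beta$, where $(\cdot,\cdot)_V$ is a positive definite product on $V$, $P$ is a product of (the images of) the generators $z_{r+1},\dots,z_{r+s}$, and $\beta$ is a symmetric bilinear form on $\mathbb{R}^2$ chosen so that each $J_{z_i}\otimes\sigma$ becomes skew-symmetric. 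For the uniqueness clause I would use that $\End_{\Cl_{r,s}}(V_+\oplus V_-)\cong\End_{\Cl_{r,s}}(V_+)\times\End_{\Cl_{r,s}}(V_-)$, since $V_+\not\cong V_-$, and run a Schur-type argument showing that any two admissible products on $V_+\oplus V_-$ are intertwined by a module automorphism.

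\emph{Items 2 and 3.} When $s$ is even one has $r\equiv 3\pmod 4$, hence $r\geq 3$, so I may use the reflection $\rho\colon\mathbb{R}^{r,s}\to\mathbb{R}^{r,s}$ given by $z_1\mapsto-z_1$ and $z_i\mapsto z_i$ for $i\geq 2$. Since $\rho$ is orthogonal, $J'_z:=J_{\rho(z)}$ is again a $\Cl_{r,s}$-module structure on the same space, and its volume form equals $-\Omega$; hence twisting the representation of $V_+$ by $\rho$ produces a module isomorphic to $V_-$, and conversely. As $\rho(z)\in\mathbb{R}^{r,s}$ for every $z$, a product making all $J_z$ skew-symmetric makes all $J'_z$ skew-symmetric as well, so transporting the product along the twist shows that $V_+$ is admissible if and only if $V_-$ is — which is item~2. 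For item~3, assume neither irreducible module is admissible; then the minimal admissible module has at least two irreducible constituents, and $V_+\oplus V_-$ is excluded as follows: here $\Omega$ is symmetric and acts as $\Id\oplus(-\Id)$, so the identity $\langle\Omega x,y\rangle=\langle x,\Omega y\rangle$ combined with symmetry of the form forces the off-diagonal blocks of the form to vanish; thus any admissible product on $V_+\oplus V_-$ splits as $\langle\cdot,\cdot\rangle_{V_+}\oplus\langle\cdot,\cdot\rangle_{V_-}$, and non-degeneracy makes each summand non-degenerate, so $V_+$ would be admissible — a contradiction. Consequently the minimal admissible module is $V_+\oplus V_+$ or $V_-\oplus V_-$, and existence of an admissible product on one of them I would again obtain from the general existence statement, or from a construction parallel to item~1 but with the diagonal action $J_{z_i}\otimes\Id$.

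\emph{Main difficulty.} I expect the only genuinely technical point to be the explicit construction of the neutral admissible products on the two-constituent modules: the adjoint identity for $\Omega$ and the Schur argument are routine, whereas choosing the auxiliary operator $P$ (a product of the positive, or of the negative, generators) together with the accompanying $2\times 2$ block so that \emph{all} $J_z$ come out skew-symmetric requires a careful bookkeeping of signs governed by $s\bmod 4$. A subsidiary point worth noting is that item~3 asserts only that the minimal admissible module is \emph{one of} $V_+\oplus V_+$, $V_-\oplus V_-$, so at this stage one needs merely that it is a two-constituent module different from $V_+\oplus V_-$; the precise choice is settled by the finer analysis carried out later.
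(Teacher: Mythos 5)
Your proof is correct and follows essentially the same route as the paper's: everything rests on the volume form $\Omega^{r,s}$ being skew-adjoint when $s$ is odd (which kills every module on which it acts as a scalar) and self-adjoint when $s$ is even (which forces the $\pm1$-eigenspaces to be mutually orthogonal, non-degenerate, hence admissible submodules, excluding $V_+\oplus V_-$ in item 3). The differences are cosmetic — you supply the reflection-twist argument for item 2, which the paper dismisses as obvious, and you add existence and uniqueness remarks that the paper delegates to the cited literature; the only slip is that $s$ even gives $r\equiv 1$ or $3\pmod 4$ rather than necessarily $r\equiv 3\pmod 4$, but all your argument needs is $r\ge 1$ so that the reflection in $z_1$ exists.
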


\begin{proof} 
To show the first claim we assume that $(V_+,\langle.\,, .\rangle_{V_+})$ is admissible.  Then 
\begin{equation}\label{eq:isom}
\langle x,x\rangle_{V_+}=\langle \Omega^{r,s} (x), \Omega^{r,s} (x)\rangle_{V_+}=\prod_{i=1}^{r+s}\langle z_{i}, z_{i}\rangle_{r,s}\langle x,x\rangle_{V_+}=(-1)^s\langle x,x\rangle_{V_+}
\end{equation}
for any $x\in V_+$.
This shows that all the vectors $x\in V_+$ are null vectors and the scalar product $\langle.\,, .\rangle_{V_+}$ is degenerate. Thus the irreducible module $V_+$ can not be supplied with non-degenerate bilinear symmetric form, such that the maps $J_{z}$ satisfies~\eqref{AdMo2}. Similar arguments are valid for $V_-$. Thus if 
$(V^{r,s},\langle.\,, .\rangle_{V^{r,s}})$ is a minimal admissible
module, then $V^{r,s}$ has to contain 
both of $V_{\pm}$. 

The second statement is obvious. Before starting to prove the last
statement, we note that if $r-s=3(mod\,\, 4)$, then $r+s=2s+3(mod\,\, 4)$ 
is always odd and $\frac{r+s-1}{2}=s+1(mod\,\, 2)$ is also odd in the
case when $s$ is even. We assume now that non of two non-equivalent
irreducible modules is admissible 
and we consider a minimal admissible module $(V^{r,s},\langle.\,,
.\rangle_{V^{r,s}})$. Then the volume form is an isometry
by~\eqref{eq:isom} and a symmetric operator 
because of 
$$
\langle \Omega^{r,s} (x),y\rangle_{V^{r,s}}=(-1)^{r+s}\langle  x, J_{z_{r+s}}\ldots J_{z_1} y\rangle_{V^{r,s}}=(-1)^{r+s}(-1)^{\frac{r+s-1}{2}}\langle x,\Omega^{r,s}(y)\rangle_{V^{r,s}}.
$$
Thus, if $V^{r,s}$ contains two eigenspaces 
$V_+$ and $V_-$ of $\Omega^{r,s}$, then $V_+$ and $V_-$ have 
to be orthogonal non-degenerate subspaces of 
$(V^{r,s},\langle.\,, .\rangle_{V^{r,s}})$ and therefore admissible
modules. 
This contradicts to the assumption that non of irreducible modules is admissible.
\end{proof}

In Table~\ref{t:dim} we give the dimensions of minimal admissible modules $V^{r,s}$, $r,s\leq 8$. By the black colour we denote the dimensions of minimal admissible modules, that are also irreducible Clifford modules. The red colour is used for the minimal admissible modules which are direct product of two irreducible Clifford modules. The notation $N_{\times 2}$ means that there are two minimal admissible modules.

\begin{table}[h]
\center\caption{Dimensions of minimal admissible modules}
\begin{tabular}{|c||c|c|c|c|c|c|c|c|c|}
\hline
${\text{\small 8}} $&$ {\text{\small{16}}}$&${\text{\small 32}}$&${\text{\small{64}}}
$&${\text{\small{64}$_{\times 2}$}}$&${\text{\small{128}}}$&${\text{\small{128}}}$&${\text{{\small{128}}}}$&$
{\text{{\small{128}$_{\times 2}$}}} $&${\text{\small{256}}}$
\\
\hline
${\text{\small 7}}$ &$ {\text{\small{16}}}$&${\text{\small{32}}}$&$
{\text{\small{\color{red}{64}}}} $&${\text{\small{64}}}
$&${\text{{\color{red}\small{128}}}}$&${\text{\small{{\color{red}{128}}}}}  $&${\text{\color{red}{\small{128}}}}$&$ {\text{\small{128}}} $&${\text{\small{256}}}$
\\
\hline
${\text{\small 6}}$ &${\text{\small{16}}}$&${\text{\small{16}$_{\times 2}$}}$&${\text{\small{32}}}$&${\text{\small{32}}}$&${\text{\small{\color{red}{64}}}}
$&${\text{\color{red}{\small{64}$_{\times 2}$}}} $&${\text{\color{red}\small{128}}} $&${\text{\small{128}}}$&$ {\text{\small{256}}} $
\\
\hline
${\text{\small 5}} $&${\color{red}\text{\small 16}}$&${\text{\small 16}}$&${\text{\small 16}}$&${\text{\small 16}}$&${\color{red}{\text{\small 32}}}$&${\color{red}{\text{\small 64}}} $&${\text{\small{\color{red}128}}}$&${\text{\small{128}}} $&$\text{\small{\color{red}256}}$
\\
\hline
${\text{\small 4}} $&$  {\text{\small 8}}$&$ {\text{\small 8}}$&$
{\text{\small 8}}$&$ 8_{\times 2}$&$16$&${\text{\small 32}}$&${\text{\small 64}}
$&${\text{\small 64}_{\times 2}} $&${\text{\small{128}}}$
\\
\hline
${\text{\small 3}}$&${\color{red}{\text{\small 8}}}$&${\color{red}{\text{\small 8}}}$&${\text{\small\color{red}8}}$&$8$&$16$&$32$
&${\text{\small\color{red}64}}$&$64$&${\color{red}{\text{\small 128}}}$
\\
\hline
${\text{\small 2}}$&${\color{red}{\text{\small 4}}}$&$
{\color{red}4_{\times 2}}$&${\color{red}8}$&$ 8$&$16$&$16_{\times 2}$&$32$&$32 $&${\color{red}{\text{\small 64}}}$
\\
\hline
${\text{\small 1}}$ &${\color{red}2}$&${\color{red}4}$&${\color{red}8}$& $8$&${\color{red}16}$&$16$&$16$&$16$&${\color{red}{\text{\small 32}}}$
\\
\hline
${\text{\small 0}} $&$  1$&$ 2$&$ 4$&$ 4_{\times 2}$&$ 8$&$ 8$&$ 8$&$ 8_{\times 2}$&$16$
\\
\hline\hline
{s/r}&  {\text{\small 0}}& {\text{\small 1}}& 
{\text{\small 2}}&{\text{\small 3}} & {\text{\small 4}}& {\text{\small 5}}& {\text{\small 6}}& {\text{\small 7}}& {\text{\small 8}}
\\
\hline
\end{tabular}\label{t:dim}
\end{table}
We need a couple of more properties of the admissible modules, see~\cite{FM}.

\begin{lemma}\cite{FM}\label{orthogonal}
Let $(V,\langle .\,,.\rangle_V)$ be an admissible module and
$\mathbb J_1,\ldots,\mathbb J_l$ 
symmetric or anti-symmetric linear operators on $V$ such that 
\begin{itemize}
\item[1)] $\mathbb J^2_k=-\Id$, $k=1,\ldots,l$;
\item[2)] $\mathbb J_k\mathbb J_j=-\mathbb J_j\mathbb J_k$ for all $k,j=1,\ldots,l$.
\end{itemize}
Then for any $v\in V$ with $\langle v,v\rangle_V=1$ there is a vector $\tilde v$ satisfying:
$$
\langle\tilde v,\mathbb J_k\tilde v\rangle_V=0,\quad\text{and}\quad\langle\tilde v,\tilde v\rangle_V=1,\ \ k=1,\ldots,l.
$$
If $P$ is a linear operator on $V$ such that 
$
P^2=\Id$, $P\mathbb J_k=\mathbb J_kP$, $k=1,\ldots,l
$, and $v\in V$ with $\langle v,v\rangle_{V}=1$, satisfies $Pv=v$, then the vector $\tilde v$ is also eigenvector of $P$: $P\tilde v=\tilde v$.
\end{lemma}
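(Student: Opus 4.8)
The plan is to establish the first assertion by induction on the number of operators, removing the quantities $\langle v,\mathbb J_k v\rangle_V$ one at a time. Write $\varepsilon_k\in\{+1,-1\}$ for the sign with $\langle \mathbb J_k a,b\rangle_V=\varepsilon_k\langle a,\mathbb J_k b\rangle_V$, so $\varepsilon_k=+1$ exactly when $\mathbb J_k$ is symmetric. Two elementary remarks drive everything: if $\mathbb J_k$ is anti-symmetric then $\langle u,\mathbb J_k u\rangle_V=0$ for \emph{every} $u\in V$, since the form is symmetric while $\mathbb J_k$ is skew --- so the anti-symmetric operators impose no condition and the whole problem reduces to the symmetric ones; and if $\mathbb J_k$ is symmetric then $\langle \mathbb J_k u,\mathbb J_k u\rangle_V=\langle u,\mathbb J_k^2u\rangle_V=-\langle u,u\rangle_V$.

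For the inductive step, assume $\mathbb J_k$ is symmetric and that the current unit vector $v$ already satisfies $\langle v,\mathbb J_j v\rangle_V=0$ for the symmetric operators $\mathbb J_j$ with $j<k$; set $\alpha=\langle v,\mathbb J_k v\rangle_V$. I would look for the replacement inside the plane $\spn\{v,\mathbb J_k v\}$ in hyperbolically rotated form $w=\cosh t\,v+\sinh t\,\mathbb J_k v$. A short computation using the two remarks gives $\langle w,\mathbb J_k w\rangle_V=\alpha-\sinh 2t$ and $\langle w,w\rangle_V=1+\alpha\sinh 2t$, so choosing $t$ with $\sinh 2t=\alpha$ (always solvable) makes $\langle w,\mathbb J_k w\rangle_V=0$ and $\langle w,w\rangle_V=1+\alpha^2>0$; after rescaling by $(1+\alpha^2)^{-1/2}$ we obtain the next unit vector. \textbf{The step I expect to need the most care} is checking that this replacement keeps the earlier relations $\langle w,\mathbb J_j w\rangle_V=0$ intact: expanding the form and pushing every occurrence of $\mathbb J_k$ to one side, the anticommutation $\mathbb J_k\mathbb J_j=-\mathbb J_j\mathbb J_k$ together with $\mathbb J_k^2=-\Id$ and $\varepsilon_k=+1$ reduces each surviving term to a multiple of $\langle v,\mathbb J_j v\rangle_V=0$ or makes the cross terms cancel in pairs. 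Iterating over the symmetric operators produces the desired $\tilde v$.

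For the claim about $P$, it suffices to note that the vector is only ever modified within a plane $\spn\{v,\mathbb J_k v\}$, and that $Pv=v$ together with $P\mathbb J_k=\mathbb J_k P$ force $P(\mathbb J_k v)=\mathbb J_k v$; hence every intermediate vector, and therefore $\tilde v$, lies in the $+1$-eigenspace of $P$. Since $P$ also commutes with all the $\mathbb J_j$, this property is stable under each step of the induction, which completes the argument.
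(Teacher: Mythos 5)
Your argument is correct and complete: the reduction to the symmetric operators, the hyperbolic rotation $w=\cosh t\,v+\sinh t\,\mathbb J_kv$ with $\sinh 2t=\alpha$, the verification that $\langle w,\mathbb J_jw\rangle=0$ persists (the fourth term reduces via $\mathbb J_k^2=-\Id$ and anticommutation to $-\langle v,\mathbb J_jv\rangle=0$, and the two cross terms cancel because $\langle\mathbb J_kv,\mathbb J_jv\rangle=-\langle v,\mathbb J_j\mathbb J_kv\rangle$), and the observation that each step stays inside the $+1$-eigenspace of $P$ all check out. The paper itself quotes this lemma from \cite{FM} without proof, and your plane-by-plane orthogonalization is essentially the standard argument used there, so nothing further is needed.
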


\begin{rem}\label{sign change}
Let $(V, \la .\,,.\ra_{V})$ be an 
admissible module of a Clifford algebra $\Cl_{r,s}$.
Then it can be easily seen form the definition of an admissible
module, that the same module with the scalar product of the opposite sign
$(V, -\la .\,,.\ra_{V})$ is also an 
admissible module. 
\end{rem}

%%%%%%%%%%%%%%%%%%%%%%%

\subsection{Pseudo $H$-type algebras}

%%%%%%%%%%%%%%%%%%%%%%%%

We give the definition of pseudo $H$-type algebras that is convenient
for us to work. 
The equivalent definitions and their relations to Clifford algebras 
can be found in~\cite{AFM,CD,Ci,CDKR,FM,GKM,Ka1,Ka2,Ka3}.

\begin{defi}\label{definition Pseudo}
Let $(U,\la.\,,.\ra_U)$ be an admissible module of a Clifford algebra
$\Cl_{r,s}$ and a map $J\colon \Cl_{r,s}\to\End(U)$ a representation. A 2-step nilpotent
Lie algebra $U\oplus\mathbb{R}^{r,s}$ 
with the center $\mathbb{R}^{r,s}$ and the Lie bracket defined via the relation 
\begin{equation}\label{bracket definition}
\la J_zx,y\ra_{U}=\la z,[x,y]\ra_{r,s},\quad z\in\mathbb{R}^{r,s}, \ \ x,y \in U,
\end{equation}
is called a pseudo $H$-type Lie algebra and is 
denoted by $\mathcal{N}_{r,s}(U)$. If $U=V^{r,s}$ is minimal
admissible, 
then we write $\mathcal{N}_{r,s}$
\end{defi} 
In Section~\ref{sec:step3} we prove the uniqueness of 
the algebra $\mathcal{N}_{r,s}$. One of the particular consequences 
of Definition~\ref{definition Pseudo} is 
$\la J_{z}x,J_{z'}x\ra_{U}=\la z,z'\ra_{r,s}\la x,x\ra_{r,s}$.
Thus for an orthonormal basis $\{z_1,\ldots,z_{r+s}\}$ 
the maps $J_{z_j}\colon U\to U$ are isometries for 
$j=1,\ldots,r$ 
and anti-isometries 
for $j=r+1,\ldots,r+s$.

\begin{theorem}[\cite{CD,Eber03,FM}]\label{integral basis I}
We fix an orthonormal basis $\{z_k\}_{k=1}^{r+s}$ for
$\mathbb{R}^{r,s}$ and  assume that $(V^{r,s},\la.\,,.\ra_{V^{r,s}},)$ is 
a minimal admissible module of $\Cl_{r,s}$ of 
dimension $2N$. Then there exists an orthonormal basis $\{x_i\}_{i=1}^{2N}$ for $V^{r,s}$ such that
\begin{itemize}
\item[1.]{$\la x_i,x_j\ra_{V^{r,s}}=\epsilon_i(N,N)\delta_{i,j}$;}
\item[2.]{For each $k$, the operator $J_{z_k}$ maps $x_i$ to some $x_j$ or $-x_j$ with $j\not=i$;}
\item[3.]{There is a vector $v\in V^{r,s}$, $\la v,v\ra_{V^{r,s}}\neq 0$, such that all the basis $\{x_i\}$ is obtained from $v$ by action of $J_{z_j}$, $j=1,\ldots,r+s$ or their product. 
}
\end{itemize}
\end{theorem}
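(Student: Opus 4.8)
The plan is to construct the desired orthonormal basis $\{x_i\}$ of $V^{r,s}$ by a greedy/inductive procedure starting from a single unit vector, using the skew-symmetry of the $J_{z_k}$ together with Lemma~\ref{orthogonal} to maintain orthonormality at each stage. First I would fix the starting vector: since $(V^{r,s},\la.\,,.\ra_{V^{r,s}})$ is a neutral (or, when $s=0$, positive definite) non-degenerate scalar product space, pick any $v$ with $\la v,v\ra_{V^{r,s}}=1$; rescaling, we may also normalize a negative vector when needed. The key structural fact to record at the outset is that, by the Clifford relations and~\eqref{bracket definition}, for an orthonormal basis $\{z_k\}$ of $\mathbb R^{r,s}$ the operators $J_{z_k}$ satisfy $J_{z_k}^2=-\epsilon_k(r,s)\Id$ and $J_{z_k}J_{z_j}=-J_{z_j}J_{z_k}$ for $k\ne j$, and moreover $\la J_{z_k}x,J_{z_k}x\ra_{V^{r,s}}=\epsilon_k(r,s)\la x,x\ra_{V^{r,s}}$; so each $J_{z_k}$ preserves or reverses the norm according to the sign $\epsilon_k(r,s)$, and (from skew-symmetry) $\la x,J_{z_k}x\ra_{V^{r,s}}=0$ always. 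Also any product $J_{z_{k_1}}\cdots J_{z_{k_m}}$ over distinct indices is, up to sign, again an isometry or anti-isometry, and is again (anti-)symmetric.

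Next I would set up the induction on the number of generators. For $m=1,\dots,r+s$ let $W_m=\spn\{J_{z_{k_1}}\cdots J_{z_{k_j}}v : 1\le k_1<\cdots<k_j, \text{ all }k_i\le m, 0\le j\}$, the submodule generated by $v$ using only the first $m$ operators. One shows inductively that $W_m$ admits an orthonormal basis consisting of vectors of the form $\pm J_{z_{k_1}}\cdots J_{z_{k_j}}v$ with strictly increasing indices. The passage from $W_{m-1}$ to $W_m$: given an orthonormal basis $\{w_\alpha\}$ of $W_{m-1}$ built from words in $z_1,\dots,z_{m-1}$, one wants $\{w_\alpha\}\cup\{J_{z_m}w_\alpha\}$ to be orthonormal. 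By the norm identity $\la J_{z_m}w_\alpha, J_{z_m}w_\alpha\ra=\epsilon_m\la w_\alpha,w_\alpha\ra$ each $J_{z_m}w_\alpha$ has the right (possibly flipped) norm, and $\la w_\alpha, J_{z_m}w_\alpha\ra=0$ by skew-symmetry; one also needs $\la w_\alpha, J_{z_m}w_\beta\ra=0$ and $\la J_{z_m}w_\alpha, J_{z_m}w_\beta\ra=0$ for $\alpha\ne\beta$, and $\la w_\alpha, w'_\gamma\ra=0$ where $w'_\gamma$ are basis vectors of the old $W_{m-1}$-part not hit — but since the $w_\alpha$ are each (anti-)isometric images of words, $\la J_{z_m}w_\alpha, J_{z_m}w_\beta\ra=\pm\la w_\alpha,w_\beta\ra=0$, and $\la w_\alpha, J_{z_m}w_\beta\ra = -\la J_{z_m}w_\alpha, w_\beta\ra$, which is a pairing of a word in $z_1,\dots,z_m$ of opposite parity-in-$z_m$ — here is where one must argue these cross terms vanish, and if not, correct $v$ at the start. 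This is exactly the role of Lemma~\ref{orthogonal}: applying it to the commuting/anticommuting family obtained from the relevant products $J_{z_k}$ (and the $P$-operators built from even products acting as $\pm\Id$ on irreducible pieces, e.g. the volume-form situation of Proposition~\ref{prop:mod}), we may replace $v$ by $\tilde v$ with $\la\tilde v,J_{z_k}\tilde v\ra=0$ and all requisite orthogonality relations, while keeping $\la\tilde v,\tilde v\ra=1$ and the eigenvector property $P\tilde v=\tilde v$ so that we stay inside the correct irreducible component(s) of the minimal admissible module. Since $\dim V^{r,s}=2N$ and the generated submodule $W_{r+s}$ is a nonzero admissible submodule, minimality of $V^{r,s}$ forces $W_{r+s}=V^{r,s}$, so the resulting list has exactly $2N$ entries and is the sought basis; relabel it $\{x_i\}_{i=1}^{2N}$ so that positive-norm vectors come first, giving item~1, while item~2 is immediate because $J_{z_k}$ sends each basis word (up to sign) to another basis word with a different index (it cannot fix $x_i$ since $\la x_i,J_{z_k}x_i\ra=0$ but $\la x_i,x_i\ra\ne0$), and item~3 holds by construction.

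The main obstacle is the inductive orthogonality step, specifically guaranteeing that all the cross pairings $\la J_{z_{k_1}}\cdots J_{z_{k_j}}v, J_{z_{l_1}}\cdots J_{z_{l_i}}v\ra_{V^{r,s}}$ over distinct increasing words actually vanish — a priori two different words can have nonzero overlap, and a single application of Lemma~\ref{orthogonal} only kills overlaps with the $l$ chosen operators $\mathbb J_k$ for a fixed $v$. The real work is to organize the choices so that one application (or a bounded number of applications, proceeding by the same induction on $m$) of Lemma~\ref{orthogonal}, to a carefully selected anticommuting family of products together with the sign operators $P$, simultaneously achieves all needed orthogonality at stage $m$; one must check that the operators fed to the lemma genuinely anticommute pairwise and square to $-\Id$, and that the $P$'s commute with all of them and preserve the starting vector, so that the corrected $\tilde v$ still lies in $V^{r,s}$ (and in the right $V_\pm$-component when $r-s\equiv 3\ (\mathrm{mod}\ 4)$). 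Once that bookkeeping is in place, the dimension count via minimality closes the argument; the paper presumably carries this out in detail following~\cite{CD,Eber03,FM}.
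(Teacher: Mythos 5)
First, a point of reference: the paper does not prove Theorem~\ref{integral basis I} at all --- it is imported with the citation [CD, Eber03, FM], so there is no in-paper proof to match your argument against. Your proposal must therefore stand on its own, and as written it does not: the decisive step is deferred rather than carried out. You correctly reduce everything to showing that $\la J_{z_{k_1}}\cdots J_{z_{k_j}}v,\,J_{z_{l_1}}\cdots J_{z_{l_i}}v\ra_{V^{r,s}}=\pm\la v,\sigma v\ra_{V^{r,s}}$ vanishes for the product $\sigma$ over the symmetric difference of the two index sets, and you correctly sort the $\sigma$'s into three types: anti-symmetric (the pairing vanishes automatically), symmetric with $\sigma^2=\Id$ (choose $v$ a common eigenvector, so the two words give dependent vectors rather than an orthogonality problem), and symmetric with $\sigma^2=-\Id$ (to be killed by Lemma~\ref{orthogonal}). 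The gap is in the third type: Lemma~\ref{orthogonal} requires the operators $\mathbb J_1,\dots,\mathbb J_l$ to \emph{pairwise anticommute}, but two products of generators $\sigma_1,\sigma_2$ satisfy $\sigma_1\sigma_2=(-1)^{|\sigma_1||\sigma_2|-|\sigma_1\cap\sigma_2|}\sigma_2\sigma_1$, so the full collection of symmetric complex structures arising as cross-terms of words does not pairwise anticommute in general. Your proposed ``one application of Lemma~\ref{orthogonal} to a carefully selected anticommuting family'' is therefore not available as stated, and you yourself concede that ``the real work'' of organizing the choices is left to the references. That organization --- selecting a maximal commuting family of isometric involutions, identifying which residual operators preserve the chosen eigenspace, and iterating the correction of $v$ in a consistent order --- is the actual content of the theorem, not bookkeeping.

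A second, smaller issue: in your induction from $W_{m-1}$ to $W_m$ you modify $v$ to $\tilde v$ at stage $m$, but Lemma~\ref{orthogonal} only guarantees that $\tilde v$ retains the eigenvector property for operators $P$ commuting with the $\mathbb J_k$'s fed into it; you must also verify that the orthogonality relations already secured at earlier stages (which were statements about the old $v$) survive the replacement, i.e.\ that the corrections are performed in an order in which later applications of the lemma do not destroy earlier ones. Nothing you wrote rules this out, but nothing establishes it either. The skeleton (cyclic generation, minimality forcing $W_{r+s}=V^{r,s}$, the easy verifications of items 1--3 once orthonormality is granted) is sound; the theorem's substance is precisely the combinatorial selection you have postponed.
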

We call the basis $\{x_i,z_j\}$ for $\mathcal N_{r,s}$ satisfying the
properties of Theorem~\ref{integral basis I} an {\it integral
  basis}. Let $(W,\la.\,,.\ra_W)$ be a vector space with a 
non-degenerate scalar product. 
We say that a vector $w\in W$ is {\it positive} if $\la w,w\ra_{W}>0$, {\it negative} if $\la w,w\ra_{W}<0$, and  {\it a null-vector} if $\la w,w\ra_{W}=0$. 
We formulate some consequences of Theorem~\ref{integral basis I}.

\begin{cor}
If there exists an index $i\in \{ 1, \ldots, 2N\}$ such that 
$J_{z_k}x_i=\pm J_{z_{l}}x_i$, then $k=l$.  Hence any basis
vector $x_i$ is mapped 
to $x_j$ or $-x_j$ by at most one~$J_{z_k}$. 
\end{cor}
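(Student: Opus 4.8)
The plan is to argue by contradiction using property 2 of the integral basis together with the anticommutation relations in the Clifford algebra. Suppose for some index $i$ we have $J_{z_k}x_i=\pm J_{z_l}x_i$ with $k\neq l$. By property 2 of Theorem~\ref{integral basis I}, both sides are of the form $\pm x_j$ for a single index $j\neq i$, so this is a genuine equality of basis vectors (up to sign), not merely a relation modulo lower-order terms; in particular $x_i$ is in the domain where we can invert the operators $J_{z_k}$.

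First I would rewrite $J_{z_k}x_i=\pm J_{z_l}x_i$ as $J_{z_l}^{-1}J_{z_k}x_i=\pm x_i$. Since $\{z_1,\dots,z_{r+s}\}$ is orthonormal, $J_{z_l}^2=-\la z_l,z_l\ra_{r,s}\Id=\mp\Id$, hence $J_{z_l}^{-1}=\pm J_{z_l}$, and therefore $J_{z_l}J_{z_k}x_i=\pm x_i$ for suitable signs. Set $P:=\pm J_{z_l}J_{z_k}$, chosen so that $Px_i=x_i$. Using the Clifford relation $J_{z_l}J_{z_k}=-J_{z_k}J_{z_l}$ for $k\neq l$, one computes $P^2=(J_{z_l}J_{z_k})^2=-J_{z_l}^2J_{z_k}^2=-\la z_l,z_l\ra_{r,s}\la z_k,z_k\ra_{r,s}\Id$. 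If $z_k$ and $z_l$ have the same causal character this is $-\Id$; if opposite, it is $+\Id$. In the first case $P^2=-\Id$ has no real eigenvectors, contradicting $Px_i=x_i$; so $z_k,z_l$ must have opposite character and $P^2=\Id$.

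Next I would invoke property 3: the whole basis $\{x_m\}$ is obtained from a single fixed vector $v$ by applying products of the operators $J_{z_j}$. I want to derive that $P$ acts as $\pm\Id$ on all of $V^{r,s}$, which will produce the contradiction. The operator $P=\pm J_{z_l}J_{z_k}$ commutes with $J_{z_j}$ for every $j\notin\{k,l\}$ (two sign changes from anticommuting past each factor) and anticommutes with both $J_{z_k}$ and $J_{z_l}$. So applying $P$ to $x_m=(\text{product of }J_{z_j}\text{'s})v$ and pushing $P$ through to the right, we get $Px_m=\pm(\text{same product})Pv$, where the sign depends only on how many factors of $J_{z_k}$ or $J_{z_l}$ occur in the word. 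The key point is that $Px_i=x_i$ forces $Pv=\pm v$ (pull $P$ back to $v$ along the word producing $x_i$), so $v$ is an eigenvector of $P$ with eigenvalue $\pm1$, and then every $x_m$ is an eigenvector of $P$ — meaning $P$ is diagonal in the basis $\{x_m\}$ with all diagonal entries $\pm1$ and entry $+1$ at position $i$.

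The main obstacle — and the crux of the argument — is to upgrade "$P$ is diagonal in $\{x_m\}$ with entries $\pm1$" to an actual contradiction. Here I would use that $P=\pm J_{z_l}J_{z_k}$ is, up to sign, a product of two distinct generators, so on any irreducible component $P$ has trace zero: indeed $\tr(J_{z_l}J_{z_k})=0$ because, picking any third generator $z_p$ (available since $r-s=3\ (\mathrm{mod}\ 4)$ forces $r+s\geq 3$) that anticommutes with exactly one of $z_k,z_l$, conjugation by $J_{z_p}$ sends $J_{z_l}J_{z_k}$ to $-J_{z_l}J_{z_k}$, whence the trace vanishes. But a diagonal $\pm1$ matrix with a $+1$ entry cannot have trace zero unless the dimension is at least $2$ with equal numbers of $+1$ and $-1$; combined with the fact that $P$ commutes with the full algebra action on each $J_{z_k}$-orbit issued from $v$ (so the $+1$ and $-1$ eigenspaces would both be invariant under the operators, contradicting the way property 3 builds the whole basis from the single vector $v$), we conclude $P$ cannot simultaneously fix $x_i$ and be traceless. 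This forces $k=l$, proving the corollary; the final sentence then follows immediately, since $x_i$ being sent to $\pm x_j$ by two different $J_{z_k}$'s is exactly the excluded situation.
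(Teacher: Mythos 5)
Your argument splits into two cases according to whether $z_k$ and $z_l$ have the same causal character, and the first case (same character, so $(J_{z_l}J_{z_k})^2=-\Id$, which has no real eigenvalue $\pm1$) is exactly the paper's argument and is correct. The gap is in the opposite-character case, which your proposal does not actually rule out. First, the stated reason for $\tr(J_{z_l}J_{z_k})=0$ is wrong: every generator $z_p$ with $p\notin\{k,l\}$ anticommutes with \emph{both} $z_k$ and $z_l$, so conjugation by $J_{z_p}$ \emph{fixes} the product $J_{z_l}J_{z_k}$ rather than negating it (the element that negates it by conjugation is $J_{z_k}$ or $J_{z_l}$ itself, and no third generator or congruence condition on $r-s$ is needed — nor is one available when $r+s=2$, e.g.\ $(r,s)=(1,1)$, which is precisely a case where the mixed situation must be excluded). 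Second, even granting $\tr P=0$, a traceless involution merely has eigenspaces of equal dimension; that is not a contradiction. Your attempt to upgrade it to one relies on the claim that the $\pm1$-eigenspaces of $P$ are invariant under the module action, but $P$ \emph{anticommutes} with $J_{z_k}$ and $J_{z_l}$, so those operators swap the eigenspaces and no conflict with cyclicity (property 3) arises.

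The missing ingredient is the non-nullity of the basis vectors. Since $\la J_{z}x,J_{z}x\ra_{V^{r,s}}=\la z,z\ra_{r,s}\la x,x\ra_{V^{r,s}}$, the map $J_{z_k}$ is an isometry for $k\le r$ and an anti-isometry for $k>r$; as $\la x_i,x_i\ra_{V^{r,s}}=\pm1\neq0$, the vectors $J_{z_k}x_i$ and $J_{z_l}x_i$ have scalar squares of opposite sign when $z_k,z_l$ have opposite characters, so $J_{z_k}x_i=\pm J_{z_l}x_i$ is impossible there. This one-line observation is how the paper disposes of the mixed case before running the $(J_{z_k}J_{z_l})^2=-\Id$ argument you already have.
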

\begin{proof}
If $k \leq r$ then $J_{z_k}$ preserves positive and negative
elements. 
If $k>r$, then $J_{z_k}$ interchange the
positive and negative elements. 
Therefore, under the assumption of the corollary only the cases $k,l\leq r$ or $k,l>r$ are possible.
Assume $k\not=l$. Then, from one hand $\pm x_i=J_{z_k}J_{z_{l}}x_i$, but from the other hand 
$$(J_{z_k}J_{z_{l}})^2=- J_{z_k}^2J_{z_{l}}^2=-\la z_k,z_k\ra_{r,s} \la z_{l},z_{l}\ra_{r,s}\Id=-\Id,
$$
which contradicts to the existence of the eigenvalue $1$ or $-1$ of the operator $J_{z_k}J_{z_{l}}$.
\end{proof}

\begin{cor}\label{integral basis II}
Let $\mathcal N_{r,s}$ be a pseudo $H$-type algebra and $\{x_i,z_j\}$
an integral basis. Set $[x_i,x_j]=\sum c_{ij}^kz_k$, then for fixed
$i$ and $j$ the coefficients $c_{ij}^{k}$ 
vanish for all but one $k$ and in the later case $c_{ij}^{k}=\pm 1$. 
\end{cor}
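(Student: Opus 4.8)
The plan is to read off each structure constant $c_{ij}^k$ directly from the defining relation \eqref{bracket definition} and then use Theorem~\ref{integral basis I} together with the Corollary immediately preceding this statement to control it.

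First I would pair \eqref{bracket definition} with the basis vectors $z_k$ of $\mathbb{R}^{r,s}$. Since $\langle z_k, z_l\rangle_{r,s} = \epsilon_k(r,s)\delta_{k,l}$, expanding $\langle z_k,[x_i,x_j]\rangle_{r,s}$ against $[x_i,x_j]=\sum_l c_{ij}^l z_l$ yields $\epsilon_k(r,s)\,c_{ij}^k = \langle J_{z_k}x_i, x_j\rangle_{V^{r,s}}$. As $\epsilon_k(r,s)=\pm 1$, it is enough to understand the right-hand side.

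Next I would invoke Theorem~\ref{integral basis I}: by property~2 the operator $J_{z_k}$ sends $x_i$ to $\pm x_l$ for some index $l\neq i$, and by property~1 the family $\{x_m\}$ is orthonormal, $\langle x_m,x_n\rangle_{V^{r,s}}=\epsilon_m(N,N)\delta_{m,n}$. Hence $\langle J_{z_k}x_i,x_j\rangle_{V^{r,s}} = \pm\langle x_l,x_j\rangle_{V^{r,s}}$, which equals $\pm 1$ if $l=j$ and $0$ otherwise. Combining this with the previous identity, $c_{ij}^k=\pm 1$ whenever $J_{z_k}x_i=\pm x_j$, and $c_{ij}^k=0$ in all remaining cases.

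Finally, to see that at most one index $k$ can occur for a fixed pair $(i,j)$, suppose $J_{z_k}x_i=\pm x_j$ and $J_{z_l}x_i=\pm x_j$; multiplying the signs gives $J_{z_k}x_i=\pm J_{z_l}x_i$, and the Corollary preceding the present one forces $k=l$. Therefore, for fixed $i$ and $j$, at most one of the $c_{ij}^k$ is nonzero, and when it is, its value is $\pm 1$. There is no genuine obstacle here; the only point needing mild care is the bookkeeping of the signs $\epsilon_k(r,s)$ and $\epsilon_m(N,N)$, all of which are $\pm 1$ and hence irrelevant to the conclusion.
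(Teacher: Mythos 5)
Your proposal is correct and follows essentially the same route as the paper: the paper's one-line proof is exactly the identity $\la J_{z_k}x_i,x_j\ra_{V^{r,s}}=\la z_k,[x_i,x_j]\ra_{r,s}=\pm c_{ij}^k$, combined implicitly with properties 1 and 2 of Theorem~\ref{integral basis I} and the preceding corollary, all of which you spell out explicitly. Your version merely fills in the sign bookkeeping and the uniqueness-of-$k$ step that the paper leaves to the reader.
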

\begin{proof}
The proof follows from 
$
\la J_{z_k}x_i,x_j\ra_{V^{r,s}}=\la z_l,[x_i,x_j]\ra_{r,s}=
\begin{cases}
c_{ij}^{k} & \text{if } k \leq r\\
- c_{ij}^{k} & \text{if } k > r
\end{cases}
$.
\end{proof}

\begin{cor}\label{variable1}
Let $\mathcal N_{r,s}$ be a pseudo $H$-type algebra, $\{x_i,z_j\}$ an
integral basis, 
and $[x_i,x_j]=\pm z_k$. Then
\begin{itemize}
\item[1.]{if either $1\leq i,j\leq N$ or $N<i,j\leq 2N$ then $z_k$ is positive, that
is $k\leq r$,}
\item[2.]{ if $1\leq i\leq N<j\leq 2N$ then $z_k$ is negative, i.e., $k>r$.}
\end{itemize}
\end{cor}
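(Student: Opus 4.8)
The plan is to extract the sign information directly from the defining relation \eqref{bracket definition} together with the metric properties of the $J_{z_k}$ recorded just before Theorem~\ref{integral basis I}. Suppose $[x_i,x_j]=\pm z_k$ for some fixed pair $i,j$ with $i\neq j$ (the case $i=j$ being vacuous since the bracket is anti-symmetric). By Corollary~\ref{integral basis II} this is the unique nonzero bracket component, so $\la J_{z_k}x_i,x_j\ra_{V^{r,s}}=\pm\epsilon_k(r,s)\neq 0$, and in particular $J_{z_k}x_i=\pm x_j$ by property~2 of Theorem~\ref{integral basis I}. The sign of $\la x_j,x_j\ra_{V^{r,s}}$ relative to $\la x_i,x_i\ra_{V^{r,s}}$ is therefore controlled by whether $J_{z_k}$ is an isometry or an anti-isometry: from $\la J_{z_k}x_i,J_{z_k}x_i\ra_{V^{r,s}}=\epsilon_k(r,s)\la x_i,x_i\ra_{V^{r,s}}$ we get $\la x_j,x_j\ra_{V^{r,s}}=\epsilon_k(r,s)\la x_i,x_i\ra_{V^{r,s}}$.

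Now I would combine this with the structure of the orthonormal basis from property~1, namely $\la x_i,x_i\ra_{V^{r,s}}=\epsilon_i(N,N)$, so $x_i$ is positive exactly when $1\le i\le N$ and negative when $N<i\le 2N$. Thus $x_i$ and $x_j$ have the \emph{same} sign precisely when either $1\le i,j\le N$ or $N<i,j\le 2N$, and \emph{opposite} signs precisely when one of them lies in $\{1,\dots,N\}$ and the other in $\{N+1,\dots,2N\}$. Matching this against the identity $\la x_j,x_j\ra_{V^{r,s}}=\epsilon_k(r,s)\la x_i,x_i\ra_{V^{r,s}}$: in the first situation $\epsilon_k(r,s)=+1$, i.e. $k\le r$, so $z_k$ is positive, giving statement~1; in the second situation $\epsilon_k(r,s)=-1$, i.e. $k>r$, so $z_k$ is negative, giving statement~2.

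This argument is essentially a bookkeeping exercise once the right identities are in place, so I do not expect a serious obstacle; the only point requiring care is making sure that $[x_i,x_j]$ being a single basis vector $\pm z_k$ is genuinely available — this is exactly Corollary~\ref{integral basis II}, which guarantees at most one surviving coefficient and that it equals $\pm1$ — and that property~2 of Theorem~\ref{integral basis I} lets us conclude $J_{z_k}x_i=\pm x_j$ rather than merely $\la J_{z_k}x_i,x_j\ra\neq 0$. If one wanted to avoid invoking property~2, one could instead argue directly from $|\la z_k,[x_i,x_j]\ra_{r,s}|=|\la J_{z_k}x_i,x_j\ra_{V^{r,s}}|$ and the Cauchy--Schwarz-type constraint that $\la J_{z_k}x_i,x_j\ra_{V^{r,s}}=\pm1$ forces $x_j$ to be, up to sign, the image $J_{z_k}x_i$ within the orthonormal frame; either route yields the same sign comparison.
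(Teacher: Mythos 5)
Your proof is correct and rests on the same ingredients as the paper's: the defining relation $\la J_{z_k}x_i,x_j\ra_{V^{r,s}}=\la z_k,[x_i,x_j]\ra_{r,s}=\pm1$, the fact that $J_{z_k}$ is an isometry for $k\le r$ and an anti-isometry for $k>r$, and the integral-basis structure in which the positive basis vectors are exactly $x_1,\dots,x_N$. The paper merely packages this as a one-line contradiction (if $k>r$ then $J_{z_k}x_i$ lands among the basis vectors orthogonal to $x_j$, forcing $0=\pm1$), whereas you run the same comparison of signs forward; the difference is organizational, not substantive.
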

\begin{proof} We prove only the first statement, 
since the second one can be shown similarly. If we assume, 
by contrary, that $k>r$, then $J_{z_k}$ should be an anti-isometry and 
\begin{equation*}
0=\la J_{z_k}x_i,x_j\ra_{V^{r,s}}=\la z_{k},[x_i,x_j]\ra_{r,s}=\pm 1, 
\end{equation*}
which is a contradiction. 
\end{proof}

%%%%%%%%%%%%%%%%%%%%%%%

\subsection{Scheme for 4 step classification}

%%%%%%%%%%%%%%%%%%%%%%%

{\sc Step 1.} We study the isomorphic and non isomorphic cases of Lie algebras 
$\mathcal N_{r,s}$ and $\mathcal N_{s,r}$, $r,s\leq 8$, $r\neq s$ of
equal dimensions, 
see Section~\ref{step1}.  
We also construct an automorphism of $N_{r,r}$, $r=1,2,4$ having 
a special property and show that there is no such an automorphism 
of $\mathcal N_{3,3}$, see Theorem~\ref{th:automorphisms} and Corollary~\ref{non-existence Cl33}. 
Then the periodicity property~\eqref{perCl} will be applied to 
extend these results to higher dimensional Lie algebras, see Theorems~\ref{periodicity} and~\ref{periodicity1}.
\\

{\sc Step 2.} If $\dim(V^{r,s})=2\dim(V^{s,r})$, then the Lie algebras
$\mathcal{N}_{r,s}$ 
and $\mathcal{N}_{s,r}$ are not isomorphic simply because 
they have different dimension.
We call these algebras {\it trivially non-isomorphic}. In this case we  
prove the isomorphism or non-isomorphism of the Lie algebras
$\mathcal{N}_{r,s}(V^{r,s})$ and
$\mathcal{N}_{s,r}(V^{s,r}\oplus V^{s,r})$, see Section~\ref{sec:4}.
\\

{\sc Step 3.} Let $V^{r,s}=V_+$ or $V^{r,s}=V_-$, where $V_+, V_-$ are
non-equivalent irreducible 
modules. We show that the Lie algebras
$\mathcal{N}_{r,s}(V_+)$ 
and $\mathcal{N}_{r,s}(V_{-})$ are isomorphic. 
An analogous question is considered 
when $V^{r,s}=V_+\oplus V_+$ or $V^{r,s}=V_-\oplus V_-$. 
The isomorphism of Lie algebras particularly shows 
the uniqueness of the pseudo $H$-type algebra 
corresponding to two minimal admissible modules, see Section~\ref{sec:step3}. 
\\

{\sc Step 4.}  The last step is devoted to the classification of Lie
algebras, 
constructed from the multiple sum of minimal admissible modules. 
The admissible modules can differ either by the choice of the scalar
product on it 
or they can be defined by non-equivalent representations. 
\\

In this paper we present 3 
steps, finishing the classification of the Lie algebras whose
complement 
to the centre is a minimal admissible module.  
We summarise the classification of the Step 1 among the basic pairs 
in Table~\ref{t:step1}. 
\begin{table}[h]
\center\caption{Classification result after the first step}
\begin{tabular}{|c||c|c|c|c|c|c|c|c|c|}
\hline
$8$&$\cong$&&&&&&&&
\\
\hline
7&d&d&d&$\not\cong$&&&&&
\\
\hline
6&d&$\cong$&$\cong$&h&&&&&
\\
\hline
5&d&$\cong$&$\cong$&h&&&&&
\\
\hline
4&$\cong$&h&h&h&$\circlearrowright$&&&&\\\hline
3&d&$\not\cong$&$\not\cong$&$\not\circlearrowright$&d&d&d&$\not\cong$&d\\\hline
2&$\cong$&h&$\circlearrowright$&$\not\cong$&d&$\cong$&$\cong$&h&$\cong$\\\hline
1&$\cong$&$\circlearrowright$&d&$\not\cong$&d&$\cong$&$\cong$&h&$\cong$\\\hline
0&&$\cong$&$\cong$&h&$\cong$&h&h&h&$\cong$\\\hline\hline
$s/r$&0&1&2&3&4&5&6&7&8\\
\hline
\end{tabular}\label{t:step1}
\end{table}
Here ``d'' stands for ``double'', meaning that $\dim V^{r,s}=2\dim V^{s,r}$, and
``h'' (half) means that $\dim V^{r,s}=\frac{1}{2}\dim V^{s,r}$. 
The corresponding pairs are trivially non-isomorphic due to the 
different dimension of minimal admissible modules. 
The symbol $\cong$ denotes the Lie algebra having 
isomorphic pair, $\not\cong$ shows that the pair 
is non-isomorphic, the symbol $\circlearrowright$ denotes 
the Lie algebra admitting a special type of automorphisms, 
and $\not\circlearrowright$ denotes the Lie algebra not having this type of automorphism.

%%%%%%%%%%%%%%%%%%%%%%%

\subsection{Remarks on Step 4 and further development}

%%%%%%%%%%%%%%%%%%%%%%%

In the forthcoming paper~\cite{FM1} we will deal with Step 4, where we plan to consider the multiple sum $U=\oplus_i V_i$ of 
several minimal admissible modules $V_i=(V,\la.\,,.\ra_V)$. 
Here different minimal admissible modules $V_i$ can have a common
vector space $V$ but 
allows the scalar products of opposite sign. 
The minimal admissible modules can differ also by the choice of the
irreducible modules 
for their construction: $V=V_+$ or $V=V_-$. Finally the minimal
admissible modules can be both 
based on non-equivalent Clifford modules and admit the scalar products
of opposite signs. 
Different combinations can give non-isomorphic Lie algebras 
$\mathcal N_{r,s}(\oplus_i V_i)$.

We also aim to study the automorphism groups of the algebras
$\mathcal{N}_{r,s}$. 
They are determined by solving the equations 
arising during the construction of the map
$A\colon V^{r,s}\to V^{r,s}$. The present paper 
indicates that it is reduced to  
the exact sequence
\[
\{0\}\to K \to Aut(\mathcal N_{r,s})\to O(r,s)\to \{0\},
\]
that defines the map $\Phi \to C$, see~\eqref{iso_form_1} for the form of
$\Phi$. 
The last map is distinguished 
by the properties of $C$. In some cases $C^{\tau} C= \Id$, as, for
instance, 
in the case $\mathcal N_{3,3}$, meanwhile for $\mathcal N_{r,r}$,
$r=1,2,4$ 
one has $C^{\tau}C=\pm \Id$. The map $C$ determines 
the map $A$ and the freedom in the construction of the map $A$ gives
the kernel $K$. 
It can be seen from the present paper that 
it could be $K=\pm \Id$ or $K=SO(2)$. In the forthcoming 
papers we aim to describe all the cases not only for the Lie algebras 
based on the minimal admissible modules, 
but also for the admissible modules of the type $U=\oplus_i V_i$.

%%%%%%%%%%%%%%%%%%%%%%%

\section{Step 1: Lie algebras of minimal dimensions} \label{step1}

%%%%%%%%%%%%%%%%%%%%%%%

%%%%%%%%%%%%%%%%%%%%%%%%%%%%%%%%%%%%%%%%%%%%%%%%%%%%%%%%%%%%%%%%%%%%%
\subsection{Necessary condition of existence of an isomorphism}
%%%%%%%%%%%%%%%%%%%%%%%%%%%%%%%%%%%%%%%%%%%%%%%%%%%%%%%%%%%%%%%%%%%%

Let $A\colon U\to \widetilde U$ be a linear map. We denote by $A^{\tau}$ the adjoint map with respect to the 
scalar products on $(U,\la.\,,.\ra_U)$ and $(\widetilde U,\la.\,,.\ra_{\widetilde U})$:
\begin{equation*}\label{}
\la A(x),y\ra_{\widetilde U}=\la x,A^{\tau}(y)\ra_{U},\quad x\in U,\ \ y \in \widetilde U. 
\end{equation*}

\begin{theorem}\label{property of isomorphism}
Let $\{U,\la.\,,.\ra_U;\,J\}$  and $\{\widetilde
U,\la.\,,.\ra_{\widetilde U});\,\wJ\}$ 
be admissible modules and representation maps of the Clifford algebras $\Cl_{r,s}$ and $\Cl_{\tilde r,\tilde s}$, respectively. Assume that $\dim U=\dim \widetilde U$,  $r+s=\tilde r+\tilde s$, and that there is a Lie algebra isomorphism 
$$\Phi\colon\mathcal{N}_{r,s}(U)\to\mathcal{N}_{\tilde r,\tilde s}(\widetilde U)$$
between the corresponding  pseudo $H$-type algebras. 
Then, necessarily, one of the cases $(r,s)=(\tilde r,\tilde s)$ or $(r,s)=(\tilde s,\tilde r)$ holds. 
Moreover, $\Phi$ has to be of the form
\begin{equation}\label{iso_form_1}
\Phi=
\begin{pmatrix}
A&0\\
B&C
\end{pmatrix}:
\begin{array}{l}
U \\
\oplus_{\perp}\\
\mathbb{R}^{r,s}
\end{array}
\longrightarrow 
\begin{array}{l}
\widetilde U \\
\oplus_{\perp}\\
\mathbb{R}^{\tilde r,\tilde s}
\end{array},
\end{equation}
where $A\colon U\to\widetilde  U$ and $C\colon\mathbb{R}^{r,s}\to\mathbb{R}^{\tilde r,\tilde s}$ are linear bijective maps satisfying the relation
\begin{equation}\label{isomorphism relation}
A^{\tau}\wJ_{z}A=J_{C^{\tau}(z)}\quad\text{for any}\quad z\in\mathbb{R}^{\tilde r,\tilde s}. 
\end{equation} 
There is no condition on $B\colon U\to\mathbb{R}^{\tilde r,\tilde s}$
and we may set $B=0$. Multiplying $A$ by a suitable constant,
we may assume that $|\det \left(AA^{\tau}\right)|=1$ and $CC^{\tau}=\pm \Id$.
\end{theorem}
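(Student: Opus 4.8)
The plan is to start from the Lie algebra isomorphism $\Phi$ and exploit that it must preserve the derived series, in particular the centre. Since $\mathcal N_{r,s}(U)$ and $\mathcal N_{\tilde r,\tilde s}(\widetilde U)$ are non-singular $2$-step nilpotent, their centres are exactly $\mathbb R^{r,s}$ and $\mathbb R^{\tilde r,\tilde s}$ respectively (here one uses that $[U,U]=\mathbb R^{r,s}$, which follows from the M\'etivier/non-singularity property already recalled in the introduction, so no proper subspace of $U$ can lie in the centre). Hence $\Phi(\mathbb R^{r,s})=\mathbb R^{\tilde r,\tilde s}$, which forces the lower-triangular block form with $A\colon U\to\widetilde U$ and $C\colon\mathbb R^{r,s}\to\mathbb R^{\tilde r,\tilde s}$ linear, $C$ bijective because $\Phi$ is, and the upper-right block zero; since $\Phi$ restricted to $U$ composed with projection to $\widetilde U$ must then also be bijective (the image together with $\mathbb R^{\tilde r,\tilde s}$ spans), $A$ is bijective too. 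The block $B$ is completely free: changing $B$ amounts to post-composing with an inner-type shift, and the bracket relation never constrains it, so we may put $B=0$.

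Next I would translate the condition ``$\Phi$ is a Lie algebra homomorphism'' into an identity on the $J$-maps. For $x,y\in U$ and any $z\in\mathbb R^{\tilde r,\tilde s}$, compute $\la z,\Phi[x,y]\ra_{\tilde r,\tilde s}$ in two ways: on one side $\Phi[x,y]=C[x,y]$ (using $B=0$ does not even matter here since $[x,y]$ lies in the centre), so $\la z,C[x,y]\ra_{\tilde r,\tilde s}=\la C^{\tau}z,[x,y]\ra_{r,s}=\la J_{C^{\tau}z}x,y\ra_U$; on the other side $[\Phi x,\Phi y]=[Ax,Ay]$ in $\mathcal N_{\tilde r,\tilde s}(\widetilde U)$, so $\la z,[Ax,Ay]\ra_{\tilde r,\tilde s}=\la \wJ_z Ax,Ay\ra_{\widetilde U}=\la A^{\tau}\wJ_z A x,y\ra_U$. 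Equating for all $x,y$ and using non-degeneracy of $\la.\,,.\ra_U$ yields exactly $A^{\tau}\wJ_z A=J_{C^{\tau}z}$, which is \eqref{isomorphism relation}.

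From \eqref{isomorphism relation} the constraint on $C$ falls out by squaring. Fix a unit vector $z\in\mathbb R^{\tilde r,\tilde s}$ and iterate: $(A^{\tau}\wJ_z A)(A^{\tau}\wJ_z A)=A^{\tau}\wJ_z (AA^{\tau})\wJ_z A$. This is awkward because of the $AA^{\tau}$ in the middle, so instead I would argue via the scalar-product identity: for $z,z'\in\mathbb R^{\tilde r,\tilde s}$, using that $\wJ$ and $J$ both satisfy the polarization identity $\la \wJ_z u,\wJ_{z'}u\ra=\la z,z'\ra\la u,u\ra$ noted after Definition~\ref{definition Pseudo}, one gets $\la C^{\tau}z,C^{\tau}z'\ra_{r,s}\la x,x\ra_U=\la J_{C^{\tau}z}x,J_{C^{\tau}z'}x\ra_U=\la A^{\tau}\wJ_z Ax,A^{\tau}\wJ_{z'}Ax\ra_U$. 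Choosing $x$ so that $Ax$ is (proportional to) a fixed unit vector and using $\la \wJ_z w,\wJ_{z'}w\ra_{\widetilde U}=\la z,z'\ra_{\tilde r,\tilde s}\la w,w\ra_{\widetilde U}$, we see $\la C^{\tau}z,C^{\tau}z'\ra_{r,s}$ is a fixed scalar multiple of $\la z,z'\ra_{\tilde r,\tilde s}$; that is, $CC^{\tau}$ (or $C^{\tau}C$, after keeping track of which space one works in) is a nonzero scalar times the identity, which forces $r+s=\tilde r+\tilde s$ components to match up, and together with the signature constraint forces $(r,s)=(\tilde r,\tilde s)$ or $(r,s)=(\tilde s,\tilde r)$. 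Then rescaling $A\mapsto \lambda A$ (and correspondingly $C$) lets us normalize $|\det(AA^{\tau})|=1$ and $CC^{\tau}=\pm\Id$.

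The main obstacle I anticipate is the signature argument: knowing only that $CC^{\tau}$ is a positive scalar multiple of the identity on $\mathbb R^{\tilde r,\tilde s}$ does not by itself pin down $(r,s)$ versus $(\tilde r,\tilde s)$ — one must feed back the fact that $J_{C^{\tau}z_i}$ are isometries or anti-isometries of $(U,\la.\,,.\ra_U)$ according to the sign of $\la C^{\tau}z_i,C^{\tau}z_i\ra$, and compare with the fact that $\wJ_{z_i}$ is an isometry iff $i\le\tilde r$. Conjugation by $A^{\tau}$ in \eqref{isomorphism relation} turns an isometry of $(\widetilde U,\la.\,,.\ra_{\widetilde U})$ into a map preserving the bilinear form $\la A^{\tau}\cdot,A^{\tau}\cdot\ra$ rather than $\la.\,,.\ra_U$ directly, so some care is needed to conclude that positive/negative directions in $\mathbb R^{\tilde r,\tilde s}$ correspond to positive/negative directions in $\mathbb R^{r,s}$, giving $\{r,s\}=\{\tilde r,\tilde s\}$ as an unordered pair — exactly the stated dichotomy. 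Everything else is bookkeeping with adjoints and the defining relation \eqref{bracket definition}.
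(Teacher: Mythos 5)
Your block decomposition, the identification of the centre, and the derivation of \eqref{isomorphism relation} from the bracket computation coincide with the paper's argument, and that part is correct. The gap is in the step that pins down the signature, i.e.\ that $(r,s)=(\tilde r,\tilde s)$ or $(\tilde s,\tilde r)$. The polarization computation you propose does not go through: substituting \eqref{isomorphism relation} gives
$\la J_{C^{\tau}z}x,J_{C^{\tau}z'}x\ra_U=\la A^{\tau}\wJ_zAx,A^{\tau}\wJ_{z'}Ax\ra_U=\la AA^{\tau}\wJ_zAx,\wJ_{z'}Ax\ra_{\widetilde U}$,
and the factor $AA^{\tau}$ --- the very obstruction you flagged when discarding the squaring route --- reappears here and cannot be removed, so you cannot conclude that $\la C^{\tau}z,C^{\tau}z'\ra_{r,s}$ is a fixed multiple of $\la z,z'\ra_{\tilde r,\tilde s}$ by this computation. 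You correctly identify the signature step as the main obstacle, but you do not supply an argument that overcomes it.

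The idea the paper uses, and which is missing from your proposal, is to read off from \eqref{isomorphism relation} only the \emph{singularity} of the operators: since $A$ is bijective, $J_{C^{\tau}(z)}$ is singular if and only if $\wJ_{z}$ is singular, and $J_w$ is singular if and only if $w$ is a null vector (because $J_w^2=-\la w,w\ra\,\Id$). One then argues by connectedness: the segment $a_t=(1-t)z_++tz_-$ joining a positive to a negative vector must contain a null vector, whereas the segment $b_t=(1-t)z_1+tz_2$ joining two orthonormal vectors of the same sign satisfies $\la b_t,b_t\ra=(1-t)^2+t^2>0$ and so consists of non-null vectors. Applying this to images under $C^{\tau}$ shows that $C^{\tau}$ either preserves or reverses the sign of every vector, which yields $\{r,s\}=\{\tilde r,\tilde s\}$ as an unordered pair. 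Only after that does the determinant identity $\left(\det AA^{\tau}\right)^2\la z,z\ra_{\tilde r,\tilde s}^{2N}=\la C^{\tau}(z),C^{\tau}(z)\ra_{r,s}^{2N}$ (with $2N=\dim U$), combined with the known sign behaviour, give that $CC^{\tau}$ is a scalar multiple of $\pm\Id$, and rescaling $A$ produces the normalization $|\det(AA^{\tau})|=1$, $CC^{\tau}=\pm\Id$.
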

\begin{proof}
If a Lie algebra isomorphism $\Phi\colon\mathcal{N}_{r,s}(U)\to\mathcal{N}_{\tilde r,\tilde s}(\widetilde U)$ exists, then it must be of the form~\eqref{iso_form_1}, 
since it maps the center to the center. The relation~\eqref{isomorphism relation} follows from the definition of Lie brackets
\begin{eqnarray}\label{eq:isomAC}
\la A^{\tau}\wJ_{z} A(x),y\ra_{U} 
& = &
\la\wJ_{z} A(x),A(y)\ra_{\widetilde U}
=
\la z,[A(x),A(y)]\ra_{\tilde r,\tilde s}=\la z,C([x,y])\ra_{\tilde r,\tilde s}\nonumber
\\ 
&= &\la C^{\tau}(z),[x,y]\ra_{r,s}=\la J_{C^{\tau}(z)}x,y\ra_{U},
\end{eqnarray}
for all $x,y\in U$ and $z\in\mathbb{R}^{\tilde r,\tilde s}$ which shows~\eqref{isomorphism relation}. Conversely, if~\eqref{isomorphism relation} holds, then from~\eqref{eq:isomAC} we obtain $[A(x),A(y)]=C([x,y])$ and therefore the map $\Phi=A\oplus C$ is a Lie algebra isomorphism. 
Note that~\eqref{isomorphism relation} implies that $J_{C^{\tau}(z)}$ is singular, if and only if $\wJ_{z}$ is singular. 

Let $z_+$ and $z_-$  be a positive and a negative vector in $\mathbb{R}^{r,s}$, respectively. We set $a_t=(1-t)z_++tz_-$, $0\leq t\leq 1$. Then
\[
\la a_0,a_0\ra_{r,s}=\la z_+,z_+\ra_{r,s}>0\quad\text{and}\quad \la a_1,a_1\ra_{r,s}=\la z_-,z_-\ra_{r,s}<0.
\]
There is ${t_0}\in (0,1)$ with $\la a_{t_0},a_{t_0}\ra_{r,s}=0$ and  therefore $J_{a_{t_0}}$ is singular. On the other hand, if  
$z_1$ and $z_2$ are orthonormal and both positive (negative) vectors in $\mathbb{R}^{r,s}$ and $b_t=(1-t)z_1+tz_2$, $0\leq t\leq 1$, then
$\la b_t,b_t\ra _{r,s}=(1-t)^2+t^2>0$, for all $t \in [0,1]$. This implies that $J_{b_t}$ is non-singular for all $t \in [0,1]$. Hence, the operator $C^{\tau}$ either preserves or reverses the sign of elements in 
$\mathbb{R}^{r,s}$. These observations imply that only the cases $(r,s)=(\tilde r,\tilde s)$ or $(r,s)=(\tilde s,\tilde r)$ are possible,
if $r\not= s$.

For the remaining part of the proof we assume that $r\not=s$ and $\Phi\colon\mathcal{N}_{r,s}(U)
\to \mathcal{N}_{s,r}(\widetilde U)$ is a Lie algebra isomorphism. Then
$
(A^{\tau}\wJ_{z}A)^2=J_{C^{\tau}(z)}^2=-\la C^{\tau}(z),C^{\tau}(z)\ra_{r,s} \Id_{U}$
by~\eqref{isomorphism relation} and therefore
\[
\det\big((A^{\tau}\wJ_{z}A)^2\big)=\left(\det AA^{\tau}\right)^2\la z,z\ra_{s,r}^{2N}=\la C^{\tau}(z),C^{\tau}(z)\ra_{r,s}^{2N},
\]
where $2N=\dim U=\dim \widetilde U$. Since the operator $C^{\tau}\colon\mathbb{R}^{s,r} \to\mathbb{R}^{r,s}$ 
reverses the sign of vectors we obtain
\[
|\left(\det AA^{\tau}\right)|^{1/N}\cdot \la z,z\ra_{s,r}=-\la C^{\tau}(z),C^{\tau}(z)\ra_{r,s}=-\la z,CC^{\tau}(z)\ra_{s,r}.
\]
Multiplying $A$ by a suitable constant  we assume that $|\det AA^{\tau}|=1$ and $CC^{\tau}= -\Id$.
\end{proof}
\begin{cor}\label{Identity relation}
Let $r\not= s$ and $\Phi\colon\mathcal{N}_{r,s}(U)\to\mathcal{N}_{r,s}(\widetilde U)$ is a Lie algebra isomorphism, written in form~\eqref{iso_form_1}. Then $CC^{\tau}=\Id$.  If $r=s$ both cases $CC^{\tau}=\pm \Id$ are possible, see Theorem~\ref{th:automorphisms} and Corollary~\ref{non-existence Cl33}.
\end{cor}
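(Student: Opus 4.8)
The plan is to recycle the last portion of the proof of Theorem~\ref{property of isomorphism}, specialized to $\tilde r = r$, $\tilde s = s$, and to be careful only about the sign bookkeeping. First I would invoke the dichotomy already established there: relation~\eqref{isomorphism relation} forces $J_{C^{\tau}(z)}$ to be singular exactly when $\wJ_z$ is, and the segment arguments from the theorem (a straight path between two positive, or between two negative, unit vectors stays non-singular, while a path from a positive to a negative vector must cross a null vector, where the associated operator degenerates) show that the bijection $C^{\tau}\colon\mathbb{R}^{r,s}\to\mathbb{R}^{r,s}$ either preserves $\langle w,w\rangle_{r,s}$ for every $w$ or reverses it for every $w$. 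Here, unlike in the theorem, the source and target centres coincide, so the reversing alternative must be excluded by a dimension count: a sign-reversing linear bijection carries a maximal positive-definite subspace (dimension $r$) onto a negative-definite subspace (dimension at most $s$) and a maximal negative-definite subspace (dimension $s$) onto a positive-definite subspace (dimension at most $r$), whence $r\le s$ and $s\le r$, i.e.\ $r=s$, contradicting the hypothesis. Therefore $C^{\tau}$ preserves signs, so $\langle C^{\tau}(z),C^{\tau}(z)\rangle_{r,s}$ and $\langle z,z\rangle_{r,s}$ always have the same sign.

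Then I would rerun the determinant computation of the theorem with a plus in place of a minus. Squaring~\eqref{isomorphism relation} gives $(A^{\tau}\wJ_z A)^2=-\langle C^{\tau}(z),C^{\tau}(z)\rangle_{r,s}\,\Id_U$, and taking determinants, with $\dim U=\dim\widetilde U=2N$ and $(\det\wJ_z)^2=\langle z,z\rangle_{r,s}^{2N}$, yields $(\det AA^{\tau})^2\,\langle z,z\rangle_{r,s}^{2N}=\langle C^{\tau}(z),C^{\tau}(z)\rangle_{r,s}^{2N}$. Extracting $2N$-th roots, using the sign preservation just proved to choose the roots coherently on the dense set of non-null vectors, and comparing the two resulting quadratic forms there, I obtain
\[
|\det AA^{\tau}|^{1/N}\,\langle z,z\rangle_{r,s}=\langle C^{\tau}(z),C^{\tau}(z)\rangle_{r,s}=\langle z,CC^{\tau}(z)\rangle_{r,s}\quad\text{for all }z\in\mathbb{R}^{r,s}.
\]
Rescaling $A$ so that $|\det AA^{\tau}|=1$, which is permitted exactly as in Theorem~\ref{property of isomorphism}, this becomes $\langle z,CC^{\tau}(z)\rangle_{r,s}=\langle z,z\rangle_{r,s}$. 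Since $CC^{\tau}$ is self-adjoint for the non-degenerate form $\langle.\,,.\rangle_{r,s}$, polarization gives $\langle z,CC^{\tau}(w)\rangle_{r,s}=\langle z,w\rangle_{r,s}$ for all $z,w$, and non-degeneracy then forces $CC^{\tau}=\Id$.

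For the case $r=s$ I would claim nothing inside this corollary: both signs genuinely occur, as shown later by the automorphisms of $\mathcal{N}_{r,r}$, $r=1,2,4$, realizing $CC^{\tau}=-\Id$ in Theorem~\ref{th:automorphisms}, and by the rigidity $CC^{\tau}=\Id$ for $\mathcal{N}_{3,3}$ in Corollary~\ref{non-existence Cl33}. The only genuinely new ingredient compared with the theorem is the dimension count excluding sign reversal when $r\neq s$; everything else is a verbatim repetition of the theorem's argument, so I foresee no real obstacle.
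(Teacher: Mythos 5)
Your proposal is correct and is essentially the argument the paper intends: the corollary carries no separate proof because it is exactly the last part of the proof of Theorem~\ref{property of isomorphism} rerun with $C^{\tau}$ sign-preserving (which your dimension count correctly extracts from the fact that a sign-reversing bijection of $\mathbb{R}^{r,s}$ onto itself would force $r=s$), yielding $+\langle z,z\rangle_{r,s}$ instead of $-\langle z,z\rangle_{r,s}$ in the determinant computation and hence $CC^{\tau}=\Id$ after the same normalization of $A$.
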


\begin{lemma}\label{iso-form 2}
Let $\{U,\la.\,,.\ra_U;\,J\}$  and $\{\widetilde
U,\la.\,,.\ra_{\widetilde U};\,\wJ\}$ 
be admissible modules and representation maps of the Clifford algebras 
$\Cl_{r,s}$ and $\Cl_{s,r}$, respectively. Assume that $r\not=s$.~If  
$$\Psi=\begin{pmatrix}A&0\\0&C\end{pmatrix}\colon
\mathcal{N}_{r,s}(U)\to\mathcal{N}_{r,s}(\widetilde U)\quad  \text{and}\quad
\Phi
=\begin{pmatrix}A&0\\0&C\end{pmatrix}\colon
\mathcal{N}_{r,s}(U)\to\mathcal{N}_{s,r}(\widetilde U),
$$
are Lie algebra isomorphisms, then the maps defined by
$$
\Psi^{\tau}=\begin{pmatrix}A^{\tau}&0\\0&C^{\tau}\end{pmatrix}
\colon\mathcal{N}_{r,s}(\widetilde U)\to\mathcal{N}_{r,s}(U)\quad 
\text{and}\quad\Phi^{\tau}=\begin{pmatrix}A^{\tau}&0\\0&C^{\tau}\end{pmatrix}
\colon\mathcal{N}_{s,r}(\widetilde U)\to\mathcal{N}_{r,s}(U), 
$$
respectively, are Lie algebra isomorphisms as well.
\end{lemma}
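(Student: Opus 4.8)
My plan is to verify, for each of $\Psi^{\tau}$ and $\Phi^{\tau}$, the intertwining identity from Theorem~\ref{property of isomorphism} that characterises Lie algebra isomorphisms of pseudo $H$-type algebras; I would carry it out for $\Phi^{\tau}$ in detail, the case of $\Psi^{\tau}$ being identical with $(\tilde r,\tilde s)$ replaced by $(r,s)$ and the scalar $\lambda$ below equal to $1$. Since $\Phi^{\tau}=\begin{pmatrix}A^{\tau}&0\\0&C^{\tau}\end{pmatrix}$ is a bijective, block-diagonal linear map carrying the centre $\mathbb R^{s,r}$ of $\mathcal N_{s,r}(\widetilde U)$ into the centre $\mathbb R^{r,s}$ of $\mathcal N_{r,s}(U)$, the computation~\eqref{eq:isomAC} (now applied to $\Phi^{\tau}$) shows that $\Phi^{\tau}$ is a Lie algebra isomorphism if and only if
\[
A\,J_{w}\,A^{\tau}=\wJ_{Cw}\qquad\text{for all}\ w\in\mathbb R^{r,s}.
\]
I want to emphasise at the outset that this is \emph{not} a formal consequence of the relation $A^{\tau}\wJ_{z}A=J_{C^{\tau}(z)}$ supplied by $\Phi$ via~\eqref{isomorphism relation}: the adjoint of a Lie algebra isomorphism need not be an isomorphism, and here the Clifford identity $J_{a}J_{b}+J_{b}J_{a}=-2\langle a,b\rangle_{r,s}\,\Id$ must be used.

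The heart of the argument --- and the step I expect to be the main obstacle --- is to control the self-adjoint invertible operator $T:=AA^{\tau}$ on $\widetilde U$. I would multiply two copies of~\eqref{isomorphism relation}, for $z,z'\in\mathbb R^{s,r}$, obtaining $A^{\tau}\wJ_{z}\,T\,\wJ_{z'}A=J_{C^{\tau}(z)}J_{C^{\tau}(z')}$, then symmetrise in $z$ and $z'$. On the right one uses the Clifford relation together with $\langle C^{\tau}(z),C^{\tau}(z')\rangle_{r,s}=\langle z,CC^{\tau}(z')\rangle_{s,r}=\lambda\langle z,z'\rangle_{s,r}$, where $CC^{\tau}=\lambda\,\Id$ with $\lambda\neq 0$ by Theorem~\ref{property of isomorphism} (respectively Corollary~\ref{Identity relation} for $\Psi$); cancelling $A^{\tau}(\cdot)A$ and noting $(A^{\tau})^{-1}A^{-1}=T^{-1}$, this leaves
\[
\wJ_{z}\,T\,\wJ_{z'}+\wJ_{z'}\,T\,\wJ_{z}=-2\lambda\,\langle z,z'\rangle_{s,r}\,T^{-1}.
\]
Specialising to $z'=z$ with $\langle z,z\rangle_{s,r}\neq 0$, multiplying on the left by $T$ and using $\wJ_{z}^{2}=-\langle z,z\rangle_{s,r}\,\Id$, one gets $T\wJ_{z}T\wJ_{z}=\lambda\,\wJ_{z}^{2}$; since $\wJ_{z}$ is invertible this simplifies to $T\wJ_{z}T=\lambda\,\wJ_{z}$, first for non-null $z$ and then, by linearity, for every $z\in\mathbb R^{s,r}$.

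With this in hand the conclusion is quick. Given $w\in\mathbb R^{r,s}$, write $w=C^{\tau}(z)$ with $z=(C^{\tau})^{-1}(w)=\lambda^{-1}C(w)$ (using $C^{\tau}C=\lambda\,\Id$, hence $(C^{\tau})^{-1}=\lambda^{-1}C$); then by~\eqref{isomorphism relation} $J_{w}=A^{\tau}\wJ_{z}A$, so
\[
A\,J_{w}\,A^{\tau}=A\,(A^{\tau}\wJ_{z}A)\,A^{\tau}=T\,\wJ_{z}\,T=\lambda\,\wJ_{z}=\lambda\,\wJ_{\lambda^{-1}C(w)}=\wJ_{Cw},
\]
which is the required identity; hence $\Phi^{\tau}$, and likewise $\Psi^{\tau}$, is a Lie algebra isomorphism. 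The only routine care needed is to pass through a non-null $z$ before invoking linearity, and to keep straight which of $\langle\cdot,\cdot\rangle_{r,s}$, $\langle\cdot,\cdot\rangle_{s,r}$ is in play at each step; conceptually everything hinges on the identity $T\wJ_{z}T=\lambda\wJ_{z}$ for $T=AA^{\tau}$, which is what the Clifford structure forces and what makes the adjoint map an isomorphism again.
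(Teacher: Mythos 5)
Your proposal is correct and takes essentially the same approach as the paper: both reduce the claim to the key identity $AA^{\tau}\wJ_{z}AA^{\tau}=\lambda\,\wJ_{z}$ for $CC^{\tau}=\lambda\,\Id$, which the paper extracts by computing $(A^{\tau}\wJ_{z}A)^{2}=J_{C^{\tau}(z)}^{2}=-\la C^{\tau}(z),C^{\tau}(z)\ra_{r,s}\Id$ --- precisely your two-copy product specialised to $z'=z$ --- and then conclude by substituting $J_{C^{\tau}(z)}=A^{\tau}\wJ_{z}A$ back into the middle. Your explicit care in passing through non-null $z$ before invoking linearity corresponds to the paper's (implicit) cancellation of the invertible operator $\wJ_{z}$.
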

\begin{proof}
First we show that $\Psi^{\tau}$ defines a Lie algebra automorphism.  
According to~\eqref{isomorphism relation} and Corollary~\ref{Identity relation} we have
\begin{equation}\label{GL_relation_r_ne_s}
(A^{\tau}\wJ_{z}A)^2=-\la C^{\tau}(z),C^{\tau}(z)\ra_{r,s}\Id=-\la z,z\ra_{r,s}\Id,
\end{equation}
which implies 
$
AA^{\tau}\wJ_{z}AA^{\tau}\wJ_{z}AA^{\tau}=
-\la z,z\ra_{r,s}AA^{\tau}$.
Multiplying by $(AA^{\tau})^{-1}$ from the right hand side we obtain
\begin{equation*}
AA^{\tau}\wJ_zAA^{\tau}\wJ_{z} =-\la z,z\ra_{r,s}\Id=\wJ_{z}^2\quad\Longrightarrow\quad
A A^{\tau}\wJ_zAA^{\tau}=\wJ_{z}.
\end{equation*}
Replacing $A^{\tau}\wJ_zA$ by $J_{C^{\tau}(z)}$, we get
$
A A^{\tau}\wJ_zAA^{\tau} =AJ_{C^{\tau}(z)}A^{\tau}=\wJ_{z}=\wJ_{CC^{\tau}(z)}$.
Hence the map $\Psi^{\tau}$ is a Lie algebra automorphism.

Next we show that $\Phi^{\tau}$ defines a Lie algebra isomorphism.  From $C^{\tau}C=C C^{\tau}=-\Id$ 
we obtain 
\begin{equation*}\label{}
(A^{\tau}\wJ_{z}A)^2=J_{C^{\tau}(z)}^2=-\la C^{\tau}(z),C^{\tau}(z)\ra_{r,s}\Id= \la z,z\ra_{s,r}\Id=-\wJ_z^2
\end{equation*}
instead of~\eqref{GL_relation_r_ne_s}. It leads to
$
- \wJ_z=AA^{\tau}\wJ_z AA^{\tau}=AJ_{C^{\tau}(z)}A^{\tau}
$
by the same argument as above. Replacing $z$ by $C(z)$, we obtain  
$AJ_zA^{\tau}=\wJ_{C(z)}$, 
which proves the assertion. 
\end{proof}

The structure of a Lie algebra isomorphism inherits somehow $\mathbb Z_2$-grading of the underlying Clifford algebras as shows the following lemma.

\begin{lemma}\label{relation between volume form}
Let $\{z_i\}_{i=1}^{r+s}$ be an orthonormal basis of $\mathbb{R}^{r,s}$ and the maps $A$ and $C$ as in Lemma~\ref{iso-form 2}. Then the following relations hold
\begin{itemize}
\item[1.] If $p=2m$, $m \in \mathbb{N}$, then
\begin{equation}\label{volume form1}
A\prod_{j=1}^{p}J_{z_j}=(-1)^m\prod_{j=1}^{p}\wJ_{C(z_j)}A,\qquad 
A^{\tau}\prod_{j=1}^{p} \wJ_{z_j}=(-1)^m\prod_{j=1}^{p}{J}_{C^{\tau}(z_j)}A^{\tau}.
\end{equation}
\begin{equation}\label{volume_form5}
A^{\tau}A \prod_{j=1}^{p}J_{z_j}=\prod_{j=1}^{p}J_{z_j}A^{\tau} A,\qquad
AA^{\tau}\prod_{j=1}^{p}\wJ_{C(z_j)}=\prod_{j=1}^{p}\wJ_{C(z_j)} A A^{\tau}.
\end{equation}
\item[2.] If $p=2m+1$, $m \in \mathbb{N}\cup\{0\}$, then
\begin{equation}\label{volume form4}
A\prod_{j=1}^{p}J_{z_j}A^{\tau}=(-1)^m\prod_{j=1}^{p}\wJ_{C(z_j)}\qquad
A^{\tau}\prod_{j=1}^{p}\wJ_{z_j} A=(-1)^m \prod_{j=1}^{p}J_{C^{\tau}(z_j)}.
\end{equation}
\begin{equation}\label{volume_form7}
A^{\tau}A \prod_{j=1}^{p}J_{z_j}A^{\tau}A=-\prod_{j=1}^{p}J_{z_j},\qquad
AA^{\tau} \prod_{j=1}^{p}\wJ_{z_j}A A^{\tau}=-\prod_{j=1}^{p}\wJ_{z_j}
\end{equation}
\end{itemize}
\end{lemma}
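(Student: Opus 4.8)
The statement is really a collection of commutation identities obtained by iterating the basic relations
$A^\tau\wJ_z A = J_{C^\tau(z)}$ (equation~\eqref{isomorphism relation}), $AJ_zA^\tau = \wJ_{C(z)}$ and $AA^\tau\wJ_z AA^\tau = \wJ_z$ (both established in the proof of Lemma~\ref{iso-form 2}), together with the Clifford anticommutation $J_{z_i}J_{z_j}=-J_{z_j}J_{z_i}$ for $i\neq j$ and the Clifford algebra analogues for $\wJ$. All four displayed equations will follow by an induction on the number $p$ of factors, and the two cases $p$ even and $p$ odd will be handled in a single induction with step $2$: passing from a product of $p$ factors to one of $p+2$ factors introduces two new transpositions of $A^\tau$ (or $A$) past two $J$'s, each contributing a sign $-1$, which reproduces the factor $(-1)^m$.

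\emph{First}, I would prove the ``mixed'' identities~\eqref{volume form1} and~\eqref{volume form4} by induction on $p$. For the odd base case $p=1$, equation~\eqref{isomorphism relation} read as $A^\tau\wJ_{z}A = J_{C^\tau(z)}$ is exactly the right-hand identity of~\eqref{volume form4} with $m=0$, and the left-hand identity is $AJ_zA^\tau=\wJ_{C(z)}$, both already available. For the even base case $p=2$ one multiplies two copies of~\eqref{isomorphism relation}: $A^\tau\wJ_{z_1}AA^\tau\wJ_{z_2}A$, and uses $AA^\tau\wJ_z AA^\tau=\wJ_z$ to collapse the middle $AA^\tau$, but one must be careful — the cleaner route is to write $A^\tau\wJ_{z_1}\wJ_{z_2}A$ directly and insert $A A^\tau = (AA^\tau)$ is not the identity, so instead I would use $AA^\tau\wJ_z AA^\tau=\wJ_z$ in the equivalent form $A^\tau\wJ_z = J_{C^\tau(z)}A^{-1}$... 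Rather than invert $A$, the slick argument is: from $AJ_zA^\tau=\wJ_{C(z)}$ we get $A\big(\prod_{j=1}^p J_{z_j}\big)A^\tau = \prod_{j=1}^p \wJ_{C(z_j)}$ up to signs whenever $p$ is odd, because $A^\tau A$ in each internal slot must be moved out; this is where~\eqref{volume_form7} and~\eqref{volume_form5} enter. So the honest order is: prove~\eqref{volume_form5} and~\eqref{volume_form7} first, as consequences of the single relation $A A^\tau \wJ_z A A^\tau = \wJ_z$ (equivalently $A^\tau A J_z A^\tau A$ conjugates $J_z$), then use them to carry the induction for~\eqref{volume form1} and~\eqref{volume form4}.

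\emph{Concretely}, from $AA^\tau\wJ_z AA^\tau = \wJ_z$, setting $D = AA^\tau$, we have $D\wJ_z D = \wJ_z$ for every $z$, hence $D\wJ_z = \wJ_z D^{-1}$, so $D$ conjugates each $\wJ_z$ to $\wJ_z$ after two applications: $D\wJ_{z_1}\wJ_{z_2}D = D\wJ_{z_1}D^{-1}\cdot D\wJ_{z_2}D = \wJ_{z_1}\wJ_{z_2}$, and inductively $D\prod_{j=1}^{2m}\wJ_{z_j}D = \prod_{j=1}^{2m}\wJ_{z_j}$, which is the right-hand part of~\eqref{volume_form5} after conjugating by $A$; for odd $p=2m+1$, $D\prod_{j=1}^{p}\wJ_{z_j}D = \prod_{j=1}^{p}\wJ_{z_j}D^{-2} = \wJ_{z_1}\cdots\wJ_{z_p}D^{-2}$, and since $CC^\tau=-\Id$ forces $\det$ normalization giving $D^{-2}$ proportional to $\Id$... here I must track that in the $r\neq s$, $\widetilde U$-over-$\mathcal N_{s,r}$ case one has $D^2 = -\Id$ (from $|\det AA^\tau|=1$ and $CC^\tau=-\Id$), which yields the minus sign in~\eqref{volume_form7}. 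Similarly $A^\tau A$ satisfies $(A^\tau A)^2 = -\Id$ giving the first identity of~\eqref{volume_form7}. Then~\eqref{volume form1} and~\eqref{volume form4} follow: split $\prod J_{z_j} = J_{z_1}(A^\tau A)^{-1}(A^\tau A)J_{z_2}\cdots$ is awkward, so instead iterate $A^\tau\wJ_z A = J_{C^\tau(z)}$ directly, inserting $\Id = A^{-1}A$ is not allowed, but $\Id = (A^\tau)^{-1}A^\tau$; cleanest: $A^\tau\prod_{j=1}^p\wJ_{z_j}A = (A^\tau\wJ_{z_1}A)(A^{-1}A^{-\tau})(A^\tau\wJ_{z_2}A)\cdots$ — the factor $A^{-1}A^{-\tau} = D^{-1}$ reorders, and using $D^{-1}$ commutes with products of even length (from~\eqref{volume_form5}) while anticommutes appropriately for odd length, one accumulates exactly $(-1)^m$.

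\emph{Main obstacle.} The bookkeeping of signs is the only real difficulty: one must correctly combine (i) the sign $(-1)^m$ coming from how many pairwise swaps of Clifford generators are needed to collapse interleaved copies of $A^\tau A$, (ii) the sign from $(A^\tau A)^2 = \pm\Id$, which depends on whether we are in the $\mathcal N_{r,s}\to\mathcal N_{r,s}$ case ($CC^\tau=+\Id$, Corollary~\ref{Identity relation}) or the $\mathcal N_{r,s}\to\mathcal N_{s,r}$ case ($CC^\tau=-\Id$, Theorem~\ref{property of isomorphism}), and (iii) the fact that in Lemma~\ref{iso-form 2} both $\Psi$ and $\Phi$ are under consideration. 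I would organize the proof so that the even-$p$ identities~\eqref{volume form1} and~\eqref{volume_form5} never see the $\pm$ ambiguity (an even power of $A^\tau A$ is $\pm\Id$ with a sign that disappears when it appears an even number of times), while the odd-$p$ identities~\eqref{volume form4} and~\eqref{volume_form7} absorb the single surviving factor $(A^\tau A)^2 = -\Id$, explaining the explicit minus signs on the right of~\eqref{volume_form7}. Once the sign conventions are pinned down on the base cases $p=1,2$, the inductive step $p\to p+2$ is a mechanical verification using only Clifford anticommutativity and the three relations recalled above.
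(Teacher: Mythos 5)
Your overall strategy (induction on $p$ driven by the base relation~\eqref{isomorphism relation}, inserting $AA^{-1}$ or $(A^{\tau})^{-1}A^{\tau}$ between consecutive factors) is the same as the paper's, and the ``cleanest'' formulation you finally settle on is essentially the paper's interleaved induction odd $p\to$ even $p+1\to$ odd $p+2$. However, two of the identities you lean on are false, and they are load-bearing. First, the relation $AA^{\tau}\wJ_{z}AA^{\tau}=\wJ_{z}$ is established in the proof of Lemma~\ref{iso-form 2} only for the automorphism $\Psi$, i.e.\ the case $CC^{\tau}=\Id$; the signs in the present lemma are those of the case $C^{\tau}C=-\Id$ (the paper's proof assumes this explicitly), where the same computation gives $AA^{\tau}\wJ_{z}AA^{\tau}=-\wJ_{z}$. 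Second, and more seriously, the claims $(AA^{\tau})^{2}=-\Id$ and $(A^{\tau}A)^{2}=-\Id$ are simply not true: the normalization $|\det AA^{\tau}|=1$ does not make $AA^{\tau}$ a scalar multiple of the identity. Concretely, for the isomorphism $\mathcal N_{5,2}\to\mathcal N_{2,5}$ the map $A_{1}$ constructed in Theorem~\ref{th:51-62} sends a unit vector $v$ to a null vector, so $\la v,A_{1}^{\tau}A_{1}v\ra_{V^{5,2}}=0$, which is incompatible with $A_{1}^{\tau}A_{1}=-\Id$. Your derivation of~\eqref{volume_form7} (``$D^{-2}$ proportional to $\Id$'', ``$(A^{\tau}A)^{2}=-\Id$ giving the first identity'') therefore does not go through, and the even-$p$ conjugation computation $D\wJ_{z_1}D^{-1}\cdot D\wJ_{z_2}D=\wJ_{z_1}\wJ_{z_2}$ is also broken, since $D\wJ_{z}D^{-1}=\mp\wJ_{z}D^{-2}$ is not $\wJ_{z}$.

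The repair is what the paper actually does. From~\eqref{isomorphism relation} one first derives the two $p=1$ relations $A^{\tau}\wJ_{z}A=J_{C^{\tau}(z)}$ and $A^{-1}\wJ_{z}(A^{\tau})^{-1}=-J_{C^{\tau}(z)}$, the latter by inverting $A^{\tau}\wJ_{z}^{-1}A$ and using $\la C^{\tau}(z),C^{\tau}(z)\ra_{r,s}=-\la z,z\ra_{s,r}$; this single minus sign, accrued once per appended generator when one inserts $AA^{-1}$ or $(A^{\tau})^{-1}A^{\tau}$, is the entire source of the factor $(-1)^{m}$. The identities~\eqref{volume_form5} and~\eqref{volume_form7} are then corollaries of~\eqref{volume form1} and~\eqref{volume form4} together with $J_{C^{\tau}C(z)}=-J_{z}$; no statement about $(A^{\tau}A)^{2}$ is needed or available. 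Finally, you never actually carry out the induction or the sign count you yourself identify as ``the only real difficulty,'' so even apart from the false identities the argument is not complete as written.
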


\begin{proof}
We only show the second parts of equalities, since the first parts can be obtained from them by transpositions. We assume that $C^{\tau}C=-\Id$ and apply the induction arguments. If $m=0$ ($p=1$) then~\eqref{volume form4} is reduced to~\eqref{isomorphism relation}. Assume now that~\eqref{volume form4} holds for  $p=2m+1$. Choose $z^*$ from the orthonormal basis $\{z_i\}_{i=1}^{r+s}$ and calculate
\begin{eqnarray*}
\Big(A^{\tau}\prod_{j=1}^{p}\wJ_{z_j}\Big)\wJ_{z^{*}}
&=&
A^{\tau}\prod_{j=1}^{p}\wJ_{z_j}AA^{-1}\wJ_{z^{*}}
=
(-1)^{m}
\prod_{j=1}^{p}J_{C^{\tau}(z_j)}A^{-1}\wJ_{z^{*}}
(A^{\tau})^{-1}A^{\tau}
 \\
 &= &
 (-1)^{m+1}
 \Big(\prod_{j=1}^{p}J_{C^{\tau}(z_j)} \Big)J_{C^{\tau}(z^{*})}A^{\tau} .
\end{eqnarray*}
Thus, we proved~\eqref{volume form1} for $p=2(m+1)$. In the last equality we argued as follows. Since 
$$
A^{\tau}\wJ_{z^*}^{-1} A =A^{\tau} \left
(-\frac{1}{\la z^*,z^*\ra_{s,r}}\wJ_{z^*}\right) A =
-\frac{1}{\la z^*,z^*\ra_{s,r}}{J}_{C^{\tau}(z^{*})},
$$
and $\la C^{\tau}(z^*),C^{\tau}(z^*)\ra_{r,s}=-\la z^*,z^*\ra_{s,r}$ we obtain
$$
A^{-1}\wJ_{z^*}(A^{\tau})^{-1}=\big(A^{\tau}\wJ_{z^*}^{-1}A \big)^{-1}=\left(-\frac{1}{\la z^*,z^*\ra_{s,r}}{J}_{C^{\tau}(z^*)}\right)^{-1}=-{J}_{C^{\tau}(z^{*})}.$$

Thus in the previous step we, particularly showed that~\eqref{volume form1} is true for $m=1$ ($p=2$). Assume now that~\eqref{volume form1} holds for $p=2m$, $m=0,1,\ldots$, then
\begin{equation*}
A^{\tau}\Big(\prod_{j=1}^{p}\wJ_{z_j}\Big)\wJ_{z^*} A
=(-1)^m
\prod_{j=1}^{p}J_{C^{\tau}(z_j)}A^{\tau}\wJ_{z^*}A
=(-1)^{m}
\Big(\prod_{j=1}^{p}J_{C^{\tau}(z_j)}\Big){J}_{C^{\tau}(Z^{*})}.
\end{equation*}
Thus the assertion~\eqref{volume form4} holds for $p=2m+1$. 

It is sufficient to show~\eqref{volume_form5} for $p=2$. We have by~\eqref{volume form1} 
$$
A^{\tau} A J_{z_1}J_{z_2}
=
-A^{\tau}\wJ_{C(z_1)} \wJ_{C(z_2)} A
=J_{C^{\tau}C(z_1)}J_{C^{\tau}C(z_2)} A^{\tau}A
=J_{z_1}J_{z_2} A^{\tau}A.
$$
Identity~\eqref{volume_form7} can be deduced from~\eqref{volume form4}.  We obtain  
$$
A^{\tau}A \prod_{j=1}^{p}J_{z_j} A^{\tau}  A=(-1)^mA^{\tau}\prod_{j=1}^{p} \wJ_{C(z_j)}A =(-1)^{2m}  \prod_{j=1}^{p}J_{C^{\tau}C(z_j)}=(-1)^p \prod_{j=1}^{p}J_{z_j}.
$$
and since $p$ is odd the equality~\eqref{volume_form7} follows. 
\end{proof}
\begin{rem}
It is clear that the result of Lemma~\ref{relation between volume form} does not depend on the permutation of the basis elements. We emphasise that the existence of a Lie algebra isomorphism $\Phi=A\oplus C\colon \mathcal{N}_{r,s}(U)\to\mathcal{N}_{s,r}(\widetilde U)$ is equivalent to the requirement that relation~\eqref{isomorphism relation} holds.
Moreover, ~\eqref{isomorphism relation} implies all the equalities listed in Lemma~\ref{relation between volume form}. 
\end{rem}

%%%%%%%%%%%%%%%%%%%%%%%

\subsection{Observations on general structure of a possible isomorphism.}

%%%%%%%%%%%%%%%%%%%%%%%

We set up the notations. We denote by $\{z_1,\ldots,z_{r+s}\}$ an orthonormal basis for $\mathbb R^{r+s}$, where $\la z_i,z_j\ra_{r,s}=\epsilon_i(r,s)\delta_{i,j}$.
A linear map $P\colon V^{r,s}\to V^{r,s}$ such that
$P^2=\Id$ is called an involution. The eigenspaces of an
involution $P$ we denote by $E^{k}_{P}$, where $k\in\{1,-1\}$
according to the eigenvalue. In order to denote the intersection of
eigenspaces of several mutually commuting 
involutions $P_j$, $j=1,\ldots,N$, we use multi-index 
$I=(k_1,\ldots,k_N)$ and write $E^I=\cap_{j=1}^{N}E^{k_j}_{P_j}$. 

The basis for $\mathbb R^{s,r}$ 
we denote by $\{w_{s+r},\ldots,w_{r+1},w_r,\ldots,w_1\}$ 
with first ``$s$" elements being positive 
and the last ``$r$'' vectors being negative. 
Therefore, the representation maps $J\colon \Cl_{s,r}\to\End(V^{s,r})$ satisfies
$$
\wJ_{w_j}^2=-\Id_{V^{s,r}},\  j=s+r,\ldots,r+1,\qquad \wJ_{w_j}^2=\Id_{V^{s,r}},\  j=r,\ldots,1.
$$
In general, operators and other objects related to 
the Clifford algebra $\Cl_{r,s}$ will be denoted by 
letters $P,E,R,\ldots$, meanwhile the operators, 
associated to the Clifford algebra $\Cl_{s,r}$ will carry the tilde on the top: $\wP,\wE,\widetilde R,\ldots$. 
At the end we formulate an immediate corollary of 
Lemma~\ref{relation between volume form} that will be used frequently in the paper.

\begin{cor}\label{rem:important}
Let $r\neq s$ and
assume that there is a Lie algebra isomorphism $\Phi=A\oplus C\colon\mathcal N^{r,s}\to\mathcal N^{s,r}$ with
$A\colon V^{r,s}\to V^{s,r}$, $C\colon \mathbb R^{r,s}\to\mathbb R^{s,r}$, where we set 
$
C(z_j)=w_j$ and $C^{\tau}(w_j)=-z_j$.
Let $P_j$, $j=1,\ldots,N$ be mutually commuting isometric involutions on $V^{r,s}$ obtained by product of some $J_{z_k}$. Let $\wP_j$ be mutually commuting isometric involutions on $V^{s,r}$ obtained from $P_j$ by changing $J_{z_k}$ to $\wJ_{w_k}$ and such that $AP_j=\wP_jA$, $j=1,\ldots,N$. We denote by $E^I$ and $\wE^I$ the common eigenspaces of $P_j$ and $\wP_j$, respectively. Then
\begin{itemize}
\item[1.] {the map $A$ can be written as $A=\oplus A_{I}$, where $A_{I}\colon E^{I}\to \wE^{I}$ for any choice of $I=(k_1,\ldots,k_N)$;} 
\item[2.]{if $\prod\limits_{j=1}^p J_{z_j}\colon E^{I}\to E^{I}$ for some $I$, then $\prod\limits_{j=1}^p\wJ_{w_j}\colon \wE^{I}\to \wE^{I}$, and 
$$
A_{I}\prod\limits_{j=1}^p J_{z_j}=
\begin{cases}
(-1)^m\prod\limits_{j=1}^p\wJ_{w_j}(A^{\tau}_{I})^{-1}(x_{I}), \quad &\text{if}\quad p=2m+1,
\\
(-1)^m\prod\limits_{j=1}^p \wJ_{w_j}A_{I}(x_I), \quad &\text{if}\quad p=2m,
\end{cases}\qquad x_I\in E_I,
$$
$$ 
A^{\tau}_{I}\prod\limits_{j=1}^p\wJ_{w_j}=
\begin{cases}
(-1)^{m+1}\prod\limits_{j=1}^p J_{z_j} (A_{I})^{-1}(y_I), \quad &\text{if}\quad p=2m,+1
\\
(-1)^m\prod\limits_{j=1}^p J_{z_j} A^{\tau}_{I}(y_I),\quad &\text{if}\quad p=2m,
\end{cases}\qquad y_I\in \wE_I.
$$
}
\end{itemize}
\end{cor}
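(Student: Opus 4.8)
The plan is to deduce Corollary~\ref{rem:important} directly from Lemma~\ref{relation between volume form} together with the intertwining hypotheses $AP_j=\wP_jA$, once we recall that the $P_j$ and $\wP_j$ are products of the $J_{z_k}$ and $\wJ_{w_k}$ respectively. First I would establish item~1. Taking adjoints in $AP_j=\wP_jA$ and using that each $P_j$, being an isometric involution, satisfies $P_j^{\tau}=P_j$ (since $\langle P_jx,y\rangle=\langle x,P_j^{-1}y\rangle=\langle x,P_jy\rangle$), we get $P_jA^{\tau}=A^{\tau}\wP_j$; combining the two gives $P_jA^{\tau}A=A^{\tau}AP_j$ and $\wP_jAA^{\tau}=AA^{\tau}\wP_j$. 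Hence $A$ maps $E^{k_j}_{P_j}$ into $\wE^{k_j}_{\wP_j}$ for each $j$: if $P_jx=k_jx$ then $\wP_j(Ax)=AP_jx=k_jAx$. Intersecting over $j=1,\dots,N$, $A$ restricts to $A_I\colon E^I\to\wE^I$, and since $V^{r,s}=\bigoplus_I E^I$ and $V^{s,r}=\bigoplus_I\wE^I$ (the $P_j$, resp.\ $\wP_j$, are commuting involutions so their joint eigenspaces span), we obtain $A=\oplus_I A_I$. The adjoint decomposes compatibly, $A^{\tau}=\oplus_I A_I^{\tau}$ with $A_I^{\tau}\colon\wE^I\to E^I$, and on each block $A_IA_I^{\tau}$ and $A_I^{\tau}A_I$ are invertible because $A$ is bijective and block-diagonal.

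Next I would prove item~2. Suppose $\prod_{j=1}^p J_{z_j}$ preserves $E^I$. Applying $A$ and using the intertwining relations to push $A$ through each $P_j$, one checks $\prod_{j=1}^p\wJ_{w_j}$ preserves $\wE^I$ (equivalently: $\prod_{j=1}^p\wJ_{w_j}$ commutes with each $\wP_j$ iff $\prod_{j=1}^p J_{z_j}$ commutes with each $P_j$, which is a purely Clifford-algebraic statement transported by $C$). Now I invoke Lemma~\ref{relation between volume form}. For $p=2m+1$ odd, equation~\eqref{volume form4} gives $A\prod_{j=1}^{p}J_{z_j}A^{\tau}=(-1)^m\prod_{j=1}^{p}\wJ_{C(z_j)}$; since $C(z_j)=w_j$ and since both sides are block-diagonal with respect to the decompositions $\oplus_I E^I$ and $\oplus_I\wE^I$ (because $\prod J_{z_j}$ preserves $E^I$ and $A$, $A^{\tau}$ are block-diagonal), we may restrict to the $I$-block: $A_I\,\big(\prod_{j=1}^{p}J_{z_j}\big|_{E^I}\big)\,A_I^{\tau}=(-1)^m\prod_{j=1}^{p}\wJ_{w_j}\big|_{\wE^I}$. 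Multiplying on the right by $(A_I^{\tau})^{-1}$ (invertible by item~1) yields, for $x_I\in E^I$,
\[
A_I\prod_{j=1}^p J_{z_j}(x_I)=(-1)^m\prod_{j=1}^p\wJ_{w_j}(A_I^{\tau})^{-1}(x_I),
\]
which is the first case of the first displayed formula. For $p=2m$ even, equation~\eqref{volume form1} gives $A\prod_{j=1}^{p}J_{z_j}=(-1)^m\prod_{j=1}^{p}\wJ_{w_j}A$, and restricting to the $I$-block gives $A_I\prod_{j=1}^p J_{z_j}(x_I)=(-1)^m\prod_{j=1}^p\wJ_{w_j}A_I(x_I)$. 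The two formulas for $A_I^{\tau}\prod_{j=1}^p\wJ_{w_j}$ follow symmetrically from the second parts of~\eqref{volume form4} and~\eqref{volume form1}, using $C^{\tau}(w_j)=-z_j$ together with $J_{-z_j}=-J_{z_j}$: for odd $p$ the sign $(-1)^m$ from~\eqref{volume form4} picks up an extra factor $(-1)^p=-1$ from replacing each $C^{\tau}(w_j)$ by $-z_j$, producing the $(-1)^{m+1}$, while for even $p$ the $(-1)^p=1$ leaves $(-1)^m$ unchanged.

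The only genuinely delicate point—and the one I would be most careful about—is the bookkeeping of signs coming from $C^{\tau}(w_j)=-z_j$ versus $C(z_j)=w_j$, and the verification that the relevant products $\prod_{j=1}^p J_{z_j}$ and $\prod_{j=1}^p\wJ_{w_j}$ really do act as block-diagonal operators so that Lemma~\ref{relation between volume form}, which is stated globally, may be localized to each $E^I$. This block-diagonality is where the hypothesis ``$P_j$ obtained by product of some $J_{z_k}$'' and ``$\prod_{j=1}^p J_{z_j}\colon E^I\to E^I$'' is used: it guarantees each $\prod J_{z_j}$ commutes with every $P_j$ (since commuting or anticommuting products of Clifford generators either preserve or swap the $\pm1$-eigenspaces, and preservation is exactly the stated hypothesis), and the analogous statement on the tilde side is transported verbatim because $C$ is an isometry matching orthonormal bases. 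Everything else is the induction already carried out in Lemma~\ref{relation between volume form}, applied blockwise. Hence no new ideas are needed; the corollary is a localization of the lemma plus adjoint bookkeeping, and I would present it in exactly that order: symmetry of $P_j$ and the intertwining relations $\Rightarrow$ item~1; then blockwise restriction of~\eqref{volume form4} and~\eqref{volume form1} $\Rightarrow$ item~2.
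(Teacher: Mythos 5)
Your proposal is correct and follows exactly the route the paper intends: the corollary is stated there as an ``immediate'' consequence of Lemma~\ref{relation between volume form}, and your argument supplies precisely the missing bookkeeping (self-adjointness of the isometric involutions giving the block decomposition $A=\oplus A_I$, the observation that a product of generators preserving one joint eigenspace must commute with every $P_j$ and hence is block-diagonal, and the sign $(-1)^p$ from $C^{\tau}(w_j)=-z_j$ converting $(-1)^m$ into $(-1)^{m+1}$ for odd $p$). No gaps.
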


Corollary~\ref{rem:important} gives 
an idea of a possible construction of an 
isomorphism $\Phi=A\oplus C\colon\mathcal{N}^{r,s}\to\mathcal{N}^{s,r}$. 
Choosing 
the bases $\{z_j\}_{j=1}^{r+s}$ for $\mathbb R^{r,s}$ and $\{w_j\}_{j=1}^{s+r}$ for $\mathbb R^{s,r}$ we define the map $C\colon \mathbb R^{r,s}\to\mathbb R^{s,r}$, by setting $C(z_j)=w_j$ and $C^{\tau}(w_j)=-z_j$.
Further, if we find mutually commuting isometric involutions $P_j$ and $\wP_j$, $j=1,\ldots,N$, acting on $V^{r,s}$ and $V^{s,r}$, respectively, we can reduce the construction of the map $A\colon V^{r,s}\to V^{s,r}$ to the construction of the maps $A^I\colon E^I\to \wE^I$. Finally, we set $A=\oplus A_I$. Theorem~\ref{th:general} states that, under some conditions, the construction of all maps $A_I$ can be obtained from the only one map $A_1\colon E^1\to \wE^1$, where we denote $E^1=\bigcap_{j=1}^{N}E^{1}_{P_j}$.

\begin{theorem}\label{th:general} We set $C(z_j)=w_j$ and $C^{\tau}(w_j)=-z_j$ for orthonormal bases $\{z_j\}_{j=1}^{r+s}$ for $\mathbb R^{r,s}$ and $\{w_j\}_{j=1}^{s+r}$ for $\mathbb R^{s,r}$. 
Let $P_j$, $j=1,\ldots,N$ be mutually commuting isometric involutions on $V^{r,s}$ obtained by product of some $J_{z_k}$ and $\wP_j$ be mutually commuting isometric involutions on $V^{s,r}$ obtained from $P_j$ by changing $J_{z_k}$ to $\wJ_{w_k}=\wJ_{C(z_k)}$. We denote by $E^I$ and $\wE^I$ the common eigenspaces of $P_j$ and $\wP_j$, respectively, and set $E^1=\bigcap_{j=1}^{N}E^{1}_{P_j}$ and $\wE^1=\bigcap_{j=1}^{N}\wE^{1}_{\wP_j}$. We assume also that 
\begin{itemize}
\item[(a)]{there are maps $G_I\colon E^1\to E^I$ for all multi-indices $I$, written in the form of product $G_I=\prod J_{z_i}$, and}
\item[(b)]{there exists a map $A_1\colon E^1\to \wE^1$ such that 
\begin{equation}\label{eq:A11}
A_{1}\prod_{j=1}^pJ_{z_j}=
\begin{cases}
(-1)^m\prod\limits_{j=1}^p\wJ_{C(z_j)}(A^{\tau}_{1})^{-1},\ \ &\text{if}\ \ p=2m+1,
\\
(-1)^m\prod\limits_{j=1}^p\wJ_{C(z_j)}A_1,\ \ &\text{if}\ \ p=2m,
\end{cases}
\end{equation}
for any choice of the product $\prod_{j=1}^pJ_{z_j}$ that leaves invariant the space $E^1$.
}
\end{itemize}
Then there is a map $A\colon V^{r,s}\to V^{s,r}$ such that $\Phi=A\oplus C\colon\mathcal N^{r,s}\to\mathcal N^{s,r}$ is the Lie algebra isomorphism.
\end{theorem}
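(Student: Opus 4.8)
The plan is to construct $A$ as a direct sum $A=\bigoplus_I A_I$ over the common eigenspaces, with each $A_I\colon E^I\to\wE^I$ built from the single map $A_1$ of condition~(b) by conjugation with the connecting products $G_I$ and their tilded analogues, and then to check that $A$ satisfies relation~\eqref{isomorphism relation}. Granting this, the proof of Theorem~\ref{property of isomorphism} shows that~\eqref{isomorphism relation} is equivalent to $[A(x),A(y)]=C([x,y])$ for all $x,y$; moreover~\eqref{isomorphism relation} forces $A$ to be invertible (for a positive $z$ the operator $J_{C^{\tau}(z)}$ is invertible, since $C^{\tau}(z)$ is non-null), while $C$ is invertible by construction, so $\Phi=A\oplus C$ with $B=0$ is the desired Lie algebra isomorphism.

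I would first set up the tilded data. For each multi-index $I$ let $\wG_I$ be the operator on $V^{s,r}$ obtained from $G_I=\prod J_{z_i}$ by the substitution $J_{z_i}\mapsto\wJ_{w_i}=\wJ_{C(z_i)}$, just as the $\wP_j$ arise from the $P_j$. Since the generators $\wJ_{w_i}$ satisfy the same pairwise anticommutation relations as the $J_{z_i}$, the sign with which $\wG_I$ commutes past each $\wP_j$ coincides with that for $G_I$ and $P_j$, which equals the $j$-th entry of $I$ because $G_I$ maps $E^1$ into $E^I$; hence $\wG_I$ maps $\wE^1$ bijectively onto $\wE^I$, in parallel with $G_I\colon E^1\to E^I$ (both maps bijective, as products of the $J_{z_i}$, resp.\ $\wJ_{w_i}$, are invertible with $J_{z_i}^{-1}=\mp J_{z_i}$). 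I then set, writing $p$ for the number of factors of $G_I$ and $m_I=\lfloor p/2\rfloor$,
\[
A_I:=(-1)^{m_I}\,\wG_I\,\Lambda_I\,G_I^{-1}\colon E^I\longrightarrow\wE^I,\qquad
\Lambda_I=\begin{cases}(A_1^{\tau})^{-1},& p\ \text{odd},\\ A_1,& p\ \text{even},\end{cases}
\]
the sign and the adjoint switch being those appearing in Corollary~\ref{rem:important}. Since $P_j$ acts on $E^I$ and $\wP_j$ on $\wE^I$ by the same scalar, $A:=\bigoplus_I A_I$ automatically satisfies $AP_j=\wP_jA$.

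It remains to verify that $A^{\tau}\wJ_{w_k}A$ and $J_{C^{\tau}(w_k)}=-J_{z_k}$ agree on $E^I$, for every $k$ and $I$. As $J_{z_k}$ commutes or anticommutes with every $P_j$, it carries $E^I$ bijectively onto a single eigenspace $E^{I'}$; put $Q:=G_{I'}^{-1}J_{z_k}G_I$, so that $J_{z_k}G_I=G_{I'}Q$ and $\wJ_{w_k}\wG_I=\wG_{I'}\widetilde Q$, where $\widetilde Q$ is the substituted operator. The map $Q$ leaves $E^1$ invariant and is, up to sign, a reduced product of the $J_{z_i}$, so condition~(b), namely relation~\eqref{eq:A11}, applies to it. Inserting into $A^{\tau}\wJ_{w_k}A|_{E^I}$ the definitions of $A_I$ and $A_{I'}$, this identity, relation~\eqref{eq:A11} for $Q$, and the relations $J_{z_k}^2=\pm\Id$, $\wJ_{w_k}^2=\pm\Id$ together with the anticommutation rules, one checks that the three sign factors coming from $G_I$, $G_{I'}$, $Q$ combine with the three parity-dependent switches between $A_1$ and $(A_1^{\tau})^{-1}$ to leave exactly $-J_{z_k}$; hence relation~\eqref{isomorphism relation} holds and the proof is complete. \textbf{The main obstacle} is precisely this last computation: keeping the signs $(-1)^{m}$ and the alternation between $A_1$ and $(A_1^{\tau})^{-1}$ consistent across the three products $G_I$, $G_{I'}$, $Q$, whose numbers of factors satisfy $|Q|\equiv|G_I|+|G_{I'}|+1\pmod 2$, and checking that the sign incurred when reducing $Q$ to a product of distinct factors (to which~\eqref{eq:A11} applies verbatim) cancels against the others. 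This parity bookkeeping carries essentially the whole weight of the argument.
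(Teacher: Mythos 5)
Your construction coincides with the paper's own proof: the paper defines $A_I=(-1)^m\wG_I(A_1^{-1})^{\tau}G_I^{-1}$ for $|G_I|=2m+1$ and $A_I=(-1)^m\wG_IA_1G_I^{-1}$ for $|G_I|=2m$, sets $A=\oplus_I A_I$, and verifies the isomorphism relation on each $\wE^I$ by factoring $J_{z_{j_0}}G_I$ through a connecting product that preserves $E^1$ and running the same four-way parity case analysis you describe. The only point you leave asserted rather than executed is the final sign bookkeeping, which is precisely the explicit computation the paper carries out and which does close as you claim.
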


\begin{proof}
We define the maps $A_I\colon E^I\to\wE^I$ by the following 
\begin{equation}~\label{eq:AII}
A_I=
\begin{cases}
(-1)^m\wG_I(A_1^{-1})^{\tau}G_I^{-1},\ \ &\text{if}\ \  G_I=\prod\limits_{j=1}^{p=2m+1} J_{z_j},\ \ \wG_I=\prod\limits_{j=1}^{p=2m+1} \wJ_{w_j},
\\
(-1)^m\wG_IA_1G_I^{-1},\ \ &\text{if}\ \  G_I=\prod\limits_{j=1}^{p=2m} J_{z_j},\ \quad \wG_I=\prod\limits_{j=1}^{p=2m} \wJ_{w_j}.
\end{cases}
\end{equation}
Here and further $\wJ_{w_k}=\wJ_{C(z_k)}$. For the convenience we also write the adjoint maps.
\begin{equation}~\label{eq:AIt}
A_I^{\tau}=
\begin{cases}
(-1)^{m+1}G_IA_1^{-1}\wG_I^{-1},\ \ &\text{if}\ \  G_I=\prod\limits_{j=1}^{p=2m+1} J_{z_j},\ \ \wG_I=\prod\limits_{j=1}^{p=2m+1} \wJ_{w_j},
\\
(-1)^m G_IA_1^{\tau}\wG_I^{-1},\ \ &\text{if}\ \  G_I=\prod\limits_{j=1}^{p=2m} J_{z_j},\ \quad \wG_I=\prod\limits_{j=1}^{p=2m} \wJ_{w_j}.
\end{cases}
\end{equation}
Then we set $A=\oplus A_I$. We only need to check the condition $AJ_{z_j}A^{\tau}=\wJ_{C(z_j)}$ for any $z_j$ from the orthonormal basis for $\mathbb R^{r,s}$. 

Observe the following facts. The spaces $E^I$ are mutually orthogonal because if $P_j(x)=x$, and $P_j(y)=-y$ for some isometry $P_j$, then
\begin{equation}\label{eq:isomP}
\la x,-y\ra_{V^{r,s}}=\la P_j(x),P_j(y)\ra_{V^{r,s}}=\la x,y\ra_{V^{r,s}}\quad\Longrightarrow\quad \la x,y\ra_{V^{r,s}}=0.
\end{equation}
Thus $V^{r,s}=\oplus E^I$, and $V^{s,r}=\oplus \wE^I$, where the direct sums are orthogonal. The maps $G_I$ and $\wG_I$ are invertible and 
\begin{equation}\label{eq:prod}
G_I^{-1}=(\prod_{j=1}^{p}J_{z_j})^{-1}=(-1)^p\prod_{j=1}^{p}\la z_j,z_j\ra_{r,s}^{-1}\prod_{k=0}^{p-1}J_{z_{p-k}}.
\end{equation} 
Lemma~\ref{relation between volume form} implies that
\begin{equation}\label{eq:AI}
(A_1^{-1})^{\tau}\prod_{j=1}^{p}J_{z_j}A_1^{-1}=(-1)^{m+1}\prod_{j=1}^{p}\wJ_{C(z_j)},\qquad
A_1\prod_{j=1}^{p}J_{z_j}A_1^{\tau}=(-1)^m\prod_{j=1}^{p}\wJ_{C(z_j)},
\end{equation}
if $p=2m+1$, $m=0,1,\ldots$, and 
\begin{equation}\label{eq:AI1}
(A_1^{-1})^{\tau}\prod_{j=1}^{p}J_{z_j}A_1^{\tau}=(-1)^m\prod_{j=1}^{p}\wJ_{C(z_j)},
\qquad
A_1\prod_{j=1}^{p}J_{z_j}A_1^{-1}=(-1)^m\prod_{j=1}^{p}\wJ_{C(z_j)},
\end{equation}
if $p=2m$, $m=1,\ldots$.

We choose $J_{z_{j_0}}$ and $y\in V^{s,r}=\oplus\wE^I$. Then we write $y=\oplus y_I$ with $y_I\in\wE^I$. Thus we distinguish the cases when the map $G_I$ is the product of odd or even number of representation maps $J_{z_i}$. Moreover, for the multi-index $I$ we find a multi-index $K$ such that $G_K^{-1}J_{z_{j_0}}G_I$ leaves invariant the space $E^1$. Since $G_K$ can also be product of even or odd number of $J_{z_k}$, we differ the following cases:
\begin{eqnarray*}
&&AJ_{z_{j_0}}A^{\tau}y_I=A_KJ_{z_{j_0}}A^{\tau}_Iy_I=
\\
&=&
\begin{cases}
(-1)^{k+m+1}\wG_K(A_1^{-1})^{\tau}G_K^{-1}J_{z_{j_0}}G_IA_1^{-1}\wG_I^{-1}y_I\ \ &\text{if}\ G_I=\prod\limits_{i=1}^{2m+1}J_{z_i},\ G_K=\prod\limits_{l=1}^{2k+1} J_{z_l},
\\
(-1)^{k+m+1}\wG_KA_1G_K^{-1}J_{z_{j_0}}G_IA_1^{-1}\wG_I^{-1}y_I\ \ &\text{if}\ G_I=\prod\limits_{i=1}^{2m+1}J_{z_i},\ G_K=\prod\limits_{l=1}^{2k} J_{z_l},
\\
(-1)^{k+m}\wG_K(A_1^{-1})^{\tau}G_K^{-1}J_{z_{j_0}}G_IA_1^{\tau}\wG_I^{-1}y_I\ \ &\text{if}\ G_I=\prod\limits_{i=1}^{2m}J_{z_i},\ \ G_K=\prod\limits_{l=1}^{2k+1} J_{z_l},
\\
(-1)^{k+m}\wG_KA_1G_K^{-1}J_{z_{j_0}}G_IA_1^{\tau}\wG_I^{-1}y_I\ \ &\text{if}\ G_I=\prod\limits_{i=1}^{2m}J_{z_i},\ \ G_K=\prod\limits_{l=1}^{2k} J_{z_l},
\end{cases}
\end{eqnarray*}
by definitions~\eqref{eq:AII} and~\eqref{eq:AIt}  of $A_I$ and $A_I^{\tau}$. 
Now we observe that
\begin{eqnarray}\label{eq:prodt}
\prod_{l=1}^{q}\frac{1}{\la z_l,z_l\ra_{r,s}}\prod_{n=0}^{q-1}\wJ_{C(z_{q-n})}
&=&
\prod_{l=1}^{q}\frac{-1}{\la C(z_l),C(z_l)\ra_{s,r}}\prod_{n=0}^{q-1}\wJ_{C(z_{q-n})}\nonumber
\\
&=&\prod_{n=0}^{q-1}\wJ_{C(z_{q-n})}^{-1}=\wG_K^{-1}.
\end{eqnarray}
Counting the number of elements in the product $G_K^{-1}J_{z_{j_0}}G_I$ and using~\eqref{eq:prod} for $G_K$, we apply corresponding formulas from~\eqref{eq:AI} or~\eqref{eq:AI1}, and then use~\eqref{eq:prodt}. We obtain
\begin{eqnarray*}
&&AJ_{z_{j_0}}A^{\tau}y_I=
\\
&=&
\begin{cases}
(-1)^{4k+2m+4}\wG_K\wG_K^{-1}\wJ_{C(z_{j_0})}\wG_I\wG_I^{-1}y_I\ \ &\text{if}\ G_I=\prod\limits_{i=1}^{2m+1}J_{z_i},\ G_K=\prod\limits_{l=1}^{2k+1} J_{z_l},
\\
(-1)^{4k+2m+2}\wG_K\wG_K^{-1}\wJ_{C(z_{j_0})}\wG_I\wG_I^{-1}y_I\ \ &\text{if}\ G_I=\prod\limits_{i=1}^{2m+1}J_{z_i},\ G_K=\prod\limits_{l=1}^{2k} J_{z_l},
\\
(-1)^{4k+2m+2}\wG_K\wG_K^{-1}\wJ_{C(z_{j_0})}\wG_I\wG_I^{-1}y_I\ \ &\text{if}\ G_I=\prod\limits_{i=1}^{2m}J_{z_i},\ \ G_K=\prod\limits_{l=1}^{2k+1} J_{z_l},
\\
(-1)^{4k+2m}\wG_K\wG_K^{-1}\wJ_{C(z_{j_0})}\wG_I\wG_I^{-1}y_I\ \ &\text{if}\ G_I=\prod\limits_{i=1}^{2m}J_{z_i},\ \ G_K=\prod\limits_{l=1}^{2k} J_{z_l}.
\end{cases}
\end{eqnarray*}
Thus $AJ_{z_{j_0}}A^{\tau}y_I=\wJ_{C(z_{j_0})}y_I$ and we finish the proof.
\end{proof}

%%%%%%%%%%%%%%%%%%

\subsection{Isomorphic Lie algebras}\label{sec:basic}

%%%%%%%%%%%%%%%%%%

We start from the construction of the isomorphism of Lie algebras $\mathcal N_{r,s}$ and $\mathcal N_{s,r}$ in 8 basic cases. We also show the existence of Lie algebra automorphisms $\Psi\colon\mathcal N_{r,r}\to\mathcal N_{r,r}$, $r=1,2,4$ such that $\Psi=A\oplus C$, $CC^{\tau}=-\Id$. This allows to apply the periodicity arguments in Theorems~\ref{periodicity} and~\ref{periodicity1}  for the classification of higher dimensional Lie algebras. 

In the forthcoming theorems in order to build a Lie algebra
isomorphism we start from the construction of a convenient basis for
the space $E^{1}=\cap_{j=1}^{N}E^1_{P_j}$, where $P_j$, $j=1,\ldots,N$
are some mutually commuting isometric involutions. To construct the
basis we need to find 
a vector $v\in E^{1}$ with $\la v,v\ra_{V^{r,s}}=1$.
Therefore, one has to be sure that the restriction of $\la.\,,.\ra_{V^{r,s}}$ to $E^1$ is positive definite or neutral. 
The following lemmas describe sufficient conditions for that. In the case when $E^1$ is one dimensional we change
the sign of the scalar product on the module space if it needs, see Remark~\ref{sign change}.

\begin{lemma}\label{lem:PT1}
Let $(V,\langle .\,,.\rangle_V)$ be a neutral scalar product space and $P\colon V \to V$ an isometric involution. Then we have the following cases.
\begin{itemize} 
\item[1)] If a linear map $R\colon V\to V$ is an isometry such that $PR=-RP$, then each of eigenspaces 
$E^{1}$ and $E^{-1}$ of $P$ is a neutral scalar product space with respect to the 
restriction of the scalar product $\langle .\,,.\rangle_V$ on $E^{1}$ and $E^{-1}$.
\item[2)] If a linear map $R\colon V\to V$ is an anti-isometry such that  $PR=-RP$, 
then the restriction of $\langle .\,,.\rangle_V$ on each of
$E^{1}$, $E^{-1}$ is non-degenerate neutral or sign definite,
\item[3)] If a linear map $R\colon V\to V$ is an anti-isometry 
such that $PR=RP$, then the restriction 
of $\langle .\,,.\rangle_V$ on each of $E^{1}$, $E^{-1}$ is non-degenerate neutral.
\end{itemize}
\end{lemma}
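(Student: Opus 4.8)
The plan is to analyze how the involution $P$ interacts with the auxiliary map $R$ by looking at the action of $R$ on the eigenspaces $E^{1}$ and $E^{-1}$. The guiding observation is that the anticommutation relation $PR=-RP$ forces $R$ to swap the two eigenspaces ($R(E^{1})\subseteq E^{-1}$ and $R(E^{-1})\subseteq E^{1}$), while the commutation relation $PR=RP$ forces $R$ to preserve each eigenspace ($R(E^{k})\subseteq E^{k}$). Since $R$ is an isometry or anti-isometry, it is in particular invertible, so in the anticommuting case $R$ restricts to a bijection $E^{1}\to E^{-1}$, and in the commuting case $R$ restricts to a bijection $E^{k}\to E^{k}$.

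For part 1), I would first note that $V=E^{1}\oplus_{\perp}E^{-1}$ is an orthogonal decomposition (this is the computation~\eqref{eq:isomP}: if $P(x)=x$ and $P(y)=-y$ then $\la x,y\ra_V=\la Px,Py\ra_V=-\la x,y\ra_V$, hence $\la x,y\ra_V=0$), so the signature of $\la.\,,.\ra_V$ is the sum of the signatures of its restrictions to $E^{1}$ and $E^{-1}$. Since $R$ is an isometry anticommuting with $P$, the restriction $R|_{E^{1}}\colon E^{1}\to E^{-1}$ is an isometric bijection, hence the restrictions of $\la.\,,.\ra_V$ to $E^{1}$ and $E^{-1}$ are isometric scalar product spaces; in particular they have the same signature $(p',q')$. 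But $V$ is neutral, so its signature is $(p'+q',p'+q')$ — wait, rather: writing the signature of $E^{\pm1}$ as $(p',q')$ each, the signature of $V$ is $(2p',2q')$, and neutrality of $V$ gives $2p'=2q'$, so $p'=q'$ and each eigenspace is neutral. This also shows each $E^{\pm1}$ is non-degenerate (its signature has no zero part since $\la.\,,.\ra_V$ is non-degenerate on $V$ and the decomposition is orthogonal).

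For part 2), $R|_{E^{1}}\colon E^{1}\to E^{-1}$ is now an anti-isometric bijection, so it carries the restriction of $\la.\,,.\ra_V$ on $E^{1}$ to the negative of the restriction on $E^{-1}$; thus if $E^{1}$ has signature $(a,b)$ then $E^{-1}$ has signature $(b,a)$, and $V$ has signature $(a+b,a+b)$, which is automatically neutral and gives no further constraint — hence each eigenspace is non-degenerate (again by orthogonality of the decomposition and non-degeneracy on $V$) and can be of any signature, i.e. neutral or sign definite, as claimed. For part 3), $R$ now commutes with $P$ and is an anti-isometry, so $R|_{E^{1}}\colon E^{1}\to E^{1}$ is an anti-isometric self-bijection of $E^{1}$; an anti-isometry of a space onto itself forces the signature $(a,b)$ to satisfy $(a,b)=(b,a)$, hence $a=b$, so $E^{1}$ is neutral, and likewise $E^{-1}$. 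In all three parts non-degeneracy of each eigenspace is immediate from the orthogonal splitting $V=E^{1}\oplus_{\perp}E^{-1}$ together with non-degeneracy of $\la.\,,.\ra_V$ on $V$.

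The main obstacle, such as it is, is purely bookkeeping: one must be careful in each case to track whether $R$ preserves or swaps the eigenspaces and whether it preserves or negates the form, and then correctly combine the signatures of the two eigenspaces to match the known signature of $V$ (neutral). The only genuinely substantive input is the elementary fact that a vector-space map which is \emph{onto} and carries a scalar product to its negative (an anti-isometry onto itself) can exist only if the form has equal positive and negative indices — this is what drives part 3), and the swap version of it drives the ``neutral'' conclusion in part 1). No deeper structure of Clifford modules is needed here; the statement is a general fact about involutions on scalar product spaces.
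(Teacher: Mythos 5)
Your proposal is correct and follows essentially the same route as the paper: the (anti-)commutation relation determines whether $R$ swaps or preserves the eigenspaces, the orthogonality of $E^{1}$ and $E^{-1}$ gives non-degeneracy of each restriction, and comparing signatures against the neutrality of $V$ yields each conclusion. If anything, your explicit signature bookkeeping in part 1) (both eigenspaces have the same signature $(p',q')$, so $2p'=2q'$) is slightly tighter than the paper's phrasing, which only argues that the restrictions cannot be sign definite before concluding neutrality.
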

\begin{proof} 
To show the first statement we observe that the isometry $R$ acts as an isometry from $E^{1}$ to $E^{-1}$.
Since the eigenspaces $E^{1}$ and $E^{-1}$ are orthogonal, see~\eqref{eq:isomP}, the scalar product $\langle .\,,. \rangle_V$
restricted to each $E^{1}$, $E^{-1}$ is non-degenerate. If the scalar product
restricted to $E^1$ would be positive definite, then the scalar product
restricted to $E^{-1}$ would be also positive definite, since the map $R$
is an isometry which contradicts the assumption that space
$(V,\langle.\,,.\rangle_V)$ is neutral. The same arguments
show that the restriction to $E^1$ could not be negative definite. 
So the scalar product restricted to $E^1$ and therefore to $E^{-1}$
should be neutral.

In order to prove the second statement, we note that since 
$R\colon E^1\to E^{-1}$ is an anti-isometry, the restriction of 
$\langle.\,,.\rangle_V$ to $E^1$ can be sign definite 
and the restriction of $\langle.\,,.\rangle_V$ to $E^{-1}$ 
will have opposite sign due to neutral nature of $(V,\langle.\,,.\rangle_V)$.

In the third case since the eigenspaces $E^1$ and $E^{-1}$ are invariant under
$R$ 
but contains positive and negative vectors, 
then each of them is decomposed into subspaces of equal 
dimension and the restriction of $\langle.\,,.\rangle_V$ on these subspaces is sign definite but of opposite signs. Thus $E^1$ and $E^{-1}$ are neutral spaces. 
\end{proof}

\begin{lemma}\label{lem:PT2}
Let $(V,\langle.\,,.\rangle_V)$ be a neutral scalar product space.
Let $P_1,\ldots, P_N$ be isometric mutually commuting involutions defined on $(V,\langle.\,,.\rangle_V)$ and $R_1,\ldots, R_N, R_{N+1}$ linear anti-isometric operators on $V$ such that
$$
\begin{array}{lllllll}
&P_1R_1=-R_1P_1, & P_1R_2=R_2P_1, &\ldots & P_1R_N=R_NP_1, & P_1R_{N+1}=R_{N+1}P_1,
\\
&&P_2R_2=-R_2P_2,&\ldots& P_2R_N=R_NP_2, &P_2R_{N+1}=R_{N+1}P_2,
\\
&&&&\vdots
\\
&&&&P_NR_N=-R_NP_N,& P_NR_{N+1}=R_{N+1}P_N.
\end{array}
$$ 
Then each common eigenspace $E^I$ of $P_1,\ldots, P_N$ is a non-trivial and neutral scalar product space.
\end{lemma}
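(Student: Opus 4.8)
The plan is to prove the statement by induction on $N$, peeling off one involution at a time. The key observation is that the common eigenspaces $E^I=E^{k_1}_{P_1}\cap\cdots\cap E^{k_N}_{P_N}$ are obtained by first splitting $V=E^{1}_{P_1}\oplus E^{-1}_{P_1}$ and then intersecting each summand with the common eigenspaces of the restrictions $P_2|,\dots,P_N|$. So I would show that each of $E^{\pm1}_{P_1}$ is again a non-trivial neutral scalar product space carrying a family of operators of the same shape as in the hypothesis, but with one fewer involution, and then apply the inductive hypothesis to each of the two summands. The base case $N=1$ is essentially the third part of Lemma~\ref{lem:PT1}: with $P=P_1$ and $R=R_2$ (an anti-isometry commuting with $P_1$) the restriction of $\langle.\,,.\rangle_V$ to each of $E^{1}_{P_1}$, $E^{-1}_{P_1}$ is non-degenerate and neutral; non-triviality follows because $R_1$ is an invertible anti-isometry and $P_1R_1=-R_1P_1$ gives $R_1^{-1}P_1R_1=-P_1$, so $\dim E^{1}_{P_1}=\dim E^{-1}_{P_1}$ and (as $V\neq\{0\}$) both are nonzero.

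\textbf{Inductive step.} Assuming the statement for $N-1$, I would first note that $E^{1}_{P_1}$ and $E^{-1}_{P_1}$ are orthogonal, being eigenspaces of an isometric involution. Applying the third part of Lemma~\ref{lem:PT1} with $R=R_2$ (which commutes with $P_1$) shows the restricted form on each is non-degenerate and neutral, while the conjugation argument with the invertible anti-isometry $R_1$ shows each is non-trivial. Let $W$ denote either of these two eigenspaces, with the restricted scalar product. Since $P_2,\dots,P_N$ commute with $P_1$, they restrict to mutually commuting isometric involutions of $W$; since $R_2,\dots,R_{N+1}$ commute with $P_1$ they preserve $W$, and as $W$ is non-degenerate their restrictions are anti-isometries of $W$ (immediate from $\langle Rx,Ry\rangle_V=-\langle x,y\rangle_V$). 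Writing $Q_i:=P_{i+1}|_W$ for $i=1,\dots,N-1$ and $S_j:=R_{j+1}|_W$ for $j=1,\dots,N$, the relations of the hypothesis become $Q_iS_i=-S_iQ_i$ and $Q_iS_j=S_jQ_i$ for $j>i$, which is exactly the configuration of the lemma with $N-1$ involutions and $N=(N-1)+1$ anti-isometries. The inductive hypothesis applied to $(W,Q_1,\dots,Q_{N-1},S_1,\dots,S_N)$ then gives that every common eigenspace of $Q_1,\dots,Q_{N-1}$ in $W$ — that is, every common eigenspace of $P_2|_W,\dots,P_N|_W$ — is non-trivial and neutral. Running this for $W=E^{1}_{P_1}$ and $W=E^{-1}_{P_1}$ yields the assertion for all $E^I$.

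\textbf{Main obstacle.} The only real point requiring care is the bookkeeping of the triangular commutation pattern: one must verify that after restricting to $W$ one still has, \emph{simultaneously}, an invertible anti-isometry anti-commuting with the first remaining involution $Q_1$ (this is $S_1=R_2|_W$, needed to equalise the two eigenspace dimensions and hence guarantee non-triviality) and an anti-isometry commuting with $Q_1$ (this is $S_N=R_{N+1}|_W$, needed to invoke the third part of Lemma~\ref{lem:PT1} for neutrality), and that the staircase of $\pm$ signs is preserved under the shift $i\mapsto i+1$. This is precisely why the hypothesis supplies $N+1$ anti-isometries for $N$ involutions: at the $k$-th peeling step ($k=1,\dots,N$) one uses $R_k$ (which anti-commutes with $P_k$) to produce non-triviality and $R_{k+1}$ (which commutes with $P_k$) to produce neutrality, so the anti-commuting operators are $R_1,\dots,R_N$ and the commuting ones are $R_2,\dots,R_{N+1}$, exhausting the given list.
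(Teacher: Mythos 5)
Your proposal is correct and follows essentially the same route as the paper: the paper also peels off one involution at a time, using $R_k$ (anti-commuting with $P_k$) to equalise the dimensions of $E^{\pm1}_{P_k}$ and $R_{k+1}$ (commuting with $P_k$, hence restricting to an anti-isometry of each eigenspace) to force neutrality, and then repeats the argument on the restricted operators for $N$ steps. Your version merely makes the induction and the verification of the restricted commutation pattern explicit, which is a faithful formalisation of the paper's argument.
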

\begin{proof} Let us assume that $P_1$ and $R_1$, $R_2$ satisfies the conditions: $P_1R_1=-R_1P_1$ and $P_1R_2=R_2P_1$.
The non-degeneracy of the restriction can be shown as in Lemma~\ref{lem:PT1}.
The presence of the operator $R_1$ ensures that the restriction of $\langle.\,,.\rangle_V$ to $E^{1}_{P_1}$ or $E^{-1}_{P_1}$ is neutral or sign definite and the spaces $E^{1}_{P_1}$ and $E^{-1}_{P_1}$ have equal dimention. Since $R_2$ preserves $E^{1}_{P_1}$ and it is an anti-isometry, the space $E^{1}_{P_1}$ contains both positive and negative vectors forming subspaces of equal dimension. The same arguments, applied to $E^{-1}_{P_1}$. Thus, spaces $E^{1}_{P_1}$ and $E^{-1}_{P_1}$ are, actually, neutral spaces.

Now we repeat the arguments applying them to the neutral spaces $E^{1}_{P_1}$ and $E^{-1}_{P_1}$ and the operators $P_2$ and $R_2$, $R_3$. After $N$ steps we finish the proof.
\end{proof}

We call the anti-isometric operators $R_1,\ldots, R_N, R_{N+1}$, described in Lemma~\ref{lem:PT2} {\it complementary operators} to the family $P_1,\ldots, P_N$. In some of situations the operator $R_{N+1}$ can be omitted, but we still call the system of operators $R_1,\ldots, R_N$ complementary.

We say that three operators $\bi,\bj,\bk\colon V^{r,s}\to V^{r,s}$ form a quaternion structure if they satisfy the relation
\begin{equation}\label{eq:q_rel}
{\bf i}^2={\bf j}^2={\bf k}^2= -\Id_{V^{r,s}}, \quad
{\bf i}{\bf j}={\bf k}=-{\bf j}{\bf i}, \ \ 
{\bf j}{\bf k}={\bf i}=-\bk \bj,\ \ \bk\bi=\bj=-\bi\bk.
\end{equation}

\begin{theorem}\label{th:10-01}
The Lie algebras $\mathcal N_{r,0}$ and $\mathcal N_{0,r}$ are isomorphic for $r=1,2,4,8$.
\end{theorem}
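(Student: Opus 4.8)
The plan is to treat the four cases $r=1,2,4,8$ uniformly by exhibiting, in each case, a linear bijection $A\colon V^{r,0}\to V^{0,r}$ together with the fixed choice $C(z_j)=w_j$, $C^{\tau}(w_j)=-z_j$, and to verify the single relation $A^{\tau}\wJ_z A=J_{C^{\tau}(z)}$ from Theorem~\ref{property of isomorphism}; by that theorem this is equivalent to $\Phi=A\oplus C$ being a Lie algebra isomorphism. Since $r\not=0$ here, we are in the setting $r\neq s$, so we expect $CC^{\tau}=-\Id$. The point of isolating these four values of $r$ is that $\Cl_{r,0}$ for $r=1,2,4,8$ has minimal admissible module of manageable size, and the representation admits an explicit model: for $r=1$ one can take $V=\mathbb R^2$ with $J_{z_1}$ a rotation by $\pi/2$; for $r=2$ take $V=\mathbb C^2$ or $\mathbb R^4$ with $J_{z_1},J_{z_2}$ anticommuting complex structures; for $r=4$ use the quaternion structure $\bi,\bj,\bk$ on $V=\mathbb H$ together with the volume form; and for $r=8$ use the octonionic model $V=\mathbb O$. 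The module $V^{0,r}$ has the same underlying vector space but with the scalar product and the squares of the $\wJ_{w_j}$ flipped in sign.

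First I would record, in each of the four cases, a concrete orthonormal integral basis $\{x_i\}$ of $V^{r,0}$ in the sense of Theorem~\ref{integral basis I}, and the action of each $J_{z_k}$ on it as a signed permutation. Then I would write down the corresponding admissible module $V^{0,r}$: the key structural fact, which follows from the discussion after Definition~\ref{definition Pseudo}, is that $V^{0,r}$ is neutral (even-dimensional, split signature) because $s=r>0$, whereas $V^{r,0}$ carries a positive definite product. So $A$ cannot be an isometry; instead $A$ must send the positive definite form on $V^{r,0}$ to the neutral form on $V^{0,r}$, consistently with $|\det(AA^{\tau})|=1$. The natural candidate for $A$ is a product of some of the $\wJ_{w_k}$ (or a signed permutation composed with such a product), chosen so that it intertwines the two representations up to the sign twist $z_j\mapsto w_j$, $w_j\mapsto -z_j$.

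Second, the verification of $A^{\tau}\wJ_{w_j}A=J_{C^{\tau}(w_j)}=J_{-z_j}=-J_{z_j}$ for each basis element $w_j$ reduces, once $A$ is written as a word in the $\wJ_{w_k}$, to a bookkeeping computation using $\wJ_{w_k}^2=\Id$ (note the sign: these square to $+\Id$ since the $w_k$ are negative vectors in $\mathbb R^{0,r}$), the anticommutation $\wJ_{w_k}\wJ_{w_l}=-\wJ_{w_l}\wJ_{w_k}$ for $k\neq l$, and the analogous relations $J_{z_k}^2=-\Id$ on the source side. One can streamline this using Corollary~\ref{rem:important} and Theorem~\ref{th:general}: pick a family of mutually commuting isometric involutions $P_j$ built from products of the $J_{z_k}$ (for instance products of pairs $J_{z_k}J_{z_l}$, which square to $+\Id$), transport them to $\wP_j$ on $V^{0,r}$, construct $A_1$ on the common $(+1)$-eigenspace $E^1$, and let Theorem~\ref{th:general} assemble the full $A=\oplus A_I$. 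When $E^1$ is one-dimensional the construction of $A_1$ is immediate after possibly flipping the sign of the scalar product on $V^{0,r}$, which is legitimate by Remark~\ref{sign change}.

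The main obstacle I anticipate is not any single hard estimate but the case $r=8$: the octonionic multiplication is non-associative, so the naive attempt to realize $A$ as a single word in the $\wJ_{w_k}$ may fail to intertwine cleanly, and one has to choose the involutions $P_j$ and the auxiliary maps $G_I$ carefully so that all the sign discrepancies introduced by the twist $C^{\tau}(w_j)=-z_j$ cancel. Concretely, the delicate point is checking hypothesis (b) of Theorem~\ref{th:general}, namely that the single map $A_1$ on $E^1$ satisfies~\eqref{eq:A11} for \emph{every} product $\prod J_{z_j}$ preserving $E^1$, including the long ones coming from (partial) volume forms; the parity bookkeeping $(-1)^m$ versus the sign twist has to be tracked with care. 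For $r=1,2,4$ the computation is short and the quaternion structure~\eqref{eq:q_rel} handles the $r=4$ case transparently; once the $r=8$ sign analysis is done the theorem follows.
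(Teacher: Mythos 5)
Your strategy is essentially the paper's: fix $C(z_j)=w_j$, $C^{\tau}(w_j)=-z_j$, build $A$ from an integral basis generated by a cyclic vector, use mutually commuting isometric involutions to split $V^{r,0}$ into common eigenspaces, handle $r=4$ via the quaternion structure on $E^1_P$, and assemble $A=\oplus A_I$ via Theorem~\ref{th:general}. So the approach is right; what is missing is the execution, and for this theorem the execution \emph{is} the proof: you never actually write down $A$ in any of the four cases, nor verify the sign conditions $A\prod J_{z_j}=(-1)^m\prod\wJ_{C(z_j)}A$ (for $p=2m$) that Lemma~\ref{relation between volume form} forces on $A$. For instance, for $r=4$ the constraint is $A_1\bi=-\wbi A_1$, $A_1\bj=-\wbj A_1$, $A_1\bk=\wbk A_1$, and these signs are exactly why the paper must take $\wbk=-\wJ_{w_2}\wJ_{w_3}$ rather than the naive transport of $\bk=J_{z_2}J_{z_3}$; this kind of sign choice is the entire content and cannot be waved through as ``bookkeeping''.

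Two further points. First, your anticipated obstacle for $r=8$ (non-associativity of octonionic multiplication) is a non-issue: the representation operators $J_{z_k}$ are linear maps composed associatively, and the paper does not route $r=8$ through Theorem~\ref{th:general} at all. Instead it takes four involutions $P_1,\dots,P_4$ whose common eigenspaces are one-dimensional, defines $A$ as an explicit signed bijection of the $16$-element integral bases ($A(v)=u$, $A(x_j)=-y_j$ for $j=2,\dots,8$, $A(x_j)=y_j$ for $j=9,\dots,16$), and verifies $C[x_i,x_j]=[A(x_i),A(x_j)]$ directly, using that $[x_i,x_j]=0$ whenever $i,j$ lie in the same half of the basis. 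Second, your phrase ``$A$ is a product of some of the $\wJ_{w_k}$'' is type-incorrect as stated ($A$ maps $V^{r,0}$ to $V^{0,r}$, while the $\wJ_{w_k}$ are endomorphisms of $V^{0,r}$); what the paper does is define $A$ on the generating vector and then propagate it by the intertwining rules, e.g.\ $A_{-1}=\wJ_{w_1}(A_1^{-1})^{\tau}J_{z_1}^{-1}$ in the $r=4$ case. With those corrections and the explicit constructions filled in, your plan coincides with the paper's proof.
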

\begin{proof} {\sc Case $r=1$}.
The Clifford algebra $\Cl_{1,0}$ has 2-dimensional minimal admissible module $V^{1,0}$ with positive definite scalar product. The 2-dimensional minimal admissible module $V^{0,1}$ is isometric to $\mathbb R^{1,1}$. We choose the vectors $v\in V^{1,0}$ and $u\in V^{0,1}$, such that $\la v,v\ra_{V^{1,0}}=\la u,u\ra_{V^{0,1}}=1$. Then we construct the orthonormal bases:
$$
\{v,\ J_{z}v,\  z\}\quad\text{for}\quad \mathcal N_{1,0},\qquad\text{and}\qquad
\{u,\ J_{w}u,\  w\}\quad\text{for}\quad \mathcal N_{0,1}.
$$
The isomorphism map $\Phi\colon\mathcal N_{1,0}\to\mathcal N_{0,1}$ is given by
$$
v\mapsto u\quad J_{z}v\mapsto J_{w}u,\quad z\mapsto w,
$$
and the non-vanising commutators are $[v,J_{z}v]=z$, and $[u,J_{w}u]=w$. Here $A^{\tau}u=v$, $A^{\tau}J_{w}u=-J_{z}v$, $C^{\tau}(w)=-z$.
We see from the commutation relations that the Lie algebras $\mathcal N_{1,0}$ and $\mathcal N_{0,1}$ are isomorphic to the Heisenberg algebra.
\\

{\sc Case $r=2$.}  The minimal admissible module $V^{2,0}$ is isometric to $\mathbb R^{4,0}$ and $V^{0,2}$ is isometric to $\mathbb R^{2,2}$. We choose $v\in V^{2,0}$ and $u\in V^{0,2}$, with $\la v,v\ra_{V^{2,0}}=\la u,u\ra_{V^{0,2}}=1$ and construct the orthonormal bases:
$$
\{x_1=v,\ x_2=J_{z_1}v,\ x_3=J_{z_2}v,\ x_4=J_{z_1}J_{z_2}v,\  z_1,\ z_2\}\ \ \text{for }\ \mathcal N_{2,0},
$$
$$
\{y_1=u,\ y_2=J_{w_1}u,\ y_3=J_{w_2}u,\ y_4=J_{w_2}J_{w_1}u,\  w_1,\ w_2\}\ \text{for}\ \mathcal N_{0,2}.
$$
The isomorphism $\Phi$ is given by $x_j\mapsto y_j$, $j=1,\ldots,4$ and $z_k\mapsto w_k$, $k=1,2$
and then it is extended by linearity. The non-vanishing commutation relations on $\mathcal N_{2,0}$ are 
$$
[x_1,x_2]=z_1,\quad [x_1,x_3]=z_2,\quad [x_2,x_4]=-z_2,\quad [x_3,x_4]=z_1,
$$
and, correspondingly, for the basis of $\mathcal N_{0,2}$.
\\

{\sc Case $r=4$}.
The minimal admissible module $V^{4,0}$ is isometric to $\mathbb R^{8,0}$. We choose an isometric involution $
P=J_{z_1}J_{z_2}J_{z_3}J_{z_4}
$ on $V^{4,0}$ and  write $V^{4,0}=E^1_P\oplus E^{-1}_P$. The operators
${\bf i}=J_{z_1}J_{z_2}$, ${\bf j}=J_{z_1}J_{z_3}$, ${\bf k}=J_{z_2}J_{z_3}$,
define a quaternion structure on $E^{1}_P$, commute with $P$ and therefore leave invariant the space $E^1_P$. Let
$v\in V^{4,0}$ be such that $\la v,v\ra_{V^{4,0}}=1$ and $P(v)=v$. Then
$$
\{x_1=v,\ x_2=\bi(v),\ x_3=\bj(v),\ x_4=\bk(v)\}\quad\text{is an orthonormal basis for}\quad E^{1}_P.
$$
The minimal admissible module $V^{0,4}$ is isometric to $\mathbb R^{4,4}$. Let $
\wP=\wJ_{w_1}\wJ_{w_2}\wJ_{w_3}\wJ_{w_4}
$ and we write $V^{0,4}=\wE^1_{\wP}\oplus \wE^{-1}_{\wP}$. The complementary operators are $\widetilde R_1=\wJ_{w_3}$ and $\widetilde R_2=\wJ_{w_1}\wJ_{w_3}$. Choose
$u\in \wE^1_{\wP}$ such that $\la u,u\ra_{V^{0,4}}=1$. Then the operators 
$\wbi=\wJ_{w_1}\wJ_{w_2}$, $\wbj=\wJ_{w_1}\wJ_{w_3}$, $\wbk=-\wJ_{w_2}\wJ_{w_3}$
are used to define the orthonormal basis 
$
\{y_1=u,\ y_2=\wbi(u),\ y_3=\wbj(u),\ y_4=\wbk(u)\}$ for $\wE^{1}_{\wP}$.

Now we construct the map $\Phi=A\oplus C$ by setting $C(z_k)=w_k$, $C^{\tau}(w_k)=-z_k$ and $A=A_1\oplus A_{-1}$ according to Corollary~\ref{rem:important}. To construct $A_1\colon E^1_P\to \wE^{1}_{\wP}$, we write $
A_{1}(v)=(a_1+a_2\tilde{\bf i}+a_3\tilde{\bf j}+a_4\tilde{\bf k})u$. Moreover, $A_1$ has to satisfy the relations $A_1\bi=-\wbi A_1$, $A_1\bj=-\wbj A_1$, $A_1\bk=\wbk A_1$. Thus we obtain
\begin{equation*}
A_{1}(x_2)=-\wbi A_{1}(v),\quad
A_{1}(x_3)=-\wbj A_{1}(v),\quad
A_{1}(x_4)=\wbk A_{1}(v).
\end{equation*}
We conclude that the matrix for the map $A_1$ is given by
$$
A_1=
\begin{pmatrix}
a_1 &a_2&a_3&-a_4
\\
a_2 &-a_1&-a_4&-a_3
\\
a_3 &a_4&-a_1&a_2
\\
a_4 &-a_3&a_2&a_1
\end{pmatrix}.
$$
Notice that $\det A_1=0$ if and only if $a_1=a_2=a_3=a_4=0$. Any choice of $a_j$, $j=1,2,3,4$, such that $\det A_1\neq 0$, defines the map $A_1$.

The map $J_{z_1}\colon E^1_P\to E^{-1}_P$ is used to define $A_{-1}\colon E^{-1}_P\to \wE^{-1}_{\wP}$ by 
$
A_{-1}=\wJ_{w_1}(A_{1}^{-1})^{\tau}J_{z_1}^{-1}$. The proof of this case is finished by applying Theorem~\ref{th:general}.
\\

{\sc Case $r=8$.} Recall that the minimal admissible module $V^{8,0}$ is isometric to $\mathbb R^{16,0}$.
We fix the mutually commuting isometric involutions acting on~$V^{8,0}$:
$$
P_1=J_{z_1}J_{z_2}J_{z_3}J_{z_4},\quad
P_2=J_{z_1}J_{z_2}J_{z_5}J_{z_6},\quad
P_3=J_{z_1}J_{z_2}J_{z_7}J_{z_8},\quad
P_4=J_{z_1}J_{z_3}J_{z_5}J_{z_7}.
$$
The common iegenspaces $E^{I}$ are one dimensional. We construct an orthonormal basis for $V^{8,0}$ starting from a vector $v\in E^{1}=\cap_{j=1}^{4}E^1_{P_j}$ such that $\la v,v\ra_{V^{8,0}}=1$:
\begin{equation}\label{eq:basis80}
\begin{array}{lllllll}
&x_{1}=v,\quad &x_{2}=J_{z_1}J_{z_2}v,\quad &x_{3}=J_{z_1}J_{z_3}v,\quad &x_{4}=J_{z_1}J_{z_4}v,
\\
&x_{5}=J_{z_1}J_{z_5}v, & x_{6}=J_{z_1}J_{z_6}v, & x_{7}=J_{z_1}J_{z_7}v, & x_{8}=J_{z_1}J_{z_8}v,
\\
&x_{9}=J_{z_1}v, & x_{10}=J_{z_2}v, & x_{11}=J_{z_3}v, & x_{12}=J_{z_4}v,
\\
&x_{13}=J_{z_5}v, &x_{14}=J_{z_6}v, & x_{15}=J_{z_7}v, &x_{16}=J_{z_8}v.
\end{array}
\end{equation}
Analogously, the isometric involutions $\wP_j$, obtained by changing $J_{z_j}$ to $\wJ_{w_j}$ in $P_j$, $j=1,2,3,4$, are used to construct an orthonormal basis for $V^{0,8}$ by changing $x_k=\prod_l J_{z_{j_l}}v$ to $y_k=\prod_l\wJ_{w_{j_l}}u$, where $u\in\wE^{1}$ with $\la u,u\ra_{V^{0,8}}=1$. The complementary anti-isometric operators $\widetilde R_1=\wJ_{w_1}\wJ_{w_5}$, $\widetilde R_2=\wJ_{w_6}$, and $\widetilde R_3=\wJ_{w_7}$ guarantee that the space $\wE=\cap_{j=1}^{3}\wE^1_{\wP_j}$ is two dimensional neutral. If the restriction of $\la .\,,.\ra_{V^{0,8}}$ to the space $\wE^{1}_{\wP_4}\cap \wE$ is not positive definite, then we change the sign of the scalar product by Remark~\ref{sign change}.

We claim that the map $\Phi=A\oplus C\colon \mathcal N_{8,0}\to\mathcal N_{0,8}$ such that
$$
A(v)=u, \quad A(x_j)=-y_j,\ j=2,\ldots, 8,\quad A(x_j)=y_j,\ j=9,\ldots,16,
$$
and $C(z_k)=w_k$, $C^{\tau}(w_k)=-z_k$, $k=1,\ldots,8$
is the Lie algebra isomorphism. We show it by checking the commutators.
First observe, that the structure of involutions implies that for any $1<i<j\leq 8$ there is $1<k\leq 8$ such that
\begin{equation}\label{eq:bas80}
J_{z_i}J_{z_j}=\pm J_{z_1}J_{z_k}\quad\text{and simultaneously }\quad \wJ_{w_i}\wJ_{w_j}=\pm \wJ_{w_1}\wJ_{w_k}.
\end{equation}
The second observation is that $[x_i,x_j]=[y_i,y_j]=0$ if either $1\leq i,j\leq 8$ or $9\leq i,j\leq 16$. Indeed, for instance, for any $1\leq i\leq 8$ and $1<j\leq 8$, we calculate
\begin{eqnarray*}
\la [x_1,x_i],z_j \ra_{8,0}
&=&
\la [v,J_{z_1}J_{z_i}v],z_j \ra_{8,0}=\la J_{z_j}v,J_{z_1}J_{z_i}v\ra_{V^{8,0}}
=\la J_{z_i}J_{z_j}v,J_{z_1}v\ra_{V^{8,0}}
\\
&=&
\pm\la J_{z_1}J_{z_k}v,J_{z_1}v\ra_{V^{8,0}}=\pm \la J_{z_k}v,v\ra_{V^{8,0}}=0,
\end{eqnarray*} 
for any $1<k\leq 8$. Analogously, the rest of the cases is proved by using~\eqref{eq:bas80}.

To show that $\Phi=A\oplus C$ is a Lie algebra isomorphism, we need to check
$C[x_i,x_j]=[A(x_i),A(x_j)]$ for any choice of $i=1,\ldots 8$ and $j=9,\ldots 16$, since all other commutators vanish.
We calculate for any $k=1,\ldots, 8$, $i=1$, and $j=9,\ldots 16$
$$
\la C[x_1,x_j],w_k\ra_{0,8}
=
-\la [x_1,x_j],z_k\ra_{8,0}
=-\la J_{z_k}v,J_{z_l}v\ra_{V^{8,0}}
=
-\la z_k,z_l\ra_{8,0}\delta_{k,l},
$$
$$
\la [A(x_1),A(x_j)],w_k\ra_{0,8}
=
\la [y_1,y_j],w_k\ra_{0,8}
=\la \wJ_{w_k}u,\wJ_{w_l}u\ra_{V^{0,8}}
=
\la w_k,w_l\ra_{0,8}\delta_{k,l}.
$$
Since  
$
-\la z_k,z_l\ra_{8,0}\delta_{k,l}=\la w_k,w_l\ra_{0,8}\delta_{k,l}$, we obtain that $C[x_1,x_j]=[A(x_1),A(x_j)]$.
We continue and calculate for any $k=1,\ldots, 8$, $i=2,\ldots 8$, and $j=9,\ldots 16$
\begin{eqnarray}\label{eqn:C}
\la C[x_i,x_j],w_k\ra_{0,8}
&=&
\la [x_i,x_j],C^{\tau}(w_k)\ra_{8,0}
=
-\la [x_i,x_j],z_k\ra_{8,0}
\\
& = &-\la J_{z_k}J_{z_1}J_{z_m}v,J_{z_l}v\ra_{V^{8,0}}
=
-\varepsilon\la J_{z_1}J_{z_m}v,J_{z_1}J_{z_n}v\ra_{V^{8,0}}\nonumber
\\
&= &
-\varepsilon\la z_1,z_1\ra_{8,0}\la z_m,z_n\ra_{8,0}\delta_{m,n},\nonumber
\end{eqnarray}
where $\varepsilon=\pm 1$ and depends on number of permutations and sign in~\eqref{eq:bas80}.
Analogously
\begin{eqnarray*}
\la [A(x_i),A(x_j)],w_k\ra_{0,8}
&=&
\la [-y_i,y_j],w_k\ra_{0,8}
\\
& = &-\la \wJ_{w_k}\wJ_{w_1}\wJ_{w_m}u,\wJ_{w_l}u\ra_{V^{0,8}}
=
-\varepsilon\la \wJ_{w_1}\wJ_{w_m}u,\wJ_{w_1}\wJ_{w_n}u\ra_{V^{0,8}}
\\
&= &
-\varepsilon\la w_1,w_1\ra_{0,8}\la w_m,w_n\ra_{0,8}\delta_{m,n},
\end{eqnarray*}
where the value of $\varepsilon$ is the same as in~\eqref{eqn:C}, due to the same number of permutations and the equalities in~\eqref{eq:bas80}. Since 
$$
-\varepsilon\la z_1,z_1\ra_{8,0}\la z_m,z_n\ra_{8,0}\delta_{m,n}=-\varepsilon\la w_1,w_1\ra_{0,8}\la w_m,w_n\ra_{0,8}\delta_{m,n}
$$
we obtain that $C[x_i,x_j]=[A(x_i),A(x_j)]$ and finish the proof. Note that the map $A^{\tau}$ is given by
$$
A^{\tau}(u)=v, \quad A^{\tau}(y_j)=-x_j,\ j=2,\ldots, 8,\quad A^{\tau}(y_j)=-x_j,\ j=9,\ldots,16.
$$

Remark that we can also construct the isomorphism $\Phi=A\oplus C$ with the same $C$ declaring $A(v)=cu$ with any $c\in\mathbb R\setminus\{0\}$.
\end{proof}

We need a technical lemma.
 
\begin{theorem}\label{th:51-62}
The Lie algebras $\mathcal N_{r,s}$ and $\mathcal N_{s,r}$ are
isomorphic for the values 
of indices $(r,s)\in \{(5,2),(5,1)\}$ and $\{(6,2),(6,1)\}$.
\end{theorem}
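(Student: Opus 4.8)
The plan is to establish each of the four isomorphisms $\mathcal N_{r,s}\cong\mathcal N_{s,r}$ by the scheme already used for Theorem~\ref{th:10-01} and packaged in Theorem~\ref{th:general}. Fix orthonormal bases $\{z_j\}_{j=1}^{r+s}$ of $\mathbb R^{r,s}$ and $\{w_j\}_{j=1}^{s+r}$ of $\mathbb R^{s,r}$, and set $C(z_j)=w_j$, $C^{\tau}(w_j)=-z_j$, so that $CC^{\tau}=-\Id$. Since in all four cases $s>0$, the minimal admissible modules $V^{r,s}$ and $V^{s,r}$ are neutral spaces of equal dimension ($16$ for $(5,1),(5,2),(6,1)$ and $32$ for $(6,2)$), and by the uniqueness of $\mathcal N_{r,s}$ the choice of module within its isomorphism class is immaterial. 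The task then reduces, via Corollary~\ref{rem:important} and Theorem~\ref{th:general}, to: (i) picking mutually commuting isometric involutions $P_1,\dots,P_N$ on $V^{r,s}$, each a product of four of the $J_{z_k}$, with common eigenspace $E^1=\cap_j E^1_{P_j}$ of small dimension; (ii) checking that the corresponding $\wP_j$ on $V^{s,r}$ have $\wE^1$ non-degenerate; and (iii) exhibiting a single linear bijection $A_1\colon E^1\to\wE^1$ satisfying~\eqref{eq:A11}.

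For the three $16$-dimensional pairs I would take $N=2$, so that $E^1$ is four-dimensional, and for $(6,2)$ I would take $N=3$, again landing on a four-dimensional $E^1$. On such an $E^1$ I would single out a triple $\bi,\bj,\bk$ of products of two of the $J_{z_k}$ which commute with all $P_j$ and satisfy the quaternion relations~\eqref{eq:q_rel}, and on $\wE^1$ a corresponding triple $\wbi,\wbj,\wbk$ of products of two $\wJ_{w_k}$, with the internal signs chosen exactly so that the parity factors $(-1)^m$ appearing in~\eqref{eq:A11} are absorbed, as was done for $r=4$ in Theorem~\ref{th:10-01} (where $\wbk=-\wJ_{w_2}\wJ_{w_3}$). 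To know that $E^1$ and $\wE^1$ are non-degenerate --- indeed neutral, or one-dimensional sign-definite after restriction --- I would produce anti-isometric complementary operators inside $\Cl_{s,r}$ and invoke Lemma~\ref{lem:PT2}; if $\wE^1$ comes out not positive definite I would flip the sign of the scalar product on the module by Remark~\ref{sign change}. This yields a unit vector $v\in E^1$, and $\{v,\bi v,\bj v,\bk v\}$ is an orthonormal basis of $E^1$.

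With this in place, I would set $A_1(v)=(a_1+a_2\wbi+a_3\wbj+a_4\wbk)u$ for a unit $u\in\wE^1$ and extend $A_1$ over $E^1$ by the commutation/anticommutation rules that~\eqref{eq:A11} forces on the pairs $(A_1,\bi),(A_1,\bj),(A_1,\bk)$; this exhibits $A_1$ as an explicit $4\times4$ matrix in $a_1,\dots,a_4$ whose determinant vanishes only if $a_1=\dots=a_4=0$, so a generic choice is invertible. It then remains to verify~\eqref{eq:A11} for every product $\prod J_{z_j}$ that preserves $E^1$; since such products are generated by $\bi,\bj,\bk$ and by the $P_j$, this comes down to the quaternion relations and to the $P_j$ being products of four $J_{z_k}$, exactly as in the $r=4$ case. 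Finally the maps $A_I$ on the remaining common eigenspaces are defined by~\eqref{eq:AII}, $A=\oplus_I A_I$, and Theorem~\ref{th:general} delivers the Lie algebra isomorphism $\Phi=A\oplus C\colon\mathcal N_{r,s}\to\mathcal N_{s,r}$.

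The principal obstacle is combinatorial: one must choose the involutions $P_j$, the quaternion triples, and the complementary operators simultaneously so that all the required (anti)commutation relations hold, the $(-1)^m$ signs in~\eqref{eq:A11} are consistent with the signs placed inside $\wbi,\wbj,\wbk$, and the complementary operators demanded by Lemma~\ref{lem:PT2} genuinely exist in $\Cl_{s,r}$. This last requirement is sensitive to the parity of $s$, which is why the four pairs --- two with $s=1$ and two with $s=2$ --- will need slightly different explicit choices, paralleling the way the cases $r=4$ and $r=8$ in Theorem~\ref{th:10-01} used different complementary systems; marshalling these choices is precisely the content of the technical lemma preceding this theorem. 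Once the data are fixed, everything else --- vanishing of the spurious commutators and compatibility of $\Phi$ with the surviving brackets --- is the routine verification already subsumed in Theorem~\ref{th:general}.
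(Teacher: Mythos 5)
Your overall strategy for $(5,2)$, $(6,2)$ and $(6,1)$ is the paper's strategy, but there is a genuine gap at the point where you declare that ``a generic choice is invertible'' suffices for $A_1$. You justify this by asserting that every product $\prod J_{z_j}$ preserving $E^1$ is generated by $\bi,\bj,\bk$ and the $P_j$. That is false in two of the three cases you treat directly: for $(5,2)$, with $P_1=J_{z_1}J_{z_2}J_{z_3}J_{z_4}$ and $P_2=J_{z_1}J_{z_2}J_{z_6}J_{z_7}$, the single generator $J_{z_5}$ commutes with both involutions and hence maps $E^1$ to itself, and it is an odd-length product not expressible through the quaternion triple. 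Condition \eqref{eq:A11} with $p=1$ then forces $A_1^{\tau}\wJ_{w_5}A_1=-J_{z_5}$ on $E^1$, which is a nontrivial quadratic system in $a_1,\dots,a_4$; the paper solves it and finds $a_2=a_3$, $a_1=-a_4$, $a_1^2+a_3^2=\tfrac12$, so an arbitrary invertible $A_1$ built from the quaternion commutation rules alone will not work. The same happens for $(6,1)$ with $J_{z_7}$ (there the solution is $a_2=-a_3$, $a_1=a_4$, $a_1^2+a_3^2=\tfrac12$). Only for $(6,2)$, where every $J_{z_k}$ anticommutes with at least one of the three involutions, is your claim correct that the operators preserving $E^1$ are spanned by the quaternion structure. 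To complete the proof you must identify the surviving odd-length products on $E^1$ (which requires computing the matrices of $J_{z_5}$, resp.\ $J_{z_7}$, and of $\wJ_{w_5}$, resp.\ $\wJ_{w_7}$, in the chosen bases), write down the resulting equations on the $a_i$, and exhibit a nonzero solution; this is precisely the nontrivial content of these cases.

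A second divergence: for $(5,1)$ the paper makes no direct construction at all. It induces $\mathcal N_{5,1}\cong\mathcal N_{1,5}$ from the already established $\mathcal N_{5,2}\cong\mathcal N_{2,5}$ via the projections $\mathbb R^{5,2}\to\mathbb R^{5,1}$ and $\mathbb R^{2,5}\to\mathbb R^{1,5}$ annihilating $z_7$ and $w_7$, together with a commutative diagram of short exact sequences of Lie algebras. Your proposed direct construction is not in principle wrong, but it would again meet extra constraints of the kind described above, which you would have to formulate and solve; the quotient argument sidesteps this entirely.
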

\begin{proof} {\sc Case $(r,s)=(5,2)$}.
The minimal admissible module $V^{5,2}$ is isometric to $\mathbb R^{8,8}$. We fix two mutually commuting isometric involutions and complementary anti-isometric operators
$$
P_1=J_{z_1}J_{z_2}J_{z_3}J_{z_4},\quad P_2=J_{z_1}J_{z_2}J_{z_6}J_{z_7},
\qquad R_1=J_{z_4}J_{z_6}, \quad R_2=J_{z_5}J_{z_6}.
$$ 
We also define a quaternion structure 
\begin{equation}\label{eq:quat52}
\bi=J_{z_1}J_{z_2},\quad \bj=J_{z_1}J_{z_3}J_{z_5}J_{z_7},\quad \bk=J_{z_2}J_{z_3}J_{z_5}J_{z_7}.
\end{equation}
\begin{table}[h]
\center\caption{Commutation relations of operators on $V^{5,2}$}
\begin{tabular}{|c||c|c|c|c|c|c|c|c|c|c|c|c|}
\hline
\ &$J_{z_1}$&$J_{z_2}$&$J_{z_3}$&$J_{z_4}$&$J_{z_5}$&$J_{z_6}$&$J_{z_7}$&$R_1$&$R_2$&$\bi$&$\bj$&$\bk$
\\
\hline\hline
$P_1$&-1&-1&-1&-1&1&1&1&-1&1&1&1&1
\\
\hline
$P_2$&-1&-1&1&1&1&-1&-1&\ &-1&1&1&1
\\
\hline\hline
\end{tabular}\label{t:52}
\end{table}
Table~\ref{t:52} shows that all the spaces $E^{I}=\bigcap\limits_{j=1}^{2}E^{k_j}_{P_j}$, with multi indices $I=(k_1,k_2)$, $k_j\in \{1,-1\}$, are neutral 4-dimensional spaces invariant under the action of quaternion structure~\eqref{eq:quat52}, which allows to find a convenient basis of $V^{5,2}$. Let $E^1=E^{1,1}$ and $v\in E^{1}$ with $\la v,v\ra_{V^{5,2}}=1$. Then the basis 
$\{x_1=v,\ x_2=\bi(v),\ x_3=\bj(v),\ x_4=\bk(v)\}$ for $E^1$
is orthonormal by Lemma~\ref{orthogonal}, where we set $\mathbb J=\bj$. 
Analogous calculations we make for the Lie algebra $\mathcal N_{2,5}$. The mutually commuting isometric involutions and the anti-isometric complementary operators are
$$
\wP_1=\wJ_{w_1}\wJ_{w_2}\wJ_{w_3}\wJ_{w_4},\quad \wP_2=\wJ_{w_1}\wJ_{w_2}\wJ_{w_6}\wJ_{w_7},\qquad
\widetilde R_1=\wJ_{w_1},\quad \wR_2=\wJ_{w_5}\wJ_{w_6}.
$$ 
We also fix the quaternion structure on $V^{2,5}$
$
\wbi=\wJ_{w_1}\wJ_{w_2}$, $\wbj=\wJ_{w_1}\wJ_{w_3}\wJ_{w_5}\wJ_{w_7}$, and $\wbk=-\wJ_{w_2}\wJ_{w_3}\wJ_{w_5}\wJ_{w_7}$.
The orthonormal basis of $V^{2,5}$ is produced from a vector $u\in\wE^{1,1}$ with $\la u,u\ra_{V^{2,5}}=1$ by action of this quaternion structure. 

Let us assume that there is an isomorphism $\Phi\colon\mathcal N_{5,2}\to\mathcal N_{2,5}$,
$\Phi=A\oplus C$, $A\colon V^{5,2}\to V^{2,5}$, where we define $C$ by
$
C(z_j)=w_j$, $C^{\tau}(w_j)=-z_j$, $j=1,\ldots,7$.
Then according to Lemma~\ref{relation between volume form} the map $A\colon V^{5,2}\to V^{2,5}$ has to satisfy the relations
\begin{equation*}\label{eq:Pijl0}
AP_j=\wP_jA,\quad A\bi=-\wbi A,\quad A\bj=\wbj A,\quad A\bk=-\wbk A.
\end{equation*}
Thus, we apply Corollary~\ref{rem:important} and set $A=\oplus A_I$. To construct  $A_{1}\colon E^{1}\to\wE^{1}$ we write $A_{1}(v)=(a_1+a_2\wbi+a_3\wbj+a_4\wbk)u$, $a_j\in\mathbb R$. Since 
$$
A_{1}(x_2)=  -\wbi A_{1}(v),
\quad
A_{1}(x_3) =  \wbj A_{1}(v),
\quad
A_{1}(x_4)= -\wbk A_{1}(v),
$$
we obtain the matrices for the map $A_{1}$ and $A_1^{\tau}$:
\begin{equation}\label{eq:A11_1}
A_{1}=
\begin{pmatrix}
a_{1}&a_2&-a_3&a_4\\
a_{2}&-a_1&a_4&a_3\\
a_{3}&a_4&a_1&-a_2\\
a_{4}&-a_3&-a_2&-a_1
\end{pmatrix},\qquad
A_{1}^{\tau}=
\begin{pmatrix}
a_{1}&a_2&-a_3&-a_4\\
a_{2}&-a_1&-a_4&a_3\\
a_{3}&-a_4&a_1&-a_2\\
-a_{4}&-a_3&-a_2&-a_1
\end{pmatrix}.
\end{equation}
To find relations between $a_j$, we observe that $J_{z_5}$ preserves $E^{1}$ and $\wJ_{w_5}$ preserves
$\wE^{1}$ and therefore they have to satisfy the relation $A^{\tau}_{1}\wJ_{w_5}A_{1}=-J_{z_5}$. In order to calculate the matrices for $J_{z_5}$ and $\wJ_{w_5}$ we observe that the isometric involution $T=J_{z_1}J_{z_2}J_{z_5}$ commuts with $P_1$ and $P_2$ and $E^1\cap E^1_{T}=E^1$. Therefore $Tv=v$ and we obtain
$J_{z_5}v=-\bi v$. To find the matrix $\wJ_{w_5}$ we note that the isometric involution $\widetilde T=\wJ_{w_1}\wJ_{w_3}\wJ_{w_7}$ commutes with $\wP_j$, $j=1,2$ and $\wE^1=E^1_{\widetilde T}\oplus E^{-1}_{\widetilde T}$, where the eigenspaces $E^1_{\widetilde T}$ and $E^{-1}_{\widetilde T}$ of $\widetilde T$ are neutral. Thus, we can assume that $\widetilde Tu=u$, which leads to $\wJ_{w_5}u=\wbj u$. Then 
\begin{equation}\label{eq:A11-2}
A^{\tau}_{1}\wJ_{w_5}A_{1}=A_1^{\tau}\begin{pmatrix}
0&0&1&0\\0&0&0&1\\1&0&0&0\\0&1&0&0
\end{pmatrix}
A_1
=
-\begin{pmatrix}0&1&0&0\\-1&0&0&0\\0&0&0&1\\0&0&-1&0\end{pmatrix}
=-J_{z_5}.
\end{equation}
 Thus, equation~\eqref{eq:A11-2} leads to three relations
$$
-2a_2a_3+2a_1a_4=-1,\quad a_1a_2+a_3a_4=0,\quad a_1^2-a_2^2+a_3^2-a_4^2=0,
$$
giving the solution
\begin{equation}\label{eq:solA}
a_2=a_3,\quad a_1=-a_4,\quad a_1^2+a_3^2=\frac{1}{2}.
\end{equation}
The operator $T=J_{z_1}J_{z_2}J_{z_5}$ leaves invariant subspace $E^1$ and therefore we have to check the equality $A_1J_{z_1}J_{z_2}J_{z_5}A_1^{\tau}=-\wJ_{w_1}\wJ_{w_2}\wJ_{w_5}$. We calculate 
$$
A_1J_{z_1}J_{z_2}J_{z_5}A_1^{\tau}=-\wJ_{w_1}\wJ_{w_2}A_1J_{z_5}A_1^{\tau}=-\wJ_{w_1}\wJ_{w_2}\wJ_{w_5},
$$
where we used the relations $A_1\bi=-\wbi A_1$ and $A_{1}J_{z_5}A^{\tau}_{1}=\wJ_{w_5}$.

To construct the remaining parts $A_I$ of the map $A$, we use Theorem~\ref{th:general} and the maps
$$
G_{1,-1}=J_{z_6}\colon E^1\to E^{1,-1},\quad G_{-1,1}=J_{z_3}\colon E^1\to E^{-1,1},\quad G_{-1,-1}=J_{z_1}\colon E^1\to E^{-1,-1}.
$$
Observe that the solution~\eqref{eq:solA} shows that $A_{1}v$ is a null vector. 
\\

{\sc Case $(r,s)=(5,1)$}. 
In this case we change the arguments and use the isomorphism
between $\mathcal{N}_{5,2}$ and $\mathcal{N}_{2,5}$.
Assume that the map $
\Phi=A\oplus C\colon\mathcal{N}_{5,2}\to\mathcal{N}_{2,5}$, and $C(z_k)=w_k$
is a Lie algebra isomorphism. Recall that minimal admissible modules $V^{5,2},V^{2,5}$ and $V^{5,1},V^{1,5}$ are irreducible and isometric to $\mathbb R^{8,8}$.
The natural inclusions $\mathbb{R}^{5,1}\subset \mathbb{R}^{5,2}$ and 
$\mathbb{R}^{1,5}\subset \mathbb{R}^{2,5}$ 
define the Clifford action of $\Cl_{5,1}$ and $\Cl_{1,5}$ on $V^{5,1}$ and $V^{1,5}$, respectively by restrictions of the Clifford action of $\Cl_{5,2}$ and $\Cl_{2,5}$.

Let $\pi_{-}:\mathbb{R}^{5,2}\to\mathbb{R}^{5,1}$ be the projection
map defined by
\[
z_1\mapsto z_1,~\,~\ldots,~\,~z_{6}\mapsto z_{6}, \quad z_{7}\mapsto 0,
\]
and let
$\pi_{+}:\mathbb{R}^{2,5}\to\mathbb{R}^{1,5}$ be the projection
defined by
\[
w_1\mapsto w_1,~\,\ldots,\,~w_{6}\mapsto w_6, \quad w_{7}\mapsto 0.
\]
Then the map 
\[
\Id\oplus\,\pi_{-}\colon\mathcal N_{5,2}=V^{5,2}\oplus\mathbb{R}^{5,2}\to \mathcal N_{5,1}=V^{5,1}\oplus\mathbb{R}^{5,1}
\]
is a Lie algebra homomorphism with kernel $K_{-}=\spn\{z_7\}$.
Also $\Id\oplus\,\pi_{+}$ is a Lie algebra homomorphism
from $\mathcal{N}_{2,5}$ to $\mathcal{N}_{1,5}$ with kernel $K_{+}=\spn\{w_7\}$.
Then the isomorphism $\Phi$
induces an isomorphism $\overline{\Phi}$  
between $\mathcal{N}_{5,1}$ and $\mathcal{N}_{1,5}$ by
\[
\begin{CD}
\{0\}@>>> K_{-} @>>> \mathcal{N}_{5,2} @>{\Id\oplus\,\pi_{-}}>> \mathcal{N}_{5,1}@>>>\{0\}\\
@.        @VV{C}V @V{\Phi}VV @VV{\overline{\Phi}}V\\                          
\{0\}@>>> K_{+}@>>> \mathcal{N}_{2,5}@>{\Id\oplus\,\pi_{+}}>> \mathcal{N}_{1,5}@>>>\{0\}.
\end{CD}
\]
Hence the Lie algebras $\mathcal{N}_{5,1}$ and $\mathcal{N}_{1,5}$ are isomorphic. 

The Lie algebra isomorphism $\bar\Phi\colon\mathcal{N}_{5,1}\to\mathcal{N}_{1,5}$ can be also induced by the isomorphism $\Phi\colon\mathcal{N}_{6,1}\to\mathcal{N}_{1,6}$, which we will construct later in this theorem.
\\

{\sc Case $(r,s)=(6,2)$}.
The minimal admissible modules, that are also irreducible, of $\Cl_{6,2}$ and $\Cl_{2,6}$ are isometric to $\mathbb{R}^{16,16}$.  
We fix mutually commuting isometric involutions 
\[
P_1=J_{z_1}J_{z_2}J_{z_3}J_{z_4},\quad P_2=J_{z_1}J_{z_2}J_{z_5}J_{z_6},\quad
P_{3}=J_{z_1}J_{z_2}J_{z_7}J_{z_8},
\]
on $V^{6,2}$ and the complementary anti-isometric operators
$
R_1=J_{z_3}J_{z_7}$, $R_2=J_{z_5}J_{z_7}$, $R_3=J_{z_7}$.
Define the quaternion structure
$
\bi=J_{z_1}J_{z_2}$, $\bj=J_{z_1}J_{z_3}J_{z_5}J_{z_7}$, $\bk=J_{z_2}J_{z_3}J_{z_5}J_{z_7}$.
Each of the space $E^{I}=\bigcap\limits_{j=1}^3 E_{P_j}^{k_j}$ is isometric to $\mathbb R^{2,2}$ according to Table~\ref{t:62}.
\begin{table}[h]
\center\caption{Commutation relations of operators on $V^{6,2}$}
\begin{tabular}{|c||c|c|c|c|c|c|c|c|c|c|c|c|c|c|}
\hline
\ &$J_{z_1}$&$J_{z_2}$&$J_{z_3}$&$J_{z_4}$&$J_{z_5}$&$J_{z_6}$&$J_{z_7}$&$J_{z_8}$&$R_1$&$R_2$&$R_3$&$\bi$&$\bj$&$\bk$
\\
\hline\hline
$P_1$&-1&-1&-1&-1&1&1&1&1&-1&1&1&1&1&1
\\
\hline
$P_2$&-1&-1&1&1&-1&-1&1&1&\ &-1&1&1&1&1
\\
\hline
$P_3$&-1&-1&1&1&1&1&-1&-1 &\ &\ &-1&1&1&1
\\
\hline\hline
\end{tabular}\label{t:62}
\end{table}
Denote $E^1=\cap_{j=1}^{3}E^1_{P_j}$ and fix a vector $v\in E^{1}$ such that
$\la v,v\ra_{V^{6,2}}=1$. Likewise we choose
\[
\wP_1=\wJ_{w_1}\wJ_{w_2}\wJ_{w_3}\wJ_{w_4},\quad \wP_2=\wJ_{w_1}\wJ_{w_2}\wJ_{w_5}\wJ_{w_6},\quad
\wP_{3}=\wJ_{w_1}\wJ_{w_2}\wJ_{w_7}\wJ_{w_8}\quad\text{involutions},
\]
$$
\widetilde R_1=\wJ_{w_1},\quad \widetilde R_2=\wJ_{w_5},\quad \widetilde R_3=\wJ_{w_1}\wJ_{w_3}\wJ_{w_5}\quad\text{complementary operators},
$$
$$
\wbi=\wJ_{w_1}\wJ_{w_2},\quad \wbj=\wJ_{w_1}\wJ_{w_3}\wJ_{w_5}\wJ_{w_7},\quad\wbk=-\wJ_{w_2}\wJ_{w_3}\wJ_{w_5}\wJ_{w_7}\quad\text{quaternion structure}.
$$
The table of commutations is preserved if we change $J_{z_k}$ to $\wJ_{w_k}$. 

Assume that there is a Lie algebra isomorphism
$$
\Phi=A\oplus C\colon\mathcal N_{6,2}\to\mathcal N_{2,6}, \qquad C(z_k)=w_k, \quad C^{\tau}(w_k)=-z_k.
$$ 
Since $AP_j=\wP_jA$ we have $A=\oplus A_I$, $A_I\colon E^I\to\wE^I$, and 
\begin{equation}\label{eq:q62}
A_I\bi=-\wbi A_I,\quad A_I\bj=\wbj A_I,\quad A_I\bk=-\wbk A_I\quad\text{for any}\quad I=(k_1,k_2,k_3)
\end{equation}
by Corollary~\ref{rem:important}. The map $A_{1}\colon E^1\to \wE^1$ is defined by the relation
$A_{1}v=(a_1+a_2\wbi+a_3\wbj+a_4\wbk)u$ and~\eqref{eq:q62} for $I=(1,1,1)$. All other operators leaving the space $E^1$ invariant are linear combination of quaternion structure.

The maps $A_I$ for other multi-indices $I$ are constructed by Theorem~\ref{th:general} by making use of the following maps 
\begin{equation*}
\begin{array}{lclcllll}
&G_{1,1,-1} &= &J_{z_7}\colon E^1\to E^{1,1,-1},\qquad
& G_{1,-1,1}&=&J_{z_5}\colon E^1\to E^{1,-1,1},
\\
&G_{1,-1,-1} &=&J_{z_1}J_{z_3}\colon E^1\to E^{1,-1,-1},
&G_{-1,1,1}&=&J_{z_3}\colon E^1\to E^{-1,1,1},
\\
&G_{-1,1,-1}& = &J_{z_1}J_{z_5}\colon E^1\to E^{-1,1,-1},
&G_{-1,-1,1}&=&J_{z_1}J_{z_7}\colon E^1\to E^{-1,-1,1},
\\
&G_{-1,-1,-1}&=&J_{z_1}\colon E^1\to E^{-1,-1,-1}.
\end{array}
\end{equation*}
\\

{\sc Case $(r,s)=(6,1)$}. 
The minimal admissible module $V^{6,1}$ 
is isometric to $\mathbb{R}^{8,8}$. The isometric involutions and the complementary anti-isometric operators are
$$
P_1=J_{z_1}J_{z_2}J_{z_3}J_{z_4},\quad P_2=J_{z_1}J_{z_2}J_{z_5}J_{z_6},\qquad
R_1=J_{z_1}J_{z_7},\quad R_2=J_{z_5}J_{z_7},\quad R_3=J_{z_7}.
$$
The quaternion structure is 
$\bi=J_{z_1}J_{z_2}$, $\bj=J_{z_1}J_{z_3}J_{z_5}J_{z_7}$, and $\bk=J_{z_2}J_{z_3}J_{z_5}J_{z_7}$. 
\begin{table}[h]
\center\caption{Commutation relations of operators on $V^{6,1}$}
\begin{tabular}{|c||c|c|c|c|c|c|c|c|c|c|c|c|c|c|}
\hline
\ &$J_{z_1}$&$J_{z_2}$&$J_{z_3}$&$J_{z_4}$&$J_{z_5}$&$J_{z_6}$&$J_{z_7}$&$R_1$&$R_2$&$R_3$&$\bi$&$\bj$&$\bk$
\\
\hline\hline
$P_1$&-1&-1&-1&-1&1&1&1&-1&1&1&1&1&1
\\
\hline
$P_2$&-1&-1&1&1&-1&-1&1&\ &-1&1&1&1&1
\\
\hline
$T$&1&-1&1&-1&1&-1&-1&\ &\ &-1&-1&-1&1
\\
\hline\hline
\end{tabular}\label{t:61}
\end{table}
Let $v\in E^{1}=\cap_{j=1}^{2}E^{1}_{P_j}$ be such that $\la v,v\ra_{V^{6,1}}=1$. Then
$
\{v,\ \bi(v),\ \bj(v),\ \bk(v)\}$ is an orthonormal basis for $E^{1}$.
To show that the basis is orthogonal we argue as following. The operator $T=J_{z_1}J_{z_3}J_{z_5}$ is an isometry, commutes with $P_j$, $j=1,2$, and therefore it decomposes the space $E^1$ on two orthogonal subspaces: $E^1=\spn\{v,\bk(v)\}\oplus\spn\{\bi(v),\bj(v)\}$, see~\eqref{eq:isomP}. If it is necessary we change $v$ to $\tilde v$ by Lemma~\ref{orthogonal}, where we set $\mathbb J=\bk$.

Analogously, we fix the involutions and the anti-isometric complementary operators
\[
\wP_1=\wJ_{w_1}\wJ_{w_2}\wJ_{w_3}\wJ_{w_4},\quad
\wP_2=\wJ_{w_1}\wJ_{w_2}\wJ_{w_5}\wJ_{w_6},\qquad
\wR_1=\wJ_{w_1}, \quad \wR_2=\wJ_{w_5}
\] 
acting on $V^{1,6}$. Set the quaternion structure $\wbi=\wJ_{w_1}\wJ_{w_2}$, $\wbj=\wJ_{w_1}\wJ_{w_3}\wJ_{w_5}\wJ_{w_7}$, and $\wbk=-\wJ_{w_2}\wJ_{w_3}\wJ_{w_5}\wJ_{w_7}$.
Choose a vector $u\in \wE^{1}$ such that $\la u,u\ra_{V^{1,6}}=1$. By making use the quaternion structure we form an orthonormal basis on space $\wE^1$.

Is $\Phi=A\oplus C\colon\mathcal N_{6,1}\to\mathcal N_{1,6}$ is an isomorphism, then it has to satisfy Lemma~\ref{relation between volume form}. We construct the map  $A\colon V^{6,1} \to V^{1,6}$, $A=\oplus A_I$, by blocks $A_I\colon E^{I}\to\wE^{I}$. Put $A_{1}(v)=(a_1+a_2\wbi+a_3\wbj+a_4\wbk)u\not=0$. Then by the action of quaternion structure
$$
A_{1}\bi(v)=-\wbi A_{1}(v),\quad A_{1} \bj(v)=\wbj A_{1}(v),\quad A_{1} \bk(v)=-\wbk A_1(v), 
$$
we find all the coefficients of $A_1$. The map $A_1$ must satisfies
the condition $A_{1}^{\tau}\wJ_{w_7}A_{1}=-J_{z_7}$. Arguing as in the case of the construction of the isomorphism $\mathcal N_{5,2}\cong\mathcal N_{2,5}$, we fined that $J_{z_7}v=-\bj(v)$ and $\wJ_{w_7}u=-\wbi(v)$ and the matrices $A$ and $A^{\tau}$ are given by~\eqref{eq:A11_1}. thus we obtain the solution
$
a_2=-a_3$, $a_1=a_4$, $a_1^2+a_3^2=\frac{1}{2}$.
We finish the proof by applying Theorem~\ref{th:general} and using the maps
$$
G_{1,-1}=J_{z_5}\colon E^1\to E^{1,-1},
\quad
G_{-1,1}=J_{z_3}\colon E^1\to E^{-1,1},\quad
G_{-1,-1}=J_{z_1}\colon E^1\to E^{-1,-1}.
$$
\end{proof}

\begin{theorem}\label{th:automorphisms}
The Lie algebras $\mathcal N_{r,r}$, admit a Lie algebra automorphism $\Psi=A\oplus C\colon \mathcal N_{r,r}=V^{r,r}\oplus\mathbb R^{r,r}\to\mathcal N_{r,r}=V^{r,r}\oplus\mathbb R^{r,r}$ with $CC^{\tau}=-\Id$, if $r=1,2,4$.
\end{theorem}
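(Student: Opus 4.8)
The plan is to exhibit, for each $r\in\{1,2,4\}$, an explicit automorphism $\Psi=A\oplus C$ following the scheme of the proofs of Theorems~\ref{th:10-01} and~\ref{th:51-62}, the only new ingredient being the choice of $C$. Fix an orthonormal basis $\{z_1,\dots,z_r,z_{r+1},\dots,z_{2r}\}$ of $\mathbb R^{r,r}$ with $z_1,\dots,z_r$ positive and $z_{r+1},\dots,z_{2r}$ negative, and let $C\colon\mathbb R^{r,r}\to\mathbb R^{r,r}$ be the sign-reversing involution $C(z_i)=z_{r+i}$, $C(z_{r+i})=z_i$, $i=1,\dots,r$. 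Computing the adjoint one gets $C^{\tau}(z_i)=-z_{r+i}$ and $C^{\tau}(z_{r+i})=-z_i$, so $C^{\tau}=-C$ and $CC^{\tau}=C^{\tau}C=-\Id$; thus $C$ has the property demanded by the statement, and by Theorem~\ref{property of isomorphism} it suffices to produce a linear bijection $A\colon V^{r,r}\to V^{r,r}$ with $A^{\tau}J_{z}A=J_{C^{\tau}(z)}$, i.e. $A^{\tau}J_{z_i}A=-J_{z_{r+i}}$ and $A^{\tau}J_{z_{r+i}}A=-J_{z_i}$. Because $C^{\tau}C=-\Id$, Corollary~\ref{rem:important} and Theorem~\ref{th:general} apply without change, the ``tilde'' operators $\wJ_{w_k}:=J_{C(z_k)}$ now being operators on the \emph{same} module built from the swapped generators.

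The construction of $A$ then proceeds as in the cited theorems. For each $r$ I would choose mutually commuting isometric involutions $P_1,\dots,P_N$ on $V^{r,r}$ (products of the $J_{z_k}$), together with complementary anti-isometric operators $R_1,\dots,R_{N+1}$ so that, by Lemmas~\ref{lem:PT1}--\ref{lem:PT2}, the common eigenspace $E^1=\bigcap_jE^1_{P_j}$ carries a positive definite or neutral restricted scalar product; after replacing $\langle.\,,.\rangle_{V^{r,r}}$ by $-\langle.\,,.\rangle_{V^{r,r}}$ if needed (Remark~\ref{sign change}), pick $v\in E^1$ with $\langle v,v\rangle_{V^{r,r}}=1$. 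Setting $\wP_j$ to be $P_j$ with $J_{z_k}$ replaced by $\wJ_{w_k}$, Corollary~\ref{rem:important} gives $A=\oplus_IA_I$ with $A_I\colon E^I\to\wE^I$. On $E^1$ I would introduce the structure modelled on the pertinent normed division algebra --- a quaternion triple $\bi,\bj,\bk$ for $r=4$ (in complete analogy with the case $r=4$ of Theorem~\ref{th:10-01} and with Theorem~\ref{th:51-62}), and the corresponding lower-dimensional analogue for $r=1,2$ --- write $A_1(v)=(a_1+a_2\wbi+a_3\wbj+a_4\wbk)u$ (or its analogue), and impose the relations~\eqref{isomorphism relation} coming from the operators (generators and their products) that leave $E^1$ invariant. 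This produces a small polynomial system in the parameters, of the same type as~\eqref{eq:solA}; any real solution with $\det A_1\neq0$ defines $A_1$, the remaining blocks are given by~\eqref{eq:AII}, and Theorem~\ref{th:general} guarantees that $\Psi=A\oplus C$ is a Lie algebra automorphism, with $CC^{\tau}=-\Id$ by construction.

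The step I expect to be the crux is the solvability of the parameter system for $A_1$: this is precisely where the value of $r$ enters. For $r\in\{1,2,4\}$, the dimensions of $\mathbb R,\mathbb C,\mathbb H$, the resulting equations are consistent and admit solutions with $\det A_1\neq0$; for $\mathcal N_{3,3}$ the same recipe leads to an inconsistent system, which is exactly the content of the companion Corollary~\ref{non-existence Cl33}. A secondary technical nuisance is the bookkeeping for the reducible minimal admissible modules $V^{1,1}=V\oplus V$ and $V^{2,2}=V\oplus V$, for which the operators needed to build the quaternion (respectively complex) structure on $E^1$ are not all products of the $J_{z_k}$ and must be taken from the commutant reflecting the two irreducible summands; alternatively, for these two small cases one can simply write $A$ down as an explicit matrix and verify~\eqref{isomorphism relation} directly.
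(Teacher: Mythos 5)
Your overall strategy coincides with the paper's: take $C$ to be a sign-reversing swap of the positive and negative basis vectors (so $C^{\tau}=-C$ and $CC^{\tau}=-\Id$), reduce via Theorem~\ref{property of isomorphism} to finding $A$ with $A^{\tau}J_{z}A=J_{C^{\tau}(z)}$, and build $A$ from the action of the $J_{z_k}$ on a generating vector $v$. Your computation of $C^{\tau}$ is correct, and your observation that the machinery of Lemma~\ref{relation between volume form}, Corollary~\ref{rem:important} and Theorem~\ref{th:general} only uses $C^{\tau}C=-\Id$ (not $r\neq s$) is also right and is implicitly how the paper proceeds.

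The genuine gap is that you never actually produce $A$. The theorem is a pure existence statement, and you yourself identify the solvability of the parameter system for $A_1$ as ``the crux'' --- and then assert, rather than prove, that ``for $r\in\{1,2,4\}$\dots the resulting equations are consistent and admit solutions with $\det A_1\neq0$,'' with the dimensions of $\mathbb R,\mathbb C,\mathbb H$ offered as a heuristic. Likewise for $r=1,2$ you say one ``can simply write $A$ down as an explicit matrix and verify'' but do not do so. The paper's proof consists precisely of this missing work: for $r=1,2$ it exhibits the $4\times4$ and $8\times8$ matrices of $A$ explicitly, and for $r=4$ it sidesteps any polynomial system by choosing \emph{four} commuting involutions $P_1,\dots,P_4$, so that all joint eigenspaces of $V^{4,4}$ are one-dimensional and $A$ is forced by $A(v)=v$, $A(J_{z_j}v)=J_{C(z_j)}v$ together with a sign rule on the degree-two products $J_{z_j}J_{z_k}v$; one then only checks the relations of Lemma~\ref{relation between volume form}. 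Note also that your proposed mechanism for why $r=3$ fails (inconsistency of a quaternionic parameter system) is not how the obstruction is actually detected in the paper: Corollary~\ref{non-existence Cl33} derives the non-existence from Lemma~\ref{rem:contradiction} applied to $T=J_{z_1}J_{z_2}J_{z_3}$, i.e.\ from the impossibility of a symmetric $S$ with $STS=-T$ on a suitable eigenspace. To turn your proposal into a proof you must either display the matrices $A$ (as the paper does) or exhibit an explicit solution of your parameter system in each of the three cases.
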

\begin{proof} 
{\sc Case $\mathcal N_{1,1}$}. The minimal admissible module $V^{1,1}$ is isometric to $\mathbb R^{2,2}$. We choose the basis $\{v,\  J_{z_1}v,\  J_{z_2}v,\  J_{z_1}J_{z_2}v,\ \  z_1,\  z_2\}$ for $\mathcal N_{1,1}$.
Set $
C(z_1)=z_2$, $C(z_2)=z_1$, and $C^{\tau}(z_1)=-z_2$, $C^{\tau}(z_2)=-z_1$.
In order to satisfy Lemma~\ref{relation between volume form} we define
$$
A=\begin{pmatrix} 
1&0&0&0
\\
0&0&1&0
\\
0&1&0&0
\\
0&0&0&1
\end{pmatrix}
\quad\text{and}\quad
A^{\tau}=\begin{pmatrix} 
1&0&0&0
\\
0&0&-1&0
\\
0&-1&0&0
\\
0&0&0&1
\end{pmatrix}.
$$

{\sc Case $\mathcal N_{2,2}$}.  The minimal admissible module $V^{2,2}$ is isometric to $\mathbb R^{4,4}$. We fix the isometric involution $P=J_{z_1}J_{z_2}J_{z_3}J_{z_4}$ and choose the basis
$$
\left\{
\begin{array}{lllll}
&x_1=v,\ \ &x_2=J_{z_1}v,\ \ &x_3=J_{z_2}v,\ \ &x_4=J_{z_1}J_{z_2}v,
\\
&x_5=J_{z_3}v,\ \ &x_6=J_{z_4}v,\ \ &x_7=J_{z_1}J_{z_3}v,\ \ &x_8=J_{z_1}J_{z_4}v
\end{array}
\right\}\quad\text{for} \quad V^{2,2},
$$
where the vector $v$ is such that $Pv=v$ and $\la v,v,\ra_{V^{2,2,}}=1$. We first listed the positive vectors of the basis and then negative vectors and therefore the matrix for the metric is the standard one: the diagonal matrix $I_{4,4}$ with first four diagonal entries 1 and the last four diagonal entries (-1). Satisfying the conditions of Lemma~\ref{relation between volume form} we set
$$
C=
\begin{pmatrix}
0&0&0&1
\\
0&0&1&0
\\
0&1&0&0
\\
1&0&0&0
\end{pmatrix}
\quad\text{and}\quad
A=
\begin{pmatrix}
1&0&0&0&0&0&0&0
\\
0&0&0&0&0&1&0&0
\\
0&0&0&0&1&0&0&0
\\
0&0&0&-1&0&0&0&0
\\
0&0&1&0&0&0&0&0
\\
0&1&0&0&0&0&0&0
\\
0&0&0&0&0&0&1&0
\\
0&0&0&0&0&0&0&1
\end{pmatrix}.
$$

{\sc Case $\mathcal N_{4,4}$}. The minimal admissible module $V^{4,4}$ is isometric to $\mathbb R^{8,8}$. We fix the isometric involutions 
$$
P_1=J_{z_1}J_{z_2}J_{z_3}J_{z_4},\ P_2=J_{z_1}J_{z_2}J_{z_5}J_{z_6},\ P_3=J_{z_1}J_{z_2}J_{z_7}J_{z_8},\ P_4=J_{z_1}J_{z_3}J_{z_5}J_{z_7}.
$$ We choose $v$ such that $P_j(v)=v$, $j=1,2,3,4$, and $\la v,v\ra_{V^{2,2,}}=1$ and construct the basis for $V^{4,4}$, where we place first the positive vectors and then the negative ones.
\begin{equation}\label{eq:basis44}
\left\{
\begin{array}{lllll}
&x_1=v,\ \ &x_2=J_{z_1}v,\ \ &x_3=J_{z_2}v,\ \ &x_4=J_{z_3}v,
\\
&x_5=J_{z_4}v,\ \ &x_6=J_{z_1}J_{z_2}v,\ \ &x_7=J_{z_1}J_{z_3}v,\ \ &x_8=J_{z_1}J_{z_4}v,
\\
&x_9=J_{z_5}v,\ \ &x_{10}=J_{z_6}v,\ \ &x_{11}=J_{z_7}v,\ \ &x_{12}=J_{z_8}v,
\\
&x_{13}=J_{z_1}J_{z_5}v,\ \ &x_{14}=J_{z_1}J_{z_6}v,\ \ &x_{15}=J_{z_1}J_{z_7}v,\ \ &x_{16}=J_{z_1}J_{z_8}v.
\end{array}
\right\}
\end{equation}
Now we define $C$ as before by $C(z_j)=z_{9-j}$ and $C^{\tau}(z_j)=-z_{9-j}$, $j=1,\ldots,8$. The map $A$ is also similar to the previous cases. Namely, we set
$$
A(v)=v,\quad A(J_{z_j}v)=J_{C(z_j)}v,\quad \text{and}\quad
A^{\tau}(v)=v,\quad A^{\tau}(J_{z_j}v)=-J_{C^{\tau}(z_j)}v.
$$
Then also 
$$
A(J_{z_j}J_{z_k}v)=
\begin{cases}
-J_{C(z_j)}J_{C(z_k)}v,\quad&\text{if}\quad J_{z_j}J_{z_k}v\ \text {is positive},
\\
J_{C(z_j)}J_{C(z_k)}v,\quad&\text{if}\quad J_{z_j}J_{z_k}v\ \text {is negative},
\end{cases}
$$
and analogously for $A^{\tau}$. All conditions of Lemma~\ref{relation between volume form} are satisfied.
\end{proof}

\begin{theorem}\label{periodicity}
If the Lie algebra $\mathcal{N}_{r,s}$ is isomorphic to the Lie algebra $\mathcal{N}_{s,r}$, then 
\begin{itemize}
\item[1.]{the Lie algebras $\mathcal{N}_{r,s+8k}$ and $\mathcal{N}_{s+8k,r}$ 
are isomorphic;}
\item[2.]{the Lie algebras $\mathcal{N}_{r+8k,s}$ and $\mathcal{N}_{s,r+8k}$ 
are isomorphic;}
\item[3.]{the Lie algebras $\mathcal{N}_{r+4k,s+4k}$ and $\mathcal{N}_{s+4k,r+4k}$ 
are isomorphic.}
\end{itemize}
for any $k=1,2,\ldots$.
\end{theorem}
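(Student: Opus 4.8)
The plan is to establish the three assertions for $k=1$ and then deduce the general case by iteration, each step invoking the relevant Clifford periodicity isomorphism of~\eqref{perCl}. I describe the first assertion; the other two are the same with $\Cl_{0,8}$ replaced by $\Cl_{8,0}$ and by $\Cl_{4,4}$ respectively. By~\eqref{perCl} one has $\Cl_{r,s+8}\cong\Cl_{r,s}\otimes\Cl_{0,8}$, and correspondingly (cf.~\cite{FM}) a minimal admissible module of $\Cl_{r,s+8}$ is realised on $V^{r,s}\otimes V^{0,8}$ with the product scalar product: the generators $z_i\in\mathbb R^{r,s}$ act as $J_{z_i}\otimes\Omega$, where $\Omega$ is the volume form of $\Cl_{0,8}$ (so $\Omega^{2}=\Id$, $\Omega^{\tau}=\Omega$, and $\Omega$ anticommutes with every $J^{0,8}_{e_k}$), while the new generators $e_k\in\mathbb R^{0,8}$ act as $\Id\otimes J^{0,8}_{e_k}$; here $\mathbb R^{r,s+8}=\mathbb R^{r,s}\oplus_{\perp}\mathbb R^{0,8}$ and, on the target side, $\mathbb R^{s+8,r}=\mathbb R^{s,r}\oplus_{\perp}\mathbb R^{8,0}$. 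This is consistent with Table~\ref{t:dim} because $s\mapsto s+8$ preserves the division-algebra type of $\Cl_{r,s}$, hence the ``$V$ or $V\oplus V$'' (respectively $V_{\pm}$) structure of the minimal admissible module.

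The isomorphism $\Phi=A\oplus C\colon\mathcal N_{r,s+8}\to\mathcal N_{s+8,r}$ will then be produced by the eigenspace construction of Theorem~\ref{th:general}. For the mutually commuting isometric involutions I take those realising the hypothesised isomorphism $\Phi_0=A_0\oplus C_0\colon\mathcal N_{r,s}\to\mathcal N_{s,r}$, lifted to $V^{r,s}\otimes V^{0,8}$ --- a product of an even number of the operators $J^{r,s}_{z_i}\otimes\Omega$ equals the corresponding product of the $J^{r,s}_{z_i}$ tensored with $\Id$ --- together with the four involutions on the $\Cl_{0,8}$-factor that realise $\mathcal N_{0,8}\cong\mathcal N_{8,0}$ via Theorem~\ref{th:10-01} (with $r=8$); the $\wP_j$ arise by the substitution $J\mapsto\wJ$. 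On the resulting $E^{1}$ I build the seed map $A_{1}\colon E^{1}\to\wE^{1}$ of~\eqref{eq:A11} from the seed maps of the two factors, and set $C=C_0\oplus C_1$, with $C_0\colon\mathbb R^{r,s}\to\mathbb R^{s,r}$ and $C_1\colon\mathbb R^{0,8}\to\mathbb R^{8,0}$ the sign-reversing maps of the factors, so that $CC^{\tau}=-\Id$; by Theorem~\ref{property of isomorphism} I normalise so that $|\det(A_0A_0^{\tau})|=|\det(A_1A_1^{\tau})|=1$. Verifying the seed relation~\eqref{eq:A11} for $A_{1}$ reduces to the analogous relations for the two factors together with Lemma~\ref{relation between volume form}, which moves $A_{1}$ past the volume form $\Omega$ (a product of eight operators, so the sign is $(-1)^{4}=1$); Theorem~\ref{th:general} then supplies $A$ and hence $\Phi$. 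Since $\Phi$ again has $CC^{\tau}=-\Id$, the iteration in $k$ runs, and the same scheme with $\Cl_{8,0}$, respectively $\Cl_{4,4}$, in place of $\Cl_{0,8}$ gives assertions~2 and~3 --- using for assertion~2 the isomorphism $\mathcal N_{8,0}\cong\mathcal N_{0,8}$ of Theorem~\ref{th:10-01}, and for assertion~3 the automorphism of $\mathcal N_{4,4}$ with $CC^{\tau}=-\Id$ of Theorem~\ref{th:automorphisms}.

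The main obstacle is the sign bookkeeping, concentrated in assertion~3. Because $\mathcal N_{s+4,r+4}$ is the index-swap of $\mathcal N_{r+4,s+4}$, Theorem~\ref{property of isomorphism} forces any isomorphism between them (when $r\ne s$) to have $CC^{\tau}=-\Id$; hence the automorphism fed in on the $\Cl_{4,4}$-factor must itself have $C$-part with $CC^{\tau}=-\Id$, and the identity automorphism, for which $CC^{\tau}=+\Id$, will not do. This is exactly why the special automorphisms of $\mathcal N_{4,4}$ of Theorem~\ref{th:automorphisms}, together with the non-existence of such an automorphism for $\mathcal N_{3,3}$ (Corollary~\ref{non-existence Cl33}), were singled out. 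When $r=s$ the hypothesis is automatic, and the construction still applies with one of the special automorphisms of $\mathcal N_{r,r}$, $r=1,2,4$, playing the role of $\Phi_0$, while $\mathcal N_{3,3}$ is left to be treated separately.
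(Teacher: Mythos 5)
Your proposal is correct and follows essentially the same route as the paper: realise $V^{r,s+8}$ as $V^{r,s}\otimes V^{0,8}$ with the old generators acting as $J_{z_i}\otimes\Omega^{0,8}$, lift the commuting involutions from both factors, build the seed map on $E^1\otimes F^1$ as the ($\mathbb Z_2$-graded) tensor product of the two factor seed maps, extend via Theorem~\ref{th:general}/Corollary~\ref{rem:important}, and for the $(+4,+4)$ case feed in the $\mathcal N_{4,4}$ automorphism with $CC^{\tau}=-\Id$ from Theorem~\ref{th:automorphisms}. The only difference is one of explicitness — the paper writes out the four parity cases of the graded tensor product for $\hat A_{1,1}$ in~\eqref{eq:tensor} — and your remarks on the $r=s$ case belong to Theorem~\ref{periodicity1} rather than to this statement.
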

\begin{proof}
Recall that if $(V^{r,s},\la .\,,.\ra_{V^{r,s}})$ is a minimal admissible module, then the products
$$
V^{r,s}\otimes V^{0,8},\quad V^{r,s}\otimes V^{8,0},\quad V^{r,s}\otimes V^{4,4}
$$
are minimal admissible if $V^{0,8}$, $V^{8,0}$, and $V^{4,4}$ are minimal admissible modules, see~\cite{FM}. The scalar product on $V^{r,s+8}$ is given by the product of bilinear symmetric forms on $V^{r,s}$ and $V^{0,8}$. Analogously, for the tensor products with $V^{8,0}$ and $V^{4,4}$. The representations are constructed as follows. Let $\{\zeta_1,\ldots, \zeta_8\}$ be an orthonormal bases for $\mathbb R^{0,8}$, $\mathbb R^{8,0}$, or $\mathbb R^{4,4}$, and $\bar J_{\zeta_{\alpha}}$, $\alpha=1,\ldots,8$ be the respective representations. Let $J_{z_j}$, $j=1,\ldots,r+s$ be representations of an orthonormal basis for $\mathbb R^{r,s}$. We denote by $\Omega^{0,8}=\prod_{\alpha=1}^{8}\bar J_{\zeta_{\alpha}}$ the volume form for $\Cl_{0,8}$ and analogously for others Clifford algebras. Set
\begin{eqnarray*}
\hat J_{z_j} & = & J_{z_j}\otimes\Omega^{0,8} \quad\text{for}\quad j=1,\ldots,r+s,
\\
\hat J_{\zeta_\alpha} & = & \Id_{V^{r,s}}\otimes\, \bar J_{\zeta_\alpha} \quad\text{for}\quad \alpha=1,\ldots,8.
\end{eqnarray*}
Then the maps $\hat J_{z_j}$ and $\hat J_{\zeta_\alpha}$ are representations of an orthonormal basis for $\mathbb R^{r,s+8}$ as it was shown in~\cite{FM}. If we substitute the volume form $\Omega^{0,8}$ by $\Omega^{8,0}$ or $\Omega^{4,4}$, then we obtain the representations for the basis vectors of $\mathbb R^{r+8,s}$ and $\mathbb R^{r+4,s+4}$, respectively.

We start from the proof of the first case, since the rest can be proven similarly. Let $\Phi=A\oplus C\colon\mathcal N_{r,s}\to\mathcal N_{s,r}$ and $\bar \Phi=\bar A\oplus \bar C\colon \mathcal N_{0,8}\to\mathcal N_{8,0}$ be the Lie algebra isomorphisms, with $A\colon V^{r,s}\to V^{s,r}$ and $\bar A\colon V^{0,8}\to V^{8,0}$. Let $P_j$, $j=1,\ldots, p$ and $Q_k$, $k=1,2,3,4$, be mutually commuting isometric involutions on $V^{r,s}$ and $V^{0,8}$, respectively. Then $\hat P_j=P_j\otimes \Id$ and $\hat Q_k=\Id\otimes Q_k$ are mutually commuting isometric involutions on $V^{r,s}\otimes V^{0,8}$.
Let $E^1=\cap_{j=1}^p E^1_{P_j}$, $F^1=\cap_{k=1}^{4}E^1_{Q_j}$ and 
$$
v\in E^1,\ \ \la v,v\ra_{V^{r,s}}=1,\quad\text{  }\quad u\in F^1,\ \ \la u,u\ra_{V^{0,8}}=1.
$$
We have $\hat P_j(v\otimes u)=P_j(v)\otimes \Id(u)=v\otimes u$,  and $\hat Q_k(v\otimes u)=\Id(v)\otimes Q_k(u)=v\otimes u$. Therefore $v\otimes u\in E^1\otimes F^1$ and $\la v\otimes u,v\otimes u \ra_{V^{r,s+8}}=1$. The linear map $\hat A\colon V^{r,s+8}\to V^{s+8,r}$ of a Lie algebra isomorphism $\hat \Phi=\hat A\oplus\hat C\colon\mathcal N_{r,s+8}\to \mathcal N_{s+8,r}$ should satisfy Lemma~\ref{relation between volume form}. Thus we could apply Corollary~\ref{rem:important} and start the construction of $\hat A$ from the map $\hat A_{1,1}\colon E^1\otimes F^1\to \wE^1\otimes \wF^1$ and then extend it to an arbitrary $\hat A_{IJ}\colon E^I\otimes F^J\to \wE^I\otimes \wF^J$. Observe that if $\prod J_{z_k}$ leaves invariant the space $E^1$, then the product $\prod \hat J_{z_j}$ leaves the space $E^1\otimes F^1$ invariant and, analogously, if $\prod J_{\zeta_\alpha}$ leaves invariant the space $F^1$, then $\prod \hat J_{\zeta_\alpha}$ leaves the space $E^1\otimes F^1$ invariant. As a consequence, we also obtain that the space $E^1\otimes F^1$ will be invariant under the action of $\prod \hat J_{z_j}\prod \hat J_{\zeta_\alpha}$.

We denote by $\{z_1,\ldots,r_{r+s},\zeta_1,\ldots,\zeta_8\}$ an orthonormal basis for $\mathbb R^{r,s+8}$ with 
$$\{z_1,\ldots,z_r\}\  \text{positive and}\  \{z_{r+1},\ldots,z_{r+s},\zeta_1,\ldots,\zeta_8\}\  \text{negative elements}.
$$ Then, let $\{w_{r+s},\ldots,w_{1},\omega_8,\ldots,\omega_1\}$ be an orthonormal basis for $\mathbb R^{s+8,r}$ with 
$$\{w_{r+s},\ldots,w_{r+1},\omega_8,\ldots\omega_{1}\}\ \text{positive vectors and}\ \{w_{r},\ldots,w_{1}\}\ \text{negative vectors}.
$$ We let the map $\hat C\colon \mathbb R^{r,s+8}\to\mathbb R^{s+8,r}$ act on the basis by the following
$$
\hat C(z_j)=w_j,\  \hat C^{\tau}(w_j)=-z_j,\  j=1,\ldots,r+s,\ \ \hat C(\zeta_\alpha)=\omega_{\alpha},\ \hat C^{\tau}(\omega_\alpha)=-\zeta_{\alpha},\  \alpha=1,\ldots,8.
$$
We define the map $\hat A_{1,1}\colon E^1\otimes F^1\to \wE^1\otimes \wF^1$ by its action on different type of products of $\hat J_{z_j}$ and $\hat J_{\zeta_{\alpha}}$. Recall that 
$\prod_{j=1}^{p} \hat J_{z_j}\prod_{\alpha=1}^q\hat J_{\zeta_{\alpha}}=\prod_{j=1}^{p}J_{z_j}\otimes (\Omega^{0,8})^{p}\prod_{\alpha=1}^qJ_{\zeta_{\alpha}}$.
Then we define $\hat A_{1,1}\prod_{j=1}^{p} \hat J_{z_j}\prod_{\alpha=1}^q\hat J_{\zeta_{\alpha}}=$
\begin{equation}\label{eq:tensor}
\begin{cases}
A_1\prod\limits_{j=1}^{p}J_{z_j}\otimes\bar A_1(\Omega^{0,8})^{p}\prod\limits_{\alpha=1}^qJ_{\zeta_{\alpha}}
=(-1)^{m+k}\prod\limits_{j=1}^{p}\wJ_{\hat C(z_j)}(A_1^{\tau})^{-1}\otimes\Omega^{8,0}\prod\limits_{\alpha=1}^q\wJ_{\hat C(\zeta_{\alpha})}(\bar A_1^{\tau})^{-1},
\\  
\qquad\qquad \text{if}\ p=2m+1, \ \  q=2k+1,
\\
\\
A_1\prod\limits_{j=1}^{p}J_{z_j}  \otimes\bar A_1(\Omega^{0,8})^{p}\prod\limits_{\alpha=1}^qJ_{\zeta_{\alpha}}
=(-1)^{m+k}\prod\limits_{j=1}^{p}\wJ_{\hat C(z_j)}(A_1^{\tau})^{-1}\otimes\Omega^{8,0}\prod\limits_{\alpha=1}^q\wJ_{\hat C(\zeta_{\alpha})}\bar A_1,
\\  
\qquad\qquad \text{if}\ p=2m+1, \ \  q=2k,
\\
\\
A_1\prod\limits_{j=1}^{p}J_{z_j} \otimes\bar A_1\prod\limits_{\alpha=1}^qJ_{\zeta_{\alpha}}
=(-1)^{m+k}\prod\limits_{j=1}^{p}\wJ_{\hat C(z_j)}A_1\otimes\prod\limits_{\alpha=1}^q\wJ_{\hat C(\zeta_{\alpha})}(\bar A_1^{\tau})^{-1},
\\  
\qquad\qquad  \text{if}\ p=2m, \ \  q=2k+1,
\\
\\
A_1\prod\limits_{j=1}^{p}J_{z_j} \otimes\bar A_1\prod\limits_{\alpha=1}^qJ_{\zeta_{\alpha}}
=(-1)^{m+k}\prod\limits_{j=1}^{p}\wJ_{\hat C(z_j)}A_1\otimes\prod\limits_{\alpha=1}^q\wJ_{\hat C(\zeta_{\alpha})}\bar A_1,
\\  
\qquad\qquad \text{if}\ p=2m, \ \  q=2k.
\end{cases}
\end{equation}
We also can write the transposed map $\hat A_{1,1}^{\tau}$ by
$\hat A_{1,1}^{\tau}\prod_{j=1}^{p} \hat J_{\hat C(z_j)}\prod_{\alpha=1}^q\hat J_{\hat C(\zeta_{\alpha})}=$
\begin{eqnarray*}
\begin{cases}
A_1^{\tau}\prod\limits_{j=1}^{p}\wJ_{\hat C(z_j)}\otimes\bar A_1^{\tau}(\Omega^{8,0})^{p}\prod\limits_{\alpha=1}^q\wJ_{\hat C(\zeta_{\alpha})}
=(-1)^{m+k}\prod\limits_{j=1}^{p}J_{z_j}A_1^{-1}\otimes\Omega^{0,8}\prod\limits_{\alpha=1}^qJ_{\zeta_{\alpha}}\bar A_1^{-1},
\\  
\qquad\qquad \text{if}\ p=2m+1, \ \  q=2k+1,
\\
\\
A_1^{\tau}\prod\limits_{j=1}^{p}\wJ_{\hat C(z_j)}  \otimes\bar A_1^{\tau}(\Omega^{8,0})^{p}\prod\limits_{\alpha=1}^q\wJ_{\hat C(\zeta_{\alpha})}
=(-1)^{m+k+1}\prod\limits_{j=1}^{p}J_{z_j}A_1^{-1}\otimes\Omega^{0,8}\prod\limits_{\alpha=1}^qJ_{\zeta_{\alpha}}\bar A_1^{\tau},
\\  
\qquad\qquad \text{if}\ p=2m+1, \ \  q=2k,
\\
\\
A_1^{\tau}\prod\limits_{j=1}^{p}\wJ_{\hat C(z_j)} \otimes\bar A_1^{\tau}\prod\limits_{\alpha=1}^q\wJ_{\hat C(\zeta_{\alpha})}
=(-1)^{m+k+1}\prod\limits_{j=1}^{p}J_{z_j}A_1^{\tau}\otimes\prod\limits_{\alpha=1}^qJ_{\zeta_{\alpha}}\bar A_1^{-1},
\\  
\qquad\qquad  \text{if}\ p=2m, \ \  q=2k+1,
\\
\\
A_1^{\tau}\prod\limits_{j=1}^{p}\wJ_{\hat C(z_j)} \otimes\bar A_1^{\tau}\prod\limits_{\alpha=1}^q\wJ_{\hat C(\zeta_{\alpha})}
=(-1)^{m+k}\prod\limits_{j=1}^{p}J_{z_j}A_1^{\tau}\otimes\prod\limits_{\alpha=1}^qJ_{\zeta_{\alpha}}\bar A_1^{\tau},
\\  
\qquad\qquad \text{if}\ p=2m, \ \  q=2k.
\end{cases}
\end{eqnarray*}
Then the maps $G_{IJ}=G_I\otimes \bar G_J\colon E^1\otimes F^1\to E^I\otimes F^J$ will be used to define 
$\hat A_{IJ}\colon E^I\otimes F^J\to\wE^I\otimes \wF^J$. Namely
\begin{equation}\label{eq:tensor1}
\hat A_{IJ}=
\begin{cases}
(-1)^m\widetilde G_{IJ}(\hat A_{1,1}^{-1})^{\tau}G_{IJ}^{-1}\quad&\text{if}\quad p=2m+1,
\\
(-1)^m\widetilde G_{IJ}\hat A_{1,1}G_{IJ}^{-1}\quad&\text{if}\quad p=2m,
\end{cases}
\end{equation}
and
 $$
\hat A_{IJ}^{\tau}=
\begin{cases}
(-1)^{m+1} G_{IJ}\hat A_{1,1}^{-1}\widetilde G_{IJ}^{-1}\quad&\text{if}\quad p=2m+1,
\\
(-1)^m G_{IJ}\hat A_{1,1}^{\tau}\widetilde G_{IJ}^{-1}\quad&\text{if}\quad p=2m.
\end{cases}
$$
Thus, we obtain that the map $\hat \Phi=\hat A\oplus \hat C$, with $\hat A=\oplus_{IJ}\hat A_{IJ}$, is a Lie algebra isomorphism from $\mathcal N_{r,s+8}$ to $\mathcal N_{s+8,r}$, according to Corollary~\ref{rem:important}. We recall that we can choose the following map $\bar A$: $\bar A_1v=u$ and since the spaces $F^J$ are one dimensional, the corresponding maps $G_J\colon F^1\to F^J$ are given by the basis~\eqref{eq:basis80}.

The third statement is proved analogously, where we change the map $\bar A\colon V^{0,8}\to V^{8,0}$ to the map $\bar A\colon V^{4,4}\to V^{4,4}$ constructed in Theorem~\ref{th:automorphisms}. Then we use the definitions~\eqref{eq:tensor} and~\eqref{eq:tensor1} to construct the isomorphism $\hat \Phi=\hat A\oplus\hat C\colon\mathcal N_{r+4,s+4}\to\mathcal N_{s+4,r+4}$ by tensor product, where we change the volume forms $\Omega^{0,8}$ and $\Omega^{8,0}$ to $\Omega^{4,4}$. The maps $G_J\colon F^1\to F^J$ are given by the basis~\eqref{eq:basis44}.
\end{proof}

\begin{rem}
The reader can recognise in the construction of $\hat A$ the $\mathbb Z^2$-graded tensor product. Indeed we write $A=A^0\oplus A^1$ and $\bar A=\bar A^{0}\oplus \bar A^1$, where $A^0$ and $\bar A^0$ act on the even product of generators $J_{z_j}$ and $A^1$ and $\bar A^1$ act on the odd product of generators. Then formula~\eqref{eq:tensor} can be written as follows
$$
\hat A_1=A_1\hat\otimes \bar A_1=(A_1\otimes \bar A_1)^0\oplus(A_1\otimes \bar A_1)^1=\big((A_1^0\otimes \bar A_1^0)\oplus(A_1^1\otimes \bar A_1^1)\big)\otimes\big((A_1^0\otimes \bar A_1^1)\oplus(A_1^1\otimes \bar A_1^0)\big).
$$
This is not surprising, according to the $\mathbb Z_2$-graded structure of Clifford algebra and the isomorphism
$
\Cl(\mathbb R^{r,s}\oplus \mathbb R^{p,q})\cong\Cl(\mathbb R^{r,s})\hat\otimes\Cl(\mathbb R^{p,q})$, based on the $\mathbb Z^2$-graded tensor product~$\hat\otimes$.
\end{rem}

\begin{theorem}\label{periodicity1}
The following is true:
\begin{itemize}
\item[1.]{the Lie algebras $\mathcal{N}_{r,r+8k}$ and $\mathcal{N}_{r+8k,r}$ 
are isomorphic for $r=1,2,4$;}
\item[2.]{the Lie algebras $\mathcal{N}_{r+4k,r+4k}$, $r=1,2,4$ admit an automorphism $\Psi=A\oplus C$ with $CC^{\tau}=-\Id$.}
\end{itemize}
\end{theorem}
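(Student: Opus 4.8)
The plan is to derive both statements by running the tensor product construction from the proof of Theorem~\ref{periodicity}, but with the ``seed'' isomorphism being the automorphism $\Psi=A\oplus C$ of $\mathcal N_{r,r}$ produced in Theorem~\ref{th:automorphisms} --- which exists exactly for $r=1,2,4$ and satisfies $CC^{\tau}=-\Id$ --- rather than a genuine index swap $\mathcal N_{r,s}\to\mathcal N_{s,r}$ with $r\neq s$. The first point I would settle is that this replacement is admissible: the proof of Theorem~\ref{periodicity}, together with Lemma~\ref{relation between volume form} and Corollary~\ref{rem:important} which it uses, never really exploits $r\neq s$; every identity in those arguments is deduced only from the algebraic relations $CC^{\tau}=C^{\tau}C=-\Id$ and from the isomorphism relation~\eqref{isomorphism relation}. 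Hence those computations go through verbatim once we feed in an automorphism of $\mathcal N_{r,r}$ with $CC^{\tau}=-\Id$, and the fact that no such automorphism exists for $r=3$ (Corollary~\ref{non-existence Cl33}) is exactly what restricts the statement to $r\in\{1,2,4\}$.

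For the first assertion I would fix $r\in\{1,2,4\}$ and take $\Psi=A\oplus C\colon\mathcal N_{r,r}\to\mathcal N_{r,r}$ from Theorem~\ref{th:automorphisms} (so $CC^{\tau}=-\Id$) together with the isomorphism $\bar\Phi=\bar A\oplus\bar C\colon\mathcal N_{0,8}\to\mathcal N_{8,0}$ of Theorem~\ref{th:10-01}, for which $\bar C\bar C^{\tau}=-\Id$ holds automatically since $0\neq 8$. Since $V^{r,r}\otimes V^{0,8}$ is a minimal admissible module of $\Cl_{r,r+8}$, as recalled at the start of the proof of Theorem~\ref{periodicity}, the first-case tensor construction there applied to $\Psi$ and $\bar\Phi$ yields a Lie algebra isomorphism $\hat\Phi\colon\mathcal N_{r,r+8}\to\mathcal N_{r+8,r}$. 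After this first step the two indices already differ, so from then on the hypotheses of Theorem~\ref{periodicity} hold literally, and iterating $k$ times --- tensoring each time with a fresh copy of $\bar\Phi$ and using the periodicity~\eqref{perCl} repeatedly so that $\Cl_{r,r+8k}\cong\Cl_{r,r}\otimes(\Cl_{0,8})^{\otimes k}$ --- produces $\mathcal N_{r,r+8k}\cong\mathcal N_{r+8k,r}$.

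For the second assertion I would again fix $r\in\{1,2,4\}$, take $\Psi=A\oplus C$ as above, and take $\bar\Psi=\bar A\oplus\bar C$ to be the automorphism of $\mathcal N_{4,4}$ from Theorem~\ref{th:automorphisms} (the case $r=4$), which also has $\bar C\bar C^{\tau}=-\Id$. Since $V^{r,r}\otimes V^{4,4}$ is a minimal admissible module of $\Cl_{r+4,r+4}$, the $(4,4)$-periodicity (third-case) tensor construction in the proof of Theorem~\ref{periodicity} applied to $\Psi$ and $\bar\Psi$ produces an automorphism $\hat\Psi=\hat A\oplus\hat C$ of $\mathcal N_{r+4,r+4}$. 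Here the point to record is that the center map $\hat C$ acts on $\mathbb R^{r+4,r+4}=\mathbb R^{r,r}\oplus_{\perp}\mathbb R^{4,4}$ as the block map $C\oplus\bar C$, so that $\hat C\hat C^{\tau}=(CC^{\tau})\oplus(\bar C\bar C^{\tau})=-\Id$; thus the output is again an automorphism of the type appearing in Theorem~\ref{th:automorphisms}. Iterating $k$ times with a fresh copy of $\bar\Psi$ each time, and using $\Cl_{r+4k,r+4k}\cong\Cl_{r,r}\otimes(\Cl_{4,4})^{\otimes k}$, then gives the desired automorphism of $\mathcal N_{r+4k,r+4k}$ with $CC^{\tau}=-\Id$.

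The main obstacle, as I see it, is not in any single calculation but in the verification announced in the first paragraph: one must reread the proof of Theorem~\ref{periodicity} and of Lemma~\ref{relation between volume form} and Corollary~\ref{rem:important} and check line by line that they only ever used $CC^{\tau}=-\Id$, so that the automorphism $\Psi$ of $\mathcal N_{r,r}$ may legitimately play the role of the seed. A lesser bookkeeping issue, important for the second assertion since its conclusion is reused in later periodicity arguments, is to keep the sign of $\hat C\hat C^{\tau}$ under control throughout the iteration; it stays $-\Id$ precisely because of the block-diagonal form of $\hat C$ noted above, together with the minimal-admissibility of the tensor products $V^{r,r}\otimes V^{0,8}$ and $V^{r,r}\otimes V^{4,4}$ recalled from~\cite{FM} at the beginning of the proof of Theorem~\ref{periodicity}.
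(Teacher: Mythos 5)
Your proposal is correct and follows essentially the same route as the paper: the paper's own proof of this theorem simply states that one repeats the tensor-product construction of Theorem~\ref{periodicity} with the seed isomorphism replaced by the automorphism $\Psi=A\oplus C$ of $\mathcal N_{r,r}$ (with $CC^{\tau}=-\Id$) from Theorem~\ref{th:automorphisms}, which is exactly what you do. Your additional remarks --- that the arguments of Lemma~\ref{relation between volume form} and Corollary~\ref{rem:important} use only $CC^{\tau}=-\Id$ rather than $r\neq s$, and that $\hat C=C\oplus\bar C$ keeps $\hat C\hat C^{\tau}=-\Id$ under iteration --- are just a more explicit spelling-out of what the paper leaves implicit.
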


\begin{proof}
The proof is literary the same as the proof of Theorem~\ref{periodicity}, where we need to change the existence of an isomorphism $\Phi=A\oplus C\colon\mathcal N_{r,s}\to \mathcal N_{s,r}$ to an automorphism $\Psi=A\oplus C\colon\mathcal N_{r,r}\to\mathcal N_{r,r}$.
\end{proof}

%%%%%%%%%%%%%%%%%%

\subsection{Non-isomorphic Lie algebras}\label{sec:nonisom}

%%%%%%%%%%%%%%%%%%

We start from a small technical observation.

\begin{lemma}\label{rem:contradiction}
Let $(V,\la.\,,.\ra_V)$ be a neutral space and $T$ a linear map on $V$ with the properties: $T^2=\Id$ and the scalar product $(x,y):=\la x,Ty\ra_V$ is positive definite. Then there is no linear map $S$ on $V$, such that 
$S=S^{\tau}$, where $S^{\tau}$ is transposed with respect to $\la.\,,.\ra_V$,
and $STS=-T$.
\end{lemma}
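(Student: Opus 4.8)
The plan is to argue by contradiction. Suppose a selfadjoint $S$ with $STS=-T$ exists; the goal is to show that the positive definite form $(\cdot,\cdot)$ would then have to vanish on $V$, which is absurd.

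First I would extract the structural information hidden in the hypotheses, before bringing $S$ into play. Since $(x,y)=\la x,Ty\ra_V$ is a scalar product it is symmetric, so $\la x,Ty\ra_V=\la y,Tx\ra_V=\la Tx,y\ra_V$ for all $x,y$, that is $T=T^{\tau}$ with respect to $\la\cdot,\cdot\ra_V$; together with $T^2=\Id$ this makes $T$ a $\la\cdot,\cdot\ra_V$-selfadjoint involution. Next I would compare the two adjoint operations on the space $V$, which is nondegenerate, being neutral: a direct computation starting from $(Lx,y)=\la Lx,Ty\ra_V=\la x,L^{\tau}Ty\ra_V$ shows that the adjoint $L^{*}$ of any linear operator $L$ with respect to $(\cdot,\cdot)$ equals $TL^{\tau}T$. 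In particular $T^{*}=T$, and since $S=S^{\tau}$ we obtain $S^{*}=TST$.

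The decisive step is then a one-line identity: using $STS=-T$ and $T^2=\Id$,
\[
S^{*}S=(TST)S=T\,(STS)=T(-T)=-\Id .
\]
But $S^{*}S$ is positive semidefinite for $(\cdot,\cdot)$, because $(S^{*}Sx,x)=(Sx,Sx)\ge 0$ for every $x\in V$. Hence $(Sx,Sx)=-(x,x)$ would be simultaneously $\ge 0$ and $\le 0$, forcing $(x,x)=0$ for all $x\in V$, which contradicts the positive definiteness of $(\cdot,\cdot)$ on the nonzero space $V$. I do not expect a genuine obstacle here: the only points that need a little care are using the symmetry of $(\cdot,\cdot)$ to secure $T=T^{\tau}$ (so that the formula $S^{*}=TST$ is valid) and observing that $V$ is nontrivial so that positive definiteness indeed yields a contradiction; everything else is pure operator bookkeeping, and the argument uses no earlier result from the paper.
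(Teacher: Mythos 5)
Your proof is correct and follows essentially the same route as the paper's: both compute the adjoint of $S$ with respect to the positive definite form $(\cdot,\cdot)$ (your $S^{*}=TST$ is the paper's ${}^tS$), use $STS=-T$ and $T^2=\Id$ to conclude $S^{*}S=-\Id$, and contradict positive semidefiniteness of $S^{*}S$. The only difference is presentational — you spell out the symmetry $T=T^{\tau}$ and the general formula $L^{*}=TL^{\tau}T$, which the paper leaves implicit.
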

\begin{proof}
Let us assume that a linear map $S\colon V\to V$ such that $S=S^{\tau}$
and $STS=-T$ exists. Then $^tST=TS$, where $^tS$ is the transposition 
with respect to the positive definite scalar product $(.\,,.)$ and therefore
$$
-T=STS=T(^tS)TTS=T(^tS)S\quad\Longrightarrow\quad ^tSS=-\Id,
$$
which is a contradiction.
\end{proof}

\begin{theorem}\label{32-23}
The Lie algebras $\mathcal{N}_{r,s}$ and $\mathcal{N}_{s,r}$ for $(r,s)\in\{(3,1),\,(3,2),\,(3,7),\,(3,11)\}$ are not isomorphic.
\end{theorem}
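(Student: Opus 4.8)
The plan is to suppose, for one of the listed pairs (so $r=3$ and $s\in\{1,2,7,11\}$), that a Lie algebra isomorphism $\Phi\colon\mathcal N_{3,s}\to\mathcal N_{s,3}$ exists, and to contradict Lemma~\ref{rem:contradiction}. Since $r=3\neq s$ in every case, Theorem~\ref{property of isomorphism} and Corollary~\ref{Identity relation} force $\Phi$ to have the block form $\Phi=A\oplus C$ of~\eqref{iso_form_1} with $A\colon V^{3,s}\to V^{s,3}$, $C\colon\mathbb R^{3,s}\to\mathbb R^{s,3}$, $CC^{\tau}=C^{\tau}C=-\Id$ and $A^{\tau}\wJ_z A=J_{C^{\tau}(z)}$ as in~\eqref{isomorphism relation}. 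In particular $S:=A^{\tau}A$ is self-adjoint for $\la.\,,.\ra_{V^{3,s}}$, and all identities of Lemma~\ref{relation between volume form} are at our disposal.

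The object to feed into Lemma~\ref{rem:contradiction} is the volume form of the Euclidean part: set $\Omega:=J_{z_1}J_{z_2}J_{z_3}$, the product of the representation maps of the three \emph{positive} basis vectors of $\mathbb R^{3,s}$. A short computation gives $\Omega^2=\Id_{V^{3,s}}$ and $\Omega^{\tau}=\Omega$, so $\Omega$ is a symmetric isometric involution on $(V^{3,s},\la.\,,.\ra_{V^{3,s}})$, and applying~\eqref{volume_form7} with $p=3$ and the vectors $z_1,z_2,z_3$ gives $S\,\Omega\,S=-\Omega$ with $S=S^{\tau}$. Hence, once we know that the symmetric bilinear form $(x,y):=\la x,\Omega y\ra_{V^{3,s}}$ is positive definite, Lemma~\ref{rem:contradiction} with $T=\Omega$ produces the contradiction. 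Equivalently, after possibly replacing $\la.\,,.\ra_{V^{3,s}}$ by $-\la.\,,.\ra_{V^{3,s}}$ (Remark~\ref{sign change}), it suffices to show that $\la.\,,.\ra_{V^{3,s}}$ restricted to the eigenspace $E^{1}_{\Omega}$ is positive definite; its restriction to $E^{-1}_{\Omega}$ is then negative definite because $J_{z_4}$ is an anti-isometry anticommuting with $\Omega$, hence interchanges the two eigenspaces isometrically up to sign.

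Establishing this definiteness is the heart of the matter, and the step I expect to be the main obstacle, since it is not formal: Lemma~\ref{lem:PT1}(2) only guarantees that $\la.\,,.\ra_{V^{3,s}}$ restricted to $E^{1}_{\Omega}$ is \emph{neutral or sign definite}, and the neutral option must be excluded using the concrete structure of the minimal admissible module. The operator $\Omega$ is central in the subalgebra generated by $J_{z_1},J_{z_2},J_{z_3}$, which is isomorphic to $\Cl_{3,0}$, so $E^{1}_{\Omega}$ is precisely the $V_{+}$-isotypic part of $V^{3,s}$ regarded as a $\Cl_{3,0}$-module, where $V_{+}$ is the irreducible $\Cl_{3,0}$-module on which $\Omega$ acts as $+\Id$. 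Since the admissible scalar product on the irreducible module $V_{+}$ is unique up to a nonzero real scalar (its $\Cl_{3,0}$-endomorphism algebra is $\mathbb H$, whose only elements that are self-adjoint for an admissible product are the real ones), and since no isometry among the Clifford operators anticommutes with $\Omega$ and no anti-isometry among them commutes with $\Omega$, the neutral alternative of Lemma~\ref{lem:PT1} is ruled out. The same conclusion can be obtained more explicitly by choosing an integral basis of $V^{3,s}$ in the sense of Theorem~\ref{integral basis I} with seed vector in $E^{1}_{\Omega}$ and tracking the signs dictated by~\eqref{AdMo2}. Because this argument uses only that $\mathbb R^{r,s}$ has exactly three positive directions, it handles $(3,1),(3,2),(3,7),(3,11)$ uniformly.

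For completeness I would record an alternative for the periodic pair $(3,11)$: by~\eqref{perCl} one has $\Cl_{3,11}\cong\Cl_{3,3}\otimes\Cl_{0,8}$ and $\Cl_{11,3}\cong\Cl_{3,3}\otimes\Cl_{8,0}$, while $\mathcal N_{0,8}\cong\mathcal N_{8,0}$ by Theorem~\ref{th:10-01}, so that the tensor-product bookkeeping in the proof of Theorem~\ref{periodicity} turns an isomorphism $\mathcal N_{3,11}\cong\mathcal N_{11,3}$ (necessarily with $CC^{\tau}=-\Id$) into an automorphism of $\mathcal N_{3,3}$ with $CC^{\tau}=-\Id$, which is impossible by Corollary~\ref{non-existence Cl33}; the uniform argument above is, however, the cleaner route.
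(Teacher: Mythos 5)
Your overall strategy coincides with the paper's: assume $\Phi=A\oplus C$ exists, use Corollary~\ref{Identity relation} and \eqref{volume_form7} with $p=3$ to get a symmetric $S=A^{\tau}A$ satisfying $S\,T\,S=-T$ for $T=J_{z_1}J_{z_2}J_{z_3}$, and contradict Lemma~\ref{rem:contradiction}. The difference is that the paper applies the lemma on a carefully chosen common eigenspace $E^1$ of commuting involutions, where an explicit orthonormal basis makes the positive definiteness of $\la x,Ty\ra$ visible, while you try to apply it globally on $V^{3,s}$, which forces you to prove that $\la.\,,.\ra$ restricted to $E^{1}_{T}$ is sign definite.

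That definiteness claim is where your argument has a genuine gap for $(3,7)$ and $(3,11)$. Your justification rests on the uniqueness, up to a real scalar, of the $\Cl_{3,0}$-invariant symmetric bilinear form on the irreducible module $V_{+}$ (endomorphism algebra $\mathbb H$). This is correct for a \emph{single} copy of $V_{+}$, and indeed settles $(3,1)$ and $(3,2)$, where $\dim E^{1}_{T}=4$. But for $(3,7)$ the eigenspace $E^{1}_{T}$ is $32$-dimensional and for $(3,11)$ it is $64$-dimensional, i.e.\ isomorphic as a $\Cl_{3,0}$-module to $V_{+}\otimes\mathbb R^{8}$ resp.\ $V_{+}\otimes\mathbb R^{16}$; the space of invariant symmetric forms there is not one-dimensional and contains neutral forms (take the definite form on $V_{+}$ tensored with an indefinite form on the multiplicity space), so uniqueness up to scalar fails and definiteness does not follow. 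Your other observation --- that no anti-isometric product of $J_{z_i}$'s commutes with $T$ --- only prevents you from invoking Lemma~\ref{lem:PT1}(3) to prove neutrality; it does not prove definiteness. The fallback you mention (an integral basis with seed vector in $E^{1}_{T}$, noting that isometric products commute with $T$ and anti-isometric ones anticommute) is the right mechanism and is essentially what the paper does, but it requires exhibiting a non-null generating vector that is simultaneously an eigenvector of $T$ and of enough mutually commuting involutions; producing that vector (via the involutions $P_j$ and the complementary operators $R_k$, and Lemma~\ref{orthogonal}) is precisely the case-by-case content of the paper's proof that your write-up omits. Finally, the aside reducing $(3,11)$ to Corollary~\ref{non-existence Cl33} runs the periodicity construction of Theorem~\ref{periodicity} backwards (from high to low dimension); the paper explicitly notes this descent is not available and instead proves the higher-dimensional non-isomorphisms directly in Theorem~\ref{th:nonisom_high}.
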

\begin{proof}
{\sc Case $(r,s)=(3,1)$}.
The minimal admissible module $V^{3,1}$ is isometric to $\mathbb R^{4,4}$. We define the isometric involution $T=J_{z_1}J_{z_2}J_{z_3}$ and the orthonormal basis for $V^{3,1}$, starting from $v\in V^{3,1}$, $\la v,v\ra_{V^{3,1}}=1$, and $Tv=v$:
\begin{equation}\label{eq:b31}
\begin{array}{lllll}
x_1=v,\quad & x_2=J_{z_1}v,\quad & x_3=J_{z_2}v,\quad &x_4=J_{z_3}v,
\\
x_5=J_{z_4}v,\quad & x_6=J_{z_4}J_{z_1}v,\quad & x_7=J_{z_4}J_{z_2}v,\quad &x_8=J_{z_4}J_{z_3}v,
\end{array}
\end{equation}
with $\la x_k,x_k\ra_{V^{3,1}}=-\la x_{k+4},x_{k+4}\ra_{V^{3,1}}=1$, $k=1,\ldots,4$. Moreover $T(x_i)=x_i$, $i=1,2,3,4$ and $T(x_i)=-x_i$, $i=5,6,7,8$. 
Assume that there is an isomorphism $\Phi\colon\mathcal N_{3,1}\to \mathcal N_{1,3}$, $\Phi=A\oplus C$ such that $A\colon V^{3,1}\to V^{1,3}$ and $C(z_j)=w_j$. Then the map $\Phi^{\tau}\Phi=A^{\tau}A\oplus -\Id_{\mathbb R^{3,2}}\colon\mathcal N_{3,2}\to \mathcal N_{3,2}$ is an automorphism by Lemma~\ref{iso-form 2}. Denote $S=A^{\tau}A$ and obtain a contradiction 
as in Lemma~\ref{rem:contradiction} with $V=V^{3,1}$ and $T=J_{z_1}J_{z_2}J_{z_3}$.
\\

{\sc Case $(r,s)=(3,2)$}.
We consider mutually commuting isometric involutions and the complementary anti-isometric operators
$$
P=J_{z_1}J_{z_2}J_{z_4}J_{z_5},\quad T=J_{z_1}J_{z_2}J_{z_3}
\qquad
R_1=J_{z_5},\quad R_2=J_{z_1}J_{z_4}
$$
acting on $V^{3,2}$.
\begin{table}[h]
\center\caption{}
\begin{tabular}{|c||c|c|c|c|c|c|c|}
\hline
\ &$J_{z_1}$&$J_{z_2}$&$J_{z_3}$&$J_{z_4}$&$J_{z_5}$&$R_1$&$R_2$
\\
\hline\hline
$P$&-1&-1&1&-1&-1&-1&1
\\
\hline   
$T$&1&1&1&-1&-1& &-1
\\
\hline\hline
\end{tabular}\label{t:3,2}
\end{table}
In Table~\ref{t:3,2} we show commutation relations of the involutions, complementary operators, and the representation maps $J_{z_j}$. We conclude that the spaces $E^{1}_{P}$ and $E^{-1}_{P}$ are neutral. We pick up a vector $v\in E^{1}_{P}$, $\la v,v\ra_{V^{3,2}}=1$ and construct an orthonormal basis for $E^{1}_{P}$  
$$
x_1=v,\quad x_2=J_{z_1}J_{z_2}v,\quad x_3=J_{z_1}J_{z_4}v,\quad x_4=J_{z_2}J_{z_4}v
$$
with $\la x_i,x_i\ra_{V^{3,2}}=-\la x_{i+2},x_{i+2}\ra_{V^{3,2}}=1$, $i=1,2$.
Table~\ref{t:3,2} also shows that
$$
Tx_1=x_1,\quad Tx_2=x_2,\quad Tx_3=-x_3,\quad Tx_4=-x_4.
$$

Assuming now that there is an isomorphism $\Phi\colon\mathcal N_{3,2}\to \mathcal N_{2,3}$, $\Phi=A\oplus C$ such that $A\colon V^{3,2}\to V^{2,3}$ and $C(z_j)=w_j$, we obtain a contradiction by Lemma~\ref{rem:contradiction} with $V=E^1_P$, $S=A^{\tau}_1A_1$, and $T=J_{z_1}J_{z_2}J_{z_3}$.
\\

{\sc Case $(r,s)=(3,7)$}.
We define the mutually commuting involutions
$$
P_1=J_{z_1}J_{z_2}J_{z_5}J_{z_6},\quad P_2=J_{z_1}J_{z_2}J_{z_7}J_{z_8},\quad
P_3=J_{z_1}J_{z_2}J_{z_9}J_{z_{10}},\quad T=J_{z_1}J_{z_2}J_{z_3},
$$
and the complementary anti-isometric operators 
$$
R_1=J_{z_5},\quad R_2=J_{z_7},\quad R_3=J_{z_9},\quad R_4=J_{z_4},
$$
acting on $V^{3,7}$.
\begin{table}[h]
\center\caption{Commutation relations of operators on $V^{3,7}$}
\begin{tabular}{|c||c|c|c|c|c|c|c|c|c|c|c|c|c|c|c|}
\hline
\ &$J_{z_1}$&$J_{z_2}$&$J_{z_3}$&$J_{z_4}$&$J_{z_5}$&$J_{z_6}$&$J_{z_7}$&$J_{z_8}$&$J_{z_9}$&$J_{z_{10}}$&$R_1$&$R_2$&$R_3$&$R_4$&$Q$
\\
\hline\hline
$P_1$&-1&-1&1&1&-1&-1&1&1&1&1&-1&1&1&1&-1
\\
\hline
$P_2$&-1&-1&1&1&1&1&-1&-1&1 &1&\ &-1&1&1&-1
\\
\hline
$P_3$&-1&-1&1&1&1&1&1&1&-1&-1&\ &\ &-1&1&-1
\\
\hline
$T$&1&1&1&-1&-1&-1&-1&-1&-1&-1&\ &\ &\ & -1&-1
\\
\hline\hline
\end{tabular}\label{t:37}
\end{table}

Since the dimension of the minimal admissible module $V^{3,7}$, which is also irreducible, is 64, the common eigenspace $E^1=\cap_{j=1}^{3}E^{1}_{P_j}$ is 8-dimensional and neutral. We choose the following basis for $E^1$, starting from $v\in E^1$, $\la v,v\ra_{V^{3,7}}=1$ and making use the anti-isometric operator $Q=J_{z_5}J_{z_7}J_{z_9}$,
$$
\begin{array}{llllll}
&x_1=v,\quad &x_2= J_{z_1}J_{z_2}v,\quad &x_3=J_{z_4}J_{z_1}Qv,\quad &x_4=J_{z_4}J_{z_2}Qv,
\\
&x_5=J_{z_4}v, &x_6= J_{z_4}J_{z_1}J_{z_2}v,\quad &x_7=J_{z_1}Qv,\quad &x_8=J_{z_2}Qv.
\end{array}
$$
 The basis is orthonormal by Lemma~\ref{orthogonal}, satisfies $\la x_j,x_j\ra_{V^{3,7}}=-\la x_{4+j},x_{4+j}\ra_{V^{3,7}}=1$ and
\[
T(x_i)=x_i,\ \ i=1,2,3,4\quad\text{and}\quad T(x_i)=-x_i,\ \ i=5,6,7,8,
\]
due to the choice of the operators $R_4$ and $Q$. Thus we can apply Lemma~\ref{rem:contradiction} to the neutral space $V=E^{1}$ with operators $S=A^{\tau}_1A_1$ and $T$. It finishes the proof. 
\\

{\sc Case $(r,s)=(3,11)$}.
The minimal admissible modules $V^{11,3}$ and $V^{3,11}$ are isometric to $\mathbb R^{64,64}$. We choose a set of mutually commuting isometric involutions:
$$
P_j=J_{z_1}J_{z_2}J_{z_{3+2j}}J_{z_{4+2J}},\ j=1,\ldots,5,\quad T=J_{z_{1}}J_{z_{2}}J_{z_{3}},
$$
acting on $V^{3,11}$. The complementary operators are
$
R_k=J_{z_{3+2k}}$, $k=1,\ldots,5$, and $R_6=J_{z_{4}}$.
{\tiny
\begin{table}[h]
\center\caption{Commutation relations of operators on $V^{3,11}$}
\begin{tabular}{|c||c|c|c|c|c|c|}
\hline
\ &$R_1$&$R_2$&$R_3$&$R_4$&$R_5$&$R_6$
\\
\hline\hline
$P_1$&-1&1&1&1&1&1
\\
\hline
$P_2$&\ &-1 &1 &1&1&1 
\\
\hline
$P_3$&\ &\ &-1 &1&1&1
\\
\hline
$P_4$&\ &\ &\ &-1&1&1
\\
\hline
$P_5$&\ &\ &\ &\ &-1&1 
\\
\hline
$T$&\ &\ &\ &\ &\ &-1 
\\
\hline\hline
\end{tabular}\label{t:113}
\end{table}
}
The space $E^1=\bigcap\limits_{j=1}^5E^{1}_{P_j}$ is 4-dimensional neutral space. We choose the orthonormal basis
$\{
x_1=v,\ x_2=J_{z_1}J_{z_2}v,\ x_3=J_{z_4}v,\ x_4=J_{z_4}J_{z_1}J_{z_2}v\}
$
for $E^1$ with $v\in E^1$, $\la v,v\ra_{V^{3,11}}=1$.
It is easy to see that
$$
T(x_j)=x_j,\ \quad T(x_{2+j})=-x_{2+j},\quad j=1,2.
$$
Thus, if we assume that there is an isomorphism $\Phi=A\oplus C\colon\mathcal{N}_{3,11}\to\mathcal{N}_{11,3}$, then the operator $S=A^{\tau}_1A_1$ will act on $E^1$. Applying Lemma~\ref{rem:contradiction} to the neutral space $E^1$, operators $S$ and $T$, we obtain a contradiction. This finishes the proof.
\end{proof}

\begin{cor}\label{non-existence Cl33}
There are no automorphism 
$\Phi=A\oplus C$ 
of $\mathcal{N}_{3,3}$
with the condition $C^{\tau}C =-Id$.
\end{cor}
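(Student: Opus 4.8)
The plan is to argue by contradiction, reducing to Lemma~\ref{rem:contradiction} in exactly the way Theorem~\ref{32-23} is proved. Assume $\Phi=A\oplus C$ is an automorphism of $\mathcal N_{3,3}=V^{3,3}\oplus\mathbb R^{3,3}$ with $C^{\tau}C=-\Id$ (then $C$ is invertible, so $CC^{\tau}=-\Id$ as well). The computation in the $\Phi^{\tau}$--part of the proof of Lemma~\ref{iso-form 2} uses only relation~\eqref{isomorphism relation} together with $C^{\tau}C=CC^{\tau}=-\Id$, and not the hypothesis $r\neq s$; hence $\Phi^{\tau}=A^{\tau}\oplus C^{\tau}$ is again an automorphism of $\mathcal N_{3,3}$, and therefore so is $\Phi^{\tau}\Phi=A^{\tau}A\oplus(-\Id)$. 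Setting $S=A^{\tau}A$, which is symmetric and invertible, relation~\eqref{isomorphism relation} applied to $\Phi^{\tau}\Phi$ reads $SJ_zS=-J_z$, equivalently $S^{-1}J_zS^{-1}=-J_z$, for every $z\in\mathbb R^{3,3}$.

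Next I would introduce $T=J_{z_1}J_{z_2}J_{z_3}$, the product over the three positive basis vectors of $\mathbb R^{3,3}$. A direct check gives $T^2=\Id$ and $T^{\tau}=T$, so $T$ is a symmetric isometric involution, and $T$ commutes with $J_{z_1},J_{z_2},J_{z_3}$ while anticommuting with the three anti-isometries $J_{z_4},J_{z_5},J_{z_6}$. From $SJ_zS=-J_z$ and invertibility of $S$ we obtain, by inserting $S^{-1}S$ between the factors (the same computation that gives~\eqref{volume_form7} for $p=3$),
\[
STS=SJ_{z_1}J_{z_2}J_{z_3}S=-J_{z_1}\bigl(S^{-1}J_{z_2}S^{-1}\bigr)\bigl(SJ_{z_3}S\bigr)=-J_{z_1}J_{z_2}J_{z_3}=-T.
\]

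It then remains to produce a neutral space on which $S$ and $T$ fulfil the hypotheses of Lemma~\ref{rem:contradiction}, and I would take the whole module $V^{3,3}$. Since $V^{3,3}$ is the $8$-dimensional irreducible $\Cl_{3,3}$-module, the decomposition $V^{3,3}=E^{1}_{T}\oplus E^{-1}_{T}$ into eigenspaces of the symmetric isometric involution $T$ is orthogonal, both summands are $4$-dimensional ($J_{z_4}$ being an anti-isometry interchanging them), and $J_{z_1},J_{z_2},J_{z_3}$ restrict to $E^{1}_{T}$, making it an irreducible $\Cl_{3,0}$-module; hence $\langle.\,,.\rangle_{V^{3,3}}$ restricted to $E^{1}_{T}$ is sign definite, which, replacing the scalar product by its opposite if necessary (Remark~\ref{sign change}), we may assume positive definite. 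Choosing $v\in E^{1}_{T}$ with $\langle v,v\rangle_{V^{3,3}}=1$ and, as in Theorem~\ref{integral basis I} and the case $(3,1)$ of Theorem~\ref{32-23}, forming the orthonormal basis $\{v,J_{z_1}v,J_{z_2}v,J_{z_3}v\}$ of $E^{1}_{T}$ together with $\{J_{z_4}v,J_{z_4}J_{z_1}v,J_{z_4}J_{z_2}v,J_{z_4}J_{z_3}v\}$ of $E^{-1}_{T}$, the first four basis vectors are positive and fixed by $T$, and the last four are negative with $Tx=-x$. Consequently $(x,y):=\langle x,Ty\rangle_{V^{3,3}}$ satisfies $(x_i,x_i)=|\langle x_i,x_i\rangle_{V^{3,3}}|=1$ and $(x_i,x_j)=0$ for $i\neq j$, i.e., it is positive definite, and Lemma~\ref{rem:contradiction} applied with $V=V^{3,3}$, this $T$ and $S=A^{\tau}A$ contradicts the identity $STS=-T$.

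The main obstacle is precisely this last step, controlling the signature of $T$ on $V^{3,3}$: it forces one to invoke the module structure, namely the irreducibility of $V^{3,3}$ and the representation-theoretic fact that an invariant scalar product on an irreducible $\Cl_{3,0}$-module is sign definite; everything preceding it is a purely formal manipulation of~\eqref{isomorphism relation} and the Clifford relations. Combined with Corollary~\ref{Identity relation}, the statement shows that every automorphism of $\mathcal N_{3,3}$ satisfies $CC^{\tau}=\Id$, in contrast with $\mathcal N_{r,r}$ for $r=1,2,4$ treated in Theorem~\ref{th:automorphisms}.
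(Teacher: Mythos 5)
Your argument is correct, and it is in essence the paper's own ``constructive'' proof: both assume the automorphism exists, pass to $S=A^{\tau}A$ (noting that $\Phi^{\tau}\Phi=A^{\tau}A\oplus(-\Id)$ is again an automorphism, so $SJ_zS=-J_z$ and hence $STS=-T$ for $T=J_{z_1}J_{z_2}J_{z_3}$), and then contradict Lemma~\ref{rem:contradiction}. The only real divergence is the choice of neutral space on which the lemma is applied. The paper cuts down to the $2$-dimensional common eigenspace $E^1$ of two auxiliary involutions $P_1=J_{z_1}J_{z_2}J_{z_4}J_{z_5}$, $P_2=J_{z_1}J_{z_3}J_{z_5}J_{z_6}$ (which $S$ preserves by~\eqref{volume_form5}), where positivity of $(x,y)=\la x,Ty\ra$ is read off from the two explicit basis vectors; you instead work on all of $V^{3,3}$ and must therefore show that $\la\cdot\,,\cdot\ra_{V^{3,3}}$ is definite on each eigenspace of $T$. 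You flag this as requiring the representation-theoretic fact that an invariant scalar product on an irreducible $\Cl_{3,0}$-module is definite; that fact is true, but note that it is already delivered by the basis you construct: by Lemma~\ref{lem:PT1}(2) (with $P=T$, $R=J_{z_4}$) the restriction to $E^1_T$ is non-degenerate, so there is a non-null $v\in E^1_T$, and then $\{v,J_{z_1}v,J_{z_2}v,J_{z_3}v\}$ is an orthogonal basis of $E^1_T$ whose vectors all have square-norm $\la v,v\ra$, which forces definiteness without any further input. So your route is self-contained, just marginally heavier than the paper's; the paper additionally records a one-line proof you do not use, namely that such an automorphism would induce an isomorphism $\mathcal{N}_{3,2}\cong\mathcal{N}_{2,3}$, contradicting Theorem~\ref{32-23} directly.
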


\begin{proof}
If we assume that such an automorphism $\Psi$ exists, 
then it must induce an isomorphism between 
$\mathcal{N}_{3,2}$ and $\mathcal{N}_{2,3}$, which contradicts to Theorem~\ref{32-23}. The constructive proof can be performed as follows. Let us assume 
the existence of an automorphism
$\Psi=A\oplus C$ with $C^{\tau}C=-\Id$.
We fix mutually commuting isometric involutions and the complementary operators
$$
P_1=J_{z_1}J_{z_2}J_{z_4}J_{z_5},\  P_2=J_{z_1}J_{z_3}J_{z_5}J_{z_6},\
T=J_{z_1}J_{z_2}J_{z_3},\ \  R_1=J_{z_4},\ R_2=J_{z_6},\ R_3=J_{z_3}J_{z_6},
$$
acting on $V^{3,3}$. Denote by $\{w_6,\ldots,w_1\}$ another orthonormal basis of $\mathbb R^{3,3}$, where $w_6,w_5,w_4$ are positive vectors and $w_3,w_2,w_1$ are negative. Put $C(z_i)=w_i$. The common eigenspace $E^1=\cap_{j=1}^2 E^{1}_{P_j}$
is spanned by $\{x_1=v,\ x_2=J_{z_1}J_{z_5}v\}$, where $v=P_1(v)=P_2(v)=T(v)$ and $\la v,v\ra_{V^{3,3}}=1$. Observe that $T(x_2)=-x_2$. Thus we obtain a contradiction as in Lemma~\ref{rem:contradiction} by setting $S=A^{\tau}A$ for the neutral space $E^1$.
\end{proof}

We can not apply directly the arguments of Theorem~\ref{periodicity} to non-isomorphic pairs. Nevertheless, by a direct construction we obtain that the non-isomorphic properties are also respect the same periodicity. 

\begin{theorem}\label{th:nonisom_high}
If $(r,s)\in\{
(3,1),\  (3,2),\  (3,7),\  (3,11)\}$,
then 
\begin{itemize}
\item[1.] {the Lie algebra $\mathcal{N}_{r+4k,s+4k}$ is not isomorphic to $\mathcal{N}_{s+4k,r+4k}$ for any $k=0,1,2,\ldots$,}
\item[2.]{the Lie algebra $\mathcal{N}_{r,s+8k}$ is not isomorphic to $\mathcal{N}_{s+8k,r}$ for any $k=0,1,2,\ldots$,}
\item[3.]{the Lie algebra $\mathcal{N}_{r+8k,s}$ is not isomorphic to $\mathcal{N}_{s,r+8k}$ for any $k=0,1,2,\ldots$.}
\end{itemize}
\end{theorem}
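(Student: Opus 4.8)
The plan is to argue by contradiction and to show that the \emph{obstruction} exhibited in the proof of Theorem~\ref{32-23} is stable under the Atiyah--Bott periodicity, so that it persists in every higher dimension. Fix $(r,s)\in\{(3,1),(3,2),(3,7),(3,11)\}$; in each case $r=3$, so the positive part of $\mathbb R^{r,s}$ contains three orthonormal generators $z_1,z_2,z_3$, and $T:=J_{z_1}J_{z_2}J_{z_3}$ is an isometric operator with $T^{\tau}=T$, $T^2=\Id$. Unwinding the proof of Theorem~\ref{32-23}, the data certifying non-isomorphism consists of: mutually commuting isometric involutions $P_1,\dots,P_N$ on $V^{r,s}$ (products of the $J_{z_k}$), each commuting with $T$; complementary anti-isometric operators, as in Lemma~\ref{lem:PT2}, forcing $E^1:=\bigcap_{j=1}^N E^1_{P_j}$ to be a neutral subspace; one further anti-isometric operator $R$ (one checks $R=J_{z_4}$ or $R=J_{z_1}J_{z_4}$ works) preserving $E^1$ and anticommuting with $T$; and the fact that $\langle.\,,.\rangle_{V^{r,s}}$ restricted to $E^1_T\cap E^1$ is sign-definite. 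The presence of $R$ makes $E^1_T\cap E^1$ and $E^{-1}_T\cap E^1$ definite of opposite sign and equal dimension, so the twisted form $(x,y):=\langle x,Ty\rangle$ is sign-definite on $E^1$; replacing the module product by its negative if necessary (harmless by Remark~\ref{sign change} and uniqueness of the pseudo $H$-type algebra) we may assume it is positive definite.

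The heart of the argument is an inductive step: such a configuration on $V^{r,s}$ produces one on $V^{r+4,s+4}$, and similarly on $V^{r,s+8}$ and $V^{r+8,s}$. Use the models $V^{r+4,s+4}=V^{r,s}\otimes V^{4,4}$, $V^{r,s+8}=V^{r,s}\otimes V^{0,8}$, $V^{r+8,s}=V^{r,s}\otimes V^{8,0}$ with tensored scalar products and representations $\hat J_{z_j}=J_{z_j}\otimes\Omega$, $\hat J_{\zeta_\alpha}=\Id\otimes\bar J_{\zeta_\alpha}$ exactly as in Theorem~\ref{periodicity}, where $\Omega$ is $\Omega^{4,4}$, $\Omega^{0,8}$ or $\Omega^{8,0}$; in all three cases $\Omega^2=\Id$, $\Omega$ is isometric and symmetric, and $\Omega$ commutes with every operator built from an even product of the $\bar J_{\zeta_\alpha}$. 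Put $\hat T=\hat J_{z_1}\hat J_{z_2}\hat J_{z_3}=T\otimes\Omega$, $\hat P_j=P_j\otimes\Id$, $\hat R=R\otimes\Omega$, and adjoin four involutions $\hat Q_l=\Id\otimes Q_l$, with the $Q_l$ taken from a standard configuration on the second factor so that $F^1:=\bigcap_l E^1_{Q_l}$ is one-dimensional. Then $\hat T$ is again an isometric symmetric involution; $\hat E^1=E^1\otimes F^1$ is neutral (carry the complementary operators of $E^1$ over as $R_i\otimes\Omega$ or $R_i\otimes\Id$, supplement with $\Id\otimes(\text{complementary operators of }F^1)$, and re-order into the staircase pattern of Lemma~\ref{lem:PT2}); $\hat R$ preserves $\hat E^1$ and anticommutes with $\hat T$ (using $\Omega^2=\Id$ and $\Omega Q_l=Q_l\Omega$); and $\hat E^1_{\hat T}\cap\hat E^1$ equals one of $(E^{\pm1}_T\cap E^1)\otimes F^1$, hence sign-definite because $E^1_T\cap E^1$ is and $R$ identifies it up to sign with $E^{-1}_T\cap E^1$. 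Iterating $k$ times from the base cases yields the configuration on $V^{r+4k,s+4k}$, $V^{r,s+8k}$, $V^{r+8k,s}$.

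Finally, suppose $\Phi=A\oplus C\colon\mathcal N_{r+4k,s+4k}\to\mathcal N_{s+4k,r+4k}$ is a Lie algebra isomorphism (the other two families are treated identically). Since $r+4k\neq s+4k$, Theorem~\ref{property of isomorphism} gives $CC^{\tau}=-\Id$, hence $C^{\tau}C=-\Id$; after relabeling the orthonormal basis of the target centre we may assume $C(z_j)=w_j$, $C^{\tau}(w_j)=-z_j$ for every generator occurring in the $P_j$, the $\hat Q_l$ and in $T$. By Lemma~\ref{relation between volume form}, $AP_j=\wP_jA$, so by Corollary~\ref{rem:important} the map $A$ is block diagonal and restricts to a bijection $A_1\colon E^1\to\wE^1$; put $S_1:=A_1^{\tau}A_1$, a symmetric invertible operator on $E^1$. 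Applying the second part of Corollary~\ref{rem:important} to the product $T=\prod_{j=1}^3 J_{z_j}$ (three factors), which leaves $E^1$ invariant, yields $A_1TA_1^{\tau}=-\wT$ and $A_1^{\tau}\wT A_1=T$, so that $S_1TS_1=A_1^{\tau}(A_1TA_1^{\tau})A_1=-T$ on $E^1$. But $E^1$ is neutral, $T$ is an isometric symmetric involution, and $(x,y)=\langle x,Ty\rangle$ is positive definite on $E^1$, so Lemma~\ref{rem:contradiction} rules out such an $S_1$ — a contradiction. Hence none of the three pairs is isomorphic. The one genuinely delicate point is the bookkeeping in the inductive step — checking that the complementary operators, the sign-definiteness of $E^1_T\cap E^1$, and the staircase commutation pattern of Lemma~\ref{lem:PT2} all survive the tensoring; everything else is a direct application of Corollary~\ref{rem:important} and Lemma~\ref{rem:contradiction}.
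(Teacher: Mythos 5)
Your proof is correct, and it reaches the paper's contradiction by the same core obstruction: the symmetric isometric involution $T=J_{z_1}J_{z_2}J_{z_3}$, a common eigenspace $E^1$ on which $\la x,Ty\ra$ is positive definite, the relation $S_1TS_1=-T$ for $S_1=A_1^{\tau}A_1$ forced by Corollary~\ref{rem:important}, and Lemma~\ref{rem:contradiction}. Where you genuinely diverge is in how the obstruction is produced in higher dimensions. The paper remarks that the periodicity argument of Theorem~\ref{periodicity} cannot be applied directly to non-isomorphic pairs, and instead, for each of the four families and each of the three periodicity directions, writes down from scratch an explicit system of $4k$ (or so) mutually commuting involutions, complementary operators, and an explicit orthonormal basis of $E^1$ on which the action of $T$ is computed by hand. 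You instead perform an induction on $k$ through the tensor models $V^{r,s}\otimes V^{4,4}$, $V^{r,s}\otimes V^{0,8}$, $V^{r,s}\otimes V^{8,0}$, showing that the whole ``obstruction package'' (involutions, complementary operators, the anti-isometry $R$ anticommuting with $T$, and the definiteness of $E^1_T\cap E^1$) is transported by $X\mapsto X\otimes\Omega$ or $X\otimes\Id$ according to parity. This buys uniformity — one inductive lemma replaces twelve explicit constructions — at the cost of the bookkeeping you flag (checking that $\Omega^2=\Id$, that $\Omega$ is symmetric and isometric and acts as a scalar on the one-dimensional $F^1$, and that neutrality of $E^1\otimes F^1$ follows, which in fact is immediate since $F^1$ is one-dimensional and non-null); the paper's explicit route makes each case verifiable by a table but obscures why the obstruction is periodic. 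Both are valid; your version is closer in spirit to Theorem~\ref{periodicity} and makes the stability of the non-isomorphism under $(4,4)$- and $8$-periodicity conceptually transparent.
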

\begin{proof}
Observe that if the Lie algebra $\mathcal{N}_{r,s}$ has a system of $p$ mutually commuting isometric involutions, then the Lie algebra  $\mathcal{N}_{r+4k,s+4k}$ has $p+4k$ mutually commuting isometric involutions. The dimensions of minimal admissible modules are related   by $\dim(V^{r+4k,s+4k})=16\dim(V^{r,s})$. Therefore, the dimension of the common eigenspace $E^1$, corresponding to  eigenvalues 1 of all the involutions, does not change and equal for $\mathcal{N}_{r,s}$ and $\mathcal{N}_{r+4k,s+4k}$ for any $k$. The same argument valid for the Lie algebras $\mathcal{N}_{r,s+8k}$ and $\mathcal{N}_{s+8k,r}$. During the proof we show that for each value of $(r,s)$ in the statement of the theorem, we can apply Lemma~\ref{rem:contradiction} and deduce that 
$\mathcal{N}_{r+4k,s+4k}\not\cong\mathcal{N}_{s+4k,r+4k}$ and $\mathcal{N}_{r,s+8k}\not\cong\mathcal{N}_{s+8k,r}$ for any $k$. 

We slightly change the notations. Denote by $\{z_1,\ldots,z_r,\zeta_1,\ldots,\zeta_s\}$ the orthonormal basis of $\mathbb R^{r,s}$ with $\la z_k,z_k\ra_{r,s}=1$, $k=1,\ldots,r$, and $\la \zeta_j,\zeta_j\ra_{r,s}=-1$, $j=1,\ldots,s$.
\\

{\sc Case $(r,s)=(3,1)$}. The minimal admissible module $V^{3,1}$ has the isometric involution $T=J_{z_1}J_{z_2}J_{z_3}$. The minimal admissible module $V^{3+4k,1+4k}$ has the following mutually commuting isometric involutions 
$$
P_1=J_{z_1}J_{z_2}J_{z_{4}}J_{z_{5}},\ P_{2}=J_{z_1}J_{z_2}J_{z_{6}}J_{z_{7}},\ldots,\  P_{2k}=J_{z_1}J_{z_2}J_{z_{2+4k}}J_{z_{3+4k}},
$$
$$
P_{2k+1}=J_{z_1}J_{z_2}J_{\zeta_{2}}J_{\zeta_{3}},\ \ \ldots,\  \ P_{4k}=J_{z_1}J_{z_2}J_{\zeta_{4k}}J_{\zeta_{1+4k}},\ \ T=J_{z_1}J_{z_2}J_{z_3}.
$$
The complementary operators are 
$$
R_l=J_{z_{2l+3}}J_{\zeta_1},\ l=1,\ldots,2k,\quad R_l=J_{\zeta_{2(l-2k)+1}}, \ l=2k+1,\ldots, 4k,\quad R_{4k+1}=J_{\zeta_1}.
$$
We choose the basis of $E^1=\bigcap_{j}^{4k}E^1_{P_j}$, starting from $v\in E^1$, $\la v,v\ra_{V^{3+4k,1+4k}}=1$. We also need an isometric operator $Q=\prod_{j=1}^{4k}R_j$. Thus we have
\begin{equation}\label{eq:Q}
QP_j=-P_jQ,\quad QT=TQ,\quad J_{\zeta_1}P_j=P_jJ_{\zeta_1},\quad J_{\zeta_1}T=-TJ_{\zeta_1},\quad Q^2=\Id.
\end{equation}
If it is necessary, we can apply Lemma~\ref{orthogonal} and find the following orthonormal basis 
\begin{equation}\label{eq:b310}
\begin{array}{lllll}
x_1=v,\quad & x_2=J_{z_1}J_{z_2}v,\quad & x_3=J_{z_1}Qv,\quad &x_4=J_{z_2}Qv,
\\
x_5=J_{\zeta_1}v,\quad & x_6=J_{\zeta_1}J_{z_1}J_{z_2}v,\quad & x_7=J_{\zeta_1}J_{z_1}Qv,\quad &x_8=J_{\zeta_1}J_{z_2}Qv,
\end{array}
\end{equation}
with $\la x_k,x_k\ra_{V^{3+4k,1+4k}}=-\la x_{k+4},x_{k+4}\ra_{V^{3+4k,1+4k}}=1$, $k=1,\ldots,4$ and
$$
T(x_j)=x_j,\quad\text{and}\quad T(x_{4+j})=-x_{4+j},\ j=5,6,7,8.
$$
due to the choice of the corresponding operators. Thus assuming that there is a Lie algebra isomorphism $\Phi=A\oplus C\colon\mathcal N_{3+4k,1+4k}\to \mathcal N_{1+4k,3+4k}$, we define the map $S=A^{\tau}A\colon V^{3+4k,1+4k}\to V^{1+4k,3+4k}$ and obtain a contradiction by Lemma~\ref{rem:contradiction}.

The minimal admissible module $V^{3,1+8k}$ has the following mutually commuting isometric involutions 
$$
P_{1}=J_{z_1}J_{z_2}J_{\zeta_{2}}J_{\zeta_{3}},
\quad
P_{2}=J_{z_1}J_{z_2}J_{\zeta_{4}}J_{\zeta_{5}},\ \ \ldots,\  \ P_{4k}=J_{z_1}J_{z_2}J_{\zeta_{8k}}J_{\zeta_{1+8k}},\ \ T=J_{z_1}J_{z_2}J_{z_3}.
$$
The complementary operators are 
$
R_l=J_{\zeta_{2l+3}}$, $l=1,\ldots,4k$, and $R_{4k+1}=J_{\zeta_1}$. We also need the isometric operator $Q=\prod_{j=1}^{4k}R_j$.
Choose the basis~\eqref{eq:b310} and finish the proof by applying Lemma~\ref{rem:contradiction}.

For the minimal admissible module $V^{3+8k,1}$ we choose the mutually commuting isometric involutions 
$$
P_{1}=J_{z_1}J_{z_2}J_{z_4}J_{z_5},
\quad
P_{2}=J_{z_1}J_{z_2}J_{z_6}J_{z_7},\ \ \ldots,\  \ P_{4k}=J_{z_1}J_{z_2}J_{z_{2+8k}}J_{z_{3+8k}},\ \ T=J_{z_1}J_{z_2}J_{z_3}.
$$
The complementary operators are 
$
R_l=J_{z_{2l+3}}J_{\zeta_1}$, $l=1,\ldots,4k$, and $R_{4k+1}=J_{\zeta_1}$ and $Q=\prod_{j=1}^{4k}R_j$.
We choose the basis~\eqref{eq:b310} and finish the proof by applying Lemma~\ref{rem:contradiction}.
\\

{\sc Case $(r,s)=(3,7)$}. This case is similar to the previous. Recall that for $V^{3,7}$ the mutually commuting involutions are
$$
P_j=J_{z_1}J_{z_2}J_{\zeta_{2j}}J_{\zeta_{1+2j}},\ j=1,2,3,\quad T=J_{z_1}J_{z_2}J_{z_3}.
$$
The complementary anti-isometric operators are
$$
R_l=J_{\zeta_{1+2l}},\ l=1,2,3,\quad R_4=J_{\zeta_1},\quad Q=\prod_{l=1}^{3}J_{\zeta_{1+2l}}.
$$
For the minimal admissible module $V^{3+4k,7+4k}$ we choose the following involutions
$$
P_m=J_{z_1}J_{z_2}J_{z_{2+2j}}J_{z_{3+2j}},\ m=1,\ldots, 2k,\quad P_j=J_{z_1}J_{z_2}J_{\zeta_{2j}}J_{\zeta_{1+2j}},\ j=1,\ldots,3+2k,
$$
and $T=J_{z_1}J_{z_2}J_{z_3}$. The complementary operators are 
$$
R_p=J_{z_{3+2p}}J_{\zeta_1},\ p=1,\ldots 2k,\quad R_l=J_{\zeta_{1+2l}},\ l=1,\ldots,3+2k,\quad R_{4+4k}=J_{\zeta_1},
$$
and the isometry $Q=\prod_{l=1}^{4k+4}R_j$. Since all the chosen operators satisfy~\eqref{eq:Q}, then we can take the basis~\eqref{eq:b310} and finish the proof, applying Lemma~\ref{rem:contradiction}. 

For the minimal admissible module $V^{3,7+8k}$ we write
$$
P_j=J_{z_1}J_{z_2}J_{\zeta_{2j}}J_{\zeta_{1+2j}},\ j=1,\ldots,3+4k,\quad T=J_{z_1}J_{z_2}J_{z_3},
$$
$$
R_l=J_{\zeta_{1+2l}},\ l=1,\ldots,3+4k,\quad R_{4+4k}=J_{\zeta_1},\quad Q=\prod_{l=1}^{4k+4}R_j.
$$

For the minimal admissible module $V^{3+8k,7}$ we define
$$
P_m=J_{z_1}J_{z_2}J_{z_{2+2j}}J_{z_{3+2j}},\ m=1,\ldots, 4k,\quad P_j=J_{z_1}J_{z_2}J_{\zeta_{2j}}J_{\zeta_{1+2j}},\ j=1,2,3,
$$
$$
T=J_{z_1}J_{z_2}J_{z_3},\quad Q=\prod_{l=1}^{4k+4}R_j,\quad\text{where}
$$
$$
R_l=J_{z_{3+2l}}J_{\zeta_1},\ l=1,\ldots 4k,\quad R_{l+4k}=J_{\zeta_{1+2l}},\ l=1,2,3,\quad R_{4+4k}=J_{\zeta_1}.
$$
\\

{\sc Case $(r,s)=(3,11)$}. We recall that the set of mutually commuting isometric involutions acting on $V^{3,11}$ is:
$$
P_j=J_{z_1}J_{z_2}J_{\zeta_{2j}}J_{\zeta_{1+2j}},\ j=1,\ldots,5,\quad T=J_{z_{1}}J_{z_{2}}J_{z_{3}}.
$$
The complementary operators are
$R_j=J_{\zeta_{1+2j}}$, $j=1,\ldots,5$, and $R_6=J_{\zeta_{1}}$.
For the minimal admissible module $V^{3+4k,11+4k}$ we make the following modifications
$$
P_m=J_{z_1}J_{z_2}J_{z_{2+2j}}J_{z_{3+2j}},\ m=1,\ldots, 2k,\quad P_j=J_{z_1}J_{z_2}J_{\zeta_{2j}}J_{\zeta_{1+2j}},\ j=1,\ldots,5+2k,
$$
and $T=J_{z_1}J_{z_2}J_{z_3}$. The complementary operators are 
$$
R_p=J_{z_{3+2p}}J_{\zeta_1},\ p=1,\ldots 2k,\quad R_l=J_{\zeta_{1+2l}},\ l=1,\ldots,5+2k,\quad R_{4+4k}=J_{\zeta_1},
$$
The space $E^1=\bigcap\limits_{j=1}^{5+4k}E^{1}_{P_j}$ is 4-dimensional neutral space. We choose the orthonormal basis:
$$
x_1=v,\quad x_2=J_{z_1}J_{z_2}v,\quad x_3=J_{\zeta_1}v,\quad x_4=J_{\zeta_1}J_{z_1}J_{z_2}v
$$
with $v\in E^1$, $\la v,v\ra_{V^{3+4k,11+4k}}=1$.
Since $T_1(x_j)=x_j$, $T_1(x_{2+j})=-x_{2+j}$, $j=1,2$, we can finish the proof by applying Lemma~\ref{rem:contradiction}. 

It is clear what changing have to be done for the rest of the proof.
\\

{\sc Case $(r,s)=(3,2)$}. The minimal admissible module $V^{3,2}$ allows two mutually commuting isometric involutions $
P=J_{z_1}J_{z_2}J_{\zeta_1}J_{\zeta_2}$ and $T=J_{z_1}J_{z_2}J_{z_3}$.
For the minimal admissible module $V^{3+4k,2+4k}$ we choose the following involutions
$$
P_1=J_{z_1}J_{z_2}J_{z_4}J_{z_5},\quad P_2=J_{z_1}J_{z_2}J_{z_6}J_{z_7},\quad\ldots, \quad P_{2k}=J_{z_1}J_{z_2}J_{z_{2+4k}}J_{z_{3+4k}},
$$
$$
P_{2k+1}=J_{z_1}J_{z_2}J_{\zeta_1}J_{\zeta_2},\quad P_{2k+2}=J_{z_1}J_{z_2}J_{\zeta_3}J_{\zeta_4},\quad\ldots\quad P_{4k+1}=J_{z_1}J_{z_2}J_{\zeta_{1+4k}}J_{\zeta_{2+4k}},
$$
and $T=J_{z_1}J_{z_2}J_{z_3}$. The complementary anti-isometric operators are 
$$
R_j=J_{z_{3+2j}}J_{\zeta_{1}},\ j=1,\ldots,2k,\quad R_{2k+m}=J_{\zeta_{2m-1}},\ m=1,\ldots 2k+1, 
$$
and $R_{4k+2}=J_{z_1}\prod_{j=1}^{4k+1}R_j$. It is easy to see that $R_{4k+2}$ commutes with all $P_j$ and anti-commute with $T$.
The space $E^{1}=\bigcap_{j=1}^{4k+1}E^1_{P_j}$ is neutral 4-dimensional. We construct an orthonormal basis, starting from $v\in E^{1}$, $\la v,v\ra_{V^{3+4k,2+4k}}=1$: 
$$
x_1=v,\quad x_2=J_{z_1}J_{z_2}v,\quad x_3=R_{4k+2}(x_1),\quad x_4=R_{4k+2}(x_2)
$$
with $\la x_i,x_i\ra_{{3+4k,2+4k}}=-\la x_{i+2},x_{i+2}\ra_{{3+4k,2+4k}}=1$, $i=1,2$.
We see that
$$
Tx_1=x_1,\quad Tx_2=x_2,\quad Tx_3=-x_3,\quad Tx_4=-x_4.
$$
We can apply now Lemma~\ref{rem:contradiction}.

For the minimal admissible module $V^{3,2+8k}$ we choose the involutions
$$
P_{1}=J_{z_1}J_{z_2}J_{\zeta_1}J_{\zeta_2},\quad P_{2}=J_{z_1}J_{z_2}J_{\zeta_3}J_{\zeta_4},\quad\ldots\quad P_{4k+1}=J_{z_1}J_{z_2}J_{\zeta_{1+8k}}J_{\zeta_{2+8k}},
$$
and $T=J_{z_1}J_{z_2}J_{z_3}$. The complementary anti-isometric operators are 
$$
R_j=J_{\zeta_{2j-1}},\ j=1,\ldots 2k+1, \quad R_{4k+2}=J_{z_1}\prod_{j=1}^{4k+1}R_j.
$$
Then we finish the proof as in the previous case.

For the minimal admissible module $V^{3+8k,2}$ we choose the involutions
$$
P_1=J_{z_1}J_{z_2}J_{z_4}J_{z_5},\quad\ldots, \quad P_{4k}=J_{z_1}J_{z_2}J_{z_{2+8k}}J_{z_{3+8k}},\quad P_{4k+1}=J_{z_1}J_{z_2}J_{\zeta_1}J_{\zeta_2},
$$
and $T=J_{z_1}J_{z_2}J_{z_3}$. The complementary anti-isometric operators are 
$$
R_j=J_{z_{3+2j}}J_{\zeta_{1}},\ j=1,\ldots,4k,\quad R_{4k+1}=J_{\zeta_{1}},\quad\text{and}\quad R_{4k+2}=J_{z_1}\prod_{j=1}^{4k+1}R_j.
$$ 
It is easy to see that $R_{4k+2}$ commutes with all $P_j$ and anti-commute with $T$ and we can finish the proof by applying Lemma~\ref{rem:contradiction}.
\end{proof}

\begin{cor}
There are no automorphisms of the algebra
$\mathcal{N}_{3+4k,3+4k}$, $k=0,1,\ldots$, of the form
$\Psi=A\oplus C$
with $C^{\tau}C=-Id$. 
\end{cor}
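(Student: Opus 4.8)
The plan is to extend the argument of Corollary~\ref{non-existence Cl33}, which is exactly the case $k=0$, to arbitrary $k$ by replacing its input, Theorem~\ref{32-23}, with Theorem~\ref{th:nonisom_high}. So suppose, for contradiction, that $\Psi=A\oplus C$ is an automorphism of $\mathcal{N}_{3+4k,3+4k}$ with $C^{\tau}C=-\Id$. First I fix an orthonormal basis $\{z_1,\dots,z_{3+4k},\zeta_1,\dots,\zeta_{3+4k}\}$ of $\mathbb{R}^{3+4k,3+4k}$ with the $z_i$ positive and the $\zeta_j$ negative. Since $C^{\tau}C=-\Id$ forces $\langle Cx,Cx\rangle=-\langle x,x\rangle$, the map $C$ interchanges positive and negative vectors; hence $w_i:=C(z_i)$ are negative, $\omega_j:=C(\zeta_j)$ are positive, $C^{\tau}(w_i)=-z_i$, $C^{\tau}(\omega_j)=-\zeta_j$, and $\{\omega_1,\dots,\omega_{3+4k},w_1,\dots,w_{3+4k}\}$ is again an orthonormal basis of the same space $\mathbb{R}^{3+4k,3+4k}$.

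Next I run the projection construction from the case $(r,s)=(5,1)$ in the proof of Theorem~\ref{th:51-62}. Let $\pi$ be the orthogonal projection of $\mathbb{R}^{3+4k,3+4k}$ onto the non-degenerate subspace $\mathbb{R}^{3+4k,2+4k}$ obtained by deleting the direction $\zeta_{3+4k}$, and let $\pi'$ be the orthogonal projection onto $\mathbb{R}^{2+4k,3+4k}$ obtained by deleting $\omega_{3+4k}=C(\zeta_{3+4k})$. Restricting the Clifford action of $\Cl_{3+4k,3+4k}$ on $V^{3+4k,3+4k}$ to the subalgebras generated by the remaining basis vectors makes $V^{3+4k,3+4k}$ an admissible module over $\Cl_{3+4k,2+4k}$ and, separately, over $\Cl_{2+4k,3+4k}$. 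The one point to verify here is that these restricted modules have minimal possible dimension: by the $(4-4)$-periodicity in~\eqref{perCl} one has $\dim V^{3+4k,3+4k}=16^{k}\dim V^{3,3}=8\cdot 16^{k}$, and likewise $\dim V^{3+4k,2+4k}=\dim V^{2+4k,3+4k}=8\cdot 16^{k}$; hence the restricted modules, carrying the inherited non-degenerate scalar product and having minimal dimension, may be taken as $V^{3+4k,2+4k}$ and $V^{2+4k,3+4k}$. Consequently $\Id\oplus\pi$ and $\Id\oplus\pi'$ are surjective Lie algebra homomorphisms onto $\mathcal{N}_{3+4k,2+4k}$ and $\mathcal{N}_{2+4k,3+4k}$, with one-dimensional central kernels $\spn\{\zeta_{3+4k}\}$ and $\spn\{\omega_{3+4k}\}$; and $C$ maps the first kernel onto the second by construction. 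Therefore $\Psi$ descends to a Lie algebra isomorphism $\overline{\Psi}\colon\mathcal{N}_{3+4k,2+4k}\to\mathcal{N}_{2+4k,3+4k}$.

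This contradicts the case $(r,s)=(3,2)$ of Theorem~\ref{th:nonisom_high}, which asserts $\mathcal{N}_{3+4k,2+4k}\not\cong\mathcal{N}_{2+4k,3+4k}$; hence no such automorphism $\Psi$ exists, which is the claim. The main obstacle I anticipate is the bookkeeping in the preceding paragraph, namely that quotienting a pseudo $H$-type algebra by the one-dimensional centre direction of an orthonormal basis vector again produces a pseudo $H$-type algebra; this reduces to the dimension count above, together with the elementary fact that the defining relation $\langle J_wx,y\rangle=\langle w,[x,y]\rangle$ is preserved under orthogonal restriction of the centre and that $C$ pairs the deleted directions on the two sides. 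As a self-contained alternative to the projection argument, one may proceed computationally as in the case $(r,s)=(3,2)$ of the proof of Theorem~\ref{th:nonisom_high}: transplant onto $V^{3+4k,3+4k}$ the mutually commuting isometric involutions $P_j$ and the involution $T=J_{z_1}J_{z_2}J_{z_3}$ used there, restrict $A$ to the neutral common eigenspace $E^{1}$ of the $P_j$, on which $T^{2}=\Id$ and $\langle x,Ty\rangle$ is positive definite, and apply Lemma~\ref{rem:contradiction} to $S=A_1^{\tau}A_1$, deriving $STS=-T$ from Lemma~\ref{relation between volume form}.
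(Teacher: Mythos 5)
Your proposal is correct and follows essentially the same route as the paper: the paper's proof is precisely the one-line observation that such an automorphism would induce an isomorphism $\mathcal{N}_{3+4k,2+4k}\cong\mathcal{N}_{2+4k,3+4k}$, contradicting Theorem~\ref{th:nonisom_high}, with the direct method of Corollary~\ref{non-existence Cl33} mentioned as an alternative. You have simply filled in the details (the projection diagram and the dimension count for minimality of the restricted modules) that the paper leaves implicit.
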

\begin{proof}
If such an automorphism would exist, then it could induce
an isomorphism between
$\mathcal{N}_{3+4k,2+4k}$ and $\mathcal{N}_{2+4k,3+4k}$, which is a contradiction. The proof can be also obtained by method of Theorem~\ref{th:nonisom_high} as in Corollary~\ref{non-existence Cl33}.
\end{proof}

%%%%%%%%%%%%%%%%%%%%%%%%%%%%%%

\section{Step 2: trivially non-isomorphic Lie algebras}\label{sec:4}

%%%%%%%%%%%%%%%%%%%%%%%%%%%%%%

In this section we study the isomorphism between the Lie algebras
$\mathcal N_{r,s}$ and $\mathcal N_{s,r}$, where one of the Lie
algebras 
is constructed from minimal admissible Clifford module and another one
is constructed 
by using the direct sum of two minimal admissible Clifford modules. 
We formulate one theorem, where we state all the cases that could be
used for further applications of periodicity~\eqref{perCl}. 
We continue to use the notation $V^{r,s}$ for minimal admissible
modules 
and we write $U^{s,r}$ to denote a non-minimal admissible module. 
In the case when there are two minimal admissible modules, 
we write
$V^{r,s}_+\cong V_{+}$ and $V^{r,s}_-\cong V_{-}$. 
We use the notation $\mathcal
N_{r,s}^2$
for the Lie algebra constructed by using the direct sum of 
two minimal admissible modules.
Below in Theorem~\ref{th:double} we write 
$\mathcal{N}_{r,s}^2\cong\mathcal{N}_{r,s}(V^{r,s}\oplus V^{r,s})$ for the case of $r-s\neq 3(mod\,\,4)$. In the cases $r-s= 3(mod\,\,4)$ the Clifford algebra $\Cl_{r,s}$ has two minimal
admissible module and in this case we write $\mathcal{N}^{2}_{r,s}\cong
\mathcal{N}_{r,s}(V^{r,s}_{+}\oplus V^{r,s}_{-})$.

\begin{theorem}\label{th:double}
The following pairs of the Lie algebras 
are isomorphic
$$
\begin{array}{lllllll}
&\mathcal N_{3,0}^2\cong\mathcal N_{0,3},\quad &\mathcal N_{5,0}^2\cong\mathcal N_{0,5},\quad&\mathcal N_{6,0}^2\cong\mathcal N_{0,6},\quad&\mathcal N_{7,0}^2\cong\mathcal N_{0,7},
\\
&\mathcal N_{2,1}\cong\mathcal N_{1,2}^2,\quad&\mathcal N_{4,1}^2\cong\mathcal N_{1,4},\quad&\mathcal N_{7,1}^2\cong\mathcal N_{1,7},
\\
&\mathcal N_{4,2}\cong\mathcal N_{2,4}^2,\quad&\mathcal N_{7,2}^2\cong\mathcal N_{2,7},
\\
&\mathcal N_{4,3}\cong\mathcal N_{3,4}^2,\quad&\mathcal N_{5,3}\cong\mathcal N_{3,5}^2,&\mathcal N_{6,3}\cong\mathcal N_{3,6}^2.
\end{array}
$$
\end{theorem}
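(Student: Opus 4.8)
The plan is to treat each of the listed isomorphisms $\mathcal{N}^2_{r,s}\cong\mathcal{N}_{s,r}$ (or its reverse) by the same mechanism used in Section~\ref{sec:basic}, but now allowing the admissible module on one side to be a direct sum of two copies of a minimal admissible module. First I would fix the general setup: by Theorem~\ref{property of isomorphism}, an isomorphism $\Phi\colon\mathcal{N}_{r,s}(U)\to\mathcal{N}_{s,r}(\widetilde U)$ with $r\neq s$ must have the form $\Phi=A\oplus C$ with $CC^{\tau}=-\Id$ and $A^{\tau}\wJ_z A=J_{C^{\tau}(z)}$. So in each case I set $C(z_j)=w_j$, $C^{\tau}(w_j)=-z_j$ on the chosen orthonormal bases, and the whole problem reduces to exhibiting a bijective $A$ between the (equal-dimensional!) module spaces realising the intertwining relation. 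The dimension bookkeeping is exactly the content of the labels ``d''/``h'' in Table~\ref{t:step1}: e.g. $\dim V^{0,3}=2\dim V^{3,0}$, so $V^{3,0}\oplus V^{3,0}$ and $V^{0,3}$ have the same dimension, and similarly for all the other pairs.

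The key technical device is Corollary~\ref{rem:important} together with Theorem~\ref{th:general}: I would, for each pair, choose a family of mutually commuting isometric involutions $P_j$ on the (possibly doubled) module for one Clifford algebra, the corresponding $\wP_j$ on the other, decompose $A=\oplus_I A_I$ along the common eigenspaces $E^I$, $\wE^I$, and reduce everything to the construction of a single block $A_1\colon E^1\to\wE^1$ satisfying~\eqref{eq:A11}. On the doubled side $U=V\oplus V$ one extra involution is available ``for free'', namely the swap $\sigma(v,v')=(v',v)$ or a sign variant $(v,-v')$, which commutes with the diagonal $J_z\oplus J_z$-action; this is what provides the additional commuting involution needed to match the eigenspace structure of the single large module on the other side, and it is how one reconciles the factor-of-two dimension gap at the level of involutions rather than dimensions. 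For the cases $r-s\equiv 3\ (\mathrm{mod}\ 4)$ (the last row, $\mathcal{N}_{4,3},\mathcal{N}_{5,3},\mathcal{N}_{6,3}$, and also $\mathcal{N}_{3,0},\mathcal{N}_{5,0},\mathcal{N}_{6,0},\mathcal{N}_{7,0}$, etc.) one must be careful: the doubled module is $V_+\oplus V_-$, so the volume form $\Omega$ acts as $\Id\oplus(-\Id)$, i.e. as an extra involution, and one has to check the intertwining relation is compatible with the chosen eigenvalue pattern of $\Omega$ on both sides, using Proposition~\ref{prop:mod} to know which irreducible summands can or must be taken.

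Concretely I expect the proof to go case by case, grouped by the Atiyah–Bott/$(4\!-\!4)$-periodicity pattern so as to minimise work: for each pair $(r,s)$ I pick explicit $P_j$'s as products of the $J_{z_k}$ (as in Theorems~\ref{th:10-01} and~\ref{th:51-62}), identify a quaternion or Clifford substructure acting on $E^1$, verify via a table of commutation signs (mirroring Tables~\ref{t:52}, \ref{t:62}) that each $E^I$ is neutral or sign-definite of the right dimension (using Lemmas~\ref{lem:PT1}, \ref{lem:PT2}), choose $v\in E^1$ with $\langle v,v\rangle=1$ adjusting the sign of the scalar product by Remark~\ref{sign change} if needed, write $A_1(v)$ as a general linear combination of the basis of $\wE^1$, impose~\eqref{eq:A11} for the finitely many products $\prod J_{z_j}$ preserving $E^1$, solve the resulting small polynomial system for the coefficients (as in~\eqref{eq:solA}), and finally invoke Theorem~\ref{th:general} to extend $A_1$ to $A=\oplus A_I$. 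The main obstacle will be organisational rather than conceptual: for the ``doubled'' side one must choose the involutions so that the swap-type involution genuinely appears among the commuting family and so that the remaining involutions still come from products of the $J_{z_k}$ acting diagonally — getting a uniform, verifiable bookkeeping of signs across roughly a dozen cases, and in particular handling the $r-s\equiv 3\ (\mathrm{mod}\ 4)$ cases where the volume form contributes an essential eigenvalue constraint, is where the real care is needed. Once a handful of representative cases are done in detail, the periodicity Theorems~\ref{periodicity} and~\ref{periodicity1} (extended to non-minimal modules exactly as in those proofs, via $\mathbb Z_2$-graded tensor products with $V^{0,8}$, $V^{8,0}$, $V^{4,4}$) propagate the isomorphisms, and the remaining pairs follow by the same template with only the index ranges changed.
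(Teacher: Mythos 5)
Your overall frame (reduce to $\Phi=A\oplus C$ with $CC^{\tau}=-\Id$, pick commuting isometric involutions, build $A$ block by block) is the right toolkit, but the specific device you rely on to bridge the dimension gap does not work as stated. The swap $\sigma(v,v')=(v',v)$ on $U=V\oplus V$ is indeed an isometric involution commuting with the diagonal Clifford action, but it is \emph{not} a product of the $J_{z_k}$, so it does not satisfy the hypotheses of Corollary~\ref{rem:important} or Theorem~\ref{th:general}: there is no counterpart $\widetilde\sigma$ on the single minimal module $V^{s,r}$ obtained by ``replacing $J_{z_k}$ by $\wJ_{w_k}$'', and, worse, since every $J_{z_k}$ acts diagonally it preserves both eigenspaces of $\sigma$, so condition (a) of Theorem~\ref{th:general} fails --- no map $G_I=\prod J_{z_i}$ can carry $E^1$ into the second copy of $V$. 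Hence the block-extension mechanism you invoke cannot reach half of the doubled module. For the pairs where $U^{r,s}=V_+\oplus V_-$ the volume form $\Omega^{r,s}=\Id\oplus(-\Id)$ genuinely is a product of $J$'s and your scheme is salvageable; but for, say, $U^{5,0}=V^{5,0}\oplus V^{5,0}$ no product of generators separates two identical summands, and you would have to define and verify $A$ on the second copy by a separate argument that your proposal does not supply.

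The paper avoids all of this by a different and much shorter route: for each pair it finds an already-established isomorphism $\Phi\colon\mathcal N_{l,m}\to\mathcal N_{m,l}$ with one more central generator (taken from Theorems~\ref{th:10-01}, \ref{th:51-62} or the automorphisms of Theorem~\ref{th:automorphisms}) such that restricting the Clifford actions along $\mathbb R^{r,s}\subset\mathbb R^{l,m}$ identifies the two restricted module spaces with $V^{s,r}$ and $U^{r,s}$ respectively (e.g.\ $V^{4,0}$ restricted to $\Cl_{3,0}$ equals $V_+^{3,0}\oplus V_-^{3,0}=U^{3,0}$ because $J_{z_4}$ anticommutes with $\Omega^{3,0}$), and then pushes $\Phi$ down through the quotient diagram~\eqref{eq:scheme} that kills one central direction --- exactly the trick already used for $\mathcal N_{5,1}\cong\mathcal N_{1,5}$. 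If you want a self-contained direct construction you must replace the swap by this restriction picture, or else handle the second summand by hand; as written, the central step of your proposal is unsupported.
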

\begin{proof}
The scheme of the proof is the following: for each pair of Lie algebras $\mathcal N_{r,s}^2$ and  $\mathcal N_{s,r}$ we find another pair  $\mathcal N_{l,m}$, $\mathcal N_{m,l}$ of isomorphic algebras such that $V^{l,m}$ is isometric to $V^{s,r}$ and $U^{r,s}$. Then representations of the Clifford algebras $\Cl_{l,m}$ and $\Cl_{m,l}$ will induce actions on $V^{s,r}$ and $U^{r,s}$ that allow to induce the isomorphism $\overline \Phi\colon \mathcal N_{s,r}\to\mathcal N_{r,s}^2$ from the existing isomorphism between $\Phi\colon \mathcal N_{l,m}\to\mathcal N_{m,l}$, as it is shown on the diagram:
\begin{equation}\label{eq:scheme}
\begin{CD}
\{0\}@>>> K_{-} @>>> \mathcal{N}_{l,m} @>{I\oplus\pi_{-}}>> \mathcal{N}_{s,r}@>>>\{0\}\\
@.        @VV{C}V @V{\Phi}VV @VV{\overline{\Phi}}V\\                          
\{0\}@>>> K_{+}@>>> \mathcal{N}_{m,l}@>{I\oplus\pi_{+}}>> \mathcal{N}_{r,s}^2@>>>\{0\}.
\end{CD}
\end{equation}
We consider case by case finding suitable isomorphic pairs of Lie algebras that will induce the isomorphism for the pairs listed in the statement of the theorem.
\\

{\sc Case $\mathcal N_{3,0}^2\cong\mathcal N_{0,3}$.} Let $V^{4,0}$ be the minimal admissible module of
the Clifford algebra $\Cl_{4,0}$, then
the natural inclusion $\mathbb{R}^{3,0}\subset\mathbb{R}^{4,0}$
defines an admissible module $U^{3,0}$ of $\Cl_{3,0}$. Then it must be $U^{3,0}=V_{+}^{3,0}\oplus V_{-}^{3,0}$, since
the operator $J_{z_4}$ anti-commutes with the volume form
$\Omega^{3,0}=J_{z_1}J_{z_2}J_{z_3}$. Thus $U^{3,0}$ includes both eigenspaces of $\Omega^{3,0}$ and $U^{3,0}$ is isometric to $V^{4,0}$.    

The orthogonal projection $\pi_+\colon \mathbb{R}^{4,0} \to\mathbb{R}^{3,0}$ with kernel $K_+=\spn\{z_1\}$ and the isometry map $I\colon V^{4,0}\to U^{3,0}$ define a surjective Lie algebra homomorphism $
\rho=I\oplus\pi_+\colon\mathcal{N}_{4,0}\to \mathcal{N}_{3,0}^2$. Analogously, the orthogonal projection $\pi_-\colon \mathbb R^{0,4}\to\mathbb R^{0,3}$ with the kernel $K_-=\spn\{\zeta_1\}$ and the isometry map $I\colon V^{0,4}\to V^{0,3}$ induce a surjective Lie algebra homomorphism $
\rho=I\oplus\pi_-\colon\mathcal{N}_{0,4}\to \mathcal{N}_{0,3}$.
Then the isomorphism
$\Phi\colon\mathcal{N}_{0,4}\to \mathcal{N}_{4,0}$ induces an isomorphism
$\overline \Phi\colon \mathcal{N}_{0,3}\to \mathcal{N}_{3,0}^2$ by~\eqref{eq:scheme},
since $\Phi(\zeta_1)=C(\zeta_{1})=z_1$.
\\

{\sc Cases $\mathcal N_{5,0}^2\cong\mathcal N_{0,5}$, $\mathcal N_{6,0}^2\cong\mathcal N_{0,6}$, $\mathcal N_{7,0}^2\cong\mathcal N_{0,7}$.} From now on we will only indicate the structure of $U^{r,s}$ and the isomorphic Lie algebras that induce the necessary isomorphism.

We have $U^{5,0}=V^{5,0}\oplus V^{5,0}$, $U^{6,0}=V^{6,0}\oplus V^{6,0}$, and $U^{7,0}=V^{7,0}_+\oplus V^{7,0}_-$. The isomorphisms $\overline \Phi$ are induced from $\Phi\colon\mathcal{N}_{8,0}\to \mathcal{N}_{0,8}$.
\\

{\sc Cases $\mathcal N_{2,1}\cong\mathcal N_{1,2}^2$, $\mathcal N_{4,1}\cong\mathcal N_{1,4}^2$, $\mathcal N_{7,1}^2\cong\mathcal N_{1,7}$.}
Let
$\Psi=A\oplus C\colon\mathcal{N}_{2,2}\to\mathcal{N}_{2,2}$, be a Lie algebra automorphism, such that 
\[
C(z_1)=z_4,\quad C(z_2)=z_3,\quad C(z_3)=z_2,\quad C(z_4)=z_1,\quad\text{and}\quad CC^{\tau}=-\Id.
\]
We have $U^{1,2}=V^{1,2}\oplus V^{1,2}$ and the isomorphism $\overline\Phi\colon \mathcal N_{2,1}\to \mathcal N_{1,2}^2$ is induced by the automorphism~$\Psi$.

We have $U^{1,4}=V^{1,4}\oplus V^{1,4}$ and the isomorphism $\overline \Phi\colon \mathcal N_{4,1}\to\mathcal N_{1,4}^2$ is induced from $\Phi\colon\mathcal{N}_{5,1}\to \mathcal{N}_{1,5}$. Analogously, $U^{7,1}=V^{7,1}\oplus V^{7,1}$ and the isomorphism $\overline \Phi\colon \mathcal N_{7,1}^2\to\mathcal N_{1,7}$ is induced from $\Phi\colon\mathcal{N}_{8,1}\to \mathcal{N}_{1,8}$.
\\

{\sc Cases $\mathcal N_{4,2}\cong\mathcal N_{2,4}^2$, $\mathcal N_{7,2}^2\cong\mathcal N_{2,7}$}. 
We have $U^{2,4}=V^{2,4}\oplus V^{2,4}$ and the isomorphism $\overline \Phi\colon \mathcal N_{4,2}\to\mathcal N_{2,4}^2$ is induced from $\Phi\colon\mathcal{N}_{5,2}\to \mathcal{N}_{2,5}$. One has $U^{7,2}=V^{7,2}\oplus V^{7,2}$ and the isomorphism $\overline \Phi\colon \mathcal N_{7,2}^2\to\mathcal N_{2,7}$ is induced from $\Phi\colon\mathcal{N}_{8,2}\to \mathcal{N}_{2,8}$.
\\

{\sc Cases $\mathcal N_{4,3}\cong\mathcal N_{3,4}^2$, $\mathcal N_{5,3}\cong\mathcal N_{3,5}^2$, $\mathcal N_{6,3}\cong\mathcal N_{3,6}^2$.} We have $U^{3,4}=V^{3,4}_+\oplus V^{3,4}_-$, and the isomorphism $\overline \Phi\colon \mathcal N_{4,3}\to\mathcal N_{3,4}^2$ is induced from the automorphism $\Psi$ of $\mathcal{N}_{4,4}$. 

One has the following modules $U^{3,k}=V^{3,k}\oplus V^{3,k}$ and the isomorphism $\overline \Phi\colon \mathcal N_{k,3}\to\mathcal N_{3,k}^2$ is induced from $\Phi\colon\mathcal{N}_{k+1,4}\to \mathcal{N}_{4,k+1}$ for $k=5,6$.
\end{proof}

The last theorem is an application of the construction made in Theorem~\ref{periodicity} to show the isomorphism of Lie algebras of high dimention.

\begin{theorem}\label{periodicity3}
If the Lie algebra $\mathcal{N}^2_{r,s}$ is isomorphic to the Lie algebra $\mathcal{N}_{s,r}$, then 
\begin{itemize}
\item[1.]{the Lie algebras $\mathcal{N}^2_{r,s+8k}$ and $\mathcal{N}_{s+8k,r}$ 
are isomorphic;}
\item[2.]{the Lie algebras $\mathcal{N}^2_{r+8k,s}$ and $\mathcal{N}_{s,r+8k}$ 
are isomorphic;}
\item[3.]{the Lie algebras $\mathcal{N}^2_{r+4k,s+4k}$ and $\mathcal{N}_{s+4k,r+4k}$ 
are isomorphic.}
\end{itemize}
for any $k=1,2,\ldots$.
\end{theorem}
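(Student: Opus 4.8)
The plan is to reproduce the tensor-product construction of Theorem~\ref{periodicity} verbatim, checking only that it respects the passage from a minimal admissible module to the doubled module underlying $\mathcal N^2_{r,s}$. Note first that the hypothesis $\mathcal N^2_{r,s}\cong\mathcal N_{s,r}$ forces $r\neq s$, since otherwise the two algebras would have modules of different dimension; hence by Theorem~\ref{property of isomorphism} the given isomorphism $\Phi=A\oplus C\colon\mathcal N^2_{r,s}\to\mathcal N_{s,r}$ satisfies $C^{\tau}C=-\Id$, exactly as the isomorphism $\bar\Phi=\bar A\oplus\bar C\colon\mathcal N_{0,8}\to\mathcal N_{8,0}$ of Theorem~\ref{th:10-01} and the automorphism $\bar\Phi$ of $\mathcal N_{4,4}$ of Theorem~\ref{th:automorphisms}.

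The first step is to verify that tensoring with the period modules preserves the doubled structure. Write $U^{r,s}$ for the admissible module underlying $\mathcal N^2_{r,s}$, that is, $V^{r,s}\oplus V^{r,s}$ when $r-s\not\equiv 3\pmod{4}$ and $V^{r,s}_+\oplus V^{r,s}_-$ otherwise. Since $\otimes$ distributes over $\oplus$, one gets $U^{r,s}\otimes V^{0,8}\cong(V^{r,s}\otimes V^{0,8})\oplus(V^{r,s}\otimes V^{0,8})$ with the representation $\hat J_{z_j}=J_{z_j}\otimes\Omega^{0,8}$, $\hat J_{\zeta_\alpha}=\Id\otimes\bar J_{\zeta_\alpha}$ as in Theorem~\ref{periodicity}. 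Since $V^{r,s}\otimes V^{0,8}$ is a minimal admissible $\Cl_{r,s+8}$-module and $r-(s+8)\equiv r-s\pmod{4}$, this is an admissible module with non-degenerate scalar product of dimension $2\dim V^{r,s+8}$; by the decomposition statement of Section~\ref{sec:adm_mod} it splits into two minimal admissible modules, hence coincides with the module defining $\mathcal N^2_{r,s+8}$. In the case $r-s\equiv 3\pmod{4}$ one checks in addition that $\Omega^{r,s+8}$ acts on the two minimal summands with opposite signs, which follows from the opposite action of $\Omega^{r,s}$ on $V^{r,s}_\pm$. The same holds for $U^{r,s}\otimes V^{8,0}$ and $U^{r,s}\otimes V^{4,4}$, while on the other side $V^{s,r}\otimes V^{8,0}=V^{s+8,r}$, $V^{s,r}\otimes V^{0,8}=V^{s,r+8}$, and $V^{s,r}\otimes V^{4,4}=V^{s+4,r+4}$ remain minimal, exactly as in Theorem~\ref{periodicity}.

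With these identifications the isomorphism $\hat\Phi=\hat A\oplus\hat C\colon\mathcal N^2_{r,s+8k}\to\mathcal N_{s+8k,r}$, and likewise for the other two periodicities, is defined by the very formulas of Theorem~\ref{periodicity}: $\hat C$ is declared on the split orthonormal basis of $\mathbb R^{r,s}\oplus\mathbb R^{0,8}$ by $\hat C(z_j)=w_j$, $\hat C(\zeta_\alpha)=\omega_\alpha$ with the adjoint reversing signs; $\hat A_{1,1}$ on $E^1\otimes F^1$ is given by the four cases of~\eqref{eq:tensor}; and $\hat A_{IJ}$ on $E^I\otimes F^J$ is obtained from $\hat A_{1,1}$ through~\eqref{eq:tensor1} via the maps $G_{IJ}=G_I\otimes\bar G_J$. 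The block-by-block verification that $\hat A J_{z_{j_0}}\hat A^{\tau}=\wJ_{\hat C(z_{j_0})}$ is word for word the computation in the proofs of Theorem~\ref{th:general} and Theorem~\ref{periodicity}, using only the relations of Lemma~\ref{relation between volume form} together with $C^{\tau}C=\bar C^{\tau}\bar C=-\Id$, and nowhere using whether the source module is minimal or doubled; for the $(4,4)$-periodicity one replaces $\bar\Phi\colon\mathcal N_{0,8}\to\mathcal N_{8,0}$ by the automorphism of $\mathcal N_{4,4}$ from Theorem~\ref{th:automorphisms}, exactly as in Theorem~\ref{periodicity1}.

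The main obstacle, indeed the only new content beyond Theorem~\ref{periodicity}, is the first step: the stability of the doubled structure under tensoring with the period modules, and in particular, in the case $r-s\equiv 3\pmod{4}$, recovering $V_+\oplus V_-$ rather than $V_+\oplus V_+$ or $V_-\oplus V_-$ after the index shift. Once that bookkeeping is carried out, the theorem follows from the construction of Theorem~\ref{periodicity} just as Theorem~\ref{periodicity1} does, with no further calculation.
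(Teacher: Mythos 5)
Your proposal is correct and follows exactly the route the paper itself takes: the paper states Theorem~\ref{periodicity3} with no written proof beyond the remark that it is ``an application of the construction made in Theorem~\ref{periodicity}'', and your argument supplies precisely that application, including the one genuinely new point (stability of the doubled module structure, and of the $V_+\oplus V_-$ splitting when $r-s\equiv 3\pmod 4$, under tensoring with the period modules). Nothing in your verification differs in substance from the intended argument.
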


%%%%%%%%%%%%%%%%%%%%%%%%%%%%%%

\section{Step 3: uniqueness of minimal dimensional Lie algebras}\label{sec:step3}

%%%%%%%%%%%%%%%%%%%%%%%%%%%%%%

Recall that we are classifying the resulting Lie algebras, constructed as $H$-type Lie algebras by making use of (probably) different admissible scalar products on the same representation space. In the present section we discuss the uniqueness of Lie algebras first with the same representation space but different scalar products and then the Lie algebras constructed from non-equivalent representation spaces. 
\begin{prop}\label{genisom1}  
Let $(V,\la.\,,.\ra_V)$ be an admissible module (not necessarily
minimal) and denote by 
$V_1=(V,-\la.\,,.\ra_V)$ an admissible module with the scalar product of
the opposite sign, see Remark~\ref{sign change}. Then
the Lie algebras $\mathcal N_{r,s}(V)$ and $\mathcal N_{r,s}(V_1)$ are isomorphic under the map 
$$
\begin{array}{ccc}
\mathcal N_{r,s}(V)=V\oplus\mathbb{R}^{r,s}&\longrightarrow &\mathcal N_{r,s}(V_1)=V\oplus\mathbb{R}^{r,s}
\\
(x,z)&\mapsto& (x,-z).
\end{array}
$$
\end{prop}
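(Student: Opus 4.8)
The plan is to reduce the statement to the defining relation~\eqref{bracket definition} for the Lie bracket together with the non-degeneracy of $\langle.\,,.\rangle_{r,s}$. First I would record that the representation map $J\colon\Cl_{r,s}\to\End(V)$ underlying both $\mathcal N_{r,s}(V)$ and $\mathcal N_{r,s}(V_1)$ may be taken to be one and the same map: by Remark~\ref{sign change} the skew-symmetry condition~\eqref{AdMo2} for each $J_z$ is insensitive to a global sign change of the bilinear form, so $V_1=(V,-\langle.\,,.\rangle_V)$ is an admissible module for the very same $J$. Consequently the two Lie algebras are built on the same underlying vector space $V\oplus\mathbb R^{r,s}$ and differ only through the scalar product used in~\eqref{bracket definition}.

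Next I would compare the two brackets. Write $[.\,,.]$ and $[.\,,.]_1$ for the brackets of $\mathcal N_{r,s}(V)$ and $\mathcal N_{r,s}(V_1)$. For $x,y\in V$ and every $z\in\mathbb R^{r,s}$, applying Definition~\ref{definition Pseudo} to $V_1$, whose scalar product is $-\langle.\,,.\rangle_V$, gives
$$
\langle z,[x,y]_1\rangle_{r,s}=-\langle J_zx,y\rangle_V=-\langle z,[x,y]\rangle_{r,s}=\langle z,-[x,y]\rangle_{r,s},
$$
and since $\langle.\,,.\rangle_{r,s}$ is non-degenerate on $\mathbb R^{r,s}$ this forces $[x,y]_1=-[x,y]$ for all $x,y\in V$.

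Finally I would verify directly that the linear bijection $\Phi\colon(x,z)\mapsto(x,-z)$ is a Lie algebra homomorphism. Both algebras are $2$-step nilpotent with centre $\mathbb R^{r,s}$, so for $x,y\in V$ and $z,z'\in\mathbb R^{r,s}$ one has $[x+z,y+z']=[x,y]\in\mathbb R^{r,s}$ in $\mathcal N_{r,s}(V)$, and $\Phi$ acts on $\mathbb R^{r,s}$ by $w\mapsto-w$; hence
$$
[\Phi(x+z),\Phi(y+z')]_1=[x-z,\,y-z']_1=[x,y]_1=-[x,y]=\Phi([x,y])=\Phi([x+z,y+z']),
$$
using centrality of $z,z'$ in the second equality and the identity $[x,y]_1=-[x,y]$ in the third. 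Since $\Phi$ is visibly linear and $\Phi^2=\Id$, it is bijective, hence a Lie algebra isomorphism. There is no genuine obstacle in this argument; the only point that requires care is the opening observation that the representation map is literally unchanged, which is what guarantees that the two brackets differ by an overall sign rather than in a more complicated way.
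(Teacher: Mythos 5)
Your proof is correct and is exactly the verification the paper leaves implicit (the proposition is stated without proof, the isomorphism map being given explicitly): the same representation $J$ serves both modules, the sign flip of the scalar product forces $[x,y]_1=-[x,y]$ by non-degeneracy of $\la .\,,.\ra_{r,s}$, and the map $(x,z)\mapsto(x,-z)$ absorbs that sign. Nothing is missing.
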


\begin{prop}\label{genisom2} Let $V$ be a representation space of a
  Clifford algebra $\Cl_{r,s}$. We denote by 
$V_1=(V,\la.\,,.\ra_V^{(1)})$ and $V_2=(V,\la.\,,.\ra_V^{(2)})$ two {\bf minimal}
admissible modules with different scalar products. 
Then the Lie algebras $\mathcal N_{r,s}(V_1)$ and $\mathcal N_{r,s}(V_2)$ are isomorphic. 
\end{prop}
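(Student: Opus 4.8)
The plan is to construct a Lie algebra isomorphism $\Phi=A\oplus C\colon\mathcal N_{r,s}(V_1)\to\mathcal N_{r,s}(V_2)$; by Theorem~\ref{property of isomorphism} this amounts to exhibiting bijective linear maps $A\colon V\to V$ and $C\colon\mathbb R^{r,s}\to\mathbb R^{r,s}$ with $A^{\tau}J_zA=J_{C^{\tau}(z)}$, and I will look for them with $C=\varepsilon\,\Id$, $\varepsilon\in\{1,-1\}$, and $A\in\End_{\Cl_{r,s}}(V)^{\times}$ a Clifford module automorphism. First I would introduce the comparison operator $T\colon V\to V$ defined by $\la Tx,y\ra^{(1)}_V=\la x,y\ra^{(2)}_V$. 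Non-degeneracy of both forms makes $T$ invertible, symmetry makes $T$ self-adjoint for $\la.\,,.\ra^{(1)}_V$, and writing out the admissibility relation~\eqref{AdMo2} for both forms gives $TJ_z=J_zT$ for every $z$; thus $T\in\End_{\Cl_{r,s}}(V)$, and the same computation shows that the $\la.\,,.\ra^{(1)}_V$-adjoint $A^{\dagger}$ of any module automorphism $A$ is again a module automorphism. A one-line computation identifies the $V_1$–$V_2$ adjoint as $A^{\tau}=A^{\dagger}T$, so, using that $A$, $A^{\dagger}$ and $T$ all commute with the $J_z$, the relation~\eqref{isomorphism relation}, $A^{\tau}J_zA=J_{C^{\tau}(z)}$, becomes $J_z\,(A^{\dagger}TA)=\varepsilon J_z$ for all $z$, that is,
\[
A^{\dagger}TA=\varepsilon\,\Id .
\]

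Hence the task reduces to solving $A^{\dagger}TA=\varepsilon\,\Id$ for some $A\in\End_{\Cl_{r,s}}(V)^{\times}$ and some sign $\varepsilon$, i.e.\ to showing that the self-adjoint invertible element $T$ of $\End_{\Cl_{r,s}}(V)$ is $\dagger$-congruent to $\pm\Id$ \emph{within} $\End_{\Cl_{r,s}}(V)$. This is where minimality of the module enters. By Table~\ref{ir} the endomorphism algebra $\End_{\Cl_{r,s}}(V)$ is one of $\mathbb D$ (for $V$ irreducible), $\mathbb D\oplus\mathbb D$ (for $V=V_+\oplus V_-$), or $M_2(\mathbb D)$ (for $V=V_0\oplus V_0$), with $\mathbb D\in\{\mathbb R,\mathbb C,\mathbb H\}$; the restriction of $\dagger$ is a standard (anti-)involution on it, pinned down by how the generators of $\mathbb D$ — products of the $J_{z_k}$, or the volume form — transform under $\dagger$. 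A Sylvester/Hermitian-form diagonalisation then congruences $T$, by an element of $GL_1$ or $GL_2$ over $\mathbb D$ (automatically in $\End_{\Cl_{r,s}}(V)^{\times}$), to a diagonal matrix with self-adjoint scalar entries, and each scalar is disposed of by extracting a square root in $\mathbb R_{>0}$, $\mathbb C$ or $\mathbb H$. The only residual obstruction is an overall sign, absorbed by taking $\varepsilon=-1$ (equivalently, by first replacing $\la.\,,.\ra^{(1)}_V$ with $-\la.\,,.\ra^{(1)}_V$, which changes nothing by Proposition~\ref{genisom1}); the fact that $\la.\,,.\ra^{(1)}_V$ and $\la.\,,.\ra^{(2)}_V$ have the same inertia — both neutral when $s>0$, both positive definite when $s=0$ — is exactly what prevents $T$ from being congruent to a genuinely indefinite form and forces congruence to $\pm\Id$. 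When $s=0$ this is transparent: $T$ is a positive self-adjoint module endomorphism, so $A=T^{-1/2}$, $\varepsilon=1$ works.

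The main obstacle is this case-by-case bookkeeping: running through every entry of Table~\ref{ir}, identifying $\End_{\Cl_{r,s}}(V)$ together with its $\dagger$-involution, and verifying the inertia/congruence step in each. The delicate case is $s$ odd, $r-s\equiv 3\pmod 4$, where $V=V_+\oplus V_-$ and the volume form $\Omega^{r,s}$ turns out to be $\dagger$-skew, so $V_+$ and $V_-$ need not be $\la.\,,.\ra^{(1)}_V$-orthogonal and $T$ cannot be diagonalised blockwise for free; there I would first use a complementary anti-isometry (in the spirit of Lemma~\ref{lem:PT1}) to split $V$ into neutral subspaces adapted simultaneously to both forms and then apply the congruence argument. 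All the remaining steps are routine, and since the construction of $A$ takes place entirely inside $\End_{\Cl_{r,s}}(V)$, the resulting $A$ is a module automorphism by construction, so $\Phi=A\oplus C$ is the required isomorphism.
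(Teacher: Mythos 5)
Your reduction is correct and takes a genuinely different route from the paper. The paper's own proof is constructive: it uses cyclicity of a minimal admissible module (Theorem~\ref{integral basis I}, item 3) to pick generating vectors $v,u$ in the common eigenspace $E^1$ of a maximal family of commuting involutions, normalised by $\la v,v\ra^{(1)}_V=\la u,u\ra^{(2)}_V=1$, argues that the Gram matrices of the two generated bases $\{J_{z_{i_1}}\cdots J_{z_{i_k}}v\}$ and $\{J_{z_{i_1}}\cdots J_{z_{i_k}}u\}$ coincide, and maps one basis to the other with $C=\Id$. Your observation that the comparison operator $T$ lies in $\End_{\Cl_{r,s}}(V)$, that $A^{\tau}=A^{\dagger}T$, and that \eqref{isomorphism relation} with $C=\varepsilon\Id$ is equivalent to $A^{\dagger}TA=\varepsilon\Id$, is all correct and cleanly isolates what has to be proved.

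However, the remaining step --- $T$ is $\dagger$-congruent to $\pm\Id$ inside $\End_{\Cl_{r,s}}(V)$ --- carries the entire weight of the proposition, and the reason you give for it is not valid. Equality of inertia does \emph{not} prevent $T$ from being congruent to an indefinite element: if $\End_{\Cl_{r,s}}(V)\cong M_2(\mathbb D)$ and $T$ were congruent to $\mathrm{diag}(1,-1)$ with respect to a pair of $\dagger$-self-adjoint idempotents, the form $\la T\cdot,\cdot\ra^{(1)}_V$ would equal $+\la\cdot,\cdot\ra^{(1)}_V$ on one copy of the irreducible summand and $-\la\cdot,\cdot\ra^{(1)}_V$ on the other --- still symmetric, non-degenerate and neutral, so of the same inertia, yet $T\not\sim\pm\Id$. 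What actually excludes this is minimality: a $\dagger$-self-adjoint idempotent $e$ with $0\neq e\neq\Id$ splits $V$ into the orthogonal sum $e(V)\oplus(\Id-e)(V)$ of proper submodules on which $\la\cdot,\cdot\ra^{(1)}_V$ restricts non-degenerately, i.e.\ into proper admissible submodules, contradicting minimality of $V$. That argument is the missing idea, and it is also needed to make the square-root step honest: over $(\mathbb R,\mathrm{id})$ or $(\mathbb H,\text{conjugation})$ a negative scalar is not of the form $g^{\dagger}g$, so the sign can only be absorbed globally by $\varepsilon=-1$ via Proposition~\ref{genisom1} once one knows there is a single sign rather than one per diagonal entry. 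Finally, beware that $T$ need not be diagonalisable as an operator (self-adjoint operators of a neutral form can have non-real spectrum), so the ``Sylvester diagonalisation'' you invoke is really the classification of hermitian forms over each $(\mathbb D,\dagger)$ occurring in Table~\ref{ir}; that multi-case verification is announced but not carried out, and as written the proposal does not close it.
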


\begin{proof}
Any minimal admissible module is cyclic in the following sense. There is a
vector, generating an orthonormal basis of the module by successive
actions 
of the maps $J_{z_j}$ on this generating vector, see Theorem~\ref{integral basis I}, item 3. 
We can find the generating vector in the space $E^1=\cap E^1_{P_i}$,
where $\{P_{i}\}$ is a set of the 
maximal number of mutually
commuting isometric involutions. We choose two vectors $u,v\in E^1$ such that 
$\la v,v\ra^{(1)}_V=1$,
and $\la u,u\ra^{(2)}_V=1$, where, if it is necessary, we can change the
sign 
of the scalar products to be opposite according 
to Remark~\ref{sign change} and Proposition~\ref{genisom1}.
Then these vectors 
generate the Clifford module, in the sense that $V$ is the span 
of all $\{J_{z_{i_1}}J_{z_{i_2}}\cdots J_{z_{i_{k}}}v\}$
and it is also the span of all $\{J_{z_{i_1}}J_{z_{i_2}}\cdots
J_{z_{i_{k}}}u\}$. Moreover 
$$
\la J_{z_{i_1}}J_{z_{i_2}}\cdots J_{z_{i_{k}}}v,J_{z_{j_1}}J_{z_{j_2}}\cdots J_{z_{j_{k'}}}v\ra^{(1)}_V
=
\la J_{z_{i_1}}J_{z_{i_2}}\cdots J_{z_{i_{k}}}u,J_{z_{j_1}}J_{z_{j_2}}\cdots J_{z_{j_{k'}}}u\ra^{(2)}_V,
$$
for any choice of the basis vectors. Let us denote by $[.\,,.]^{(k)}$, $k=1,2$ the brackets defined by scalar products $\la.\,,.\ra_V^{(k)}$.
Then
\begin{eqnarray*}
&&\la z_{\ell},[J_{z_{i_1}}\cdots J_{z_{i_{k}}}v,J_{z_{j_1}}\cdots J_{z_{j_{k'}}}v]^{(1)}\ra_{r,s}
=\la J_{z_{\ell}}J_{z_{i_1}}\cdots J_{z_{i_{k}}}v,J_{z_{j_1}}\cdots J_{z_{j_{k'}}}v\ra^{(1)}_V
\\
&=&\la J_{z_{\ell}}J_{z_{i_1}}\cdots J_{z_{i_{k}}}u,J_{z_{j_1}}\cdots J_{z_{j_{k'}}}u\ra^{(2)}_V
=\la z_{\ell},[J_{z_{i_1}}\cdots J_{z_{i_{k}}}u,J_{z_{j_1}}\cdots J_{z_{j_{k'}}}u]^{(2)}\ra_{r,s}
\end{eqnarray*}
for any $l=1,\ldots,r+s$. The map
$
(J_{z_{i_1}}J_{z_{i_2}}\cdots J_{z_{i_{k}}}v,z)\mapsto 
(J_{z_{i_1}}J_{z_{i_2}}\cdots J_{z_{i_{k}}}u,z)
$
is well defined and gives an isomorphism between the Lie algebras
$\mathcal N_{r,s}(V_1)$ and $\mathcal N_{r,s}(V_2)$.
\end{proof}

As it was mentioned in Section~\ref{sec:adm_mod}, some of the Clifford
algebras $\Cl_{r,s}$ have two minimal admissible modules, that correspond to two
non-equivalent irreducible modules supplied 
with a neutral or a positive definite scalar product,
making the representation maps skew-symmetric. 
We denote these modules by $V^{r,s}_+$ and $V^{r,s}_-$ 
and the corresponding Lie algebras by $\mathcal N_{r,s}(V^{r,s}_+)$ and $\mathcal N_{r,s}(V^{r,s}_-)$.

\begin{theorem}\label{th:step3}
If there are two minimal admissible $\Cl_{r,s}$-modules $V^{r,s}_+$ and $V^{r,s}_-$, then the Lie algebras $\mathcal N_{r,s}(V^{r,s}_+)$ and $\mathcal N_{r,s}(V^{r,s}_-)$ are isomorphic.
\end{theorem}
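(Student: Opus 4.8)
The plan is to exhibit an explicit Lie algebra isomorphism $\Phi = A \oplus C \colon \mathcal N_{r,s}(V^{r,s}_+) \to \mathcal N_{r,s}(V^{r,s}_-)$ of the form studied in Theorem~\ref{property of isomorphism}, taking $C = \pm\Id$ on $\mathbb R^{r,s}$ (so $C^\tau C = \Id$) and reducing everything to the construction of a linear bijection $A \colon V^{r,s}_+ \to V^{r,s}_-$ satisfying $A^\tau \wJ_z A = J_{C^\tau(z)}$, equivalently $A J_z A^\tau = \wJ_{C(z)}$ for all $z$ in an orthonormal basis. The first step is to recall from Proposition~\ref{prop:mod} and Table~\ref{ir} which Clifford algebras actually possess two minimal admissible modules $V^{r,s}_\pm$: either $r-s \equiv 3 \pmod 4$ with $s$ even and the irreducible modules $V_\pm$ themselves admissible (so $V^{r,s}_\pm = V_\pm$), or the cases where the minimal admissible module is $V_+\oplus V_+$ versus $V_-\oplus V_-$. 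I would treat these on equal footing because in all cases the two modules are distinguished purely by the sign with which the volume form $\Omega^{r,s} = \prod_{j=1}^{r+s} J_{z_j}$ acts.

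The key idea is the following. Fix one generator, say $z_1$, and note that $J_{z_1}$ anticommutes with $\Omega^{r,s}$ when $r+s$ is even and commutes with it when $r+s$ is odd; since $r-s \equiv 3 \pmod 4$ forces $r+s$ odd, I must instead use a product of an even number of generators that anticommutes with $\Omega^{r,s}$, or more simply observe that on the underlying vector space $V$ the two representations $J$ and $\wJ$ differ by precomposition with an automorphism of the Clifford algebra. Concretely: define a new representation $J'_{z_j} := -J_{z_j}$ on the same space $V^{r,s}_+$; then $(J')^2_{z_j} = J^2_{z_j} = -\langle z_j,z_j\rangle\Id$, so $J'$ is again a representation, the admissibility condition~\eqref{AdMo2} still holds with the same scalar product, and the volume form of $J'$ is $(-1)^{r+s}\Omega^{r,s} = -\Omega^{r,s}$ since $r+s$ is odd. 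Hence $(V^{r,s}_+, \langle.\,,.\rangle, J')$ is a minimal admissible module on which the volume form acts as $-\Id$, i.e.\ it is isomorphic as a Clifford module to $V^{r,s}_-$. Taking $A$ to be this module isomorphism and $C = -\Id$ (so that $J_{C^\tau(z)} = J_{-z} = -J_z = J'_z$), the relation~\eqref{isomorphism relation} reads $A^\tau \wJ_z A = J'_{?}$ and is satisfied by construction; by Theorem~\ref{property of isomorphism} (converse direction, the computation~\eqref{eq:isomAC}) this $\Phi = A \oplus C$ is a Lie algebra isomorphism.

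In more detail, the steps in order would be: (1) verify that $J' = -J$ is a representation and that it is admissible with respect to the same scalar product on $V^{r,s}_+$; (2) compute its volume form and conclude it acts by $-\Id$, so $(V^{r,s}_+, J')$ is Clifford-isomorphic to $V^{r,s}_-$, say via a linear isometry (or anti-isometry, absorbed by Remark~\ref{sign change} and Proposition~\ref{genisom1}) $A$; (3) for the case $V^{r,s}_\pm = V_+ \oplus V_+$ vs $V_- \oplus V_-$, apply the same argument block-diagonally, noting the scalar product on the doubled module is again the neutral one prescribed in~\cite{Ci}; (4) set $C = -\Id_{\mathbb R^{r,s}}$, check $CC^\tau = \Id$, and invoke the converse part of Theorem~\ref{property of isomorphism} to conclude $\Phi = A \oplus C$ is an isomorphism $\mathcal N_{r,s}(V^{r,s}_+) \cong \mathcal N_{r,s}(V^{r,s}_-)$; (5) combine with Proposition~\ref{genisom2} to absorb any ambiguity in the choice of admissible scalar product, thereby also yielding the uniqueness of $\mathcal N_{r,s}$ claimed after Definition~\ref{definition Pseudo}.

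The main obstacle I anticipate is not the algebra but making precise the claim in step~(2) that $(V^{r,s}_+, J')$ and $V^{r,s}_-$ are isomorphic \emph{as admissible modules}, i.e.\ that the module isomorphism can be chosen compatible with (up to sign) the scalar products; this is where Theorem~\ref{integral basis I} and Lemma~\ref{orthogonal} enter, since a cyclic generating vector in the common eigenspace $E^1$ of a maximal family of commuting isometric involutions produces matching orthonormal bases on the two sides, exactly as in the proof of Proposition~\ref{genisom2}. One must also be slightly careful that the involutions used to set up $E^1$ are built from \emph{even} products of the $J_{z_k}$ so that changing $J \to -J$ leaves them unchanged; then the generator transfers verbatim and $A$ is read off from the identification of basis vectors, with a possible global sign change handled by Remark~\ref{sign change}.
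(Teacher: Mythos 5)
Your strategy is genuinely different from the paper's and, in outline, more uniform: you reduce everything to the single observation that the twisted representation $J'=-J$ on $(V^{r,s}_{+},\la.\,,.\ra_{+})$ is again admissible with the same scalar product and has volume form $(-1)^{r+s}\Omega^{r,s}=-\Omega^{r,s}$ (since $r+s$ is odd), hence is module-equivalent to $V^{r,s}_{-}$; an isometric intertwiner $A$ together with $C=-\Id$ then satisfies~\eqref{isomorphism relation} and Theorem~\ref{property of isomorphism} does the rest. The paper instead proves the theorem through Lemmas~\ref{lem:2new}--\ref{lem:highdimstep3}: general constraints ($AA^{\tau}=-\det C\,\Id$, $\det C=-1$ for $s=0$), explicit matrix constructions of $A$ in the six base cases $(3,0),(7,0),(1,2),(3,4),(5,2),(1,6)$ using commuting involutions and quaternion structures, and then the tensor-product periodicity to reach all $(r,s)$. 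Your twist avoids both the case analysis and the periodicity step, and your sign bookkeeping (isometry with $C=-\Id$ versus anti-isometry with $C=\Id$, consistency with $\det C=-1$ when $s=0$) is correct.

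The gap is exactly where you locate it, but it is larger than "slightly careful": the decisive claim is that the abstract module equivalence $(V^{r,s}_{+},J')\simeq(V^{r,s}_{-},\wJ)$ can be realized by an \emph{isometry} (or anti-isometry) of the admissible scalar products, and this does not follow from Schur-type rescaling. If $A_0$ is any intertwiner, then $A_0^{\tau}A_0$ lies in the commutant of the representation, and for $r-s\equiv 3\ (\mathrm{mod}\ 8)$ that commutant is quaternionic, so $A_0^{\tau}A_0$ need not be a scalar and cannot simply be normalized away; this is visible in the paper's own computations, e.g.\ the matrix $A_1^{\tau}A_1$ for $(5,2)$ in Lemma~\ref{lem:basic3-7}, which forces nontrivial conditions $a_1a_3=a_2a_4$, $a_2a_3=-a_1a_4$ on the intertwiner before it becomes conformal. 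Closing the gap therefore requires constructing $A$ from matching cyclic vectors $v\in E^{1}$, $u\in\wE^{1}$ and verifying that \emph{every} product of the $J'_{z_k}$ preserving $E^{1}$ acts on $v$ with the same sign as the corresponding product of the $\wJ_{w_k}$ acts on $u$ — not only the volume form, but all involutions commuting with the chosen family $P_j$. That verification is the actual content of the paper's case-by-case work; your appeal to the proof of Proposition~\ref{genisom2} inherits the same unproved Gram-matrix identity that that proposition asserts without detailed justification. Also, your step (3) for the case $V^{r,s}_{\pm}=V_{\pm}\oplus V_{\pm}$ cannot be done "block-diagonally": by Proposition~\ref{prop:mod} the individual summands are not admissible there, so the scalar product does not restrict nondegenerately to them, and the argument must be run on the doubled module as a whole. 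So: right idea, genuinely more economical architecture, but the heart of the proof is still to be supplied.
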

The proof of Theorem~\ref{th:step3} is contained in four lemmas. Lemma~\ref{lem:2new} is a reformulation of Lemma~\ref{relation between volume form}, Lemma~\ref{cor:constr} states general properties of Lie algebra isomorphism, Lemma~\ref{lem:basic3-7} shows the isomorphism for lower dimensional cases and the last Lemma~\ref{lem:highdimstep3} is an application of the periodicity~\eqref{perCl}. 

\begin{lemma}\label{lem:2new}
Let $\{z_i\}_{i=1}^{r+s}$ be an orthonormal basis of $\mathbb{R}^{r,s}$ and $\Phi=A\oplus C\colon \mathcal N_{r,s}(V^{r,s}_+)\to \mathcal N_{r,s}(V^{r,s}_-)$ a Lie algebra isomorphism. Then the following relations hold
$$
A\prod_{j=1}^{p}J_{z_j}=
\begin{cases}
\prod\limits_{j=1}^{p}\wJ_{C(z_j)}A,\quad \text{if}\quad p=2m, 
\\
\prod\limits_{j=1}^{p}\wJ_{C(z_j)}(A^{\tau})^{-1},\quad \text{if}\quad p=2m+1, 
\end{cases}
$$
$$
A^{\tau}\prod_{j=1}^{p} \wJ_{w_j}=
\begin{cases}
\prod\limits_{j=1}^{p}{J}_{C^{\tau}(w_j)}A^{\tau}\quad \text{if}\quad p=2m, 
\\
\prod\limits_{j=1}^{p}{J}_{C^{\tau}(w_j)}A^{-1}\quad \text{if}\quad p=2m+1.
\end{cases}
$$
for any $m \in \mathbb{N}$.
\end{lemma}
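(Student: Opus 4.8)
The statement is the exact analogue of Lemma~\ref{relation between volume form} in the situation where the isomorphism links two pseudo $H$-type algebras built on the \emph{same} $\mathbb R^{r,s}$, so that the map $C$ preserves, rather than reverses, the sign of vectors. The plan is therefore to reduce everything to the defining relation~\eqref{isomorphism relation}, $A^{\tau}\wJ_{z}A=J_{C^{\tau}(z)}$, and to run the induction of Lemma~\ref{relation between volume form} essentially verbatim, keeping careful track of how replacing $C^{\tau}C=-\Id$ by $C^{\tau}C=\Id$ affects the signs.

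First I would record the elementary facts. Since the existence of two non-equivalent irreducible modules forces $r-s\equiv 3\pmod 4$, we have $r\neq s$, so Corollary~\ref{Identity relation} gives $CC^{\tau}=C^{\tau}C=\Id$; in particular $C$ is an isometry of $\mathbb R^{r,s}$ and $\la C^{\tau}(z),C^{\tau}(z)\ra_{r,s}=\la z,z\ra_{r,s}$ for every $z$. Combining~\eqref{isomorphism relation} with $CC^{\tau}=\Id$ exactly as in the first half of the proof of Lemma~\ref{iso-form 2} yields the base case $p=1$ of the first family, namely $AJ_{z_1}A^{\tau}=\wJ_{C(z_1)}$, equivalently $AJ_{z_1}=\wJ_{C(z_1)}(A^{\tau})^{-1}$. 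From this and the isometry property one gets the auxiliary identity $A^{-1}\wJ_{z^{*}}(A^{\tau})^{-1}=J_{C^{\tau}(z^{*})}$: indeed $A^{\tau}\wJ_{z^{*}}^{-1}A=-\tfrac{1}{\la z^{*},z^{*}\ra_{r,s}}J_{C^{\tau}(z^{*})}$, and inverting this expression, together with $J_{C^{\tau}(z^{*})}^{-1}=-\tfrac{1}{\la z^{*},z^{*}\ra_{r,s}}J_{C^{\tau}(z^{*})}$, produces no extra minus sign. This is precisely the place where, in the proof of Lemma~\ref{relation between volume form}, the relation $\la C^{\tau}(z^{*}),C^{\tau}(z^{*})\ra_{r,s}=-\la z^{*},z^{*}\ra_{s,r}$ inserted the sign that eventually accumulated into the factor $(-1)^{m}$.

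Then I would carry out the induction on $p$ as in Lemma~\ref{relation between volume form}: pass from $p$ to $p+1$ by inserting $AA^{-1}$ (respectively $(A^{\tau})^{-1}A^{\tau}$) after the last factor and applying the base relation and the auxiliary identity to the single new operator $\wJ_{z^{*}}$ or $J_{z^{*}}$, alternating between the two formulas according to the parity of $p$. Because the auxiliary identities now carry no sign, the power of $(-1)$ present in~\eqref{volume form1}--\eqref{volume_form7} disappears and one arrives at exactly the clean formulas in the statement. The second, transposed, family then follows from the first by taking adjoints and using $(J_{z})^{\tau}=-J_{z}$, $(\wJ_{z})^{\tau}=-\wJ_{z}$, with $C^{\tau}(w_j)$ in place of $C(z_j)$, just as the first parts of the equalities in Lemma~\ref{relation between volume form} were obtained from the second parts.

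The whole computation is routine; the only point that needs care is the sign bookkeeping: one must verify that the single change $\la C^{\tau}(z),C^{\tau}(z)\ra_{r,s}=+\la z,z\ra_{r,s}$ cancels the power of $(-1)$ at each inductive step and is not secretly reintroduced elsewhere (for instance through $(\prod_{j}J_{z_j})^{\tau}=(-1)^{p}\prod_{j}J_{z_j}$ with reversed order, which is also why the parity of $p$ still dictates whether $A$, $A^{\tau}$, $A^{-1}$ or $(A^{\tau})^{-1}$ appears on the right-hand side). As in the remark following Lemma~\ref{relation between volume form}, the result does not depend on the chosen ordering of the basis vectors.
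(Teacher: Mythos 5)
Your proposal is correct and follows essentially the same route as the paper, which simply invokes Corollary~\ref{Identity relation} to get $CC^{\tau}=\Id$ and then reruns the induction of Lemma~\ref{relation between volume form} verbatim with the sign of $C^{\tau}C$ flipped. Your additional sign bookkeeping (in particular, locating the disappearance of the $(-1)^{m}$ factor in the auxiliary identity $A^{-1}\wJ_{z^{*}}(A^{\tau})^{-1}=J_{C^{\tau}(z^{*})}$) is exactly the verification the paper leaves implicit.
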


\begin{proof} 
Recall, that a Lie algebra isomorphism $\Phi=A\oplus C\colon\mathcal N_{r,s}(V^{r,s}_+)\to\mathcal N_{r,s}(V^{r,s}_-)$ satisfies $CC^{\tau}=\Id$ by Corollary~\ref{Identity relation}. Then the proof follows literally the proof of Lemma~\ref{relation between volume form}, where one has to change the condition $CC^{\tau}=-\Id$ to $CC^{\tau}=\Id$.
\end{proof}

\begin{lemma}\label{cor:constr}
Let $r-s=3(mod\,\,4)$ and the Clifford algebra $\Cl_{r,s}$ admits two minimal admissible modules $V_{\pm}^{r,s}$. Assume that $\Phi=A\oplus C\colon \mathcal{N}_{r,s}(V_{+}^{r,s})\to
\mathcal{N}_{r,s}(V_{-}^{r,s})$ is a Lie algebra isomorphism. Then $AA^{\tau}=-\det C\,\Id$ and $\det C=-1$ if $s=0$.
\end{lemma}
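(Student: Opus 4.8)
The plan is to extract the two claims from the defining relation $A^{\tau}\wJ_z A = J_{C^{\tau}(z)}$ of Theorem~\ref{property of isomorphism} together with the volume-form identities of Lemma~\ref{lem:2new}. First I would form the volume forms $\Omega_{+}^{r,s} = \prod_{j=1}^{r+s} J_{z_j}$ on $V_{+}^{r,s}$ and $\Omega_{-}^{r,s} = \prod_{j=1}^{r+s} \wJ_{w_j}$ on $V_{-}^{r,s}$, where $w_j = C(z_j)$. Since $r-s \equiv 3 \pmod 4$, the integer $p = r+s$ is odd, so $p = 2m+1$ and by Lemma~\ref{lem:2new} we get $A\,\Omega_{+}^{r,s} = \Omega_{-}^{r,s}(A^{\tau})^{-1}$, hence $A\,\Omega_{+}^{r,s} A^{\tau} = \Omega_{-}^{r,s}$. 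Now use the distinguishing property of $V_{\pm}^{r,s}$: $\Omega_{+}^{r,s} = \Id$ on $V_{+}^{r,s}$ and $\Omega_{-}^{r,s} = -\Id$ on $V_{-}^{r,s}$ (this is the whole point of the $\pm$ labels; note that the product of the $\la w_j, w_j\ra$ signs equals that of the $\la z_j, z_j\ra$ signs, so $C$ does not flip which module the volume form acts on as $\pm\Id$). Substituting gives $A A^{\tau} = -\Id$ — but this is the normalized version; in general $|\det(AA^{\tau})| = 1$ has only been arranged up to a scalar, so without normalization one obtains $A A^{\tau} = \lambda\,\Id$ for some nonzero real $\lambda$, and I must identify $\lambda = -\det C$.

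To pin down the scalar I would take determinants. From $A A^{\tau} = \lambda \Id$ on a space of dimension $2N = \dim V_{\pm}^{r,s}$ we get $(\det A)^2 = \lambda^{2N}$, in particular $\lambda > 0$ would force $(\det A)^2 > 0$ which is automatic, so the sign information must come from elsewhere — namely from comparing the two sides of $A\,\Omega_{+}^{r,s} A^{\tau} = \Omega_{-}^{r,s}$ as operators, not just via determinants. Since $\Omega_{+}^{r,s} = \Id$, the left side is literally $A A^{\tau} = \lambda \Id$, and the right side is $-\Id$; thus $\lambda = -1$ in the normalized situation. For the unnormalized statement $AA^{\tau} = -\det C \,\Id$, I would observe that $CC^{\tau} = \Id$ (Corollary~\ref{Identity relation}, since $r = s$ is impossible when $r - s \equiv 3 \pmod 4$... wait, one must be careful here: $r-s \equiv 3$ excludes $r=s$, so indeed $CC^{\tau} = \Id$ and $\det C = \pm 1$), and then track how the normalization constant multiplying $A$ interacts with $\det C$ through the relation $A^{\tau}\wJ_z A = J_{C^{\tau}(z)}$: replacing $A$ by $cA$ multiplies the left side by $c^2$, which must be absorbed by rescaling, and a careful bookkeeping of $\det$ on both sides of $A\,\Omega^{r,s}_{+}A^{\tau} = (\det\text{-factor})\,\Omega^{r,s}_{-}$ yields $AA^{\tau} = -\det C\,\Id$. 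The cleanest route is: take $\det$ of $A^{\tau}\wJ_z A = J_{C^{\tau}(z)}$ for a unit $z$, getting $(\det A)^2 = \det J_{C^{\tau}(z)} / \det \wJ_z$; combine with $(\det A)^2 = (\det(AA^{\tau}))$ and the operator identity $AA^{\tau}\,\Omega^{r,s}_{-} = A\,\Omega^{r,s}_{+}A^{\tau}\cdot(\pm 1) = \pm AA^{\tau}$... this reduces to a scalar equation solvable for $AA^{\tau}$.

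For the final claim, when $s = 0$: here $\Omega_{-}^{r,0}$ acts as $-\Id$ on $V_{-}^{r,0}$ but the scalar products are positive definite (any $\Cl_{r,0}$-module is admissible with a positive definite product), so $AA^{\tau}$ is a positive definite symmetric operator, hence $AA^{\tau} = -\det C\,\Id$ forces $-\det C > 0$, i.e. $\det C = -1$. I would phrase this as: $\la AA^{\tau}x, x\ra_{V_{-}} = \la A^{\tau}x, A^{\tau}x\ra_{V_{+}} > 0$ for $x \neq 0$ since $\la\,,\ra_{V_{+}}$ is positive definite and $A^{\tau}$ is invertible, so the eigenvalue $-\det C$ of $AA^{\tau}$ is positive, giving $\det C = -1$. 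The main obstacle I anticipate is the careful sign/normalization bookkeeping in the first part — specifically, being precise about whether $C$ (which may swap positive and negative basis vectors of $\mathbb R^{r,s}$, though for $r \neq s$ with $CC^{\tau} = \Id$ it preserves the splitting) affects the computation $\prod \la w_j, w_j\ra = \prod \la z_j, z_j\ra$ that underlies $\Omega_{-}^{r,s}$ acting as $-\Id$ rather than $+\Id$ — and making sure the normalization constant is threaded consistently so that the unnormalized identity $AA^{\tau} = -\det C\,\Id$ (rather than just $AA^{\tau} = -\Id$) comes out correctly.
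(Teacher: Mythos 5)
Your overall route is the paper's: apply Lemma~\ref{lem:2new} with $p=r+s$ odd to the volume form, use that $\Omega^{r,s}=\Id$ on $V_+^{r,s}$, and for $s=0$ use positive definiteness of the scalar product to force $\det C=-1$ (that last part of your argument is correct and is exactly the paper's). But the first part has a genuine gap. You define $\Omega_-^{r,s}:=\prod_j\wJ_{w_j}$ with $w_j=C(z_j)$ and assert it acts as $-\Id$ on $V_-^{r,s}$, justifying this by noting that the signature signs $\prod_j\la w_j,w_j\ra$ are unchanged. That addresses the wrong issue. The element of $\Cl_{r,s}$ that acts as $\mp\Id$ on $V_\mp^{r,s}$ is the volume form $\prod_j \wJ_{z_j}$ computed in the fixed orthonormal basis; for the rotated orthonormal basis $\{C(z_j)\}$ one has the top-degree identity
\begin{equation*}
\prod_{j=1}^{r+s}\wJ_{C(z_j)}=\det C\cdot\prod_{j=1}^{r+s}\wJ_{z_j}=-\det C\,\Id \quad\text{on } V_-^{r,s},
\end{equation*}
i.e.\ the volume form is orientation-sensitive, and this is precisely where the factor $\det C$ enters. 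This single line, combined with $A\Omega^{r,s}A^{\tau}=\prod_j\wJ_{C(z_j)}$ from Lemma~\ref{lem:2new}, gives $AA^{\tau}=-\det C\,\Id$ immediately; it is the paper's whole computation.

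Your fallback attempts to recover the factor $-\det C$ cannot work. Taking determinants of an operator identity on the even-dimensional space $V^{r,s}_{\pm}$ loses all sign information: if $AA^{\tau}=\lambda\Id$ then $\det(AA^{\tau})=\lambda^{2N}$ and $\det(-\det C\,\Id)=(\det C)^{2N}=1$, so you only learn $|\lambda|=1$. Likewise, rescaling $A\mapsto cA$ multiplies $AA^{\tau}$ by $c^{2}>0$ and so can never produce the sign $\det C=\pm1$; the normalization of $A$ is a red herring here, since the identity $AA^{\tau}=-\det C\,\Id$ already forces $|\det(AA^{\tau})|=1$. So the statement $AA^{\tau}=-\det C\,\Id$ is not reachable by the "determinant bookkeeping" you sketch; you need the displayed identity above.
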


\begin{proof}
Let $z_1,\ldots z_{r+s}$ be orthonormal generators of the
algebra $\Cl_{r,s}$ and 
$V_{\pm}^{r,s}$ two non-equivalent minimal admissible
modules of the algebra $\Cl_{r,s}$ with the module actions, which are 
denoted by $J$ and $\wJ$, respectively. Both of admissible modules are irreducible and are distinguished by the actions of the volume forms
$\Omega^{r,s}=\prod_{j=1}^{r+s}J_{z_j}$ and
$\widetilde{\Omega}^{r,s}=\prod_{j=1}^{r+s}\wJ_{z_j},$
that is
\[
\Omega^{r,s}\equiv \Id~ \text{on}~V_{+}^{r,s},
\quad~\text{and}~\tilde{\Omega}^{r,s}\equiv -\Id~\text{on} ~V_{-}^{r,s}.
\]
Let us assume that there is an isomorphism 
$\Phi=A\oplus C\colon \mathcal{N}_{r,s}(V_{+}^{r,s})\to
\mathcal{N}_{r,s}(V_{-}^{r,s})$,
where $A\colon V_{+}^{r,s}\to V_{-}^{r,s}$ and $C\colon\mathbb{R}^{r,s}\to \mathbb{R}^{r,s}$ with $CC^{\tau}=\Id$ by Corollary~\ref{Identity relation}. We set $C(z_j)=\sum_{i=1}^{r+s} c_{ji}z_i=w_j$, $j=1,\ldots,r+s$. 
Then we have to satisfy the following
\begin{equation}\label{eq:v1}
AA^{\tau}=A\Omega^{r,s}A^{\tau}=\prod_{j=1}^{r+s}\wJ_{C(z_j)}=\det C\,\prod_{j=1}^{r+s}\wJ_{z_j}=\det C\,\widetilde\Omega^{r,s}=-\det C\,\Id
\end{equation}
by Lemma~\ref{lem:2new} and the definition of the map $C$. 

In the case $s=0$ the scalar product on $V_{\pm}^{r,0}$ is positive definite and the matrix $AA^{\tau}$ is positive. Therefore,  
$$
AA^{\tau}=-\det C\,\Id\quad\Longrightarrow\quad \det C=-1,\quad\Longrightarrow\quad AA^{\tau}=\Id.
$$
\end{proof}

We present general ideas for the construction of a possible map $A\colon V^{r,s}_+\to V^{r,s}_-$ in this case.  Observe that 
\begin{equation}\label{eq:Jpn}
J_{z_i}x=
\begin{cases}
(-1)^i\prod\limits_{j\neq i}J_{z_j}x,\quad&\text{if}\quad i=1,\ldots,r,
\\
(-1)^{i-1}\prod\limits_{j\neq i}J_{z_j}x,\quad&\text{if}\quad i=r+1,\ldots,r+s,
\end{cases}\quad\text{for any}\quad x\in V^{r,s}_+.
\end{equation}
Since the map $A$ has to commute with the product of any even number of representations, we obtain for $i\in\{1,\ldots,r\}$
$$
AJ_{z_i}A^{\tau}=(-1)^{i}\prod_{j\neq i}\wJ_{C(z_j)}AA^{\tau}=\wJ_{C(z_i)}\det C\,\widetilde\Omega^{r,s}(-\det C)\Id=(\det C)^2\wJ_{C(z_j)}=\wJ_{C(z_j)},
$$
and analogously for $i\in\{r+1,\ldots,r+s\}$.
Since $A^{-1}=-\det C A^{\tau}$, in order to satisfy Lemma~\ref{lem:2new} we must define the map $A\colon V^{r,s}_+\to V^{r,s}_-$ by
\begin{equation}\label{eq:gen_rule}
AJ_{z_i}=-\det C\,\wJ_{C(z_i)}A,\quad AJ_{z_i}J_{z_j}=\wJ_{C(z_i)}\wJ_{C(z_j)}A,\ldots.
\end{equation}
We also obtain that 
$$
\la A(x),A(x)\ra_{V^{r,s}_-}=\la x,A^{\tau}A(x)\ra_{V^{r,s}_+}=-\det C\la x,x\ra_{V^{r,s}_+}
$$
for any $x\in V^{r,s}_+$. Thus we see that if $\det C=1$, then the map $A$ became anti-isometry and in the case $\det C=-1$ the map $A$ is an isometry.
In the following lemma we give the precise construction of the isomorphism for the basic cases.

\begin{lemma}\label{lem:basic3-7}
The Lie algebras $\mathcal N_{r,s}(V^{r,s}_+)$ and $\mathcal N_{r,s}(V^{r,s}_-)$ are isomorphic for the set of indices $(r,s)\in \{(3,0),\ (7,0),\ (1,2),\ (3,4),\ (5,2)\ (1,6)\}$.
\end{lemma}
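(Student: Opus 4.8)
The plan is to handle each of the six pairs $(r,s)\in\{(3,0),(7,0),(1,2),(3,4),(5,2),(1,6)\}$ by an explicit construction of the isomorphism $\Phi=A\oplus C$, guided by the general recipe collected in Lemma~\ref{cor:constr} and equations~\eqref{eq:Jpn}--\eqref{eq:gen_rule}, and then to reduce as many cases as possible to the smallest ones. First I would fix, for each pair, an orthonormal basis $\{z_j\}_{j=1}^{r+s}$ of $\mathbb R^{r,s}$ together with a maximal family of mutually commuting isometric involutions $P_1,\dots,P_N$ built from products of the $J_{z_k}$, so that the common eigenspaces $E^I$ are as small as possible; by Theorem~\ref{integral basis I} and the discussion preceding the lemma, the module $V^{r,s}_+$ is generated cyclically from a vector $v\in E^1$ with $\langle v,v\rangle=1$. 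The map $C\colon\mathbb R^{r,s}\to\mathbb R^{r,s}$ is chosen so that $CC^\tau=\Id$ (forced by Corollary~\ref{Identity relation}) and $\det C=-1$ when $s=0$ (forced by Lemma~\ref{cor:constr}); in the $s=0$ cases one simply takes $C$ to be a reflection, e.g. $C(z_1)=-z_1$, $C(z_j)=z_j$ for $j>1$, while for $s>0$ one may take $C=\Id$ or a suitable permutation-type isometry. Then $A$ is defined on the generating vector by $A(v)=u$ (or $A(v)=cu$) for a generator $u$ of $V^{r,s}_-$ and extended to all of $V^{r,s}_+$ by the rule~\eqref{eq:gen_rule}, i.e. $A$ commutes with products of an even number of the $\wJ_{C(z_k)}$ and intertwines single generators up to the sign $-\det C$; one checks this is well defined because the relations among the $J_{z_k}$ that are needed are exactly the ones respected by the $\wJ_{C(z_k)}$, the only subtlety being consistency with the action of the volume form, which is handled by~\eqref{eq:v1}.

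For the low-dimensional pairs $(3,0)$ and $(1,2)$ the module has dimension $2$ (respectively $V^{3,0}$ is $2$-dimensional, $V^{1,2}$ is neutral of dimension $2$), so the verification that $AJ_{z_i}A^\tau=\wJ_{C(z_i)}$ for each generator $z_i$ reduces to a $2\times 2$ matrix identity and is immediate. For $(7,0)$ the module $V^{7,0}_\pm$ is $8$-dimensional; here I would use the family of involutions analogous to $P_1,\dots,P_4$ from the $r=8$ case of Theorem~\ref{th:10-01} (restricted to the seven generators) so the eigenspaces $E^I$ are one-dimensional, define $A$ on the basis~\eqref{eq:basis80}-style vectors by $A(x_1)=u$ and $A(x_j)=\pm y_j$ with signs dictated by~\eqref{eq:gen_rule}, and then check the commutators $[x_i,x_j]$ exactly as in the $(8,0)$ proof, using that for $1<i<j\le 7$ there is $1<k\le 7$ with $J_{z_i}J_{z_j}=\pm J_{z_1}J_{z_k}$ and the same relation with tildes. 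The pair $(3,4)$ is reduced to $(5,2)$ by the same projection-and-diagram device used for $(5,1)$ inside the proof of Theorem~\ref{th:51-62}: the inclusions $\mathbb R^{3,4}\subset\mathbb R^{5,4}$ is not quite right, so instead one uses that $V^{3,4}$ sits inside the relevant larger module and that an isomorphism $\mathcal N_{5,2}(V_+)\to\mathcal N_{5,2}(V_-)$ descends through kernels $\mathrm{span}\{z_k\}$ on each side; alternatively one treats $(3,4)$ directly with a family of three involutions so that $E^1$ is $4$-dimensional with a quaternion structure, mirroring the $(6,1)$ construction. Likewise $(1,6)$ is obtained from $(1,6)$-adapted involutions reducing $E^1$ to dimension $2$ and a direct $2\times 2$ check, or by projecting from a higher case.

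The main case requiring genuine work is $(5,2)$: here $V^{5,2}_\pm$ has dimension $8$, $r-s=3\pmod 4$, and I would set up two commuting involutions $P_1,P_2$ built from the $J_{z_k}$ together with a quaternion structure $\bi,\bj,\bk$ on the common eigenspace $E^1$ (of dimension $4$), exactly in the spirit of Table~\ref{t:52}; writing $A_1(v)=(a_1+a_2\wbi+a_3\wbj+a_4\wbk)u$ and imposing the intertwining relations coming from~\eqref{eq:gen_rule} together with the requirement $A_1^\tau\wJ_{w_5}A_1=\pm J_{z_5}$ (for a generator $z_5$ that preserves $E^1$) yields a small polynomial system in $a_1,\dots,a_4$, analogous to~\eqref{eq:solA}, whose solvability is the crux. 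Once $A_1$ is found, the remaining blocks $A_I$ are produced from $A_1$ by the transport maps $G_I=\prod J_{z_i}$ as in Theorem~\ref{th:general}, and assembling $A=\oplus A_I$ gives the isomorphism. I expect the hard part to be precisely this bookkeeping in the $(5,2)$ case — choosing the involutions and the single generator $z_k$ preserving $E^1$ so that the resulting quadratic constraints on the $a_j$ are consistent (they will be, because $CC^\tau=\Id$ rather than $-\Id$ changes the sign pattern favorably compared with the proof of Theorem~\ref{th:51-62}), and verifying that the globally assembled $A$ indeed satisfies $AJ_{z_{j_0}}A^\tau=\wJ_{C(z_{j_0})}$ for every generator, which is exactly what Theorem~\ref{th:general} guarantees once hypotheses (a) and (b) there are checked.
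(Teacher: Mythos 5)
Your overall strategy is the one the paper uses: build $\Phi=A\oplus C$ explicitly case by case, with $CC^{\tau}=\Id$ forced by Corollary~\ref{Identity relation}, the constraint $AA^{\tau}=-\det C\,\Id$ coming from the volume form as in \eqref{eq:v1}, commuting involutions to cut the module into small eigenspaces, a quaternion structure where $E^1$ is four-dimensional, and Theorem~\ref{th:general}-style transport $G_I$ to assemble $A=\oplus A_I$. For $(7,0)$ your plan coincides with the paper's (one-dimensional eigenspaces, $A(v)=\lambda u$, $AJ_{z_i}v=\lambda\wJ_{w_i}u$), and for $(5,2)$ your setup (two involutions, quaternion structure on $E^1$, $A_1(v)=(a_1+a_2\wbi+a_3\wbj+a_4\wbk)u$, a small quadratic system in the $a_j$) is exactly what the paper does; the paper additionally offers a slicker non-constructive route for $(5,2)$ and $(1,6)$ by observing that the isomorphism $\mathcal N_{5,2}\cong\mathcal N_{2,5}$ of Theorem~\ref{th:51-62} can be built for either sign of the volume-form action on the generating vector, so both $\mathcal N_{5,2}(V_{\pm}^{5,2})$ are isomorphic to $\mathcal N_{2,5}$ and hence to each other.

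There are, however, concrete dimension errors that would derail the execution as written. From Table~\ref{t:dim}: $\dim V^{3,0}_{\pm}=4$ and $\dim V^{1,2}_{\pm}=4$ (not $2$), so these cases require a $4\times4$ matrix verification (the paper's \eqref{al:matrixA30} and \eqref{eq:matrixA12}), not a $2\times2$ one; $\dim V^{5,2}_{\pm}=16$ (not $8$); and $\dim V^{1,6}_{\pm}=16$, so with two involutions $E^1$ is $4$-dimensional and carries a quaternion structure (not $2$-dimensional with a ``direct $2\times2$ check''). Your treatment of $(3,4)$ is also the weak point: the projection/descent idea is, as you admit, not set up correctly, and your fallback of ``three involutions so that $E^1$ is $4$-dimensional with a quaternion structure'' is dimensionally inconsistent, since $\dim V^{3,4}_{\pm}=8$ and three commuting involutions give \emph{one}-dimensional common eigenspaces — which is in fact what the paper uses, setting $A(v)=\lambda u$ on the eight lines spanned by $v$ and the $J_{z_i}v$ and deducing $\det C=-1$ from positivity of $A^{\tau}A$ on a positive vector. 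None of these errors affects the viability of the method, but each of the claimed ``immediate'' verifications must be redone at the correct size before the proof is complete.
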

\begin{proof}
{\sc Case $(r,s)=(3,0)$}. Making use of the notations of Lemma~\ref{cor:constr}, we are 
interested in the construction of an isomorphism 
$\Phi=A\oplus C\colon \mathcal{N}_{3,0}(V_{+}^{3,0})\to
\mathcal{N}_{3,0}(V_{-}^{3,0})$,
where $A\colon V_{+}^{3,0}\to V_{-}^{3,0}$ and $C\colon\mathbb{R}^{3,0}\to \mathbb{R}^{3,0}$ with $CC^{\tau}=\Id$, $\det C=-1$. The spaces $V_{+}^{3,0}$ and $V_{-}^{3,0}$ are spanned by orthonormal bases
\begin{equation}\label{eq:basis3012}
\begin{array}{llllll}
&x_1=v,\ \ &x_2=J_{z_1}v,\ \ &x_3=J_{z_2}v,\ \ &x_4=J_{z_1}J_{z_2}v,\qquad\text{and}
\\
&y_1=u,\ \ &y_2=\wJ_{w_1}u,\ \ &y_3=\wJ_{w_2}u,\ \ &y_4=\wJ_{w_1}\wJ_{w_2}u,
\end{array}
\end{equation}
respectively, for some $\la v,v\ra_{V_{+}^{3,0}}=\la u,u\ra_{V_{-}^{3,0}}=1$. 
In order to construct the map $A$ we assume 
$$
A(v)=\sum_{i=1}^{4}a_iy_i, \quad\ \sum_{i=1}^{4} {a_i}^2=\la A(v),A(v)\ra_{V^{3,0}_-}=\la v,A^{\tau}A(v)\ra_{V^{3,0}_+}=1.
$$ 
Then
\begin{align}\label{al:matrixA30}
&A(x_2)=AJ_{z_1}v= \wJ_{w_1}A(v)=
-a_{2}y_{1}+a_{1}y_{2}-a_{4}y_{3}+a_3y_{4},\nonumber
\\
&A(x_3)=AJ_{z_2}v= \wJ_{w_2}A(v)=
-a_{3}y_{1}+a_{4}y_{2}+a_{1}y_{3}-a_{2}y_{4},
\\
&A(x_4)=AJ_{z_1}J_{z_2}=\wJ_{w_1}\wJ_{w_2}A(v)=
-a_{4}y_{1}-a_{3}y_{2}+a_{2}y_{3}+a_{1}y_{4}.\nonumber
\end{align}
Thus, for any orthogonal transformation $C\in
O(3)$, with $\det C=-1$ we can find a map $A\colon V_{+}^{3,0}\to
V_{-}^{3,0}$ such that $\Phi=A\oplus C\colon \mathcal
N_{3,0}(V_{+}^{3,0})\to\mathcal N_{3,0}(V_{-}^{3,0})$ 
is an isomorphism.  
\\

{\sc Case $(r,s)=(1,2)$}. Notice that in this case $V^{1,2}_{+}=V_+\oplus V_+$ and $V^{1,2}_{-}=V_-\oplus V_-$, where $V_{\pm}$ are non-equivalent irreducible modules. We choose basis~\eqref{eq:basis3012} and look for the map $A\colon  V^{1,2}_+\to V^{1,2}_-$ satisfying
$$
A(v)=\sum_{i=1}^{4}a_iy_i, \quad\ \sum_{i=1}^{2} {a_i}^2-\sum_{i=3}^{4} {a_i}^2=\la A(v),A(v)\ra_{V^{1,2}_-}=\la v,A^{\tau}A(v)\ra_{V^{1,2}_+}=-\det C.
$$
We denote $\det C=\epsilon$. Making calculations similar to~\eqref{al:matrixA30}, we find
\begin{equation}\label{eq:matrixA12}
A=\begin{pmatrix}
a_1&\epsilon a_2&-\epsilon a_3&-\epsilon a_4
\\
a_2&-\epsilon a_1&  \epsilon a_4&-\epsilon a_3
\\
a_3&\epsilon a_4&-\epsilon a_1&-\epsilon a_2
\\
a_4&-\epsilon a_3&\epsilon a_2&-\epsilon a_1
\end{pmatrix},\qquad
A^{\tau}=\begin{pmatrix}
a_1&-\epsilon a_2&\epsilon a_3&\epsilon a_4
\\
-a_2&-\epsilon a_1&-\epsilon a_4&\epsilon a_3
\\
-a_3&-\epsilon a_4&-\epsilon a_1&\epsilon a_2
\\
-a_4&\epsilon a_3&-\epsilon a_2&-\epsilon a_1
\end{pmatrix}
\end{equation}
By direct calculations we obtain $A^{\tau}A=-\det C\,\Id$. We conclude that for any transformation $C\in O(1,2)$ we can find a map $A\colon V_{+}^{1,2}\to V_{-}^{1,2}$ such that $\Phi=A\oplus C\colon \mathcal N_{1,2}(V_{+}^{1,2})\to\mathcal N_{1,2}(V_{-}^{1,2})$ is an isomorphism.
\\

{\sc Case $(r,s)=(7,0)$}.
We choose
the involutions
$$
P_{1}=J_{z_1}J_{z_2}J_{z_3}J_{z_4},\quad P_{2}=J_{z_1}J_{z_2}J_{z_5}J_{z_6},\quad
P_{3}=J_{z_1}J_{z_3}J_{z_5}J_{z_7},
$$
acting on the module $V_{+}^{7,0}$. For the module $V_{-}^{7,0}$ we fix the involutions $\wP_{i}$, $i=1,2,3$
changing the basis vectors $z_j$ by $w_j=C(z_j)$, $j=1,\ldots,7$. We take vectors $v\in V_{+}^{7,0}$ and $u\in V_{-}^{7,0}$ 
such that
$$
P_{j}(v)=v,\quad \wP_{j}(u)=u\quad\text{for}\quad j=1,2,3,\quad\text{and}
\quad
\la v,v\ra_{V_{+}^{7,0}}=\la u,u\ra_{V_{-}^{7,0}}=1.
$$
The $8$ common eigenspaces of $P_{j}$ are one dimensional
and are spanned by the vectors $v$ and $J_{z_i}v$. Analogously, one dimensional eigenspaces of $\wP_{j}$ are spanned by $u$ and $\wJ_{w_i}u$.
We set
$$
A(v)=\lambda u,\qquad
AJ_{z_i}v=\lambda \wJ_{w_i}u,\ 
\quad j=1,\ldots,7, \ \ \lambda =\pm 1.
$$
Thus, for each $C\in O(7)$ with $\det C=-1$ and $\lambda=\pm 1$ there is a Lie algebra isomorphism between $\mathcal{N}_{7,0}(V_{+}^{7,0})$ and $\mathcal{N}_{7,0}(V_{-}^{7,0})$.
\\

{\sc Case $(r,s)=(3,4)$}.
By using the notations as in the previous case, 
we choose the mutually commuting isometric involutions
$$
P_1=J_{z_1}J_{z_2}J_{z_4}J_{z_5},\quad
P_2=J_{z_1}J_{z_2}J_{z_6}J_{z_7},\quad
P_3=J_{z_1}J_{z_3}J_{z_5}J_{z_7},
$$
acting on $V_{+}^{3,4}$ and a vector $v\in V_{+}^{3,4}$ such that
$P_i(v)=v$, $i=1,2,3$, $\la v,v\ra_{V_{+}^{3,4}}=1$.
The 8 one dimensional common eigenspaces of involutions
$P_j$, $j=1,2,3$ are spanned by the vectors
$v$ and $J_{z_i}v$ ($i=1,\ldots,7$), respectively. 
For the module $V_{-}^{3,4}$ we also choose the involutions $\wP_{j}$,
$j=1,2,3$ with the same combinations of the generators and
take a positive unit vector $u\in V_{-}^{3,4}$ (if necessary, by
changing the sign of the scalar product, see Remark \ref{sign change})
such that $\wP_{i}(u)=u$ for $i=1,2,3$.

%changing the basis vectors $z_j$ by $w_j=C(z_j)$ and take a vector 
%$u\in V_{-}^{3,4}$ such that $\wP_{i}(u)=u$, $i=1,2,3$. If it is
%necessary we can change the sigh of the scalar products according to
%Remark~\ref{sign change} and Proposition~\ref{genisom1} 
%to achieve the positiveness of the generating vectors in both cases. 

The one dimensional common eigenspaces of the involutions $\wP_i$ are spanned by
$u$ and $\wJ_{w_i}$, $i=1,\ldots,7$. We may set 
$A(v)=\lambda u$. Then 
we have $A^{\tau}(u)= \lambda v$, according to the choice $\la
u,u\ra_{V_{-}^{3,4}}=1$. So that
\[
-\det C\cdot A^{\tau}\,A(v)=-\det C\cdot \lambda^2 \cdot v=
v\quad\Longrightarrow\quad \det C=- 1,
\]
since $\lambda^2=1$.

Thus, there is a Lie algebra isomorphism $A\oplus C$ between
$\mathcal{N}_{3,4}(V_{+}^{3,4})$ and $\mathcal{N}_{3,4}(V_{-}^{3,4})$
with $C\in O(3,4)$ and we can always assume that $\det C=-1$.
\\

{\sc Case $(r,s)\in \{(5,2),\ (1,6)\}$}. First we present arguments of existence of an isomorphism and then we give the constructive proof. In Theorem~\ref{th:51-62} it was shown that 
$\mathcal{N}_{5,2}$ is isomorphic to $\mathcal{N}_{2,5}$, where we implicitly assumed that $\Omega^{5,2}(v)=-v$, which was used in the condition $J_{z_5}v=-\bi(v)$. Thus we actually showed the isomorphism $\mathcal N_{5,2}(V^{5,2}_-)\cong\mathcal N_{2,5}$. The assumption $\Omega^{5,2}(v)=v$ leads to $J_{z_5}v=\bi(v)$, and thus $\mathcal N_{5,2}(V^{5,2}_+)\cong\mathcal N_{2,5}$. We conclude $\mathcal N_{5,2}(V^{5,2}_-)\cong\mathcal N_{5,2}(V^{5,2}_+)$. The same arguments shows $\mathcal N_{1,6}(V^{1,6}_-)\cong\mathcal N_{1,6}(V^{1,6}_+)$.

We propose the constructive proof now. Let $(r,s)=(5,2)$. The mutually commuting isometric involutions and the complementary operators acting on $V^{5,2}_+$
$$
P_1=J_{z_1}J_{z_2}J_{z_3}J_{z_4},\quad P_2=J_{z_1}J_{z_2}J_{z_6}J_{z_7},
\qquad R_1=J_{z_4}J_{z_6}, \quad R_2=J_{z_5}J_{z_6}
$$ 
show that common eigenspaces are 4-dimensional and neutral. It is enough to construct the isomorphism by defining map $A_1\colon E^1\to \wE^1$, since it can be extended to the map $A\colon V^{5,2}_+\to V^{5,2}_-$  in a similar way as in Theorem~\ref{th:general}. By making use of the quaternion structure  
\begin{equation}\label{eq:quat52-1}
\bi=J_{z_1}J_{z_2},\quad \bj=J_{z_1}J_{z_3}J_{z_5}J_{z_7},\quad \bk=J_{z_2}J_{z_3}J_{z_5}J_{z_7},
\end{equation}
we fix the orthonormal basis for $E^1$ as follows
$$
x_1=v,\quad x_2=\bi(v),\quad x_3=\bj(v),\quad x_4=\bk(v),
$$
where $v\in E_1=\cap_{i=1}^{2}E^1_{P_1}$ and $\la v,v\ra_{V^{5,2}_+}=1$. The existence of such a vector $v$ and the orthogonality of the basis is justified by Lemma~\ref{orthogonal}. 
Analogously, we choose the involutions $\wP_i$, $i=1,2$, and quaternion structure, acting on $V^{5,2}_-$ by changing $J_{z_j}$ to $\wJ_{C(z_j)}$. It allows to get the orthonormal basis
$$
y_1=u,\quad y_2=\wbi(u),\quad y_3=\wbj(u),\quad y_4=\wbk(u),
$$
for some $u\in \wE_1=\cap_{i=1}^{2}\wE^1_{\wP_1}$ with $\la u,u\ra_{V^{5,2}_-}=1$. We are looking for the map $A_1\colon  V^{5,2}_+\to V^{5,2}_-$ satisfying $A_1(v)=\sum_{i=1}^{4}a_iy_i$ and 
$$
a_1^2+a_2^2-a_3^2-a_4^2=\la A_1(v),A_1(v)\ra_{V^{5,2}_-}=\la v,A_1^{\tau}A_1(v)\ra_{V^{5,2}_+}=-\det C.
$$
We use the property $A_1\bi=\wbi A_1$ (and the same for $\bj,\bk$) according to~\eqref{eq:gen_rule}, and calculate the matrix for $A_1$ and $A_1^{\tau}A_1$:
\begin{equation}\label{eq:matrix52-16}
A_1=
\begin{pmatrix}
a_1&-a_2&-a_3&-a_4
\\
a_2&a_1&a_4&-a_3
\\
a_3&-a_4&a_1&a_2
\\
a_4&a_3&-a_2&a_1
\end{pmatrix},\qquad
A_1^{\tau}=
\begin{pmatrix}
a_1&a_2&-a_3&-a_4
\\
-a_2&a_1&a_4&-a_3
\\
a_3&-a_4&a_1&-a_2
\\
a_4&a_3&a_2&a_1
\end{pmatrix}
\end{equation}
and 
$$
A_1^{\tau}A_1=
\begin{pmatrix}
a_1^2+a_2^2-a_3^2-a_4^2&0&-2a_1a_3+2a_2a_4&-2a_2a_3-2a_1a_4
\\
0&a_1^2+a_2^2-a_3^2-a_4^2&2a_2a_3+2a_1a_4&-2a_1a_3+2a_2a_4
\\
2a_1a_3-2a_2a_4&-2a_2a_3-2a_1a_4&a_1^2+a_2^2-a_3^2-a_4^2&0
\\
2a_2a_3+2a_1a_4&2a_1a_3-2a_2a_4&0&a_1^2+a_2^2-a_3^2-a_4^2
\end{pmatrix}
$$
In order to satisfy the condition $A_1^{\tau}A_1=-\det C\,\Id$ we need to solve the equations
$$
-2a_1a_3+2a_2a_4=0,\quad -2a_2a_3-2a_1a_4=0.
$$
Thus,  if $\det C=-1$, then we have to choose $a_3=a_4=0$ and if $\det C=1$, then $a_1=a_2=0$. We conclude that
for each $C\in O(5,2)$ there is a Lie algebra isomorphism between $\mathcal{N}_{5,2}(V_{+}^{5,2})$ and $\mathcal{N}_{5,2}(V_{-}^{5,2})$.

Let now $(r,s)=(1,6)$. The arguments are essentially the same as in the previous case. The mutually commuting isometric involutions acting on $V^{1,6}_+$ are $P_1=J_{z_2}J_{z_3}J_{z_4}J_{z_5}$, $P_2=J_{z_2}J_{z_3}J_{z_6}J_{z_7}$. The quaternion structure is $
\bi=J_{z_2}J_{z_3}$, $\bj=J_{z_1}J_{z_3}J_{z_5}J_{z_7}$, $\bk=J_{z_1}J_{z_3}J_{z_5}J_{z_7}$.
We fix orthonormal bases for $E^1$ and $\wE^1$ as in the previous case and construct the map $A_1$ as in~\eqref{eq:matrix52-16}. We come to the same conclusion  that
for each $C\in O(1,6)$ there is a Lie algebra isomorphism between $\mathcal{N}_{1,6}(V_{+}^{1,6})$ and $\mathcal{N}_{1,6}(V_{-}^{1,6})$.
\end{proof}

\begin{cor}\label{cor:compos1}
If $\Phi=A\oplus C\colon \mathcal N_{r,s}(V^{r,s}_+)\to\mathcal N_{r,s}(V^{r,s}_-)$ is a Lie algebra isomorphism, then we always can assume that $\det C=-1$. 
\end{cor}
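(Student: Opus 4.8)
The plan is to pin down $\det C$ from the facts already established and then, in the single remaining case $\det C=1$, to modify $\Phi$ by composing with an obvious sign-flip isomorphism of the target. Since the standing hypothesis here is $r-s= 3(mod\,\,4)$ (otherwise $V^{r,s}_{+}$ and $V^{r,s}_{-}$ do not both exist), we have $r\neq s$, so Corollary~\ref{Identity relation} gives $CC^{\tau}=\Id$ and hence $\det C=\pm1$. Lemma~\ref{cor:constr} moreover yields $AA^{\tau}=-\det C\,\Id$; when $s=0$ the scalar product on $V^{r,0}_{\pm}$ is positive definite, so $AA^{\tau}$ is positive and necessarily $\det C=-1$. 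Thus only the case $s\geq1$, $\det C=1$ requires work.

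The key elementary observation is that $r+s$ is odd, because $r-s= 3(mod\,\,4)$ forces $r-s$ odd and $r+s=(r-s)+2s$. Now equip $V^{r,s}_{-}$ with the opposite scalar product $-\la.\,,.\ra_{V^{r,s}_{-}}$; by Remark~\ref{sign change} this is again an admissible $\Cl_{r,s}$-module, and since the module action is unchanged its volume form still acts by $-\Id$, so it is again a realization of $V^{r,s}_{-}$. By Proposition~\ref{genisom1} the map $\Theta=\Id_{V^{r,s}_{-}}\oplus(-\Id_{\mathbb R^{r,s}})$ is a Lie algebra isomorphism from $\mathcal N_{r,s}(V^{r,s}_{-})$ onto the copy built with this opposite-sign scalar product. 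Taking $\Phi$ in the form with $B=0$ granted by Theorem~\ref{property of isomorphism}, the composite $\Theta\circ\Phi=A\oplus(-C)$ is then a Lie algebra isomorphism between $\mathcal N_{r,s}(V^{r,s}_{+})$ and $\mathcal N_{r,s}(V^{r,s}_{-})$ whose central component $-C$ satisfies $\det(-C)=(-1)^{r+s}\det C=-1$. This is exactly the assertion.

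I do not expect a genuine obstacle; the only point needing care is the bookkeeping of ``$V^{r,s}_{-}$''. One has to invoke Remark~\ref{sign change} to see that negating the scalar product changes neither the module action nor the eigenvalue $-1$ of the volume form, so that $\Theta\circ\Phi$ really lands in an isomorphic realization of $\mathcal N_{r,s}(V^{r,s}_{-})$ and not of $\mathcal N_{r,s}(V^{r,s}_{+})$; it is this that turns the trivial identity $\det(-\Id_{\mathbb R^{r,s}})=(-1)^{r+s}$ into a meaningful statement about the two inequivalent Lie algebras. As a byproduct the argument gives, uniformly in $(r,s)$, the possibility of choosing $\det C=-1$ that was observed case by case inside the proof of Lemma~\ref{lem:basic3-7}.
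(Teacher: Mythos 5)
Your proposal is correct and follows essentially the same route as the paper: in the remaining case $\det C=1$ one composes $\Phi$ with the sign-flip isomorphism $\Id\oplus(-\Id)$ of Proposition~\ref{genisom1}, notes via Remark~\ref{sign change} that the target is still a realization of $\mathcal N_{r,s}(V^{r,s}_-)$, and uses that $r+s$ is odd to get $\det(-C)=-1$. The only (harmless) difference is organizational: the paper disposes of $(3,0),(7,0),(1,6)$ by citing Lemma~\ref{lem:basic3-7} directly, whereas you handle $s=0$ through Lemma~\ref{cor:constr} and run the composition argument uniformly otherwise.
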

\begin{proof}
The cases of the indices $(r,s)=\{(3,0),(7,0),(1,6)\}$ contains in the proof of Lemma~\ref{lem:basic3-7}. For the rest of the cases in Lemma~\ref{lem:basic3-7} if $\det C=1$, then we compose the isomorphism $\Phi_1=A\oplus C$ with the isomorphism $\Phi_-=\Id\oplus -\Id$ that gives the isomorphism between $\mathcal N_{r,s}(V_1)$ with $V_1=(V^{r,s}_-,\la.\,,.\ra_{V^{r,s}_-})$ and the Lie algebra $\mathcal N_{r,s}(V_2)$ with  $V_2=(V^{r,s}_-,-\la.\,,.\ra_{V^{r,s}_-})$ by Proposition~\ref{genisom1}. The composition map $\Phi=\Phi_1\circ\Phi_-=A\oplus(-C)$ will have the properties $\det C=-1$ since $r+s$ is odd and $AA^{\tau}=\Id$ due to the change of the sign of the scalar product.
\end{proof}

\begin{lemma}\label{lem:highdimstep3}
If the Lie algebra $\mathcal{N}_{r,s}(V_{+}^{r,s})$ is isomorphic to the Lie algebra $\mathcal{N}_{r,s}(V_{-}^{r,s})$, then 
\begin{itemize}
\item[1.]{the Lie algebras $\mathcal{N}_{r,s+8k}(V_{+}^{r,s+8k})$ and $\mathcal{N}_{r,s+8k}(V_{-}^{r,s+8k})$ 
are isomorphic;}
\item[2.]{the Lie algebras $\mathcal{N}_{r+8k,s}(V_{+}^{r+8k,s})$ and $\mathcal{N}_{r+8k,s}(V_{-}^{r+8k,s})$ 
are isomorphic;}
\item[3.]{the Lie algebras $\mathcal{N}_{r+4k,s+4k}(V_{+}^{r+4k,s+4k})$ and $\mathcal{N}_{r+4k,s+4k}(V_{-}^{r+4k,s+4k})$ 
are isomorphic.}
\end{itemize}
for any $k=1,2,\ldots$.
\end{lemma}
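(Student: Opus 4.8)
The plan is to follow the tensor–product scheme used in the proof of Theorem~\ref{periodicity}, but in a much lighter form: since here both modules $V^{r,s}_+$ and $V^{r,s}_-$ get tensored with the \emph{same} periodicity block, no auxiliary isomorphism of that block is needed. It suffices to carry out one step of each of the three periodicities and then iterate over $k$. I will describe item~1 in detail; items~2 and~3 are obtained verbatim after replacing the block $V^{0,8}$ by $V^{8,0}$, respectively $V^{4,4}$.

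The first point to settle is that the tensoring respects the $+/-$ labelling of the modules. With the representations $\hat J_{z_j}=J_{z_j}\otimes\Omega^{0,8}$ and $\hat J_{\zeta_\alpha}=\Id\otimes\bar J_{\zeta_\alpha}$ of Theorem~\ref{periodicity}, one computes directly that $\Omega^{r,s+8}=\Omega^{r,s}\otimes(\Omega^{0,8})^{r+s+1}$. Now $r-s=3(mod\,\,4)$ forces $r+s$ to be odd (as already observed in the proof of Proposition~\ref{prop:mod}), and $(\Omega^{0,8})^2=\Id$, so $\Omega^{r,s+8}=\Omega^{r,s}\otimes\Id$. Hence the volume form of $\Cl_{r,s+8}$ acts as $\pm\Id$ on $V^{r,s}_\pm\otimes V^{0,8}$, and therefore $V^{r,s+8}_\pm\cong V^{r,s}_\pm\otimes V^{0,8}$ as minimal admissible $\Cl_{r,s+8}$-modules (minimality by~\cite{FM}, using the tensor product of the scalar products). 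This parity remark, which is what prevents the construction from interchanging the two modules, is the only step where I expect any real subtlety.

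Next, starting from a Lie algebra isomorphism $\Phi=A\oplus C\colon\mathcal N_{r,s}(V^{r,s}_+)\to\mathcal N_{r,s}(V^{r,s}_-)$, I would normalise $\det C=-1$ by Corollary~\ref{cor:compos1}; then $AA^{\tau}=-\det C\,\Id=\Id$ by Lemma~\ref{cor:constr}, hence $A^{\tau}A=\Id$. Put $\hat A=A\otimes\Id_{V^{0,8}}$ and $\hat C=C\oplus\Id_{\mathbb R^{0,8}}$, using the same minimal admissible $\Cl_{0,8}$-module $V^{0,8}$ as second tensor factor on both sides, so that $\det\hat C=-1$ again. By the converse direction contained in Theorem~\ref{property of isomorphism} it is enough to verify relation~\eqref{isomorphism relation} for $\hat\Phi=\hat A\oplus\hat C$ on an orthonormal basis of $\mathbb R^{r,s+8}$. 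On the ``old'' generators $w_j=C(z_j)$ one gets $\hat A^{\tau}\hat{\wJ}_{w_j}\hat A=(A^{\tau}\wJ_{w_j}A)\otimes\Omega^{0,8}=J_{C^{\tau}(w_j)}\otimes\Omega^{0,8}=\hat J_{\hat C^{\tau}(w_j)}$ by~\eqref{isomorphism relation} for $\Phi$; on the ``new'' generators $\zeta_\alpha$, which $\hat C$ fixes, one gets $\hat A^{\tau}(\Id\otimes\bar J_{\zeta_\alpha})\hat A=(A^{\tau}A)\otimes\bar J_{\zeta_\alpha}=\Id\otimes\bar J_{\zeta_\alpha}=\hat J_{\zeta_\alpha}$, using exactly $A^{\tau}A=\Id$. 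Thus $\hat\Phi$ is a Lie algebra isomorphism $\mathcal N_{r,s+8}(V^{r,s+8}_+)\to\mathcal N_{r,s+8}(V^{r,s+8}_-)$, and iterating the step $k$ times proves item~1.

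For item~2 one tensors with $V^{8,0}$, using $(\Omega^{8,0})^2=\Id$, and for item~3 with $V^{4,4}$, using $(\Omega^{4,4})^2=\Id$; in both cases the parity $r+s$ odd again collapses $(\Omega^{\bullet})^{r+s+1}$ to $\Id$, so that $V^{r+8,s}_\pm\cong V^{r,s}_\pm\otimes V^{8,0}$ and $V^{r+4,s+4}_\pm\cong V^{r,s}_\pm\otimes V^{4,4}$, and the relation check for $\hat A=A\otimes\Id$ is word for word the same. I would emphasise that, unlike in the proof of Theorem~\ref{periodicity}, neither the $\mathbb Z_2$-graded tensor product nor the auxiliary maps $A_1$ and $G_I$ are needed here: the normalisation $\det C=-1$ reduces $A^{\tau}A$ to the identity, which is precisely what makes the naive tensor $A\otimes\Id$ work on the new generators.
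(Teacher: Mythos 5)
Your proof is correct, and it takes a genuinely lighter route than the paper's. The paper's own argument mirrors Theorem~\ref{periodicity}: it picks an auxiliary automorphism $\bar\Psi=\bar A\oplus\Id$ of $\mathcal N_{0,8}$ and defines $\hat A=A\hat\otimes\bar A$ through the case-by-case $\mathbb Z_2$-graded tensor-product formula~\eqref{eq:hatAstep3} on products of generators applied to a cyclic vector, then appeals to the machinery of Corollary~\ref{rem:important}. You instead first normalise the given isomorphism via Corollary~\ref{cor:compos1} and Lemma~\ref{cor:constr} so that $AA^{\tau}=\Id$, take the plain tensor $\hat A=A\otimes\Id$, $\hat C=C\oplus\Id$, and verify relation~\eqref{isomorphism relation} directly on the generators, invoking only the converse direction of Theorem~\ref{property of isomorphism}; the identity $A^{\tau}A=\Id$ is exactly what makes the check on the new generators $\zeta_\alpha$ go through, and in its presence your map essentially coincides with the paper's (with $\bar A=\Id$), so nothing is lost. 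You also make explicit a point the paper leaves implicit: the computation $\Omega^{r,s+8}=\Omega^{r,s}\otimes(\Omega^{0,8})^{r+s+1}=\Omega^{r,s}\otimes\Id$ (using that $r+s$ is odd when $r-s=3(mod\,\,4)$), which guarantees that tensoring with the periodicity block does not interchange $V_+$ and $V_-$; this is a worthwhile addition, since without it the identification $V^{r,s+8}_{\pm}\cong V^{r,s}_{\pm}\otimes V^{0,8}$ is unjustified. The trade-off is that your argument leans on the normalisation $\det C=-1$, whereas the paper's graded construction works for an arbitrary starting isomorphism; for the statement at hand your version is the more economical one.
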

\begin{proof}
The proof is similar to the proof of Theorem~\ref{periodicity} and we only show the first statement, since the others can be obtained analogously. If $\bar \Psi=\bar A\oplus\bar C\colon \mathcal N_{0,8}\to\mathcal N_{0,8}$, with $\bar C\bar C^{\tau}=\Id$ and $\Phi= A\oplus C\colon \mathcal N_{r,s}(V_{+}^{r,s})\to\mathcal N_{r,s}(V_{-}^{r,s})$ with $CC^{\tau}=\Id$, then the map $\hat \Phi=\hat A\oplus\hat C$ with $\hat C=C\oplus\bar C$ and $\hat A=A\hat\otimes\bar A$ given by~\eqref{eq:hatAstep3} is a Lie algebra isomorphism between $\mathcal{N}_{r,s+8k}(V_{+}^{r,s+8k})$ and $\mathcal{N}_{r,s+8k}(V_{-}^{r,s+8k})$.

First of all we observe that an automorphism $\bar \Psi=\bar A\oplus\bar C\colon \mathcal N_{0,8}\to\mathcal N_{0,8}$, with $\bar C\bar C^{\tau}=\Id$ always exists, where we can simply set $\bar C=\Id$ and $\bar A\colon V^{0,8}\to V^{0,8}$ can be any map $A\in\GL(8)$ satisfying $
AJ_{z_j}A^{\tau}=J_{z_j}$,
where $\{z_1,\ldots,z_8\}$ is an orthonormal basis for $\mathbb R^{0,8}$.

We only need to define the map $\hat A\colon V_{+}^{r,s+8k}\to V_{-}^{r,s+8k}$. By making use the notations of Theorem~\ref{periodicity}, we set 
\begin{equation}\label{eq:hatAstep3}
\begin{array}{lll}
&\hat A\prod_{j=1}^{p} \hat J_{z_j}\prod_{\alpha=1}^q\hat J_{\zeta_{\alpha}}=
\\
\\
&=
\begin{cases}
A\prod\limits_{j=1}^{p}J_{z_j}\otimes\bar A(\Omega^{0,8})^{p}\prod\limits_{\alpha=1}^qJ_{\zeta_{\alpha}}
=\prod\limits_{j=1}^{p}\wJ_{\hat C(z_j)}(A^{\tau})^{-1}\otimes\Omega^{8,0}\prod\limits_{\alpha=1}^q\wJ_{\hat C(\zeta_{\alpha})}(\bar A^{\tau})^{-1},
\\  
\qquad\qquad \text{if}\ p=2m+1, \ \  q=2k+1,
\\
\\
A\prod\limits_{j=1}^{p}J_{z_j}  \otimes\bar A(\Omega^{0,8})^{p}\prod\limits_{\alpha=1}^qJ_{\zeta_{\alpha}}
=\prod\limits_{j=1}^{p}\wJ_{\hat C(z_j)}(A^{\tau})^{-1}\otimes\Omega^{8,0}\prod\limits_{\alpha=1}^q\wJ_{\hat C(\zeta_{\alpha})}\bar A,
\\  
\qquad\qquad \text{if}\ p=2m+1, \ \  q=2k,
\\
\\
A\prod\limits_{j=1}^{p}J_{z_j} \otimes\bar A\prod\limits_{\alpha=1}^qJ_{\zeta_{\alpha}}
=\prod\limits_{j=1}^{p}\wJ_{\hat C(z_j)}A\otimes\prod\limits_{\alpha=1}^q\wJ_{\hat C(\zeta_{\alpha})}(\bar A^{\tau})^{-1},
\\  
\qquad\qquad  \text{if}\ p=2m, \ \  q=2k+1,
\\
\\
A\prod\limits_{j=1}^{p}J_{z_j} \otimes\bar A\prod\limits_{\alpha=1}^qJ_{\zeta_{\alpha}}
=\prod\limits_{j=1}^{p}\wJ_{\hat C(z_j)}A\otimes\prod\limits_{\alpha=1}^q\wJ_{\hat C(\zeta_{\alpha})}\bar A,
\\  
\qquad\qquad \text{if}\ p=2m, \ \  q=2k.
\end{cases}
\end{array}
\end{equation}
\end{proof}

%%%%%%%%%%%%%%%%%%%%%%%%%%%%%%%

\end{document}